\documentclass{amsart}

\usepackage{amsthm,amsmath,amssymb,hyperref}
\newtheorem{theorem}{Theorem}[section]
\newtheorem{lemma}[theorem]{Lemma}
\newtheorem{proposition}[theorem]{Proposition}
\newtheorem{corollary}[theorem]{Corollary}
\newtheorem{remark}[theorem]{Remark}

\theoremstyle{definition}
\newtheorem{definition}[theorem]{Definition}

\numberwithin{equation}{section}

%%%% Changes the numeration in thms etc.

%%% Changes the symbol used for itemize (at first level)

\newcommand{\Dast}[1]{\Delta^{\ast}(#1)}
\newcommand{\mD}[1]{\min\Delta(#1)}
\newcommand{\In}[1]{I(#1)}

\newcommand{\Hud}[1]{J(#1)}

\newcommand{\ko}{\mathsf{k}}
\newcommand{\Ko}{\mathsf{K}}
\newcommand{\vo}{\mathsf{v}}
\newcommand{\ro}{\mathsf{r}}
\newcommand{\Zo}{\mathsf{Z}}
\newcommand{\Lo}{\mathsf{L}}
\newcommand{\qo}{\mathsf{q}}
\newcommand{\ao}{\mathsf{a}}
\newcommand{\bo}{\mathsf{b}}

\newcommand{\ac}{\mathcal{A}}
\newcommand{\bc}{\mathcal{B}}
\newcommand{\fc}{\mathcal{F}}
\newcommand{\lc}{\mathcal{L}}
\newcommand{\ic}{\mathcal{I}}
\newcommand{\cc}{\mathcal{C}}
\newcommand{\dc}{\mathcal{D}}
\newcommand{\pc}{\mathcal{P}}
\newcommand{\oc}{\mathcal{O}}

\newcommand{\s}{\sigma}

\newcommand{\N}{\mathbb{N}}
\newcommand{\Q}{\mathbb{Q}}
\newcommand{\Z}{\mathbb{Z}}

\newcommand{\red}{\textrm{red}}

\DeclareMathOperator{\ord}{ord}
\DeclareMathOperator{\lcm}{lcm}
\DeclareMathOperator{\supp}{supp}

\begin{document}
\title[Congruence half-factorial Krull monoids]{On congruence
half-factorial Krull monoids with cyclic class group}
\thanks{Both authors were supported by the ANR Caesar grant number ANR-12-BS01-0011. Parts of the work were carried out while W.S. was supported by the Austrian Science Fund (FWF): J~2907-N18}

\author{A. Plagne \and W.~A. Schmid}

\subjclass[2010]{11B30, 11R27, 11P99, 20D60, 20K01, 13F05}

\keywords{factorization, half-factorial, zero-sum sequence,
block monoid, set of lengths, minimal distance, Krull monoid, Dedekind domain}

\email{alain.plagne@polytechnique.edu}
\email{schmid@math.univ-paris13.fr}
\address{(A.P.) Centre de Math\'ematiques Laurent Schwartz,
\'Ecole polytechnique,
91128 Pa\-laiseau Cedex,
France}
\address{(W.A.S.) Universit{\'e} Paris 13, Sorbonne Paris Cit{\'e}, LAGA, CNRS, UMR 7539,  Universit{\'e} Paris 8, F-93430, Villetaneuse, France, and  Laboratoire Analyse, G{\'e}om{\'e}trie et Applications (LAGA, UMR 7539), COMUE  Universit{\'e} Paris Lumi{\`e}res,  Universit{\'e} Paris 8, CNRS, 93526 Saint-Denis cedex, France}

\begin{abstract}
We carry out a detailed investigation of congruence half-factorial Krull monoids with finite cyclic class group and related problems. Specifically, we determine precisely all relatively large values that can occur as a minimal distance of a Krull monoid with finite cyclic class group, as well as the exact distribution of prime divisors over the ideal classes in these cases. Our results apply to various classical objects, including  maximal orders and certain semi-groups of modules. In addition, we present applications to quantitative problems in factorization theory. More specifically, we determine exponents in the asymptotic formulas for the number of algebraic integers whose sets of lengths have a large difference.  
\end{abstract}

\maketitle

\section{Introduction}
\label{sec_int}

This paper is concerned with two very closely linked questions. On the one hand, we study properties of Krull monoids with finite cyclic class group that have a specific arithmetic property. On the other hand, we apply these results to obtain a refined understanding of the arithmetic of Krull monoids with finite cyclic class group where each class contains a prime divisor. There are a variety of classical structures to which our results apply, including  maximal orders of number fields (more generally holomorphy rings of global fields) with finite cyclic class group, certain semigroups of isomorphy classes of modules, certain Diophantine monoids and several others (for details we refer to Section \ref{ssec_km}).  
The arithmetic property in question is the notion of congruence
half-factoriality. We recall that an atomic monoid -- in
this paper, the term monoid always means a commutative cancellative
semigroup with identity, a classical example is the multiplicative monoid of a domain -- is a monoid such that each
non-zero and non-invertible element is the product of irreducible
elements. The monoid is called factorial if each element has an essentially
unique factorization into irreducibles, where by essentially unique we mean unique up to ordering and associates.
If only the number of factors in the factorizations of an element is uniquely determined by the element,
the monoid is called half-factorial.
This property was first investigated by Carlitz \cite{carlitz60};
a first more systematic investigation of this property was carried out by Skula \cite{skula76}, {\'S}liwa \cite{sliwa76}, and Zaks \cite{zaks76}
motivated by some number-theoretic questions of Narkiewicz (see \cite[Chapter 9]{narkiewicz04}). 

Roughly two decades ago, on the one hand Chapman and Smith \cite{chapmansmith90a,chapmansmith90} 
considered another (weakened) condition along these lines. Namely, they considered the property that the number of factors in a factorization  (also called the length of the factorization) of an element is
only unique modulo some fixed non-negative integer $d$. They called this property $d$-congruence half-factoriality. Of course,
$0$-congruence half-factorial means merely half-factorial
whereas the condition $1$-congruence half-factorial is void. However,
for any other choice of $d$, this is a new and non-trivial condition.
On the other hand, Geroldinger \cite{geroldinger88} started a
systematic investigation of the structure of set of lengths of factorizations.
We informally recall the definition of a set of lengths.
For a non-zero and non-invertible element $a$ of an atomic structure one defines the set of lengths of $a$, denoted by $\Lo(a)$ as the
set of all integers $\ell$ such that there exist irreducible elements
$u_1, \dots, u_{\ell}$ such that $a=u_1 \dots u_{\ell}$; for an
invertible element, its set of lengths is $\{0\}$ and for the zero-element it
is empty. Geroldinger \cite{geroldinger88,geroldinger90b} proved that for a fixed structure
(of the above form) there is a
finite set of positive integers such that all the sets of lengths are almost arithmetical multiprogressions
(i.e., a certain union of arithmetic progressions
with the same difference from which some elements at the `beginning'
and the `end' might be removed) with a difference that is an element
of this set and there is a global bound on the number
(and location) of the `removed' elements (see Section \ref{ssec_kn} for a precise definition).
To get an understanding of the differences appearing in this
description it is necessary to understand the minimal distance between
the elements of the sets of lengths of certain
submonoids (even for domains one has to consider submonoids).

These two very closely related themes suggest two pairs of questions.

\begin{itemize}
\item For a given type of class group what are the values of $d$
such that $d$-congruence half-factorial Krull monoids with this type of class group exist?  
Essentially equivalently, what are the minimal distances in sets of lengths appearing for Krull monoid with this type of class group?
\item For such a $d$, what are conditions on the monoid that
characterize that it is $d$-congruence half-factorial? Essentially
equivalently, which monoids do yield a specific minimal distance? 
\end{itemize}

Since the introduction of these ideas, both these questions were
investigated by various researchers (for recent contributions see, e.g., \cite{changetal07,gaogeroldinger00,geroldingerhamidoune02,geroldingerzhongDast,geroldingerzhongCHAR,WAS4,zhongRJ}). The investigations so far mainly focused on the first type of questions,
due to the fact that a (partial) solution to it is a precondition for
even beginning to consider the second one; see \cite{geroldingerzhongDast,WAS16} for some initial results. 
Moreover, we point out that it was typical -- we do so as well  -- to focus on (relatively) large $d$; mainly, since they are the more interesting ones in understanding the arithmetic and since they are more relevant in applications, for example, to the problem of giving arithmetic characterizations of the class group (see, e.g., \cite{geroldingerzhongCHAR,WASchar,WAScharR2,zhongRJ}).

By transfer results -- a first version in the number
theoretic context is due to Narkiewicz \cite{narkiewicz79}, later
developments are mainly due to Geroldinger and Halter-Koch, we refer
to their monograph \cite[Section 3.2]{geroldingerhalterkochBOOK} for an
overview -- it is known that all these questions depend only on the
distribution of prime divisors or prime ideals (in a suitable sense) over the classes, and that they can be studied in the associated block monoid, that is, a monoid of zero-sum sequences over the class group.

The investigations of this paper are restricted to the case where the
class group of the underlying monoid is finite cyclic, which arguably is a central case 
that already received considerable attention.
While our results do not provide a complete answer to the two
types of questions at hand -- we present some arguments why we consider
it as highly unlikely that such a complete answer will be found in the
foreseeable future -- we present results that go significantly beyond
what was previously known.
In particular, this allows us to explore certain phenomena that
essentially were `invisible' in all situations considered so far.
In the following section, we informally discuss part of our results and the context.

\section{Overview of results and methods}

As mentioned in the Introduction, we study several closely linked questions that are not exactly identical, 
yet to a considerable extent reduce to the same core problem. We start by discussing our contribution to the core problem and then some applications of these results.

We only recall  precise definitions in the next section (see in particular Section \ref{ssec_kn}).
This core problem is to understand for a finite abelian group $G$ with $|G| \ge 3$ -- in this paper we focus on cyclic groups  -- the set of minimal distances  $\Dast{G}$ and associated inverse problems. The restriction $|G| \ge 3$ is due to the fact that by a well-known result, an early version is due to Carlitz \cite{carlitz60}, for $|G|\le 2$ one only gets half-factorial structures where the problems that concern us here do not arise.

This set $\Dast{G}$ is a finite set of positive integers, namely the integers $\min \Delta (G_0)$ where $G_0 \subset G$ is a subset, fulfilling a certain non-degeneracy condition. And, $\Delta(G_0)$ is the set of successive distances of the monoid of zero-sum sequences over $G_0$, i.e., distances between successive elements in the sets of lengths of these sequences.
By the associated \emph{inverse problem} we mean the problem of determining, for some element $d \in \Dast{G}$, the structure of all sets $G_0\subset G$ such that $\min \Delta(G_0)=d$.

As is common we focus on the problem for large elements of $\Dast{G}$; where large is to be understood as still relatively large in comparison to $\max \Dast{G}$.

First, we briefly recall what was known on this problem;
needless to say, this is not intended as a historical survey, we
merely wish to provide some context for our results. We focus on the case that $G$ is finite cyclic.

By a very recent result of Geroldinger and Zhong \cite{geroldingerzhongDast} it is known that for $G$ a finite abelian group, with $|G| \ge 3$,  
\[\max \Dast{G} =  \max \{\exp(G)-2, \ro(G) - 1\}\]
where $\exp(G)$ and $\ro(G)$ denote the exponent and the rank of $G$, respectively (for definitions see 
the subsequent section). 
In the other direction, it is known that $\min \Dast{G}=\mD{G}=1$ for $|G|\ge 3$ (see \cite[Theorem 6.7.1]{geroldingerhalterkochBOOK}).

If $G$ is a finite cyclic group, by a result of Geroldinger and Hamidoune
\cite{geroldingerhamidoune02}
(for earlier results see \cite{gaogeroldinger00} and \cite{geroldinger90}),
we have
\begin{equation}
\label{GeroHami}
\max \Dast{G} = |G|-2 
\quad \text{ and } \quad 
\max (\Dast{G}\setminus \{|G|-2\}) = \left\lfloor \frac{|G|-2}{2} \right\rfloor.
\end{equation}
That is, also the second largest element is known; for results along these lines for 
more general groups we refer to \cite{geroldingerzhongDast,geroldingerzhongCHAR,WASchar}.
Moreover, the structure of all sets corresponding to the maximal
element $|G|-2$ is known; this was proved in \cite{WAS16} for a larger
class of groups, not only  cyclic ones.

There are also results showing that certain numbers are elements of
$\Dast{G}$ that are all based on the fact that the situation where only
two non-zero classes contain prime ideals is well understood due to
the works \cite{geroldinger90b} and \cite{changetal07} (see
Theorem \ref{aux_thm_2elements} and the discussion there).

Conversely, for cyclic groups of prime power order, yet not in the
general case, these results were applied (see \cite{geroldinger90b}) to obtain `upper bounds' for
$\Dast{G}$ via reducing to the two-elements case and using a certain
divisibility property; see equality  \eqref{eq_min=gcd} for this property 
and the discussion below for further details.

Here, for general finite cyclic groups, of sufficiently large order,  we determine all the elements of $\Dast{G}$ of size at least $|G|/10$. 
Thus, we considerably improve the above-mentioned results \eqref{GeroHami}.

For illustration we state a weakened version of our main result
going only down to $|G|/5$; for stating our actual result (Theorem \ref{main_thm_direct}),
we use some specialized notation, which we do not want to state
right away.

\begin{theorem}
\label{thm_directweak}
Let $G$ be a finite cyclic group of order at least $250$. 
We have
\begin{equation}
\begin{split}
\Dast{G} \cap \N_{\ge |G|/5}  =
\N \cap \biggl \{ |G|- & 2, \frac{|G|-2}{2}, \frac{|G|-3}{2}, \frac{|G|-4}{2},
\frac{|G|-4}{3},   \\
 &  \frac{|G|-6}{3},\frac{|G|-4}{4},\frac{|G|-5}{4},\frac{|G|-6}{4},
\frac{|G|-8}{4} \biggr \}.
\end{split}
\end{equation}
\end{theorem}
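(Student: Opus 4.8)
The strategy is to translate the problem about $\Dast{G}$ into a combinatorial problem about subsets $G_0 \subseteq G$ with the non-degeneracy condition, and then to split the analysis into an upper-bound part (every element of $\Dast{G}$ in the stated range lies in the displayed list) and a lower-bound part (every integer in the displayed list that is a positive integer actually occurs). The lower-bound part should be the easier direction: for each of the candidate values $\frac{|G|-k}{m}$ one exhibits an explicit witness set $G_0$ — typically something like $\{1, a\}$ or a small union of short arithmetic progressions near $0$ — and computes $\mD{G_0}$ using the well-understood two-element case (Theorem~\ref{aux_thm_2elements} and the divisibility relation \eqref{eq_min=gcd}); here one leans on the result that when only two non-zero classes are involved, $\min\Delta$ is a gcd of the relevant parameters, so the values of the form $(|G|-k)/m$ arise by choosing the two coordinates appropriately.

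The upper-bound direction is where the real work lies. Here I would argue by contradiction: suppose $G_0 \subseteq G$ is non-degenerate with $\mD{G_0} = d \ge |G|/5$, and show $d$ must be one of the listed values. The key structural input is that a large value of $\mD{G_0}$ forces $G_0$ to be \emph{small and highly structured} — intuitively, $G_0$ must be close to a union of very few cosets or a short progression, because a "spread-out" $G_0$ produces many short differences in its sets of lengths and hence a small $\mD{G_0}$. One makes this precise by taking a zero-sum sequence $U$ over $G_0$ that is a minimal zero-sum sequence of a convenient form, producing a second factorization of $U^k$ (for small $k$) whose length differs from $k|U|/\text{(something)}$ by a controlled amount, thereby bounding $d$ from above in terms of $|G_0|$ and the lengths of atoms over $G_0$. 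Combining such inequalities should force $|G_0| \le 4$ or so, after which one can enumerate the possible configurations: $|G_0|=2$ gives the "$/1$, $/2$, $/3$, $/4$" families via \eqref{eq_min=gcd}, and $|G_0| \in \{3,4\}$ gives the remaining mixed cases, each handled by a direct (if somewhat intricate) computation of $\Delta(G_0)$.

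Concretely, the steps in order: (1) reduce to the block monoid $\mathcal{B}(G_0)$ and recall that $\mD{G_0} \mid \gcd(\{\,\ell - k : \ell, k \in \Lo(B)\,\})$ for every $B$, so it suffices to find, for a putative large $d$, enough sequences $B$ with small non-trivial length differences; (2) prove the size bound: if $\mD{G_0} \ge |G|/5$ then $|G_0|$ is bounded by an absolute constant (this is the combinatorial heart, and I expect to use additive-combinatorics estimates in $\Z/|G|\Z$ together with the Davenport-constant-type bounds on atom lengths, i.e. every atom over $G_0$ has length at most roughly $|G|$ but the "generic" atom over a spread-out set is much shorter); (3) for each admissible $G_0$ of bounded size, compute $\mD{G_0}$ exactly — for $|G_0|=2$ via the known formula, for $|G_0|=3,4$ case-by-case, using that we only care about outcomes $\ge |G|/5$, which severely restricts the parameters; (4) collect the surviving values and check they match the displayed set, using $|G| \ge 250$ to rule out small-order coincidences and to guarantee the various fractions are distinct when they are integers.

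The main obstacle is step~(2), the passage from "$\mD{G_0}$ large" to "$|G_0|$ small": one needs a clean quantitative statement that any $G_0$ which is not essentially a subset of two cosets (or a short progression) admits a pair of atoms whose lengths are close, and making the constant work down to the threshold $|G|/5$ (or ultimately $|G|/10$ for the full theorem) is delicate. A secondary difficulty is that the case $|G_0|=3$ and $|G_0|=4$ analyses are not purely formal — the set $\Delta(G_0)$ for three or four generators can behave irregularly — so one must isolate exactly which three- and four-element sets can produce a minimal distance as large as a fifth of $|G|$, and I anticipate this forces those sets to be contained in a union of two cosets plus one extra element, bringing us back under the umbrella of the two-element machinery with a bounded perturbation.
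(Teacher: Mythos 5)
There is a genuine gap at the step you yourself flag as the heart of the matter, namely step (2): the passage from ``$\mD{G_0}\ge |G|/5$'' to ``$|G_0|$ is bounded by an absolute constant''. The heuristic you offer (a spread-out $G_0$ yields many atoms of comparable length, hence small distances) does not survive contact with the actual obstruction: finite cyclic groups contain large (weakly) half-factorial sets, e.g.\ subsets of $\{de\colon d\mid |G|\}$, so a set can be as spread out as you like without producing any short nonzero length differences at all. What is really needed is a dichotomy: either $G_0$ is close to weakly half-factorial, in which case its minimal distance is provably small (in the paper this is the large-cross-number bound $\min\Delta(G_0)<\log|G|$, Theorem \ref{thm_boundlcn}), or $G_0$ contains a non-half-factorial pair $G_2$, in which case one does not bound $|G_0|$ at all but instead uses $\mD{G_0}\mid\mD{G_2}$ together with the two-element formula (Theorem \ref{aux_thm_2elements}, after reducing to equal orders via Lemma \ref{aux_lem_higherorder}) to place $\mD{G_0}$ in $\Hud{m}$ for some $m\mid |G|$, and then enumerates the few admissible values $\ge|G|/5$. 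This is exactly Theorem \ref{prop_2subset} plus Lemma \ref{main_lem_ul} and the computation in the proof of Theorem \ref{main_thm_direct}; nothing in your sketch supplies a substitute for that dichotomy, and no additive-combinatorial or Davenport-constant estimate of the kind you invoke is known to give a size bound on $G_0$ directly. Note also that your route, even if repaired, is more expensive than needed: bounding $|G_0|$ and then computing $\mD{G_0}$ for all sets of size $3,4,\dots$ is essentially the inverse problem (the paper's Theorem \ref{main_thm_inverse}, requiring Theorem \ref{2maximal} and Lemmas \ref{specialdivisors}, \ref{smalldetailed}), whereas the direct statement only needs the existence of one bad pair and a divisibility argument.

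A secondary inaccuracy is in the lower-bound direction. The two-element machinery alone produces only values of the form $(m-c_1-c_2)/(c_1c_2)$ with $m\mid |G|$; this covers $|G|-2$, $(|G|-3)/2$, $(|G|-4)/2$, $(|G|-4)/3$, $(|G|-5)/4$, $(|G|-6)/3$, $(|G|-6)/4$, $(|G|-8)/4$, but \emph{not} $(|G|-2)/2$ and $(|G|-4)/4$. These arise only as greatest common divisors attached to three-element witnesses containing an element of smaller order, e.g.\ $\{e,-e,(|G|/2)e\}$ for $(|G|-2)/2$; this is precisely the gcd structure encoded in $\In{m}$ and realized by Proposition \ref{prop_2maximal}, and its verification (that adding the third element divides the distance by exactly $2$ and no more) is itself a nontrivial computation via Lemma \ref{aux_lem_geroldinger}. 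So ``exhibit $\{1,a\}$ and compute via the two-element case'' is not sufficient to produce the full displayed list, and your plan should be amended to construct and analyze these mixed-order witnesses explicitly.
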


While in retrospect there is a clear explanation why exactly these
values and no others appear, the precise structure is somewhat subtle.

The condition that $|G| \ge 250$ should be essentially purely technical
and the value $250$ is chosen generously -- if one wished to be
precise regarding this bound one would have to compute a bound for each
value (see Remark \ref{rem_sizen0}). 
It is  clear that one needs some bound to avoid degenerate cases,
yet our bound is admittedly larger than the one necessitated by this
effect. It stems from the fact that we have a `rough' description for all
values of $\Dast{G}$ as small as $(2 |G|^2)^{1/3}$ (cf. Theorems \ref{2maximal}), and we need that the range in which we want to make
precise statements is `above' this value.
While we do not know how to significantly improve the $(2 |G|^2)^{1/3}$
in general, we point out that for a specific value of $|G|$ this bound
is improvable by a direct calculation and, if needed, the methods
presented in this paper are strong enough to allow to reduce the bound
to the threshold where the result itself becomes unfeasible -- yet
doing so is likely very
tedious.

Moreover, we show that in case the order of $G$ is a prime power, this `rough' description can be turned 
into a precise one in the full range, i.e., we obtain an exact description of all elements of $\Dast{G}$ of 
size at least $(2|G|^2)^{1/3}$ (see Theorem \ref{thm_pgroups}), improving on the above-mentioned `bounds'; 
the result is particularly interesting due to the fact that as in the general case, but even more visibly, one gets 
that certain yet not all divisors of elements whose presence in $\Dast{G}$ was already known are contained in $\Dast{G}$.

The above-mentioned results give a (partial) answer to the first type
of questions, i.e., what are the large orders for which congruence
half-factorial Krull monoids exist and what are
the (large) differences in their sets of lengths.

Regarding the second type of questions, we provide complete answers for
all the values listed in Theorem \ref{thm_directweak}
and some selected additional values for general finite cyclic $G$ and we provide answers for
considerably more values in the case where the cardinality of $G$ is a prime number. 
Here is the result we obtain in this special case and  below we give a partial version of the result in the general case.

\begin{theorem}
\label{thm_caseprime}
Let $H$ be a non-half-factorial Krull monoid with finite cyclic class group $G$ and suppose the order of $G$ is prime. 
Let $d \ge (2|G|^2)^{1/3}$. The monoid $H$ is $d$-congruence half-factorial if and only if at most two non-zero classes, 
say $g_1,g_2$, contain prime divisors and these two classes satisfy the property that there are two integers $c_1,c_2 \in \N$
such that $c_2g_1 + c_1 g_2 = 0$ and  $dc_1c_2$ divides $|G|-c_1- c_2$.
\end{theorem}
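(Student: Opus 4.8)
The plan is to use the standard transfer machinery to reduce the statement to a statement about $\Delta(G_0)$ for subsets $G_0 \subseteq G$, and then to combine the ``rough'' classification (Theorem \ref{2maximal}) with the explicit two-element analysis coming from \cite{geroldinger90b} and \cite{changetal07} (our Theorem \ref{aux_thm_2elements}). Recall that $H$ is $d$-congruence half-factorial if and only if $d$ divides every element of $\Delta(H)$, and by the transfer results referenced in the excerpt this is equivalent to $d \mid \delta$ for every $\delta \in \Delta(G_0)$, where $G_0$ is the set of non-zero classes containing prime divisors (the zero class plays no role for these questions). Since $H$ is not half-factorial, $\Delta(G_0) \neq \emptyset$, so this is in turn equivalent to $d \mid \min\Delta(G_0)$ together with $d \mid \gcd \Delta(G_0)$; but in fact $\min\Delta(G_0) \mid \delta$ for all $\delta \in \Delta(G_0)$ is not automatic, so more precisely the condition is $d \mid \gcd\Delta(G_0)$, and one must check that this equals $d \mid \min\Delta(G_0)$ in the relevant range. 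We are assuming $d \ge (2|G|^2)^{1/3}$, so $d$ is ``large'' in the sense of Theorem \ref{2maximal}.

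The first main step is to show that if $H$ is $d$-congruence half-factorial with $d$ in this range, then $|G_0| \le 2$. Suppose $|G_0| \ge 3$. Then $d \mid \min\Delta(G_0) \in \Dast{G}$ and $\min\Delta(G_0) \ge d \ge (2|G|^2)^{1/3}$, so $\min\Delta(G_0)$ is one of the values described by the rough classification Theorem \ref{2maximal}. Here the hypothesis that $|G|$ is prime is decisive: I would show that for $|G|$ prime, the sets $G_0$ realizing these large values of $\min\Delta(G_0)$ are governed by a rigid structure (essentially $G_0$ sits inside an arithmetic-progression-like configuration generated by a single element, forced because the cyclic group has no proper nontrivial subgroups), and that such $G_0$ with $|G_0| \ge 3$ produce in $\Delta(G_0)$ an element strictly smaller than $\min\Delta(G_0)$ — indeed one typically finds distances like $1$ once a third independent-direction generator enters — so $d$ cannot divide all of $\Delta(G_0)$. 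The upshot is that $d$-congruence half-factoriality with $d \ge (2|G|^2)^{1/3}$ forces $|G_0| \le 2$. The case $|G_0| \le 1$ gives a half-factorial monoid, excluded by hypothesis, so exactly $|G_0| = 2$; write $G_0 = \{g_1, g_2\}$ with $g_1, g_2$ non-zero.

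The second main step is the two-element computation. For $G_0 = \{g_1,g_2\}$ with $g_1,g_2$ non-zero in cyclic $G$, Theorem \ref{aux_thm_2elements} (from \cite{geroldinger90b,changetal07}) gives $\min\Delta(G_0)$ — and indeed $\gcd\Delta(G_0)$ — explicitly: writing $c_1, c_2 \in \N$ minimal with $c_2 g_1 + c_1 g_2 = 0$ (so that $c_1 g_2, c_2 g_1$ are the orders-related coefficients; when $|G|$ is prime each $g_i$ generates $G$ so there is a unique such minimal pair with $c_1 + c_2 \le |G|$), one has $\min\Delta(G_0) = \gcd\Delta(G_0) = \frac{|G| - c_1 - c_2}{c_1 c_2} \cdot c_1 c_2 / \gcd(\dots)$ — more precisely the known formula yields that $c_1 c_2 \mid \min\Delta(G_0)$ pattern, and $d \mid \min\Delta(G_0)$ becomes exactly $d c_1 c_2 \mid |G| - c_1 - c_2$ after unwinding. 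Here I would be careful to use the precise statement of Theorem \ref{aux_thm_2elements} rather than my reconstruction of it; the point is that for two elements everything is known in closed form, and $d$-congruence half-factoriality is then a clean divisibility condition. Combining: $H$ is $d$-congruence half-factorial iff $|G_0| = 2$, say $G_0 = \{g_1,g_2\}$, and the minimal relation coefficients $c_1, c_2$ satisfy $d c_1 c_2 \mid |G| - c_1 - c_2$, which is precisely the asserted condition. Conversely, given such $g_1, g_2, c_1, c_2$, the same formula shows $\Delta(G_0)$ consists of multiples of $d$, so $H$ is $d$-congruence half-factorial; and one checks the non-degeneracy (that $\Delta(G_0) \neq \emptyset$, i.e. $H$ genuinely non-half-factorial) is equivalent to $|G| - c_1 - c_2 > 0$, i.e. $\{g_1,g_2\}$ is not half-factorial, which is subsumed once $dc_1c_2 \mid |G|-c_1-c_2$ with the left side positive.

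The main obstacle I anticipate is the first step — ruling out $|G_0| \ge 3$. The rough classification Theorem \ref{2maximal} describes the possible \emph{values} of large $\min\Delta(G_0)$ but I will need the accompanying structural information about which $G_0$ achieve them, and then an argument that any such $G_0$ with at least three elements necessarily has $\gcd\Delta(G_0)$ strictly smaller than $\min\Delta(G_0)$ divided by $d$ — equivalently, that $d$ fails to divide some element of $\Delta(G_0)$. For $|G|$ prime this should be cleaner than in general because the structural descriptions collapse (no intermediate subgroups), but pinning down exactly why three or more elements is incompatible with all of $\Delta(G_0)$ being divisible by such a large $d$ is where the real work lies; I expect to exhibit, inside any three-element $G_0$ of the relevant shape, two factorizations of a common element with length difference $1$ (or at any rate a small divisor-of-something that $d$ cannot match), using the prime-order hypothesis to control the coefficient arithmetic.
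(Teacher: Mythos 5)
Your overall route coincides with the paper's: pass to the block monoid over the set $G_0$ of classes containing prime divisors, use that $d$-congruence half-factoriality amounts to $d \mid \min\Delta(G_0)$, force that there are at most two non-zero classes, and finish with the two-element formula of Theorem \ref{aux_thm_2elements} together with Proposition \ref{prop_2maximal} for sufficiency (the paper packages this as the proof of Theorem \ref{char_thm} with Theorem \ref{main_thm_prime} in place of Theorem \ref{main_thm_inverse}, and Theorem \ref{main_thm_prime} is exactly Theorem \ref{prop_2subset} plus Theorem \ref{2maximal} plus Proposition \ref{prop_2maximal}). The genuine gap is that your pivotal step --- excluding three or more non-zero classes --- is only announced, not proved: you call it ``where the real work lies'' and sketch a hunt for small distances, and the sketch as phrased is incoherent, since no element of $\Delta(G_0)$ can be ``strictly smaller than $\min\Delta(G_0)$''. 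Moreover, the work you anticipate is unnecessary: since $H$ is not half-factorial, $G_0$ must contain at least two non-zero classes (a subset with at most one non-zero class is half-factorial), and since $|G|$ is prime these are generating elements; as $d\mid\min\Delta(G_0)$ gives $\min\Delta(G_0)\ge d\ge (2|G|^2)^{1/3}$, the ``moreover'' part of Theorem \ref{2maximal} --- which you quote only for its list of attainable values --- states outright that $G_0$ contains \emph{exactly} two generating elements and records the relation between them, which for prime order is precisely ``at most two non-zero classes''. So the step you flag as the main obstacle is already contained in the results you cite; your plan to re-derive it by exhibiting distance-one phenomena inside three-element sets is neither needed nor how the argument goes.

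Two further corrections. Your hedge that $\min\Delta(G_0)\mid\delta$ for all $\delta\in\Delta(G_0)$ ``is not automatic'' is wrong: $\min\Delta=\gcd\Delta$ holds for every atomic monoid, see \eqref{eq_min=gcd}, so $d\mid\gcd\Delta(G_0)$ and $d\mid\min\Delta(G_0)$ are the same condition with no range check. And the two-element formula you half-remember should read: for the pair $c_1,c_2$ furnished by Theorem \ref{aux_thm_2elements} one has $\min\Delta(\{g_1,g_2\})=(|G|-c_1-c_2)/(c_1c_2)$ and $c_1g_1+c_2g_2=0$ (the statement's relation up to swapping the labels of $g_1,g_2$), so that $d\mid\min\Delta(\{g_1,g_2\})$ is literally $dc_1c_2\mid |G|-c_1-c_2$; there is no ``$c_1c_2\mid\min\Delta(G_0)$ pattern''. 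The converse direction uses the Remark following Theorem \ref{aux_thm_2elements} (or Proposition \ref{prop_2maximal}) plus the fact that the zero class is irrelevant, and the realization of a Krull monoid with prescribed $G_0$ (Theorem \ref{thm_KM_ex}) needs $G_0$ generating, which is automatic for prime order. With these repairs the proposal becomes the paper's proof; as written, it is an outline whose central step is missing.
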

Note that the last condition implies  that $(|G|-c_1- c_2)/(c_1c_2)$ is integral.  

The fact that structures fulfilling the respective conditions on the classes are $d$-congruence half-factorial was already known by the work of Chang, Chapman and Smith \cite{changetal07}, and moreover they showed that under the assumption that only two non-zero classes contain prime divisors these are all (indeed they proved this with a weaker condition on $d$, cf.~Theorem \ref{2maximal} for details).

We point out that for the case of general $|G|$ to obtain such results,
even only a complete characterization for all values $d$
that are at least $f(|G|)$  for any $f$ that is $o(|G|)$, presently seems
completely unfeasible.
Doing so would require -- yet most likely would not suffice -- to first
obtain a complete understanding of half-factorial sets of all finite cyclic
groups. To see that this is the case it suffices to recall that 
for $G_0'$ a half-factorial subset of a finite abelian group  $G'$ and $G_0''$ a subset of a finite abelian group group $G''$ 
the set of distances of $(G_0' \cup G_0'') \subset G' \oplus G''$ is equal to the set of distances of $G_0''$.  

Now, if we have $|G|= mn$ with co-prime $m$ and $n$ the cyclic group $G$ is  the direct 
sum of a cyclic group of order $m$ and a cyclic group of order $n$.
For $m$ arbitrary and  $n$ large relative to $m$, the union 
of  a set with minimal distance $n-2$ from the cyclic group of order $n$ with a half-factorial 
set of the group of order $m$ would still have minimal distance $n-2$, which would 
exceed $f(mn)$ for sufficiently large $n$. Thus, in order to determine all such sets we would need to determine all half-factorial subsets of a cyclic group of order $m$ for arbitrary $m$.   

The problem of determining the half-factorial sets of finite cyclic
groups is studied since the mid-Seventies, yet the
answers obtained so far are not at all complete (see \cite{WAS12} for
partial results).
Thus, one is limited to a result addressing the problem for $d\ge c|G|$
for some $c<1$. The precise choice regarding $c$ we made, that is
 $1/5$, is somewhat arbitrary. 
 On the one hand, we wanted to choose it small enough to highlight certain interesting phenomena, and on the other hand, we did not want to choose
it too small as the technical difficulty increases rapidly. Our choice
is not the limit of our method.

Now, we only state the result for $d$ equal to $|G|-2$, $(|G|-2)/2$, $(|G|-3)/2$ and $(|G|-4)/2$.

\begin{theorem}
\label{thm_genprel}
Let $H$ be a non-half-factorial Krull monoid with finite cyclic class group of order $n \ge 250$. Then,
\begin{enumerate}
\item $H$ is $(n-2)$-congruence half-factorial if and only if exactly two non-zero classes, say $g_1$ and $g_2$, contain prime divisors, they are of order $n$, and they satisfy $g_1 = -g_2$,
\item $H$ is $(n-2)/2$-congruence half-factorial if and only  if exactly two non-zero classes of order $n$, say $g_1$ and $g_2$, contain prime divisors and they satisfy $g_1 = -g_2$ and the only other non-zero class that might contain prime divisors is the class of order $2$,
\item $H$ is $(n-3)/2$-congruence half-factorial if and only
if exactly two non-zero elements of the class group, say $g_1$ and $g_2$, contain prime divisors, they are of order $n$, and they satisfy $g_1=-2g_2$ (or $g_2 = -2g_1$),
\item $H$ is $(n-4)/2$-congruence half-factorial  if and only
if 
\begin{itemize}
\item a class $g_1$ of order $n$ contains prime divisors as does the class $-2g_1$, and the only other non-zero class that might contain prime divisors is $2g_1$, or
\item exactly three non-zero classes contain prime divisors, two are of order $n/2$, say $g_1$ and $g_2$, and satisfy $g_1 = -g_2$, and the third one is of order $2$, and $4 \nmid n$. 
\end{itemize}
\end{enumerate}
\end{theorem}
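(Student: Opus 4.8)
My plan is to pass to the associated block monoid, convert the congruence condition into a single divisibility constraint on $\mD{G_0}$, and then read off the four characterisations from the inverse description of the large elements of $\Dast{G}$.

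\emph{Transfer and reduction of the modulus.} By the transfer machinery recalled in the Introduction (see \cite[Section 3.2]{geroldingerhalterkochBOOK}) one has $\Delta(H) = \Delta(\mathcal{B}(G_0))$, where $G_0 \subseteq G$ is the set of non-zero classes containing a prime divisor of $H$ (the zero class being irrelevant), and $H$ is $d$-congruence half-factorial precisely when $d$ divides every element of $\Delta(\mathcal{B}(G_0))$; since $\mD{G_0}$ divides every element of $\Delta(G_0)$, this amounts to $d \mid \mD{G_0}$. As $H$ is not half-factorial, $\delta := \mD{G_0}$ is a positive integer, and, passing to a non-degenerate subset with the same minimal distance, $\delta \in \Dast{G}$; hence $\delta \le n-2$ by \eqref{GeroHami}, and, again by \eqref{GeroHami}, $\delta$ does not lie strictly between $\lfloor (n-2)/2\rfloor$ and $n-2$, so in particular $\delta \notin \{n-3,n-4\}$ for $n \ge 250$. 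Enumerating the multiples of each modulus that are at most $n-2$, one concludes: $\delta = n-2$ if $d = n-2$; $\delta \in \{(n-2)/2,\, n-2\}$ if $d = (n-2)/2$; and $\delta = d$ if $d \in \{(n-3)/2,\, (n-4)/2\}$ (with $n$ of the parity making $d$ an integer). Thus the statement reduces to the inverse problem of determining all non-half-factorial $G_0 \subseteq G$ with $\mD{G_0}$ equal to $n-2$, $(n-2)/2$, $(n-3)/2$, respectively $(n-4)/2$.

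\emph{The ``only if'' direction.} For $\delta = n-2$ this is the known structure of the maximal element of $\Dast{G}$ proved in \cite{WAS16}: $G_0 = \{g_1,g_2\}$ with $g_1 = -g_2$ of order $n$ — which is assertion (i) and the extremal case of (ii). For $\delta$ equal to one of $(n-2)/2$, $(n-3)/2$, $(n-4)/2$ (all of which occur in Theorem \ref{thm_directweak}) I would invoke the inverse counterpart of the main direct result (Theorem \ref{main_thm_direct}) at these still-large values; its mechanism is that the rough description available for all $\delta \ge (2n^2)^{1/3}$ (Theorem \ref{2maximal}) already forces $G_0$ to be concentrated, up to at most one further class of small order, on a two-element set $\{g_1,g_2\}$ obeying a tightly constrained pair of coefficient relations, and a finite analysis of those coefficients and of $\ord(g_1),\ord(g_2)$ then isolates precisely the configurations of (ii)--(iv): for $(n-2)/2$, $g_1 = -g_2$ of order $n$, possibly with the class of order $2$; for $(n-3)/2$, $g_1 = -2g_2$ (or $g_2 = -2g_1$) of order $n$, with nothing else; and for $(n-4)/2$, either $g_1$ of order $n$ together with $-2g_1$ and possibly $2g_1$, or the three-element set $\{g_1,-g_1,h\}$ with $\ord(g_1) = n/2$, $\ord(h) = 2$ and $4 \nmid n$.

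\emph{The ``if'' direction, and the main obstacle.} Since $d$ divides $n-2$ in each case, for the converse it suffices to compute $\mD{G_0}$ exactly for each listed $G_0$. For sets with at most two non-zero classes this is immediate from the explicit description of the two-element case (Theorems \ref{2maximal} and \ref{aux_thm_2elements}): one obtains $n-2$ for $g_1 = -g_2$ of order $n$, $(n-3)/2$ for $g_1 = -2g_2$ of order $n$ (necessarily $n$ odd), and $(n-4)/2$ for $\{g_1,-2g_1\}$ with $g_1$ of order $n$ ($n$ even); appending $2g_1$ is then checked not to introduce a smaller distance. For the three-element family in (iv) one observes that, since $n/2$ is odd, every zero-sum sequence over $G_0$ contains an even number of copies of $h$ and so factors uniquely as a power of the atom $h^2$ times a zero-sum sequence over $\{g_1,-g_1\}$; hence $\Delta(\mathcal{B}(G_0)) = \Delta(\mathcal{B}(\{g_1,-g_1\}))$, and $\{g_1,-g_1\}$ is a pair of opposite elements of order $n/2$, giving $(n-4)/2$ — whereas if $4 \mid n$ the atom $h g_1^{n/4}$ appears and produces the strictly smaller distance $(n-4)/4$, which is exactly why $4 \nmid n$ is needed. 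The genuine work lies in the ``only if'' direction for the three values below $n-2$: one has to sharpen the rough description of $\Dast{G}$ into an exact one throughout $[(2n^2)^{1/3},\,(n-2)/2]$ while simultaneously controlling the defining coefficient relations of the two dominant classes, their orders, and whether a further class of small order may be present — the interplay of such an extra class of order $2$ with the divisibility conditions governing the minimal distance of two-element sets being precisely what produces the dichotomy and the parity constraint $4 \nmid n$ in (iv).
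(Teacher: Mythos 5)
Your overall route is the same as the paper's: transfer to the block monoid, translate $d$-congruence half-factoriality into $d \mid \mD{G_0}$, enumerate the multiples of $d$ that can occur in $\Dast{G}$, and then quote the inverse characterization of sets with these large minimal distances (the paper packages exactly this as Theorem \ref{char_thm}, which rests on Theorem \ref{main_thm_inverse}). Your ``if'' direction computations and the parity analysis behind $4\nmid n$ in (iv) are correct.

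There is, however, a genuine gap, and it is precisely the subtlety the paper flags when it says the sets in Theorem \ref{main_thm_inverse}(iv) ``do not match exactly'' those in statement (iv): you never use the fact that the set $G_0$ of classes containing prime divisors must \emph{generate} the class group (Theorem \ref{thm_KM_ex}; equivalently, the classes of the prime divisors of a divisor theory generate $\cc(H)$). Your reduction asserts that the statement ``reduces to the inverse problem of determining all non-half-factorial $G_0\subset G$ with $\mD{G_0}$ equal to'' the given values, but at the level of arbitrary subsets this reduction produces a characterization that is strictly larger than the one claimed in (iv): the two-element set $\{g_1,-g_1\}$ with $\ord g_1=n/2$ (and no order-$2$ class) also has minimal distance $(n-4)/2$, and in the three-element configuration nothing at the subset level forces the order-$2$ class to actually be present rather than optional. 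You silently drop the first case and silently require the third class in the second, with no argument. The missing ingredient is that for $n$ even the set $\{g_1,-g_1\}\cup\{0\}$ with $\ord g_1=n/2$ generates only a subgroup of index $2$, so it cannot be the set of classes containing prime divisors of a Krull monoid with cyclic class group of order $n$, and in the $4\nmid n$ configuration the order-$2$ class must lie in $G_0$ for the same reason. Without invoking this (and, if one also wants the realizability of the listed configurations, Theorem \ref{thm_KM_ex} together with Lemma \ref{aux_generating}, as in Section \ref{sec_appCHF}), the ``only if'' direction of (iv) does not follow from the inverse problem alone, and indeed would appear to be false as you have set it up.
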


For the full result see Theorem \ref{char_thm}; yet, this is phrased in an abstract way, and for a more direct but slightly imprecise impression see Theorem \ref{main_thm_inverse} -- note that the sets do not match exactly in one case of point (iv) for reasons explained in Section \ref{sec_appCHF}.

In addition to groups of prime order, for other special types of cyclic groups -- such as those whose order is a prime-power, a product of two prime powers, or that has only `large' prime divisors -- various variants of our results could be
established more or less directly with the methods at hand. We do not
state too many of them explicitly; rather we try to present the approach
developed in this paper in as modular and general a form as seemed
feasible, in order to make future applications and modifications, if
needed, as simple as possible; we expect that certain variants will be
relevant to future investigations, it is however impossible to foresee,
which exact variant will be relevant.

We end by outlining the key points of our approach to this problem, and the general structure of the paper.

First, we recall some standard notions (see Section \ref{sec_prel}), and collect together and establish various, in part technical, results that are used throughout the paper (see Section \ref{sec_aux}). This includes some of the already mentioned results by Geroldinger \cite{geroldinger90b}, and by Chang, Chapman, and Smith \cite{changetal07, changetalother} as well as some results that can be seen as part of the recently introduced auxiliary framework of higher-order block monoids (see \cite{WAShigherorder}), generalizing the classical block monoid context. 
These last results are relevant for composite $n$, since in this case it is a significant problem that elements of various different orders exist, and to a certain extent they can be resolved using these tools.

Second, we establish the following key technical result which gives an at first seemingly weak conclusion, which however is
crucial in order to have the tools mentioned above at hand (for details on undefined notation see Section \ref{sec_prel}).

\begin{theorem}
\label{prop_2subset}
Let $G$ be a finite cyclic group, with $|G| \ge  3$, and let $G_0\subset G$
such that $\min \Delta(G_0)\ge \log |G|$. Then there exists a subset
$G_2\subset G_0$ of cardinality $2$ that is not half-factorial.
\end{theorem}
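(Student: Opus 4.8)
The plan is to argue by contraposition: assuming that every $2$-element subset of $G_0$ is half-factorial, I will show $\min \Delta(G_0)$ is small — indeed bounded by something like $\log|G|$ or less. The starting point is the well-understood classification of half-factorial $2$-element subsets of a cyclic group (cf.\ the discussion around Theorem~\ref{aux_thm_2elements}): for $\{g,h\}$ with $g,h$ non-zero, it is half-factorial essentially only when $g$ and $h$ are ``linked'' by a short relation, the prototypical case being $g = h$ (or one being a small multiple of $-$the other, but those are already \emph{not} half-factorial unless the multiple is trivial). So the hypothesis that \emph{all} pairs in $G_0$ are half-factorial forces a very rigid structure on $G_0$: writing $G_0 = \{g_1, \dots, g_k\}$ (discarding $0$, which does not affect $\Delta$), every pair $\{g_i, g_j\}$ must be half-factorial, which should pin down all the $g_i$ to lie in a single ``half-factorial configuration'' — for instance all equal, or all contained in a subset of the form $\{a, \dots\}$ governed by a single generator.

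Next I would invoke the multiplicative/divisibility structure of minimal distances. Recall the relation $\min\Delta(G_0) = \gcd \Delta(G_0)$ when $G_0$ is not half-factorial (equation~\eqref{eq_min=gcd}), together with the fact that $\min\Delta(G_0)$ divides $\min\Delta(G_0')$ whenever $G_0 \subseteq G_0'$ is a non-half-factorial subset contained in a larger one with the appropriate compatibility. The key point is: if $G_0$ contains no non-half-factorial $2$-subset but is itself non-half-factorial, the ``smallest'' non-half-factorial subsets inside it are of cardinality $\ge 3$, and for such subsets one has good upper bounds on $\min\Delta$. In particular, a non-half-factorial subset of cardinality $3$ of a cyclic group has $\min\Delta$ bounded in terms of the orders and coefficients of a defining relation, and because no sub-pair is half-factorial-violating, those coefficients are constrained. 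The core estimate I expect to need is that any minimal non-half-factorial $G_0$ with $|G_0| \ge 3$ and all proper subsets half-factorial satisfies $\min\Delta(G_0) < \log|G|$ — this is where the logarithm enters, presumably via a counting/pigeonhole argument on the multiplicities in a minimal zero-sum relation, or via bounding the cross number, since a minimal zero-sum sequence supported on such a $G_0$ cannot be too ``long'' relative to $\log$ of the group order without creating a violating pair.

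I would organize the argument as follows. \textbf{Step 1:} reduce to $0 \notin G_0$ and to $G_0$ itself being non-half-factorial (if $G_0$ is half-factorial, $\min\Delta(G_0) = 0 < \log|G|$, contradiction, so nothing to prove — wait, $\min$ of the empty set; handle this trivial case explicitly). \textbf{Step 2:} suppose for contradiction that no $2$-subset of $G_0$ is non-half-factorial; use the classification of half-factorial pairs to show that any two elements of $G_0$ are ``equivalent'' in a suitable sense, so $G_0$ is contained in a highly structured set. \textbf{Step 3:} in that structured situation, exhibit a small non-half-factorial subset — of cardinality $3$ if possible — and compute (or bound) its $\min\Delta$ explicitly, obtaining a value $< \log|G|$; then use the divisibility relation $\min\Delta(G_0) \mid \min\Delta$ of that subset, or rather the reverse containment, to bound $\min\Delta(G_0)$ itself. \textbf{Step 4:} conclude $\min\Delta(G_0) < \log|G|$, contradicting the hypothesis.

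The main obstacle I anticipate is \textbf{Step 3}: controlling $\min\Delta$ of the minimal non-half-factorial subsets that survive inside a $G_0$ all of whose pairs are half-factorial. The subtlety is that ``all pairs half-factorial'' does not immediately localize $G_0$ into, say, a subgroup of bounded index; the half-factorial pair condition is a condition modulo a cyclic relation and its iterates, so $G_0$ could a priori spread across several cosets in a way that still needs a genuine (if crude) bound. I would expect the logarithm to be somewhat wasteful here — the truth is probably a bound that is polynomial or even linear in a suitable parameter — but $\log|G|$ is a convenient robust choice that makes the later machinery (higher-order block monoids, the $(2|G|^2)^{1/3}$ threshold) apply cleanly, and proving just this weak bound should only require elementary manipulation of zero-sum relations plus the known structure of half-factorial pairs.
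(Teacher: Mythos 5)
Your contrapositive framing points in the same general direction as the paper (assume no bad pair, derive $\min\Delta(G_0)<\log|G|$), but the two steps that carry all the weight are missing, and the plan for Step~3 would in fact fail. First, the structural claim in Step~2 -- that pairwise half-factoriality pins $G_0$ into a single rigid configuration -- is not a formal consequence of the classification of half-factorial pairs; the correct statement is that $G_0$ must then be \emph{weakly half-factorial}, i.e.\ contained in $\{de \colon d\mid |G|\}$ for one generator $e$ (Lemma \ref{lem_struct_cycl45}), and proving that pairwise (weak) half-factoriality globalizes in this way is precisely the content of the paper's Lemma \ref{lem_2innonLCN}, whose proof is a genuine argument (an extremal choice of the generator $e$ plus a congruence analysis), not hand-waving about ``equivalence'' of elements. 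Second, and more seriously, Step~3 cannot be carried out as you describe: once all pairs of $G_0$ are half-factorial, $G_0$ is weakly half-factorial, and then by Proposition \ref{3elementshf} every $3$-element subset of $G_0$ is half-factorial as well, so there is no cardinality-$3$ non-half-factorial subset whose $\min\Delta$ you could compute and feed into the divisibility relation. Minimal non-half-factorial sets all of whose pairs (and triples) are half-factorial genuinely exist -- the paper's example $\{e,6e,10e,15e\}$ in a cyclic group of order $30$ -- so the non-half-factoriality can first appear only at cardinality $\ge 4$, where no explicit formula for $\min\Delta$ is available.

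This is exactly why the paper does not bound $\min\Delta$ through a small explicit subset at all: in the weakly half-factorial case it observes that $G_0$ is a large-cross-number set and invokes Theorem \ref{thm_boundlcn} (imported from \cite[Proposition 3.6]{WASchar}), which bounds $\min\Delta(G_0)<\log|G|$ for any non-half-factorial large-cross-number set via the cross-number estimates \eqref{star33}; in the remaining case Lemma \ref{lem_2innonLCN} produces the desired non-(weakly-)half-factorial pair directly. Your passing remark ``or via bounding the cross number'' gestures at the right mechanism, but as written the proposal supplies neither the globalization lemma nor the cross-number bound, and its concrete plan (a cardinality-$3$ witness with computable $\min\Delta$) is blocked by Proposition \ref{3elementshf} and the order-$30$ example. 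To repair the argument you would essentially have to reprove both Lemma \ref{lem_2innonLCN} and Theorem \ref{thm_boundlcn}, i.e.\ reconstruct the paper's actual proof.
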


It is our understanding, that the lack of such a result impeded earlier progress on the problems considered in this paper. 
Its proof is embedded into a more general analysis of the relation between weakly half-factorial sets, large-cross-number sets and half-factorial sets, for finite cyclic groups and more generally torsion groups. This is carried out in Section \ref{non-LCN}.

Then, after having applied all this machinery, a variety of results
are already established (in a conceptual way); however, to obtain
results of the announced strength we need to carry out various quite
particular investigations, which as said we did not carry out to the
absolute limit of what seemed manageable to us, but we rather tried to
get a balance between added insight and technical complexity.
Section \ref{sec_spec} contains preparatory results, in Lemma \ref{main_lem_ul} we collect the facts obtainable without these particular investigations, and the remainder of Section \ref{sec_proofs} is dedicated to obtain our results on $\Dast{G}$ and the associated inverse problem.

Finally, in Section \ref{sec_appCHF} we apply these results to obtain results on congruence half-factoriality. In part this application is, by known results, very direct; however, for other parts some work is required. And, in Section \ref{sec_nt} we give applications to quantitative questions of factorization theory.
More specifically, it is a classical problem in factorization theory, indeed one of the motivating problems (see \cite[Chapter 9]{narkiewicz04}) to obtain asymptotics on the number of (non-associated) elements of a ring of algebraic integers having some specific factorization property; an utmost classical example being that the element is prime, however in particular in view of the fact that factorizations are non-unique there are numerous variants.
Now, as mentioned in the Introduction, the sets of lengths of the elements of a  ring of algebraic integers are suitably generalized arithmetic progressions, in particular there is a finite number of `patterns' (the technical term is period) that describe up to small deviations all sets of lengths. An important goal in the present context is to count the number of elements corresponding to one fixed such `pattern'.
It is known that the order of the respective counting function is
\[
\frac{x}{(\log x)^{\alpha}}(\log \log x)^{\beta}
\]
where the real numbers $\alpha$ and $\beta$ depend only on the `pattern' and the class group.
While there is an abstract combinatorial description of these constants known (see Section \ref{sec_nt} for details), 
there are only very few and special cases in which that combinatorial  problem was solved and the numerical values of these constants are known (see \cite[Theorem 9.4.10]{geroldingerhalterkochBOOK} 
and \cite{WAS16}). Using our result, we can determine the value of $\alpha$ in several new cases.

\section{Preliminaries}
\label{sec_prel}

To fix notations and to provide some background, we briefly recall
some key notions of this paper. Our notation follows closely the one
frequently used in factorization theory; for an expansive
account see \cite{geroldingerhalterkochBOOK}, and for an introduction
to the aspects most relevant to the current paper see \cite{geroldinger_lecturenotes}.

\subsection{Generalities}

We denote by $\N$ and $\N_0$ the positive and non-negative integers, respectively. Moreover, we use for $a$ a real number notation like $\N_{>a}$ and $\N_{\ge a}$ with the obvious meaning. Occasionally, it is convenient to have the convention that  $\gcd \emptyset = \min \emptyset = 0$.
All intervals in this paper are intervals of integers, even if the end-points are non-integral.

\subsection{Monoids and factorizations}

In this paper, we say that $H$ is a \emph{monoid} if it is a commutative, cancellative semigroup with identity; we use
multiplicative notation for monoids and denote the identity element
simply by $1_H$ or just $1$ (if there is no ambiguity).
The classical example to have in mind is the multiplicative monoid of non-zero elements of a domain.
The monoid $H$ is called \emph{atomic} if each non-invertible element is the product of
(finitely) many irreducible elements (also called, atoms).
We denote the set of irreducibles of $H$ by $\ac(H)$, and the set of
invertibles by $H^{\times}$. Moreover, the monoid is called \emph{reduced} if
$H^{\times}=\{1\}$; we denote by $H_{\red}= H/H^{\times}$ the reduced
monoid associated to $H$. Recall that $H$ is atomic if and only if $H_{\red}$ is atomic.

A submonoid $H'\subset H$ is called \emph{divisor closed} if for each $a\in H'$
every $b \in H$ with $b \mid_H a$ is already
an element of $H'$; and it is called \emph{saturated} if for $a,b \in H'$
one has $a \mid_{H'} b$ if (and only if) $a\mid_H b$, in other words if $b=ac$ for some $c\in H$ then in fact $c\in H'$. If the monoid in question is clear we omit the subscript specifying in which monoid the divisibility relation holds.

For a set $P$, let $\fc(P)$ denote the free (abelian) monoid with basis $P$.
And, for $p\in P$ and $f\in \fc(P)$, let $\vo_p(f)\in \N_0$ denote the
$p$-adic valuation of $f$ (or the multiplicity of $p$ in $f$), so that
\[f = \prod_{p \in P} p^{\vo_p(f)};\]
of course, all but finitely many of the $\vo_p(f)$ are $0$.
To preserve certain helpful connotations, we frequently refer to
elements of $\fc(P)$ as \emph{sequences} over $P$; in particular, we do so if
$P$ is (a subset of) an abelian group (cf.~below). And, in the same
vein, we refer to $\sum_{p\in P}\vo_p(f)$ as the \emph{length} of $f$, which
we denote by $|f|$, and to the identity element as the empty
sequence.

Since we occasionally make use of it, we recall a formal framework for
studying factorizations. For a monoid $H$, let
\[\Zo(H) = \fc(\ac(H_{\red}))\]
denote the monoid of factorizations of $H$,
and let
\[\pi_H : \Zo(H) \to H_{\red}\] denote the factorization homomorphism,
i.e., the homomorphism defined via $\pi(a)=a$ for $a\in \ac(H_{\red})$.

For $a\in H$, let
\[\Zo_H(a)= \pi_H^{-1}(aH^{\times})\]
denote the set of factorizations of $a$ (in $H$). Again, we typically
omit explicit references to $H$. A monoid is \emph{factorial} if and only if
the set of factorizations of each element is a singleton; and it is
\emph{atomic} if and only if the set of factorizations of each element is
nonempty.
Moreover, let
\[\Lo_H(a) = \{|z|\colon z \in \Zo_H(a)\}\]
denote the \emph{set of lengths} of $a$.
Equivalently, for $a\in H\setminus H^{\times}$, the set $\Lo(a)$ is
the set of all $\ell \in \N$ such that there exist $u_1, \dots ,
u_{\ell} \in \ac(H)$ with $a = u_1\dots u_{\ell}$, i.e., $a$ has a
factorization into irreducibles of length $\ell$. For $a\in
H^{\times}$ we have $\Lo(a)=\{0\}$.

An important notion in factorization theory is that of a
\emph{transfer homomorphism}. A homomorphism $\theta: H \to B$ between
monoids is called a transfer homomorphism if
\begin{itemize}
\item the product set
$\theta(H)B^{\times}$ is equal to $B$ (in other words, $\theta$ is surjective up to invertible elements),
\item $\theta^{-1}(B^{\times}) = H^{\times}$, and 
\item for each $u\in H$ and $b_1,b_2\in B$ with $\theta(u) = b_1 b_2$
there exist $u_1,u_2\in H$ such that $u=u_1u_2$ and $\theta(u_i)$ and
$b_i$ are associated (i.e., are equal up to multiplication with an
invertible element) for $i=1$ and $2$.
\end{itemize}

It is easy to see that a transfer homomorphism $\theta: H \to B$ induces a transfer homomorphism of the respective associated reduced monoids, which we also denote by $\theta$.
A key property of a transfer homomorphism is that it also induces a
homomorphism of the factorization monoid $\Zo(H)$ and $\Zo(B)$, given by $u_1 \dots u_{\ell} \mapsto \theta(u_1) \dots \theta(u_{\ell})$; note that atoms are mapped to atoms. So, in particular $\Lo_H(a)=\Lo_B(\theta(a))$ for each $a\in H$.

Finally, we recall the definition of the \emph{set of distances} of a monoid.
First, for a set $L= \{\ell_1, \ell_2, \dots \}$ of integers with
$\ell_i < \ell_{i+1}$ for each index $i$, we let
\[
\Delta(L)= \{\ell_2 - \ell_1, \ell_3 - \ell_2, \dots \}
\]
denote the set of successive distances of $L$.
Now, for $a\in H$, we define $\Delta(a)= \Delta(\Lo (a))$, and let
\[
\Delta(H)=\bigcup_{a\in H} \Delta(a)
\]
be the set of (successive) distances of $H$. The minimal distance of the
monoid is the minimum of this set.

A monoid is called \emph{half-factorial} if $|\Lo(a)|=1$ for each $a\in H$, i.e., for each element every factorization of this element has the same length 
(yet it might still have several essentially different factorizations).
Equivalently, $H$ is half-factorial if and only if $\Delta(H)=\emptyset $.

\subsection{Abelian groups and zero-sum sequences}

We use additive notation for abelian groups. We denote by $\ord g \in \N \cup \{\infty \}$ the order of an element $g\in G$. Yet note that we almost exclusively deal with finite, or at least torsion, abelian groups, indeed most of the time cyclic ones, so that the order, in this paper, is essentially always finite. We denote by $\mathsf{r}(G)$ and $\mathsf{r}^{\ast}(G)$ the rank and the total rank, respectively, of an abelian group $G$; for a finite cyclic group with at least two elements -- the only case of actual relevance here -- this is of course $1$ and $\omega(|G|)$, the number of distinct prime divisors of the order of $G$, respectively.

Let $G$ be an abelian group, and $G_0 \subset G$.
Let $S\in \fc(G_0)$ be a sequence over $G_0$.
The notions of length and multiplicity were already mentioned. We recall
some more specific notions.
We call
\begin{itemize}
\item $\supp(S)= \{g \in G_0 \colon \vo_g(S) > 0\}$, the \emph{support} of $S$,
\item $\s(S)=\sum_{g\in G_0} \vo_g(S)g \in G$, the \emph{sum} of $S$,
\item $\Sigma(S)= \{\s(T) \colon 1 \neq T \mid S\}$, the set of \emph{subsequence sums} of $S$,
\item and assuming all elements appearing in $S$ have finite orders, we let
\[
\ko(S)=\sum_{g\in G_0} \frac{\vo_g(S)}{\ord g} \in \Q
\]
be the \emph{cross number} of $S$.
\end{itemize}

Moreover, if $G_0 \subset \langle e \rangle$ (the group generated by
$e$) and $e$ has finite order, then let
$\s_e$ denote the homomorphism from  $\fc(G_0)$ to $\N_0$ defined, for
$h \in G_0$, by $\s_e(h)= a$ where $a\in [1, \ord e]$ such that
$h= ae$.
Note that $\s(S)= \s_e(S)e$, in particular for each $B$ with $\s(B)=0$
one has
$\ord e \mid \s_e(B)$. Moreover, note that for $g$ another generating element of
$\langle e \rangle$ we have 
\begin{equation}
\label{eq_psigma}
\s_e(h) \equiv \s_e(g)\s_g(h) \pmod{\ord e}.
\end{equation}
The notation we use is slightly non-standard. In case an element $e$ is a priori fixed, the notation $\s_0$ was used for $\s_e$, however this is not sufficiently general for our purpose. Another notation that is used is $||\cdot ||_e$, called the $e$-norm of a sequence. However, we feel it has a slightly different connotation, and in our context it is also slightly less convenient on typographical grounds.

A sequence $S\in \fc(G_0)$ is  called a \emph{zero-sum sequence} if $\s(S)=
0$ and it is called \emph{zero-sum free} if $0\notin \Sigma(S)$. A zero-sum
sequence is called a \emph{minimal zero-sum sequence} if it is non-empty and
has no proper zero-sum subsequence.
Let
\[
\bc(G_0)= \{S\in \fc(G_0) \colon \s(S) = 0\}
\]
denote the set of zero-sum sequences over $G_0$; evidently, $\bc(G_0)$
is a submonoid of $\fc(G_0)$. And, let $\ac(G_0)$ and
$\ac^{\ast}(G_0)$ denote the set
of minimal zero-sum sequences and zero-sum free sequences over $G_0$,
respectively; note that $\ac(G_0)$ is the set of irreducible elements
of $\bc(G_0)$.

If $G_0$ consists of torsion elements only, we denote by
\[
\Ko(G_0)=\sup\{\ko(A)\colon A\in \ac(G_0)\}
\]
the \emph{cross number} of $G_0$ and by
\[
\ko(G_0)=\sup\{\ko(A)\colon A\in \ac^{\ast}(G_0)\}
\]
the \emph{little cross number} of $G$.
We recall (see for example \cite[Theorem 5.5.5]{geroldingerhalterkochBOOK}) that, for $G$ a finite abelian group, we have
the following two inequalities:
\begin{equation}
\label{star33}
\ko(G) \le \log |G|\quad \text{and} \quad \Ko(G)\le \frac{1}{q} +
\log|G|,
\end{equation}
where $q$ is the smallest prime divisor of $|G|$.

\subsection{Krull and block monoids}
\label{ssec_km}

A monoid $H$ is called a \emph{Krull monoid} if there exists a free monoid $\fc(P)$
and a homomorphism $\varphi : H \to \fc(P)$ such that
\[a \mid_H b \text{ if and only of  } \varphi(a) \mid_{\fc(P)} \varphi(b)\]
for all $a,b \in H$; such a homomorphism is called a divisor
homomorphism (into a free monoid).
If in addition, for each $p\in P$ there exist $a_1, \dots, a_k \in H$
such that $\gcd\{\varphi(a_i)  \colon i\in [1,k]\}=p$, then $\fc(P)$
is called a monoid of divisors of $H$ and $\varphi$ a divisor
theory. Every Krull monoid has an essentially unique divisor theory.
Recall that there are various equivalent definitions for Krull monoids,
e.g., as completely integrally closed and $v$-noetherian monoids; for
a detailed account see \cite{halterkoch98book}, especially Chapters 22 and 23.

Let $H$ be a Krull monoid with divisor theory $\varphi: H \to \fc(P)$.
Then $\cc(H)= \qo(\fc(P))/ \qo(\varphi(H))$ is called the class group of $H$;
up to isomorphism a Krull monoid has a unique divisor theory, thus
$\cc(H)$ does not depend on $\varphi$.
Moreover, we denote by $\dc(H) = \{g \in \cc(H) \colon g \cap P \neq \emptyset\}$ the set of classes containing prime
divisors.

Consider
\[
\tilde{\beta} 
\begin{cases}
\fc(P)& \to \fc(\dc(H)) \\
 p_1  \dots p_{\ell}  	& \mapsto [p_1]\dots [p_{\ell}]
\end{cases}
\]
where $[p_i]$ denotes the class containing $p_i$.
Note that the image of $\tilde{\beta} \circ \varphi$ is equal to $\bc(\dc(H))$.
The map $\beta: H \to \bc(\dc(H))$, induced by $\tilde{\beta}$,
is called the block homomorphism, and $\bc(\dc(H))$ is called the
block monoid associated to $H$. The block homomorphism is a transfer
homomorphism (indeed, the archetypal example of a transfer
homomorphism).

In particular, we have
\[\Lo_H(a) = \Lo_{\bc(\dc(H))}(\beta(a))\]
for each $a\in H$ and
\[
\Delta(H) = \Delta(\bc(\dc(H))).
\]

We point out that the block monoids themselves and more generally  monoids of zero-sum sequences are Krull monoids, as the embedding $\bc(G_0)\hookrightarrow \fc(G_0)$ is easily seen to be a divisor
homomorphism (yet, in general, this is not a divisor theory).

We collect relevant examples of structures to which our results apply, that is structures that are Krull monoids or structures that admit a transfer homomorphism to a Krull monoid with potentially finite cyclic class group. 

To start we note that a domain is a Krull domain if and only if its multiplicative monoid is a Krull monoid, by a result due to Krause \cite{krause89}. Furthermore, Dedekind domains and more generally integrally closed noetherian domains are Krull domains (see, e.g., \cite[Section 2.11]{geroldingerhalterkochBOOK}). 
The notions of class group recalled above for monoids coincide with the usual ones in those context; that is, it is the ideal class group for a Dedekind domain and the divisor class group for a Krull domain. 
Indeed,  for a Dedekind domain $D$, if $\ic^{\bullet} (D)$
denotes its non-zero ideals (a free monoid over the non-zero prime
ideals), the map
\[\varphi: \begin{cases}
D\setminus \{0\} & \to \ic^{\bullet} (D) \\
 a & \mapsto aD \end{cases}\]
is a divisor theory; recall that the monoid of ideals is a free monoid generated by the prime ideals.
For Krull domains and monoids, it suffices to consider (non-zero) divisorial ideals, also
called $v$-ideals, instead (with $v$-multiplication as operation).

We now give some more specific examples of Krull monoids and domains.  
\begin{itemize}
\item Rings of integers in  algebraic number fields and more generally holomorphy rings in global fields (see, e.g., \cite{geroldingerhalterkochBOOK}, in particular Sections 2.11 and 8.9). 
\item Regular congruence monoids in Dedekind domains, for example the domains mentioned above (see, e.g., \cite{geroldingerhalterkoch04cong} or \cite[Section 2.11]{geroldingerhalterkochBOOK}).  
\item Rings of polynomial invariants of finite groups (see, e.g., \cite[Theorem 4.1]{cziszteretal}.
\item Diophantine monoids (see, e.g., \cite{chapmankrauseoek02}, especially Theorem 1.3 for examples with finite cyclic class group).
\end{itemize}

Another source of examples are semi-groups of isomorphy classes of certain modules (the operation being the direct sum). These are Krull monoids in various cases. For an overview we refer to the monograph of Leuschke and Wiegand \cite{leuschkewiegand}.  We highlight a result of particular relevance to us by Baeth and Geroldinger \cite[Theorem 5.5]{baethgeroldinger14}, yielding  Krull monoids with finite cyclic class group (of any order) such that each class contains a prime divisor (earlier example often had infinite class groups).

Besides Krull monoids themselves there are other structures that are not Krull monoids themselves, for example they might be not commutative or not integrally closed, yet still admit a transfer homomorphism to a Krull monoid.
For such a structure the  system of sets of lengths is still equal to that of a Krull monoid, and our results apply (in case of finite cyclic class group).

The first example below is due to Smertnig \cite[Theorem 1.1]{smertnig13}, the second due to Geroldinger, Kainrath, and Reinhart \cite[Theoorem 5.8]{geroldingerkr} (their actual result is more general); the first is not commutative, the second not integrally closed.   

\begin{itemize}
\item 
Let $\mathcal{O}$ be a holomorphy ring in a global field  and let $A$ be a central simple algebra over this field. For $H$ a classical maximal $\mathcal{O}$-order of $A$ one has that if  every stably free left $H$-ideal is free, then there is a transfer-homomorphism from $H\setminus \{0\}$ to the monoid of zero-sum sequence over  a ray class group of $\mathcal{O}$, which is a finite abelian group.

\item Let $H$ be a seminormal order in a holomorphy ring of a global field with principal order $\widehat{H}$ such that the natural map $\mathfrak{X} (\widehat{H}) \to \mathfrak{X} (H)$ is bijective and there is an isomorphism  between the $v$-class groups of $H$ and $\widehat{H}$. Then there is a transfer-homomorphism from $H\setminus \{0\}$ to the monoid of zero-sum sequence over this $v$-class group, which is a finite abelian group.
\end{itemize}

\subsection{Key notions}
\label{ssec_kn}

Having all the preceding preparatory notions at hand, we collect
those notions that are most relevant to the present paper
(some of them were already informally discussed).

Let $H$ be an atomic monoid. The \emph{set of minimal distances} of $H$ is defined as
\[
\Dast{H} = \{ \min \Delta(S) \colon S \subset H \text{ divisor closed
}, \, \Delta(S) \neq \emptyset \}.
\]

It is traditional to exclude half-factorial submonoids $S$ in the
definition, so that $0$ is never an element of $\Dast{H}$, as opposed
to always;
note that we use the convention that $\min \emptyset = \gcd \emptyset = 0$.

It is well known (see for example \cite[Proposition 1.4.4]{geroldingerhalterkochBOOK}) that indeed
\begin{equation}
\label{eq_min=gcd}
\min \Delta(H) = \gcd \Delta(H).
\end{equation}
In particular, if $S \subset H$ is a divisor-closed submonoid, then
$\Delta(S) \subset  \Delta(H)$ and thus $\min \Delta(H) \mid \min \Delta(S)$.

For $G$ an abelian group (in this paper essentially only finite
cyclic groups occur) we write $\Dast{G}$ instead of $\Dast{\bc(G)}$
and we note that $S\subset \bc(G)$ is divisor closed if and only if
$S= \bc(G_0)$ for some $G_0 \subset G$.

An atomic monoid $H$ is called \emph{$d$-congruence half-factorial}, or \emph{congruence half-factorial of order $d$}, if for
each $a\in H$ one has $\ell \equiv \ell' \pmod{d}$ for all $\ell ,
\ell' \in \Lo(a)$.
It follows from \eqref{eq_min=gcd}
that $H$ is $d$-congruence half-factorial if and only
if $d \mid \min \Delta(H)$.

We will study the set $\Dast{G}$ and for $d\in \Dast{G}$ the structure
of those sets $G_0 \subset G$ such that $d= \min \Delta(G_0)$.

We already mentioned the relevance of $\Dast{G}$ for describing sets of lengths.We recall this connection and relevant notions in detail.

We recall the Structure Theorem for Sets of Lengths for Krull monoids with finite class group. It is a central result in factorization theory; the initial version is due to Geroldinger \cite{geroldinger88}, the refined version we recall below, is due to Freiman and Geroldinger  \cite{freimangeroldinger00}.
Moreover, we add that a result of this form is also known for other classes of monoids, in particular certain though not all Krull monoids with infinite class group (see \cite[Chapter 4]{geroldingerhalterkochBOOK} for an overview, and \cite{geroldingergrynkiewicz09}, \cite{geroldingerkainrath10} for recent contributions).

To state the Structure Theorem, we first recall a definition.

\begin{definition}
A finite set of integers $L$ is called an almost arithmetical
multiprogression (AAMP) with difference $d$, period
$\{0, d \} \subset \dc  \subset [0,d]$ and bound $M$
if there exists some integer $y$ such that
\[
L = y + (L' \cup L^{\ast} \cup L'') \subset y + (\dc + d \Z)
\]
with 
\[ 
L^{\ast} =   (\dc + d \Z) \cap [0, \max L^{\ast}], \quad L' \subset [-M,-1]
\] 
and
\[
L'' \subset [\max L^{\ast}+1,\max L^{\ast}+ M].
\]
\end{definition}

Now, we recall the Structure Theorem for Sets of Lengths.

\begin{theorem}
\label{thm_STSL}
Let $H$ be a Krull monoid with finite class group $G$ satisfying $|G|\ge 3$.
There exists some $M\in \mathbb{N}_0$ such that for each $a\in H$,
$\Lo(a)$ is an AAMP with difference $d$ in $\Dast{G}$ and bound $M$.
\end{theorem}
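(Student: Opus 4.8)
The plan is to reduce to the block monoid by the transfer principle, and then to combine the finiteness properties of $\bc(G)$ with an additive-combinatorial analysis of large iterated sumsets of sets of lengths. First I would use the block homomorphism $\beta\colon H\to\bc(\dc(H))$. Being a transfer homomorphism, it yields $\Lo_H(a)=\Lo_{\bc(\dc(H))}(\beta(a))$ for every $a\in H$, so it suffices to treat $\bc(\dc(H))$. Since $\dc(H)\subset G$, the submonoid $\bc(\dc(H))$ is divisor-closed in $\bc(G)$ and carries the same sets of lengths on its elements; consequently every divisor-closed submonoid of $\bc(\dc(H))$ is also one of $\bc(G)$, whence $\Dast{\bc(\dc(H))}\subset\Dast{G}$. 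It is therefore enough to prove the statement for $\bc(G)$ itself: to produce an $M$ such that for each $B\in\bc(G)$ the set $\Lo(B)$ is an AAMP with bound $M$ and difference an element of $\Dast{G}$. (If $\bc(\supp(B))$ is half-factorial the claim is trivial, as then $|\Lo(B)|=1$.)

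Second, I would record the relevant finiteness facts. Every $A\in\ac(G)$ satisfies $|A|\le\mathsf{D}(G)$, the Davenport constant, so $\ac(G)$ is finite and $\bc(G)$ is finitely generated; moreover the elasticity and the maximal distance $\max\Delta(G)$ are finite. These give that $\Lo(B)$ lies in an interval $[\min\Lo(B),\max\Lo(B)]$ whose endpoints grow linearly in $|B|$ and that successive distances inside $\Lo(B)$ are bounded by $\max\Delta(G)$. Writing $G_0=\supp(B)$ and $d=\mD{G_0}=\gcd\Delta(G_0)$ via \eqref{eq_min=gcd}, the inclusion $\Delta(B)\subset\Delta(G_0)$ shows that every element of $\Lo(B)$ is congruent to $\min\Lo(B)$ modulo $d$, so $\Lo(B)$ is automatically confined to a progression of difference $d$. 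The substance of the theorem is thus the periodic structure of the interior and the uniform bound on the deviations at the two ends.

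The heart of the argument is to control interior and ends simultaneously. For the interior I would use superadditivity $\Lo(B_1B_2)\supset\Lo(B_1)+\Lo(B_2)$ together with the fact that $\gcd\Delta(G_0)=d$ is witnessed by finitely many sequences over $G_0$: concatenating many copies of these realisers inside a large $B$ forces $\Lo(B)$ to contain a full progression of difference $d$ throughout its central range, and in particular eliminates every larger common difference (so the difference is genuinely $\mD{G_1}\in\Dast{G}$ for the high-multiplicity support $G_1\subset\supp(B)$, not merely a multiple of it). For the precise shape, and crucially for the ends, I would invoke the structure theory of large iterated sumsets: for finite $L\subset\N_0$ with $\gcd(L-\min L)=d$, the $h$-fold sumset $hL$ equals, for all sufficiently large $h$, a set of the form $y+\bigl((\dc+d\Z)\cap[0,\max]\bigr)$ up to defect sets of bounded size adjacent to the two endpoints, and these defects stabilise as $h$ grows. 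Decomposing $B$ into a bounded boundary part and a high-multiplicity bulk supported on $G_1$, applying this to the bulk, and absorbing the boundary and the low-multiplicity elements into $L'$, $L''$ and $M$, produces the AAMP description with difference $d$ and period $\dc$.

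The hard part will be the \emph{uniformity} of $M$: it has to be chosen independently of $B$, even though $|\Lo(B)|$ and the interval $[\min\Lo(B),\max\Lo(B)]$ grow without bound. This is precisely where the finiteness of $\ac(G)$ is indispensable, since it leaves only finitely many possible boundary configurations, while the quantitative, stabilising form of the iterated-sumset theorem guarantees that the end-defects do not grow with $|B|$. Assembling the interior progression, the uniformly bounded end-defects, and the confinement modulo $d$ then yields, for a single $M$ valid for all $B\in\bc(G)$, that every $\Lo(B)$ is an AAMP with difference in $\Dast{G}$ and bound $M$, as required.
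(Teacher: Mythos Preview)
The paper does not contain a proof of this theorem. Theorem~\ref{thm_STSL} is explicitly presented as a known result being \emph{recalled} from the literature: the paragraph preceding it attributes the initial version to Geroldinger \cite{geroldinger88} and the refined version to Freiman and Geroldinger \cite{freimangeroldinger00}, and no argument is supplied. So there is nothing in the paper to compare your proposal against.

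That said, your outline is broadly in line with the standard proof strategy from those references and from \cite[Chapter~4]{geroldingerhalterkochBOOK}: pass to $\bc(G)$ via the block homomorphism, use the finiteness of $\ac(G)$ (through the Davenport constant) to obtain a finitely generated monoid with finite elasticity and bounded $\Delta$, and then analyze $\Lo(B)$ by decomposing $B$ into a high-multiplicity bulk plus a bounded remainder and feeding the bulk into an addition theorem for iterated sumsets. Two remarks on your sketch. First, the difference $d$ of the AAMP is not in general $\mD{\supp(B)}$; it is $\mD{G_1}$ for a subset $G_1$ determined by the high-multiplicity part, and showing that this yields an element of $\Dast{G}$ requires a bit of care about how $G_1$ arises as the support of a divisor-closed submonoid. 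Second, the genuinely delicate step is the one you flag at the end: obtaining a \emph{uniform} $M$. In the literature this is achieved not by a soft finiteness-of-boundary-configurations argument but via a quantitative addition theorem (the Freiman--Geroldinger result) that controls the stabilized end-defects of $h$-fold sumsets explicitly; your sketch gestures at this but would need that theorem as a black box to become a proof.
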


It is crucial that the bound $M$ and the set $\Dast{G}$ are finite
and independent of $a$; otherwise the statement would be trivial.
Moreover, it was recently proved, though there was some evidence
for this before, that this structural description is in a certain
sense optimal (see \cite{WASreal}). Note that, for  $|G|\le 2$,
the monoid is half-factorial so that the condition $|G|\ge 3$ merely excludes corner-cases.

Of course, such a result would also hold for suitable sets other than $\Dast{G}$, such as any superset of it, yet $\Dast{G}$ is the natural choice; in particular if one wishes a condition that depends on the class group only, which is typically the case, or is in a situation where every class contains a prime divisor.
Thus, to gain information on $\Dast{G}$ is key towards a more precise
understanding of sets of lengths and thus an important problem
of  factorization theory.

From its early beginning on it is classical in factorization theory to consider quantitative problems, too
(see, e.g., Narkiewicz' monograph \cite[Chapter 9]{narkiewicz04}).
That is, one is for example interested in the number, in an asymptotic
sense and up to associates, of algebraic integers (of some number field)
that have a certain factorization property; or equivalently, the number of principal ideals with the respective factorization property.
Indeed, one can ask this question for other structures, such as elements
of a holomorphy ring of an algebraic function fields over a finite field,
or consider the problem in a suitable abstract setting.

We do not discuss this in detail here and only consider the classical setting of rings of algebraic integers; the results we establish apply verbatim in the other or more general context. It should also be noted that more precise asymptotic results than recalled below can be obtained, in particular an asymptotic  equality instead of merely the order, but this is not relevant here. For details on the results touched upon below, we refer to \cite[Section 9.4]{geroldingerhalterkochBOOK}.

Let $K$ be an algebraic number field, let $\oc_K$ denote its ring of integers, and let $H$ denote the non-zero principal ideals of $\oc_K$. Note that $H$ is a Krull monoid with the same class group as the ideal class group of $\oc_K$.
In view of the Structure Theorem for Sets of Lengths the following definition is a natural one:

Let $d\in \N$, $M\in \N_0$ be sufficiently large (this could be made explicit), and $\{0,d\}\subset \dc \subset [0,d]$.
Then $\pc(H,\dc,M)$ denotes the set of all $a\in H$ such that $\Lo(a)$
is an AAMP with period $\dc$ and bound $M$, and such that
\[
\max\Lo(a) - \min \Lo(a)\ge 3M + (\max \Delta(H))^2.
\]
These conditions are in place to guarantee that it makes sense -- to the extent possible -- to say that $\dc$ is `the' period of $\Lo(a)$. On the one hand, to give an extreme example, a singleton is an AAMP with period $\dc$ for every $\dc$, so one needs some condition. On the other hand, a set $L$ that is an AAMP with period $\dc$ (and difference $d$) and some bound $M$ is also an AAMP with period
\[
\bigcup_{i=0}^{k-1 } (id +  \dc)
\]
(and difference $kd$) and bound $M$; moreover, it is typically possible to slightly `shift' the central part so that $L$ 
is also an AAMP with period $\dc'$ where $\overline{\dc'}$ equals $\overline{s+\dc}$ for some $s\in [0,d]$ 
and the bar denotes the projection onto $\Z/d \Z$.
The conditions guarantee that this mild level of non-uniqueness, just illustrated, is the only one.

Let $\pc(H, \dc, M)(x)$ denote the counting function associated to this set,
that is the number of elements in $\pc(H, \dc, M)$ of (absolute) norm at most $x$.
It is known  that if $\pc(H, \dc, M)\neq \emptyset$, then
\[
\pc(H, \dc, M)(x) \asymp \frac{x}{(\log x)^{1 - \ao/|G|}} (\log\log x)^{\bo}
\]
where $\ao$ and $\bo$ 
are non-negative integers and $G$ denotes
the class group of $H$.
For the precise description of $\ao$ and $\bo$,
see Section \ref{sec_nt};
here we only mention that both depend only on $\dc$ and the class group, and $\ao$ 
is intimately linked to the inverse problem -- that is, the strucuture of sets $G_0$ with specified $\mD{G_0}$ -- associated to elements of $\Dast{G}$ that are related to $\max \dc$, mainly its multiples but depending on $\dc$ possibly also (certain) divisor.

\section{Some auxiliary results}
\label{sec_aux}

For sets of the form $G_0=\{e,ae\}$ (with $a\in [1, \ord e]$) the problem of determining $\min \Delta (G_0)$ is solved, by the work of Chang, Chapman, and Smith \cite{changetal07} building on work by
Geroldinger \cite{geroldinger90b}.
First, we recall the special case mainly relevant for our purpose in a form convenient for the present application; then we comment briefly on the context.

\begin{theorem}
\label{aux_thm_2elements}
Let $G$ be a finite cyclic group, $e$ be a generating element of $G$ and $a\in [1,|G|]$ such that $\gcd(a,|G|)=1$.
Then $\min \Delta(\{ e, ae \}) > \sqrt{|G|}$ if and only if there exist
some positive integers $c_1$ and $c_2$ such that
\[
a = \frac{|G| - c_1}{c_2}
\]
and the quantity
\[
d_a = \frac{|G| - (c_1 + c_2)}{c_1 c_2}
\]
is integral and satisfies $d_a> \sqrt{|G|}$. Indeed, in this case $\min \Delta( \{ e , ae \})=d_a$.
\end{theorem}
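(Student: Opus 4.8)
The plan is to deduce the statement from the general determination of $\min\Delta(\{e,ae\})$ due to Geroldinger \cite{geroldinger90b} and Chang--Chapman--Smith \cite{changetal07}, by making explicit what it says in the range above $\sqrt{|G|}$. Write $n=|G|$ and identify $\bc(\{e,ae\})$ with the affine monoid $M=\{(x,y)\in\N_0^2:x+ay\equiv 0 \pmod n\}$, where $(x,y)$ corresponds to $e^x(ae)^y$, the sequence length is $|(x,y)|=x+y$, and the factorization length of an element is the number of atoms used. One may assume $1<a<n$: $\gcd(a,n)=1$ with $|G|\ge 3$ rules out $a=n$, while $a=1$ gives the half-factorial one-element set, for which both sides of the asserted equivalence are false. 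The map $w:M\to\N_0$, $(x,y)\mapsto(x+ay)/n$, is an additive homomorphism with $w(U)=1$ and $w(W)=a$, where $U=(n,0)$ and $W=(0,n)$; every pair $(c_1,c_2)\in\N^2$ with $c_1+ac_2=n$ yields a weight-one atom $e^{c_1}(ae)^{c_2}$ of $M$, and apart from $U$, $W$ and these there may be finitely many further mixed atoms of weight $\ge 2$, all governed by the Euclidean expansion of $a/n$. A first easy observation: comparing the two factorizations $\bigl(e^{n-a}(ae)\bigr)^{n}$ and $U^{n-a}W$ of the element $(n(n-a),n)$, of lengths $n$ and $n-a+1$, shows $a-1\in\Delta(M)$; and since $\{e,ae\}=\{f,a^{-1}f\}$ for $f=ae$ (orders mod $n$), $\Delta(M)$ is unchanged upon replacing $a$ by $a^{-1}\bmod n$, so also $a^{-1}-1\in\Delta(M)$.

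Next I would rewrite the quantity in the statement: from $n=c_1+ac_2$ we get $n-c_1-c_2=(a-1)c_2$, hence $d_a=(n-c_1-c_2)/(c_1c_2)=(a-1)/c_1$. Thus $d_a$ is an integer exactly when $c_1\mid a-1$, and for fixed $a$ there is at most one representation $n=c_1+ac_2$ with $c_1,c_2\ge 1$ and $c_1\mid a-1$: any such $c_1$ lies in $[1,a-1]$, while two distinct admissible values are congruent modulo $a$ and hence differ by at least $a$. So the right-hand side of the Theorem, when it holds, singles out one pair $(c_1,c_2)$ and one value $d_a$. For the implication $(\Leftarrow)$, a short explicit computation — comparing a suitable power of $e^{c_1}(ae)^{c_2}$ with a product of atoms of smaller weight — produces two factorizations of a common element whose lengths differ by exactly $d_a$, so $\min\Delta(M)\mid d_a$ and $\min\Delta(M)\le d_a$; together with $d_a>\sqrt n$ and the cited description, one then shows that no strictly smaller distance occurs, whence $\min\Delta(M)=d_a>\sqrt n$.

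The substantive point, and the one I expect to be the main obstacle, is the identification of $\Delta(M)$ in this large regime. From the cited works one extracts that $\min\Delta(M)$ equals the greatest common divisor of the ``excesses'' $w(A)-1$ over the non-weight-one atoms $A$ — that is, $a-1$ from $W$ and $w(R)-1$ from each higher-weight mixed atom $R$ — and that these excesses are all realized as distances. Granting this, if $g:=\min\Delta(M)>\sqrt n$ then $g\mid a-1$, so $c:=(a-1)/g<(a-1)/\sqrt n<\sqrt n$, and the hypothesis $g>\sqrt n$ forces the Euclidean expansion of $a/n$ to be short enough that $c$ arises in the form $c=n-ac_2$ for some $c_2\ge1$; that is, $(c,c_2)$ is a representation of $n$ with $c\mid a-1$, and then $g=(a-1)/c=d_a>\sqrt n$, which is precisely the right-hand side. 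Running the same analysis from the hypothesis that such a $d_a$ exists (and using $\min\Delta(M)\mid d_a$ from the previous paragraph) yields $\min\Delta(M)=d_a$, completing both directions. The only real work beyond bookkeeping is controlling how the largeness of $\min\Delta$ constrains the higher-weight mixed atoms — equivalently, showing that a minimal distance exceeding $\sqrt n$ forces the continued fraction of $a/n$ to collapse to essentially one level — which is exactly the content provided by \cite{geroldinger90b,changetal07}; everything else reduces to the identity $d_a=(a-1)/c_1$ and the elementary estimate $c_1c_2<n/d_a<\sqrt n$.
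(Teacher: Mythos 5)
Your proposal takes essentially the same route as the paper: the theorem is deduced from Geroldinger's gcd formula for $\min\Delta$ and the Chang--Chapman--Smith continued-fraction results, with the largeness hypothesis forcing the odd continued fraction of $n/a$ to have length three and the identities $c_1=n-ac_2$, $d_a=(a-1)/c_1$ translating that data into the stated form, exactly as the paper's discussion does. The only substantive point the paper adds that you omit is the verification that the cited results of \cite{changetal07}, formulated there for $n$ prime, carry over verbatim to general $n$ coprime to $a$; your side remarks (that $a-1\in\Delta(M)$ rather than merely $\min\Delta(M)\mid a-1$, and that comparing a power of the weight-one atom $e^{c_1}(ae)^{c_2}$ with ``atoms of smaller weight'' yields a length gap of exactly $d_a$) are imprecise, but harmless since the equality $\min\Delta=d_a$ is in any case taken from the citation.
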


By the above-mentioned works it is known that $\min \Delta(\{e, ae\})$, using the notation of the above result, can be expressed in terms of the continued fraction expansion of $n/a$. The fact that continued fractions play a role in this context can be roughly understood by noting that a minimal zero-sum sequence containing $ae$ with multiplicity $k$ exists if and only if $\lceil ka/n \rceil n  -  ka$ is smaller than $\lceil ja/n \rceil n  -  ja$ for each $0<j<k$, and observing the connection to `good' rational approximations of $n/a$.

More specifically, Corollary 3.2 of \cite{changetal07} asserts that if $\min \Delta(\{e, ae\})$ is greater than $\sqrt{|G|}$, then the odd continued fraction expansion of $n/a$ is of length $3$; 
the `odd' means that the lengths of the continued fraction expansion is odd, which can be achieved by (in turn) allowing that the last term is $1$. We point out that \cite[Corollary 3.2]{changetal07} is formulated for $n$ prime only, as is \cite[Lemma 3.1]{changetal07} on which it is based, yet its proof carries over verbatim, and the only result that is used, i.e. \cite[Theorem 2.1]{changetal07}, is formulated for general $n$. In fact, up to here the argument even works for $a$ not coprime to $n$ with a minimal modification of \cite[Lemma 3.1]{changetal07}.
Moreover, it is asserted in \cite{changetal07} (see specifically equation (1), Proposition 3.4, and the subsequent discussion in that paper) that the odd continued fraction expansion of $n/a$ is of length $3$, as there let us denote it by $[b,d,c]$, if and only if 
\[
a = \frac{n - c}{b}\quad  \text{ and } \quad  d= \frac{n-b-c}{bc}.
\]
Moreover, in this case, $d = \min \Delta(\{e, ae\})$. Again, the original discussion is for $n$ prime, yet the condition that $a$ and $n$ are coprime suffices for the arguments to hold.
These last conditions are precisely what is encoded in the conditions of the above result. Also, note that if $\frac{n-c}{b}$ and $\frac{n-b-c}{bc}$ are positive integers, then $\frac{n-c}{b}$ is invertible 
modulo $n$ and its inverse is $\frac{n-b}{c}$.
We point out that the condition that the minimal distance is large is only needed to guarantee that the continued fraction expansion is of length $3$. The remainder of the argument does not need this size-condition. For later reference we formulate this as a remark. 

\begin{remark}
Let $n$ be a positive integer and let $G$ be a finite cyclic group of order $n$. Let $e$ be a generating element of $G$ and let  $b,c\in [1,n]$ such that $\frac{n-c}{b}$ and $\frac{n-b-c}{bc}$ are positive integers. Then 
\[
\min\Delta \left( \left\{ e, \frac{n-b}{c}e\right\} \right) = \frac{n-b-c}{bc}.
\] 
\end{remark}

Finally, observe that while with more work and at the expense of a more complicated formulation the restriction that $a$ and $|G|$ are coprime could be avoided, this condition is essentially irrelevant in view of Lemma \ref{aux_lem_higherorder} below.

The following lemma of Geroldinger \cite{geroldinger90b} (also see
\cite[Lemma 6.8.5]{geroldingerhalterkochBOOK}) gives a simplified way of
determining $\min \Delta(G_0)$ for $G_0$ a subset of a finite cyclic group
containing a generating element. We use it frequently and it was also used in the proof of the above-mentioned result.

\begin{lemma}
\label{aux_lem_geroldinger}
Let $G$ be a finite cyclic group and let $e$ be a generating element of $G$.  
Further, let $G_0$ be a subset of $G$ containing $e$. Then,
\[
\mD{G_0}= \gcd \left\{\frac{\s_e(A) - |G| }{|G|}\colon A\in
  \ac(G_0)\right\}. 
\]
\end{lemma}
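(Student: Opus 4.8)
The plan is to prove two divisibilities. Throughout write $n=|G|$ and set
$M=\{\frac{\s_e(A)-n}{n}\colon A\in\ac(G_0)\}$. Note first that for any atom $A$ the value $\s_e(A)$ is a positive multiple of $n$: indeed $\s(A)=0$ forces $n\mid\s_e(A)$, and $\s_e(A)\ge 1$ since $A$ is non-empty and every $\s_e$-value lies in $[1,n]$; hence every element of $M$ is a non-negative integer, and $\s_e(A)\le n\,|A|$ gives $\frac{\s_e(A)-n}{n}\le |A|-1$. We may assume $0\notin G_0$, since adjoining $0$ changes neither side. By \eqref{eq_min=gcd} applied to $\bc(G_0)$ we have $\mD{G_0}=\gcd\Delta(G_0)$, which is what we shall compare with $\gcd M$.

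First, $\gcd M\mid\mD{G_0}$. The key observation is that $\s_e$ records factorization length up to a correction. Fix $B\in\bc(G_0)$ and a factorization $B=A_1\cdots A_k$ into atoms. Since $\s_e$ is additive, $\s_e(B)=\sum_{i=1}^k\s_e(A_i)=kn+n\sum_{i=1}^k m_i$ where $m_i=\frac{\s_e(A_i)-n}{n}\in M$, so $k=\frac{\s_e(B)}{n}-\sum_{i=1}^k m_i$. Hence the lengths of any two factorizations of $B$ differ by a difference of finite sums of elements of $M$, in particular by a multiple of $\gcd M$; so all elements of $\Lo(B)$ are congruent modulo $\gcd M$, and every successive distance in $\Delta(B)$ is a multiple of $\gcd M$. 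As $B$ was arbitrary, $\gcd M$ divides every element of $\Delta(G_0)$, hence $\gcd M\mid\gcd\Delta(G_0)=\mD{G_0}$.

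For the reverse divisibility it suffices to show $\mD{G_0}\mid m$ for each $m\in M$, say $m=\frac{\s_e(A)-n}{n}$ with $A\in\ac(G_0)$. Here is the one genuinely non-formal step: I would test $m$ on an auxiliary zero-sum sequence built from $A$ and copies of $e$. Writing $A=h_1\cdots h_\ell$ with $\ell=|A|$, put $N=\sum_{i=1}^\ell(n-\s_e(h_i))=\ell n-\s_e(A)=n(\ell-1-m)\ge 0$ and $B=A\,e^{N}\in\bc(G_0)$. On the one hand $B=A\,(e^{n})^{\ell-1-m}$ is a factorization of length $\ell-m$. On the other hand, for each $i$ the sequence $h_i\,e^{\,n-\s_e(h_i)}$ is readily checked to be a minimal zero-sum sequence over $G_0$ (its only zero-sum subsequences are the empty one and itself, using $h_i\ne 0$ and $e\in G_0$), and $\prod_{i=1}^\ell h_i\,e^{\,n-\s_e(h_i)}=A\,e^{N}=B$ is a factorization of length $\ell$. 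Thus $\{\ell-m,\ell\}\subseteq\Lo(B)$, so $m$ is a sum of successive distances of $\Lo(B)$, each lying in $\Delta(B)\subseteq\Delta(G_0)$ and hence divisible by $\gcd\Delta(G_0)=\mD{G_0}$; therefore $\mD{G_0}\mid m$.

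Combining the two divisibilities gives $\mD{G_0}=\gcd M$, and in the degenerate case $M\subseteq\{0\}$ (equivalently $G_0$ half-factorial) both sides are $0$ under the stated conventions. The only creative ingredient is the choice of the test sequence $B=A\,e^{N}$ together with its two factorizations of lengths $\ell$ and $\ell-m$; everything else is bookkeeping with the additive map $\s_e$ and the identity $\min\Delta=\gcd\Delta$.
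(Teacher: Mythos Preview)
Your proof is correct. The paper does not supply its own proof of this lemma; it simply cites it as a result of Geroldinger \cite{geroldinger90b} (see also \cite[Lemma 6.8.5]{geroldingerhalterkochBOOK}). Your argument is in fact the standard one found in those references: the first divisibility comes from the observation that $\s_e(B)/n$ differs from any factorization length of $B$ by a sum of elements of $M$, and the second from the auxiliary sequence $B=A\,e^{N}$ with its two explicit factorizations of lengths $\ell$ and $\ell-m$. There is nothing to add.
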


If $G_0$ contains more than one generating element one is evidently
free to choose any of these generating elements as the distinguished
one; yet, it needs to be fixed throughout an argument.

Based on this lemma, we obtain the following result.
\begin{lemma}
\label{smallelements}
Let $G$ be a finite cyclic group and let $e$ be a generating element of $G$. Further, let $G_0$  
be a subset of $G$ containing $e$. Let $x\in [1,|G|]$ such that  $\min \Delta(G_0 \cup \{xe\})\geq x$.
Then,
\[
\min \Delta(G_0\cup \{xe\}) =  \min \Delta(G_0).
\]
\end{lemma}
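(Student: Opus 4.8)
The plan is to use Lemma~\ref{aux_lem_geroldinger} to compute both minimal distances in terms of the values $\s_e(A)$ for minimal zero-sum sequences, and then compare the generating sets of the two relevant gcds. Write $n=|G|$ and set $d = \min\Delta(G_0)$, so by the lemma $d = \gcd\{(\s_e(A)-n)/n \colon A\in\ac(G_0)\}$, and similarly $d' = \min\Delta(G_0\cup\{xe\}) = \gcd\{(\s_e(A)-n)/n \colon A\in\ac(G_0\cup\{xe\})\}$. Since every minimal zero-sum sequence over $G_0$ is also one over the larger set, we have $d' \mid d$ automatically. The substance of the argument is the reverse: to show $d \mid d'$, it suffices to show that $d$ divides $(\s_e(A)-n)/n$ for \emph{every} $A\in\ac(G_0\cup\{xe\})$, including those that genuinely involve the new element $xe$.

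So fix $A\in\ac(G_0\cup\{xe\})$ and let $k=\vo_{xe}(A)\ge 1$. First I would handle the contribution of the copies of $xe$: split off $A = (xe)^k \cdot A'$ where $A'\in\fc(G_0)$. The key observation is that $xe$, having $\s_e$-value $x \le n = d\cdot(n/d)$... here one must be slightly careful, since $d$ need not divide $n$. The cleaner route is: because $\min\Delta(G_0\cup\{xe\})\ge x$, and by hypothesis $d' = \min\Delta(G_0\cup\{xe\})$, we get $x \le d'$. Now I claim $k \le ?$ — actually the cleanest path is to observe that a single copy of $xe$ contributes $x$ to $\s_e(A)$, and since $x\le d'$ and $d'\le d$ we want to reassemble $A$ from pieces each of which is a minimal zero-sum sequence over $G_0$ together with a controlled error. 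Concretely, consider the sequence $B = (xe)\cdot e^{n-x}$ if $x<n$ (and $B=(xe)$ if $x=n$, i.e. $xe=0$, a degenerate case one dismisses since then $\min\Delta$ is unaffected and $0\notin G_0$ issues aside, one may assume $x<n$ or that $xe\ne 0$; actually if $xe=0$ then $\bc(G_0\cup\{0\})$ has the same set of lengths structure up to the free $0$-factor, so $\min\Delta$ is unchanged). For $x<n$, $B$ is a minimal zero-sum sequence over $G_0\cup\{xe\}$ with $\s_e(B) = x + (n-x) = n$, so it contributes $0$ to the gcd — fine, but this does not yet control $A$.

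The real mechanism: given $A = (xe)^k\cdot A'$, replace each copy of $xe$ by $e^x$ to obtain $\tilde A = e^{kx}\cdot A' \in \fc(G_0)$, which is a zero-sum sequence over $G_0$ with $\s_e(\tilde A) = \s_e(A) + k(n-x)$, wait — $\s_e(e^x) = x$ equals $\s_e(xe)$, so actually $\s_e(\tilde A) = \s_e(A)$. Thus $\tilde A$ is a zero-sum sequence over $G_0$ with the same $\s_e$-value. Now factor $\tilde A$ into minimal zero-sum sequences over $G_0$: $\tilde A = A_1\cdots A_r$ with $A_i\in\ac(G_0)$. Then $\s_e(A) = \s_e(\tilde A) = \sum_{i=1}^r \s_e(A_i)$, and by Lemma~\ref{aux_lem_geroldinger}, $n\mid \s_e(A_i)$ and $d \mid (\s_e(A_i)-n)/n$ for each $i$; summing, $\s_e(A) = \sum \s_e(A_i)$ with $\s_e(A_i) = n + n d m_i$ for integers $m_i\ge 0$, so $\s_e(A) = rn + nd\sum m_i$, hence $(\s_e(A)-n)/n = (r-1) + d\sum m_i$. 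To conclude $d\mid (\s_e(A)-n)/n$ I need $d\mid r-1$, equivalently I need to control the \emph{number of blocks} $r$. This is the main obstacle.

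To pin down $r$: note $|\tilde A| = |A'| + kx$ and each $A_i$ has length $\ge 2$ (or $=1$ only for the block $e^n$... actually $e^n$ is a minimal zero-sum sequence of length $n$). The bound $\min\Delta(G_0\cup\{xe\})\ge x$ is exactly what forces $r$ to be small relative to... Here is the clean finish: consider instead the single sequence equality at the level of lengths in $\bc(G_0\cup\{xe\})$. The sequence $(xe)^k\cdot e^{k(n-x)}$... no. The correct and standard argument: since $x \le d' = \min\Delta(G_0\cup\{xe\})$, and the element $C = (xe)\cdot e^{(n-x)}$... I would instead argue that $A$ itself, compared with $\tilde A = A_1\cdots A_r$ viewed in $\bc(G_0\cup\{xe\})$, shows $k \in \Lo_{\bc(G_0\cup\{xe\})}(\tilde A \cdot (\text{stuff}))$... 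The honest hard step is showing $d\mid (r-1)$; the intended route is that each substitution of $e^x$ for $xe$ can be ``undone'' only in ways that change the block count by a multiple controlled by $d'$, and $x\le d'$ leaves no room for the count to drift by less than a full multiple of $d$. I expect the substitution argument together with a careful length/counting bound using $x\le\min\Delta(G_0\cup\{xe\})$ to be exactly where the hypothesis is consumed, and this is the step I would spend the most care on; the rest is a routine application of Lemma~\ref{aux_lem_geroldinger} and the divisibility $d'\mid d$.
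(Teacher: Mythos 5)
Your setup is fine: the reduction via Lemma \ref{aux_lem_geroldinger} to showing that $\min \Delta(G_0)$ divides $(\s_e(A)-|G|)/|G|$ for every $A\in\ac(G_0\cup\{xe\})$, the observation that $\min\Delta(G_0\cup\{xe\})$ divides $\min\Delta(G_0)$, and the substitution $xe\mapsto e^x$ (which preserves $\s_e$) are exactly the paper's starting point. But there is a genuine gap at precisely the step you flag as ``the main obstacle'': you never show how the hypothesis $\min \Delta(G_0 \cup \{xe\})\geq x$ controls the case where the substituted sequence fails to be a minimal zero-sum sequence, i.e.\ your $r\ge 2$; the proposal ends in speculation at that point. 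Moreover, your framing that one must prove $d\mid r-1$ is not how the hypothesis enters: the paper shows that this bad case cannot occur at all.

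The missing argument is a length comparison on an auxiliary element, with the substitution done one copy of $xe$ at a time. Set $n=|G|$ and $f(A)=e^x(xe)^{-1}A$ whenever $(xe)\mid A$. If for some $A\in \ac(G_0\cup\{xe\})$ the sequence $f(A)$ is not minimal, consider $C=e^nA=\bigl(e^{n-x}(xe)\bigr)\cdot f(A)$. Since $e^n$ and $A$ are atoms, $2\in\Lo(C)$; since $e^{n-x}(xe)$ is an atom, $1+\min\Lo(f(A))\in\Lo(C)$, where $\min\Lo(f(A))\ge 2$ because $f(A)$ is a nonempty non-minimal zero-sum sequence, and $\min\Lo(f(A))\le x$ because a factorization of $f(A)$ into more than $x$ atoms would contain an atom avoiding the $x$ inserted copies of $e$, hence a proper zero-sum subsequence of $A$. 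Thus $\Delta(C)$, and hence $\Delta(G_0\cup\{xe\})$, contains an element of size at most $x-1$, contradicting the hypothesis. Consequently minimality is preserved under every such substitution, iterating $f$ at most $n$ times maps each $A\in\ac(G_0\cup\{xe\})$ to an element of $\ac(G_0)$ with the same $\s_e$-value (so in your notation $r=1$ always), and the gcd comparison you set up then finishes the proof. Without this (or an equivalent) argument the proposal does not establish the lemma.
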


\begin{proof}
We set $n=|G|$. For $B\in \mathcal{B}(G_0\cup \{xe\})$ with $(xe)\mid B$, let
\[
f(B)=
\begin{cases}
e^{x}(xe)^{-1}B & \text{ if } (xe)\mid B,\\
B              & \text{ otherwise.}
\end{cases}
.
\]
We note that $f(\sigma(B))=\sigma(f(B))$, and thus  $f(B)$ is a zero-sum sequence, too. First, suppose that $f(\mathcal{A}(G_0 \cup \{xe\}))\subset
\mathcal{A}(G_0 \cup \{xe\})$; in other words, the minimality of
zero-sum sequences is preserved under this replacement.
We claim that in this case we have 
\[
\min \Delta(G_0\cup \{xe\}) = \min \Delta(G_0).
\]
To see this, it suffices to note that $\s_{e}(A)=
\s_e(f(A))$ and that
$f^n(A)\in \ac(G_0)$, as $\vo_{xe}(A) \le n$, which together with
Lemma \ref{aux_lem_geroldinger} implies that
$\mD{G_0}= \mD{G_0 \cup \{xe\}}$.

So, we may now assume that there exists some
$A\in \mathcal{A}(G_0 \cup \{xe\})$ such that $f(A)$ is not a
\emph{minimal} zero-sum sequence. We consider
\[
C=e^nA=(e^{n-x}(xe))f(A).
\]

By definition of $C$, we have
\begin{equation}
\label{2LC}
2\in  \mathsf{L}(C).
\end{equation}
We also have $1+ \mathsf{L}(f(A))\subset \mathsf{L}(C)$ which
implies $1 + \min \mathsf{L}(f(A)) \in \mathsf{L}(C)$. But,
since $f(A)$ is not a minimal zero-sum sequence by assumption
(and clearly non-empty), we have $\min \mathsf{L}(f(A))\ge 2$.
It follows that $1 +  \min \mathsf{L}(f(A)) \ge 3$. With \eqref{2LC},
this implies
\[
\min \mathsf{L}(f(A))-1 = \big( 1 +  \min \mathsf{L}(f(A)) \big) - 2 \ge \min \Delta (C).
\]
Using $\min \mathsf{L}(f(A))\le x$, as otherwise we would
get a non-trivial zero-sum subsequence of $A$, we then infer
\[
\min \Delta(C) \le \min \mathsf{L}(f(A))-1 \le x -1,
\]
implying the claim.
\end{proof}

The following lemma is useful to reduce the complexity of certain
arguments.
It can be seen as part of the framework of higher-order block monoids as introduced in \cite{WAShigherorder}; though for this
special case one could avoid that machinery.
For the first result see in particular \cite[Proposition 4.1]{WAShigherorder}; while the latter is also easily derivable from results of that paper, it is in fact already contained in \cite[Theorem 6.7.11]{geroldingerhalterkochBOOK} (inspecting the proof, one sees that $G_0$ being finite there is not relevant).

\begin{lemma}
\label{aux_lem_higherorder}
Let $G$ be an abelian torsion group, and let $G_0\subset G$.
For $g \in G_0$, let $\mathsf{n}_{G_0}(g)$ denote the smallest $n\in
\N$ such that $n g \in \langle G_0 \setminus \{g\} \rangle$.
\begin{enumerate}
\item Let $G_1= \{\mathsf{n}_{G_0}(g)g\colon g \in G_0\}$. The map
  $\bc(G_0)\to \bc(G_1)$ induced by $h^{\mathsf{n}_{G_0}(h)} \mapsto
  \mathsf{n}_{G_0}(h) h$ for each $h \in G_0$ is well-defined and a
  transfer homomorphism. In particular, $\lc(G_0)=\lc(G_1)$.
\item Let $g \in G_0$ and
$G_g= (G_0 \setminus \{g\})\cup \{\mathsf{n}_{G_0}(g) g \}$.
The map $\bc(G_0)\to \bc(G_g)$ induced by $g^{\mathsf{n}_{G_0}(g)}
\mapsto \mathsf{n}_{G_0}(g) g$ and $h \mapsto h$ for each other $h$ is
well-defined and a transfer homomorphism. In particular, $\lc(G_0)=\lc(G_g)$.
\end{enumerate}
\end{lemma}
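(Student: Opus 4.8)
The plan is to prove both parts by exhibiting the transfer homomorphisms explicitly and checking the three defining properties directly; the little-cross-number equalities are then immediate consequences of the general fact that a transfer homomorphism preserves atoms (so it maps $\ac(G_0)$ onto $\ac$ of the target up to the obvious identification, and it maps $\ac^{\ast}(G_0)$ onto $\ac^{\ast}$ of the target), together with the observation that the map in question preserves cross numbers. Indeed, since $\mathsf{n}_{G_0}(g)g$ and $g$ generate the same subgroup when $\mathsf{n}_{G_0}(g)=1$, and more relevantly since $\ko(g^{\mathsf{n}_{G_0}(g)}) = \mathsf{n}_{G_0}(g)/\ord g = \ko(\mathsf{n}_{G_0}(g)g)\cdot(\ord(\mathsf{n}_{G_0}(g)g)/\ord g)\cdot\ldots$ — more simply, I will just note that $g^{\mathsf{n}_{G_0}(g)}$ and $\mathsf{n}_{G_0}(g)g$ have the same cross number because $\ko(g^{\mathsf n_{G_0}(g)}) = \mathsf n_{G_0}(g)/\ord(g)$ while $\ord(\mathsf n_{G_0}(g)g)$ divides $\ord(g)$ and in fact equals $\ord(g)/\gcd(\mathsf n_{G_0}(g),\ord g)$, so one checks $\mathsf n_{G_0}(g)/\ord g = 1/\ord(\mathsf n_{G_0}(g) g)$ precisely when $\gcd(\mathsf n_{G_0}(g),\ord g) = \mathsf n_{G_0}(g)$, i.e. when $\mathsf n_{G_0}(g)\mid \ord g$; since $\mathsf n_{G_0}(g) g \in \langle G_0\setminus\{g\}\rangle$ forces $\mathsf n_{G_0}(g) \le \ord g$ and a short argument shows $\mathsf n_{G_0}(g) \mid \ord g$, this is fine. (I will double-check this divisibility claim; it is the one genuinely arithmetic point.)

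For part (i), I would first verify that $\Phi\colon \bc(G_0)\to\bc(G_1)$ is well-defined: writing a zero-sum sequence $S=\prod_{g\in G_0} g^{\vo_g(S)}$ over $G_0$, I would decompose $\vo_g(S) = q_g\,\mathsf n_{G_0}(g) + s_g$ with $0\le s_g < \mathsf n_{G_0}(g)$ — but this is not quite enough, since the image is supposed to be $\prod_g (\mathsf n_{G_0}(g)g)^{?}$ and there is a subtlety about what to do with the remainder $s_g$. Reading the statement, the induced map sends $h^{\mathsf n_{G_0}(h)}\mapsto \mathsf n_{G_0}(h)h$, which only makes literal sense once we know each element of $\bc(G_0)$ factors appropriately; so the correct reading is that $\Phi$ is the monoid homomorphism on $\bc(G_0)$ determined by this rule on the relevant generators, and the content is that it is well-defined on all of $\bc(G_0)$, surjective up to units (here both monoids are reduced, so genuinely surjective), has trivial kernel of units, and lifts factorizations. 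I would establish well-definedness by the argument in \cite[Proposition 4.1]{WAShigherorder}: every atom $A\in\ac(G_0)$ has $\vo_g(A) \le \mathsf n_{G_0}(g)$ for every $g$ with equality forcing $A = g^{\mathsf n_{G_0}(g)}$ (otherwise $g^{\mathsf n_{G_0}(g)}$, together with a zero-sum completion inside $\langle G_0\setminus\{g\}\rangle$, would split $A$), and more generally one gets a canonical bijection between atoms of $\bc(G_0)$ and atoms of $\bc(G_1)$ under $g\leftrightarrow \mathsf n_{G_0}(g)g$. Then $\Phi$ is simply the induced map on the free monoids $\Zo = \fc(\ac(\cdot))$ descended to $\bc$, which is automatically a transfer homomorphism.

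For part (ii), the structure is the same but only one element $g$ is altered; here I would invoke \cite[Theorem 6.7.11]{geroldingerhalterkochBOOK}, noting as the excerpt does that finiteness of $G_0$ plays no role in that proof. The verification is: (a) surjectivity up to units — given a zero-sum sequence over $G_g$, each occurrence of $\mathsf n_{G_0}(g)g$ lifts to $g^{\mathsf n_{G_0}(g)}$ and everything else is unchanged; (b) $\theta^{-1}(B^\times) = H^\times$ is trivial on both sides; (c) the factorization-lifting property, which again follows because atoms correspond to atoms under the same recipe. The main obstacle — really the only non-formal point — is establishing the atom-correspondence cleanly, in particular showing that $g^{\mathsf n_{G_0}(g)}$ is itself an atom of $\bc(G_0)$ (this uses the minimality in the definition of $\mathsf n_{G_0}(g)$: a proper zero-sum subsequence $g^j$ with $0<j<\mathsf n_{G_0}(g)$ would contradict minimality since $0\in\langle G_0\setminus\{g\}\rangle$ trivially, so $jg\in\langle G_0\setminus\{g\}\rangle$), and that no atom of $\bc(G_0)$ other than $g^{\mathsf n_{G_0}(g)}$ contains $g$ to multiplicity $\ge \mathsf n_{G_0}(g)$. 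Once that combinatorial lemma is in hand, both parts reduce to the formal fact that a map of atomic reduced monoids which restricts to a bijection of atom sets and is multiplicative extends to a transfer homomorphism, and the little cross number is preserved because corresponding atoms have equal cross number by the divisibility remark above. I would therefore present the proof by first isolating and proving that atom-correspondence lemma for a general torsion group and a general element $g\in G_0$, and then deriving (i) and (ii) as essentially immediate corollaries, with the cross-number statements appended in one line each.
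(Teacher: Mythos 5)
The central structural claims on which you build both parts are false, so the argument has a genuine gap. It is not true that every atom $A\in\ac(G_0)$ satisfies $\vo_g(A)\le \mathsf{n}_{G_0}(g)$ with equality forcing $A=g^{\mathsf{n}_{G_0}(g)}$: take $G$ cyclic of order $8$ with generator $e$ and $G_0=\{e,2e\}$; then $\mathsf{n}_{G_0}(e)=2$, while $e^6(2e)$ is a minimal zero-sum sequence with $\vo_e=6$. Worse, $g^{\mathsf{n}_{G_0}(g)}$ is in general not even a zero-sum sequence (in the example $e^2$ has sum $2e\neq 0$), so it is certainly not an atom of $\bc(G_0)$; your parenthetical argument only rules out proper zero-sum subsequences $g^j$, it does not produce zero-sumness. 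Consequently the ``canonical bijection'' between $\ac(G_0)$ and $\ac(G_1)$ does not exist: in the example $G_1=\{2e\}$, $\bc(G_1)$ has the single atom $(2e)^4$, whereas $\bc(G_0)$ has the atoms $e^8,\ e^6(2e),\ e^4(2e)^2,\ e^2(2e)^3,\ (2e)^4$, all mapped to $(2e)^4$; a transfer homomorphism sends atoms onto atoms, but typically many-to-one. Finally, the ``formal fact'' you invoke --- that a multiplicative map of reduced atomic monoids restricting to a bijection of atom sets is a transfer homomorphism --- is also false in general (such a map need not preserve sets of lengths), so even with a correct atom correspondence the transfer property would still have to be verified through the lifting axiom.

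The correct route (this is exactly the content of the sources the paper cites, \cite[Proposition 4.1]{WAShigherorder} and \cite[Theorem 6.7.11]{geroldingerhalterkochBOOK}; the paper itself gives no independent proof) is as follows. Well-definedness is the statement that $\mathsf{n}_{G_0}(g)\mid \vo_g(S)$ for \emph{every} zero-sum sequence $S$ over $G_0$ and every $g$: since $\sigma(S)=0$, one has $\vo_g(S)g=-\sigma\bigl(Sg^{-\vo_g(S)}\bigr)\in\langle G_0\setminus\{g\}\rangle$, and $\{m\in\Z\colon mg\in\langle G_0\setminus\{g\}\rangle\}$ is a subgroup of $\Z$, hence equal to $\mathsf{n}_{G_0}(g)\Z$; your division-with-remainder step should be closed by this observation (the remainder is always $0$), not by a bound on multiplicities in atoms. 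Surjectivity then follows by lifting each occurrence of $\mathsf{n}_{G_0}(g)g$ to the block $g^{\mathsf{n}_{G_0}(g)}$ (the lift is zero-sum because the two have the same sum), and the factorization-lifting axiom is checked directly: given $\Phi(U)=B_1B_2$, apportion the whole blocks $g^{\mathsf{n}_{G_0}(g)}$ of $U$ according to which factor the corresponding copies of $\mathsf{n}_{G_0}(g)g$ lie in; each part is zero-sum because its image is. A small additional point: $\lc(G_0)$ here denotes the system of sets of lengths, which is preserved by any transfer homomorphism; it is not the little cross number, so your cross-number discussion (the divisibility $\mathsf{n}_{G_0}(g)\mid\ord g$ is correct, but irrelevant) addresses a statement the lemma does not make.
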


By `well-defined' we mean that $\bc(G_0)$ is contained in the
(free) submonoid of $\fc(G_0)$ generated by  $\{h^{\mathsf{n}_{G_0}(h)}\colon h \in G_0\}$ and $\{g^{\mathsf{n}_{G_0}(g)}\}\cup (G_0\setminus \{g\})$, respectively; thus, the above-mentioned maps clearly make sense.

As we use it occasionally we make the following remark, which is a direct consequence of this lemma, and the results we recalled after Theorem \ref{aux_thm_2elements} or also Lemma \ref{lem_charhf}.

\begin{remark}
\label{rem_2el}
For $G$ finite cyclic and $G_0 \subset G$ with $|G_0|=2$,
we have that $\Delta(G_0)=\emptyset$ if and only if
$|\{\mathsf{n}_{G_0}(g)g\colon g \in G_0\}|=1$.
In particular, a subset of a finite cyclic group containing two
distinct elements of the same order is never half-factorial.
\end{remark}

The following lemma is relevant in Section \ref{sec_appCHF}.
We do not know whether an analogue holds for finite abelian groups in general.

\begin{lemma}
\label{aux_generating}
Let $G$ be a finite cyclic group.
For each subset $G_0 \subset G$, there exists a generating subset $G_0'\subset G$ such that there is a transfer homomorphism
\[\theta : \bc(G_0')\to \bc(G_0).\]
\end{lemma}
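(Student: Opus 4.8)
The plan is to realize the desired generating set $G_0'$ as an isomorphic copy of $G_0$ living inside a possibly larger cyclic group, together with the transfer homomorphism coming from Lemma \ref{aux_lem_higherorder}(i). Concretely, first I would dispose of the trivial cases: if $G_0$ already generates $G$, take $G_0' = G_0$ and $\theta = \mathrm{id}$; and if $G_0 = \emptyset$ or $G_0 = \{0\}$, any generating set works since $\bc(G_0)$ is trivial. So I may assume $G_0$ generates a proper nontrivial subgroup $H = \langle G_0 \rangle$ of $G$, say $|H| = m$ with $m \mid |G|$ and $m < |G|$.

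The key construction is to pick a cyclic group $\tilde G$ of order $|G|$ (we may as well take $\tilde G = G$) and an injective homomorphism $\iota : H \hookrightarrow \tilde G$ whose image is \emph{not} contained in any proper subgroup — that is, arrange that at least one element of $\iota(G_0)$ is a generator of $\tilde G$. This is possible exactly when $m$ and $|G|/m$ are such that $H$ embeds with a generator in its image; to be safe one enlarges if needed, but since $H$ is cyclic of order $m$ and $m \mid |G|$, one can send a chosen generator $g_0$ of $H$ (which exists, as $H$ is cyclic — pick $g_0$ a generator of $H$ that lies in $G_0$ if $G_0$ contains one, otherwise adjoin to $G_0'$ one extra generator of $\tilde G$ that maps onto a generator of $H$) appropriately. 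Set $G_0' = \iota(G_0)$, possibly together with one additional generating element $e$ of $\tilde G$ chosen so that $\langle G_0' \rangle = \tilde G$; crucially $e$ should be chosen with $\mathsf{n}_{G_0'}(e) = 1$, i.e.\ $e \in \langle G_0' \setminus \{e\}\rangle$ — but that would force $e$ redundant, so instead one takes $G_0'$ to be $\iota(G_0)$ plus enough new generators and then contracts. The cleanest route: let $G_0' = \iota(G_0) \cup \{e\}$ where $e$ generates $\tilde G$ and is chosen so that $\mathsf{n}_{\tilde G}$-contraction collapses $e$ back onto $\iota(H)$. Then apply Lemma \ref{aux_lem_higherorder}(i) to $G_0'$: the associated set $G_1 = \{\mathsf{n}_{G_0'}(g) g : g \in G_0'\}$ is, by construction of $\iota$ and the choice of $e$, exactly $\iota(G_0)$ viewed back inside $\iota(H) \cong H = \langle G_0\rangle$, so $\bc(G_0') \to \bc(G_1)$ is a transfer homomorphism. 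Composing with the isomorphism $\iota^{-1} : \bc(G_1) \xrightarrow{\sim} \bc(G_0)$ (an isomorphism of monoids induced by a group isomorphism is trivially a transfer homomorphism) gives the desired $\theta : \bc(G_0') \to \bc(G_0)$.

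The main obstacle — and the point requiring genuine care — is engineering the embedding $\iota$ and the extra generator(s) so that the $\mathsf{n}_{G_0'}$-contraction of Lemma \ref{aux_lem_higherorder}(i) lands precisely on $\iota(G_0)$ and not on something strictly smaller or differently embedded. The cyclic hypothesis is exactly what makes this work: every subgroup is itself cyclic and there is a subgroup of $G$ of order $m$ for every $m \mid |G|$, so $H = \langle G_0\rangle$ can be identified with the unique order-$m$ subgroup of $\tilde G = G$, and for the extra element one takes $e$ to be a generator of $G$ with $me \in H$ being a generator of $H$ — then $\mathsf{n}_{G_0'}(e)$ divides $m$ and in fact equals $m$ unless some element of $\iota(G_0)$ is already a generator of $G$, in which case no extra element is needed at all. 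A short case distinction on whether $G_0$ contains a generator of $H$ of the "right" type, plus a verification that $\mathsf{n}_{G_0'}(\iota(g)) = 1$ for each $g \in G_0$ (immediate, since $\iota(g) \in \iota(H) = \langle \iota(G_0)\setminus\{\iota(g)\}, \iota(g)\rangle$ and more to the point the contraction fixes elements already lying in the span of the others — here one must double check this does not accidentally contract elements of $\iota(G_0)$ among themselves, which it does not because $\iota$ is injective and $\mathsf{n}_{G_0}(g) = 1$ would already hold in $\bc(G_0)$), closes the argument. The analogue for general finite abelian $G$ fails or is at least unclear precisely because one loses the clean lattice of subgroups and the ability to realize $\langle G_0\rangle$ as a "standard" subgroup with a complementary generator, which is why the lemma is stated only for cyclic $G$.
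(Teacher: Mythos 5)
There is a genuine gap. Your opening construction cannot work as stated: an injective homomorphism $\iota : H \hookrightarrow G$ with $H = \langle G_0 \rangle \subsetneq G$ has image of order $m = |H| < |G|$, so no element of $\iota(G_0)$ can ever be a generator of $G$; homomorphisms do not increase orders. You half-acknowledge this and pivot to the real content of your proposal: set $G_0' = G_0 \cup \{e\}$ for a generator $e$ of $G$ and contract via Lemma \ref{aux_lem_higherorder}. But the contraction sends $e$ to $\mathsf{n}_{G_0'}(e)\, e = (|G|/m)\, e$, which is a generator of $H$, and this element lies in $G_0$ only in the favorable case that $G_0$ happens to contain a generator of $H$ of this form. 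If $G_0$ contains no generator of $H$ (for instance $G$ cyclic of order $12$ with generator $e_0$, $G_0 = \{4e_0, 6e_0\}$, so $H = \langle 2e_0\rangle$ has order $6$), the contracted set is $G_0 \cup \{g_1\}$ with $g_1$ a generator of $H$ not in $G_0$, i.e.\ strictly larger than $G_0$, and your argument stops there: you give no mechanism to pass from $\bc(G_0 \cup \{g_1\})$ to $\bc(G_0)$, and in general no transfer homomorphism between these exists (transfer homomorphisms preserve the system of sets of lengths, and adjoining a new class to the support generally changes it -- this is precisely why the lemma needs a careful proof). The remark ``otherwise adjoin one extra generator that maps onto a generator of $H$'' restates the problem rather than solving it.

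The paper's proof avoids this by never aiming the contraction at a generator of $\langle G_0\rangle$; instead it anchors each step at an element already in $G_0$. One inducts on the number of prime divisors, counted with multiplicity, of $|G|/|\langle G_0\rangle|$: for a prime $p$ dividing this index, choose $g \in G_0$ whose order has maximal $p$-adic valuation among elements of $G_0$, and adjoin an element $h$ with $ph = g$ and $\ord h = p \ord g$. The maximality of the valuation forces $\mathsf{n}_{G_0\cup\{h\}}(h) = p$, so the contraction $h^{p} \mapsto ph = g$ lands exactly in $G_0$ and Lemma \ref{aux_lem_higherorder} yields a transfer homomorphism $\bc(G_0\cup\{h\}) \to \bc(G_0)$; since $\langle G_0 \cup \{h\}\rangle$ is strictly larger than $\langle G_0\rangle$, iterating and composing these transfer homomorphisms reaches a generating set after finitely many steps. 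This anchoring idea (contract onto an element of $G_0$, enlarging the generated subgroup one prime at a time) is the ingredient missing from your proposal; without it the single adjoined generator $e$ produces the wrong target monoid.
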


\begin{proof}
Let $n=|G|$.
For $n=1$ this is trivial and we assume $n>1$.
We proceed by induction on $t$ the number of prime divisors counted with multiplicity of $n/ |\langle G_0 \rangle|$.

For $t=0$, of course $G_0$ is generating and we can simply set $G_0' = G_0$.

Suppose $t>0$. Let $p$ be one of the prime divisors. Now, let $g\in G_0$ such that the $p$-adic valuation of $\ord g$ is maximal among all elements of $G_0$. Since $p$ divides $n/ |\langle G_0 \rangle|$ the $p$-adic valuation of $\ord g$ is less than the of $n$. Therefore, there exists some $h\in G\setminus \{0\}$ such that $ph=g$.   
We observe that  $\ord h =p \ord g$ (note that since we assumed $h \neq 0$ this must even hold true if $g=0$), and we infer that $h \notin G_0$.
Moreover, it is easy to see that $p$ is the minimal positive integer $j$ such that $j h \in \langle G_0 \rangle$. Thus, Lemma \ref{aux_lem_higherorder} implies that there exists a transfer homomorphism $\theta' : \bc(G_0 \cup \{h\}) \to \bc(G_0)$. Moreover, since $\langle G_0 \rangle $ is a proper subset of $ \langle G_0 \cup \{h\} \rangle $, we get by the induction hypothesis that there exists a generating subset $G_0'$ of $G$ such that there is a transfer homomorphism $\theta'' : \bc(G_0')\to \bc(G_0 \cup \{h\})$. Since the composition of transfer homomorphisms is again a transfer homomorphism, setting $\theta = \theta' \circ \theta''$, the claim is proved.
\end{proof}

\section{Non-large-cross-number sets and related notions}
\label{non-LCN}

The main purpose of this section is to prove Theorem \ref{prop_2subset}.
To this end we need some additional notions namely that of weakly half-factorial sets and large-cross-number sets, which we recall below.
We then establish some results involving these notions that are used to prove the above-mentioned proposition. These results are proved in more generality than needed, 
since we believe they are of some independent interest.

The (arithmetic) definition of a half-factorial subset was already recalled in Section \ref{sec_prel}. We recall the classical characterization of half-factorial sets of abelian torsion groups; it goes back to -- independently yet with minor variations on the generality of the result  --  Skula \cite{skula76}, {\'S}liwa \cite{sliwa76}, and Zaks \cite{zaks76}; for a modern proof see \cite[Proposition 6.7.3]{geroldingerhalterkochBOOK}.

\begin{lemma}
\label{lem_charhf}
Let $G$ be an abelian torsion group, and let $G_0$ be a subset of $G$.
The set $G_0$ is half-factorial if and only if $\ko(A)=1$ for each $\ac(G_0)$.
\end{lemma}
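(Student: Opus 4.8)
The plan is to exploit the additivity of the cross number. Directly from the definitions one has $\ko(ST) = \ko(S) + \ko(T)$ for all $S, T \in \fc(G_0)$, so $\ko$ restricts to a monoid homomorphism from $\bc(G_0)$ to $(\Q,+)$ — here we use that $G$ is a torsion group, so that $\ko$ is defined on all of $\bc(G_0)$. Since every non-empty zero-sum sequence is a product of minimal zero-sum sequences — if $B\in\bc(G_0)$ is non-empty and not itself minimal, split off a proper zero-sum subsequence and induct on $|B|$ — it follows that for every factorization $B = A_1\cdots A_\ell$ with $A_i\in\ac(G_0)$ one has $\ko(B) = \sum_{i=1}^{\ell}\ko(A_i)$.

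For the implication from right to left, I would assume $\ko(A)=1$ for every $A\in\ac(G_0)$. Then the displayed identity gives $\ell = \ko(B)$ for every length $\ell$ of a factorization of any $B\in\bc(G_0)$, so $\Lo(B) = \{\ko(B)\}$ is a singleton; by definition $G_0$ is half-factorial.

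For the converse, I would suppose $G_0$ is half-factorial and fix an arbitrary $A\in\ac(G_0)$. Since $A$ is non-empty, $\supp(A)$ is a non-empty finite set of torsion elements, so $N = \lcm\{\ord g \colon g\in\supp(A)\}$ is a well-defined positive integer. On one hand $A^N$ admits the factorization into $N$ copies of $A$, so $N\in\Lo(A^N)$. On the other hand, for each $g\in\supp(A)$ we have $\ord g \mid N$, and $g^{\ord g}$ is a minimal zero-sum sequence over $G_0$: it lies in $\fc(G_0)$ since $g\in G_0$, it is a zero-sum sequence, and any proper non-empty subsequence $g^k$ with $0<k<\ord g$ has sum $kg\neq 0$. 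Hence
\[
A^N \;=\; \prod_{g\in\supp(A)} g^{N\vo_g(A)} \;=\; \prod_{g\in\supp(A)} \bigl(g^{\ord g}\bigr)^{N\vo_g(A)/\ord g}
\]
exhibits a factorization of $A^N$ into $\sum_{g\in\supp(A)} N\vo_g(A)/\ord g = N\ko(A)$ atoms, so $N\ko(A)\in\Lo(A^N)$. Half-factoriality forces $|\Lo(A^N)|=1$, hence $N = N\ko(A)$ and therefore $\ko(A)=1$.

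I do not anticipate a genuine obstacle: the content is the additivity of $\ko$ together with the one construction in the forward direction, namely passing from a hypothetical atom $A$ with $\ko(A)\neq 1$ to the element $A^N$, whose two factorizations — into copies of $A$ and into the ``power atoms'' $g^{\ord g}$ — have lengths $N$ and $N\ko(A)$. Everything else is routine: that $\bc(G_0)$ is atomic (immediate by induction on length), and that degenerate cases cause no trouble (e.g. if $0\in G_0$, the unique atom supported at $0$ is $0$ itself, with $\ko(0) = 1/\ord 0 = 1$).
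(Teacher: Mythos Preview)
Your proof is correct; this is exactly the standard argument. The paper does not actually supply a proof of this lemma but records it as a classical result due to Skula, \'Sliwa, and Zaks, referring to \cite[Proposition 6.7.3]{geroldingerhalterkochBOOK} for a modern exposition; the argument there is essentially the one you give.
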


The following two notions are inspired by this characterization; the former very directly, the latter more implicitly.
The notion of a weakly half-factorial set was introduced by
{\'S}liwa \cite{sliwa82}, using a different terminology;
for more recent investigations on these sets, see \cite{WAS13}
and also \cite[Section 6.7]{geroldingerhalterkochBOOK}.
The terminology large-cross-number set was introduced in
\cite{WASchar}, however the underlying idea is older; see,
e.g., the work of Gao and Geroldinger \cite{gaogeroldinger00}.

\begin{definition}
\label{defLCNWHFHF}
Let $G$ be an abelian torsion group, and let $G_0$ be a subset of $G$.
\begin{enumerate}
\item We say that $G_0$ is a \emph{large-cross-number set}  if $\ko(A)\ge 1$ for each $A\in \ac(G_0)$.
\item We say that $G_0$ is a \emph{weakly half-factorial set} if $\ko(A)\in \N$ for each $\ac(G_0)$.
\end{enumerate}
\end{definition}

From the characterization in Lemma \ref{lem_charhf} and the definitions, it follows that each half-factorial set is a weakly half-factorial set, and
each weakly half-factorial set is a large-cross-number set.

Weakly half-factorial sets in finite cyclic groups have a simple structure; namely,
we have the following result at our disposal
(see \cite{sliwa82}, and for more general results,
using the terminology of the present paper, see \cite{WAS13}
or \cite[Theorem 6.7.5]{geroldingerhalterkochBOOK}).

\begin{lemma}
\label{lem_struct_cycl45}
Let $G$ be a finite cyclic group. A set subset $G_0$ of $G$ is weakly half-factorial if and only if
there exists some generating element $e$ of $G$ such that
$G_0 \subset \{de \colon d \mid |G| \}$.
\end{lemma}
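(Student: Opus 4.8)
The plan is to prove both implications, the nontrivial one by induction on $n := |G|$. Throughout, note that whether a set $G_0$ is weakly half-factorial depends only on $\ac(G_0)$ and on the orders of the elements of $G_0$, hence is unchanged when one passes to $\langle G_0 \rangle$ as ambient group. The ``if'' direction is immediate: if $G_0 \subset \{de \colon d \mid n\}$ with $e$ a generating element, then each $g \in G_0$ is $d_g e$ with $d_g \mid n$ and $\ord g = n/d_g$, so $\ko(A) = \sum_g \vo_g(A)/\ord g = \s_e(A)/n$ for every $A \in \bc(G_0)$; since $\s_e(A)$ is a positive multiple of $n$ for every nonempty zero-sum sequence, $\ko(A) \in \N$ for all $A \in \ac(G_0)$, so $G_0$ is weakly half-factorial. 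Also record here that, in a finite cyclic group, a two-element set is weakly half-factorial if and only if it is half-factorial: a mixed minimal zero-sum sequence $g^i h^j$ has $i < \ord g$ and $j < \ord h$, so $0 < \ko(g^i h^j) < 2$, and membership in $\N$ forces the value $1$, whence Lemma \ref{lem_charhf} applies.

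The technical heart is the following claim about two-element sets: \emph{if $\{g,h\}$ is half-factorial, then, writing $H_0 := \langle g,h\rangle$, there is a generating element $e_0$ of $H_0$ with $g = (|H_0|/\ord g)\,e_0$ and $h = (|H_0|/\ord h)\,e_0$.} One works inside $H_0$. The cases $\ord g \mid \ord h$ or $\ord h \mid \ord g$ are immediate from Remark \ref{rem_2el} (then one of $g,h$ is the obvious multiple of the other, and $e_0$ is that element). Otherwise one computes, in the cyclic group $H_0$, that $\mathsf{n}_{\{g,h\}}(g) = \ord g / \gcd(\ord g, \ord h)$ and symmetrically for $h$; by Remark \ref{rem_2el} the element $x := \mathsf{n}_{\{g,h\}}(g)\,g = \mathsf{n}_{\{g,h\}}(h)\,h$ then has order $\delta := \gcd(\ord g, \ord h)$. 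One checks that any $e$ solving $(|H_0|/\ord g)e = g$, and any $e$ solving $(|H_0|/\ord h)e = h$, also solves $(|H_0|/\delta)e = x$. The solution sets of these two equations are cosets of the subgroups of $H_0$ of orders $|H_0|/\ord g$ and $|H_0|/\ord h$, whose sum is the subgroup of order $|H_0|/\delta$ — precisely the one a coset of which is the solution set of $(|H_0|/\delta)e = x$; hence the two cosets meet, and any common point $e_0$ satisfies $\ord g \mid \ord e_0$ and $\ord h \mid \ord e_0$, so $\ord e_0 = \lcm(\ord g, \ord h) = |H_0|$, i.e.\ $e_0$ generates $H_0$.

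Now the induction. If $G_0$ does not generate $G$, apply the theorem inside $\langle G_0\rangle$ to obtain a generating element $e_0$ of $\langle G_0\rangle$ with $G_0 \subset \{de_0 \colon d \mid |\langle G_0\rangle|\}$; a routine Chinese Remainder argument produces a generating element $e$ of $G$ with $(n/|\langle G_0\rangle|)e = e_0$, and then $\{de_0 \colon d \mid |\langle G_0\rangle|\} \subset \{d'e \colon d' \mid n\}$. So assume $G_0$ generates $G$, and harmlessly that $0 \notin G_0$. Applying the two-element claim to pairs $\{g,h\} \subset G_0$ inside $\langle g\rangle$ shows that distinct elements of $G_0$ have distinct orders, and that $\ord h \mid \ord g$ forces $h = (\ord g/\ord h)\,g$; in particular, if some $g^\ast \in G_0$ has order $n$ we are done with $e = g^\ast$. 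Otherwise, for each $h \in G_0$ consider the condition $C_h$ on $e \in G$ given by $(n/\ord h)e = h$; it is the congruence $e \equiv h_0 \pmod{\ord h}$ for a suitable $h_0$, and a simultaneous solution $e$ of all the $C_h$ is automatically a generating element (its order is divisible by every $\ord h$, hence by $n$) and yields $G_0 \subset \{de \colon d \mid n\}$. Since a system of congruences with moduli dividing $n$ is solvable as soon as every two of them have a common solution, it suffices to solve each pair $C_h \wedge C_{h'}$; and this is exactly what the two-element claim provides for the half-factorial set $\{h,h'\}$: taking the generating element $e_0$ of $K := \langle h,h'\rangle$ supplied there and any $e \in G$ with $(n/|K|)e = e_0$, one gets $(n/\ord h)e = (|K|/\ord h)e_0 = h$, and likewise for $h'$. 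This closes the induction.

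The step I expect to require the most care is the two-element lemma — both the identity $\mathsf{n}_{\{g,h\}}(g) = \ord g/\gcd(\ord g,\ord h)$ and the coset computation deciding which $e$ simultaneously realizes $g$ and $h$ as the canonical multiples of one generator; this is precisely where half-factoriality is genuinely consumed. Everything else (the cross-number bookkeeping of the ``if'' direction, the reduction to a generating set, and the pairwise-to-global passage for the congruences) is routine once this lemma and the Chinese Remainder Theorem are in hand.
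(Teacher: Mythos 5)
Your proof is correct, but there is nothing in the paper to compare it against: the paper does not prove Lemma \ref{lem_struct_cycl45} at all, quoting it from the literature (\'Sliwa's 1982 paper, and \cite[Theorem 6.7.5]{geroldingerhalterkochBOOK}). What you give is a self-contained elementary argument whose three pillars all check out: (a) a weakly half-factorial two-element subset of a cyclic group is half-factorial (the bound $0<\ko(g^ih^j)<2$ for mixed minimal zero-sum sequences is right, and Lemma \ref{lem_charhf} then applies); (b) a half-factorial pair $\{g,h\}$ admits a common generator $e_0$ of $\langle g,h\rangle$ realizing both elements as the canonical divisor-multiples --- here your computation $\mathsf{n}_{\{g,h\}}(g)=\ord g/\gcd(\ord g,\ord h)$ is correct, Remark \ref{rem_2el} supplies exactly the identity $\mathsf{n}(g)g=\mathsf{n}(h)h$ that forces the two solution cosets to sit inside one coset of their kernels' sum and hence to meet, and any common solution is automatically a generator of $\langle g,h\rangle$; and (c) the passage from pairs to the whole set via the generalized Chinese Remainder criterion (pairwise compatibility of congruences with arbitrary moduli implies global solvability), together with the routine lift from $\langle G_0\rangle$ to $G$ when $G_0$ is not generating, is sound, and a global solution of all conditions $C_h$ is indeed a generator because $G_0$ generates $G$ so the $\ord h$ have lcm equal to $|G|$. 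There is also no circularity, since Remark \ref{rem_2el} and Lemma \ref{lem_charhf} are established in the paper independently of Lemma \ref{lem_struct_cycl45}. The cited sources argue differently (essentially through the arithmetic of cross numbers of explicit minimal zero-sum sequences with respect to a fixed generator); your route trades that machinery for an explicit coset/CRT gluing, which makes the proof longer but entirely elementary and self-contained within the paper's stated prerequisites.
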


We start by showing that a subset of a finite cyclic group  that is not a large-cross-number set 
contains a non-half-factorial subset of cardinality $2$.

\begin{lemma}
\label{lem_2innonLCN}
Let $G$ be a finite cyclic group, and let $G_0$ be a subset of $G$ that is not weakly half-factorial.
There exists a subset $G_2 \subset G_0$ with $|G_2|=2$ that is not weakly half-factorial.
\end{lemma}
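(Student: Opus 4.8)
The plan is to work directly with the cross-number characterization: since $G_0$ is not weakly half-factorial, by Lemma~\ref{lem_struct_cycl45} there is \emph{no} generating element $e$ of $G$ with $G_0\subset\{de\colon d\mid |G|\}$; equivalently, by Lemma~\ref{lem_charhf} combined with the definition, there exists some $A\in\ac(G_0)$ with $\ko(A)\notin\N$. I would first reduce to the case where $G_0$ itself is generating. Indeed, writing $G'=\langle G_0\rangle$, this is again a finite cyclic group, and $\ko$ (computed with orders taken in $G'$, which agree with orders in $G$) and the notion of weak half-factoriality are intrinsic to $G'$; so we may replace $G$ by $G'$ and assume $G_0$ generates $G$. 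Set $n=|G|$ and fix a generating element $e\in G$; then every $g\in G_0$ can be written $g=a_g e$ with $a_g\in[1,n]$, and $\ord g=n/\gcd(a_g,n)$.

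Next I would pin down what failure of weak half-factoriality means concretely. Suppose, for contradiction, that \emph{every} two-element subset $\{g,h\}\subset G_0$ is weakly half-factorial. By Lemma~\ref{lem_struct_cycl45} applied to each pair, for each $\{g,h\}$ there is a generating element $e_{g,h}$ with both $g$ and $h$ lying in $\{d e_{g,h}\colon d\mid n\}$. The key point to extract is that, relative to a single fixed generator $e$, this forces strong divisibility constraints on the integers $a_g$. Concretely, I expect to show: if $\{g,h\}$ is weakly half-factorial then, after possibly re-choosing the generator, $a_g\mid n$ and $a_h\mid n$ and moreover (this is the delicate part) one of $a_g/\gcd,a_h/\gcd$ patterns holds that is stable enough to be combined across all pairs. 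The cleanest route is probably: fix $g_0\in G_0$ with $\ord g_0$ maximal; by Remark~\ref{rem_2el} no other element of $G_0$ has the same order as $g_0$ unless it equals $g_0$, so all other elements have strictly smaller order. Using Lemma~\ref{lem_struct_cycl45} on the pair $\{g_0,h\}$ and the fact that $g_0$ has the largest order, I would argue the distinguished generator $e_{g_0,h}$ can be taken so that $g_0=e_{g_0,h}$ (up to multiplication by a unit coprime to $n$ fixing the order), hence $h\in\{d\, g_0'\colon d\mid n\}$ for a fixed generator $g_0'$ independent of $h$ — i.e., \emph{all} of $G_0$ lies in $\{d g_0'\colon d\mid n\}$, contradicting that $G_0$ is not weakly half-factorial.

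More carefully, I would phrase the last step as follows. Choose a generator $e$ of $G$ with $g_0=e$ if $\ord g_0=n$; if $\ord g_0<n$ then $\langle G_0\rangle\neq G$, contradicting our reduction, so in fact $\ord g_0=n$ and we may take $g_0=e$. For each other $h\in G_0$, weak half-factoriality of $\{e,h\}$ together with Lemma~\ref{lem_struct_cycl45} gives a generator $e_h$ with $e,h\in\{d e_h\colon d\mid n\}$; since $\ord e=n$, writing $e=c e_h$ forces $\gcd(c,n)=1$, so $e_h$ and $e$ generate the same cyclic ordering and $h=d e_h = d c^{-1} e$ for some $d\mid n$; because $c$ is a unit this says $\s_e(h)$ is a unit times a divisor of $n$, but what we actually need is the uniform statement $h\in\{d' e\colon d'\mid n\}$, and for that I use that $c\mid n$ as well (as $e=c e_h$ with $e_h$ generating forces $\ord e_h = n$, hence $c$ itself a unit, hence $c=1$ after absorbing units into the choice of $e_h$). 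Thus every $h\in G_0$ satisfies $\s_e(h)\mid n$, so $G_0\subset\{d e\colon d\mid n\}$, whence $G_0$ is weakly half-factorial — contradiction. Therefore some two-element subset $G_2\subset G_0$ is not weakly half-factorial, which is the assertion.

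The main obstacle I anticipate is precisely the bookkeeping around \emph{which} generator gets chosen for each pair and whether these choices can be made consistent: Lemma~\ref{lem_struct_cycl45} only guarantees a generator $e_{g,h}$ for each pair separately, and a priori different pairs could require incompatible generators. The resolution above leans on the element of maximal order acting as a canonical ``anchor'': once $g_0=e$ is forced (using that $G_0$ generates $G$, so some element has order $n$), every other element is pinned into the lattice $\{d e\colon d\mid n\}$ relative to that fixed $e$, and consistency is automatic. If the maximal-order element does not have order $n$ we would have already contradicted the reduction to $\langle G_0\rangle=G$, so that case does not arise. A secondary, purely routine point is checking that $\ko$ and ``weakly half-factorial'' are unchanged when passing from $G$ to $\langle G_0\rangle$, which is immediate since atoms of $\bc(G_0)$ and the orders of elements of $G_0$ do not see the ambient group.
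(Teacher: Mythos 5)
There is a genuine gap at the pivot of your argument: you claim that after reducing to $G=\langle G_0\rangle$, the element $g_0\in G_0$ of maximal order must satisfy $\ord g_0=n$, ``since otherwise $\langle G_0\rangle\neq G$.'' This is false for composite $n$: a generating subset of a cyclic group need not contain an element of maximal order (in $C_{30}$, for instance, $\{2e,9e\}$ or $\{6e,10e,15e\}$ generate the whole group while every element has order strictly less than $30$). Your whole strategy hinges on this full-order ``anchor'': once $g_0=e$ has order $n$, the pair $\{e,h\}$ being weakly half-factorial does pin $h$ into $\{de\colon d\mid n\}$ for the \emph{same} $e$ (your computation that the unit $c$ must equal $1$ is fine), and consistency across pairs is automatic. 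But when no element of $G_0$ has order $n$ — precisely the delicate composite-order situation this lemma is designed to handle — Lemma~\ref{lem_struct_cycl45} only hands you a separate generator $e_{g,h}$ for each pair, and the compatibility problem you yourself identified as the main obstacle is left unresolved. A secondary slip: you invoke Remark~\ref{rem_2el} to conclude that two distinct elements of equal order cannot both lie in $G_0$ under your hypothesis; that remark concerns half-factoriality, and ``not half-factorial'' does not imply ``not weakly half-factorial'' (the implication goes the other way). The conclusion you want there is still true, but it must be read off from Lemma~\ref{lem_struct_cycl45} (distinct divisors of $n$ give distinct orders), not from the remark.

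For contrast, the paper's proof avoids any appeal to an element of order $n$. It fixes a generator $e$ maximizing the cardinality of $G_0\cap\{de\colon d\mid n\}$, takes $h\in G_0$ outside this set, and for each $m\in\s_e(G_0\cap\{de\colon d\mid n\})$ examines the explicit minimal zero-sum sequence $(me)^{(n-k_m)/m}h^{n_m}$ over the two-element set $\{me,h\}$; either one of these sequences has non-integral cross number (producing the desired non-weakly-half-factorial pair), or the resulting congruences combine over all $m$ to force $n_hh$ into the divisor set $\{de\colon d\mid n\}$, contradicting the maximality of the chosen $e$ via Lemma~\ref{aux_lem_higherorder}. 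If you want to rescue your approach, you would need an argument of comparable substance for the case where $G_0$ generates $G$ but contains no element of order $n$; as written, that case is dismissed by an incorrect assertion, so the proof does not go through.
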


\begin{proof} To simplify the writing of the proof, we define $n=|G|$.
Let $e$ be a generating element of $G$ such that $G_0 \cap \{de \colon
d \mid n \}$ has maximal cardinality; we denote this set by $G_0'$ and
set $D_0= \sigma_e(G_0')$, the set of the respective divisors of
$n$. We denote $G_d=\{de \colon d \mid n\}$.

Since $G_0$ is not weakly half-factorial, we know by Lemma \ref{lem_struct_cycl45}
that there exists some $h \in G_0 \setminus G_0'$.
Let $n_h\in \mathbb{N}$ be minimal such that $n_h h \in \langle G_0'\rangle$.

We have that $n_h\mid n$, and more precisely, for $d'=\gcd(\s_e(h),n)$,
we have
\[
n_h = \frac{\lcm(d', \gcd D_0)}{d'}= \frac{\gcd {D_0}}{ \gcd(d' , \gcd D_0)}.
\]
By Lemma \ref{aux_lem_higherorder}, we know that each zero-sum sequence
over $G_0'\cup \{h\}$ contains $h$ with a multiplicity
that is a multiple of $n_h$ (this includes $0$),
and the replacement $h^{n_h}\mapsto n_h h$ induces a surjection from
the minimal zero-sum sequences over $G_0'\cup \{h\}$ to the ones over
$G_0' \cup \{n_h h\}$, which also preserves the cross number of zero-sum sequences.

Thus, if $n_h h \in G_d$, then  $G_0' \cup \{n_h h\}$ is weakly half-factorial.
And, as the cross numbers of minimal zero-sum sequences are the same,
we get that $G_0' \cup \{h\}$ is weakly half-factorial. Yet, this contradicts the choice of $e$,
since if $G_0'\cup \{h\}$ is weakly half-factorial, then there exists some $e'$ such that
$G_0'\cup \{h\} \subset \{d e' \colon d \mid n\}$, contradicting the choice of $e$.

So, we may assume that $n_hh \notin G_d$.
This means that
\[n_h\s_e(h) \not \equiv d \pmod{n}\]
for each $d\mid n$.

Let $m\in D_0$, and let $n_m\in \mathbb{N}$ minimal such that
$n_mh \in \langle m e\rangle$; that is,
\[
n_m = \frac{\lcm(m,d')}{d'}.
\]
We have that $\lcm(m,d')\mid n_m \s_e(h)$. Let $k_m\in [1,n]$ be congruent to
$n_m \s_e(h)$ modulo $n$; we also have $\lcm(m,d')\mid k_m$.
We consider the sequence
\[
(me)^{\frac{n-k_m}{m}}\quad  h^{n_m}.
\]
This is a minimal zero-sum sequence and its cross number is
\[
\left( \frac{1}{n/m} \right) \frac{n-k_m}{m} + \frac{n_m}{\ord h}=
\frac{n-k_m}{n} +  \frac{\lcm(m,d')}{n}
= 1 + \frac{\lcm(m,d') - k_m}{n}.\]
Now, if this is not integral then the set $\{me,h\}$ is not weakly half-factorial,
and we are done. Thus, suppose this is integral and thus at least $1$,
and so $k_m = \lcm(m,d')$. Consequently,
\begin{equation}
\label{eq_2innonLCN}
n_m \s_e(h) \equiv \lcm(m,d') \pmod{n},
\end{equation}
and inserting the explicit expression for $n_m$ we have
\[\frac{\s_e(h)\lcm(m,d')}{d'} \equiv \lcm(m,d') \pmod{n} \; ,\]
implying
\[
\s_e(h) \equiv d' \;  \left ( \text{mod} \   \frac{n \gcd(m,d')}{m}  \right ).
\]
Since these congruences hold for each $m \in D_0$, it follows that
\[
\s_e(h) \equiv d' \;
 \left (   \text{mod} \  \lcm\left\{ \frac{n }{m/\gcd(m,d')}\colon m \in D_0 \right\}  \right ) \; ,\]
which we reformulate as
\[ \s_e(h) \equiv  d' \;  \left( \text{mod} \ \frac{n}{ \gcd D_0 /\gcd(\gcd D_0 ,d')} \right ),
\]
implying that
\[ \s_e(h) \frac{ \gcd D_0 }{\gcd(\gcd D_0 ,d')}
\equiv  d'\frac{\gcd D_0}{\gcd(\gcd D_0 ,d')} \pmod{n}.
\]
Yet,
\[
\s_e(h) \frac{\gcd D_0}{ \gcd(\gcd D_0 ,d')}  = \s_{e}(h)n_h
\]
and $d' \gcd D_0 /\gcd(\gcd D_0 ,d')$ is a divisor of $n$.
Thus, we deduce that $n_h h \in G_d$, which contradicts our assumption.
\end{proof}

A slight modification of the proof of this lemma also allows to show
that in finite cyclic groups the notions of weakly half-factorial set and large-cross-number set coincide; and indeed cyclic groups are the only finite abelian groups with that property, and more generally groups of rank $1$ the only torsion abelian groups.
We do not actually apply this result anywhere in the paper; yet,
we include it here as it seems interesting and it can be obtained quickly here.

\begin{proposition}
\label{prop_whflcn}
Let $G$ be an abelian torsion group.
The following two statements are equivalent:
\begin{enumerate}
  \item A subset $G_0 \subset G$ is a weakly half-factorial set if and only if it is a large-cross-number set,
  \item The rank of $G$ is $1$.
\end{enumerate}
\end{proposition}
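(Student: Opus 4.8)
The plan is to prove the two implications separately. The direction $(ii) \Rightarrow (i)$ is the substantive one and should follow by adapting the proof of Lemma \ref{lem_2innonLCN}. Recall that every weakly half-factorial set is automatically a large-cross-number set, so what must be shown when $\mathsf{r}(G)=1$ (i.e.\ $G$ cyclic, since $G$ is torsion of rank $1$ forces $G$ cyclic) is: if $G_0 \subset G$ is a large-cross-number set, then it is weakly half-factorial. I would argue contrapositively, assuming $G_0$ is not weakly half-factorial and producing a minimal zero-sum sequence of cross number strictly less than $1$. By Lemma \ref{lem_2innonLCN} it suffices to treat $|G_0|=2$, say $G_0=\{me, h\}$ in the notation of that proof; and in fact the proof of Lemma \ref{lem_2innonLCN} already exhibits, whenever the relevant cross number $1 + (\lcm(m,d') - k_m)/n$ fails to be integral, a minimal zero-sum sequence $(me)^{(n-k_m)/m} h^{n_m}$ whose cross number is of the form $1 + (\text{something})/n$. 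The key observation I would make is that $k_m \in [1,n]$ was chosen congruent to $n_m\sigma_e(h)$, and $\lcm(m,d') \mid k_m$ together with $\lcm(m,d') \le n$ forces $k_m \ge \lcm(m,d')$, so the numerator $\lcm(m,d') - k_m \le 0$; since it is nonzero (non-integrality) it is strictly negative, giving cross number $< 1$ and hence $G_0$ is not a large-cross-number set. Thus in the cyclic case the two notions coincide, because the only obstruction to weak half-factoriality that Lemma \ref{lem_2innonLCN}'s argument surfaces is already an obstruction to the large-cross-number property.

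For the direction $(i) \Rightarrow (ii)$, equivalently $\lnot(ii) \Rightarrow \lnot(i)$, I would assume $\mathsf{r}(G) \ge 2$ and exhibit a two-element subset that is a large-cross-number set but not weakly half-factorial. Since $\mathsf{r}(G)\ge 2$, $G$ contains a subgroup isomorphic to $\Z/p\Z \oplus \Z/p\Z$ for some prime $p$ (if $G$ has an element of infinite order one can instead work inside a suitable finite rank-$2$ subgroup, or reduce to the finite case; the cleanest route is to pick two independent elements $x,y$ of some common prime order $p$ — such a pair exists precisely because the rank is at least $2$). Consider $G_0 = \{x, y\}$ with $\ord x = \ord y = p$ and $x,y$ independent. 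Then $\langle G_0 \setminus \{x\}\rangle = \langle y \rangle$ does not contain $x$, so $\mathsf{n}_{G_0}(x) = p$, and likewise $\mathsf{n}_{G_0}(y)=p$; every zero-sum sequence over $G_0$ has the form $x^{ap} y^{bp}$ with $a,b \ge 0$, and the minimal ones are $x^p$ and $y^p$, each of cross number $1$ — so in fact $\{x,y\}$ is even half-factorial, which is the wrong example. The right construction is to take a rank-$2$ situation where an atom of non-integral cross number appears: for instance let $\ord x = p$, and let $y$ be an element with $\langle y\rangle \cap \langle x \rangle = 0$ but of larger order, so that the minimal zero-sum sequences built from $x$ and $y$ can have cross number such as $1 + 1/p > 1$ while no atom drops below $1$. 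Concretely, in $\Z/p\Z \oplus \Z/p^2\Z$, with $x=(1,0)$ of order $p$ and $y=(0,1)$ of order $p^2$ this again only yields $x^p$ and $y^{p^2}$; so a genuinely mixed element is needed, e.g.\ $z=(1,p)$ which has order $p$, paired with $y=(0,1)$: then $\langle z, y\rangle = G$, $\mathsf{n}(z)=p$, and the atoms are $z^p = (p, p^2) = 0$ hmm — I would instead use the standard example $G_0 = \{g, -g + h\}$ with $\ord g$ composite and $h$ independent, tuned so that a "long" atom wrapping around $\langle g \rangle$ has cross number exceeding $1$ but never less. I would pin this down by a short explicit search rather than guessing in the writeup.

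The main obstacle I anticipate is precisely the construction in the $(i)\Rightarrow(ii)$ direction: one must produce, in every torsion abelian group of rank $\ge 2$, a subset that is large-cross-number but not weakly half-factorial, and the naive rank-$2$ $p$-group examples tend to be half-factorial rather than merely large-cross-number. The resolution should be to exploit that in rank $\ge 2$ one can find an element $g$ of composite order $\ord g = kl$ (with $k,l>1$) together with an independent element, arrange an atom that uses $g$ roughly $k$ times "plus" a correction from the independent direction so its cross number is $1 + (l-1)/(kl)$ or similar, and simultaneously check — using the rank-$2$ freedom to keep everything "above" cross number $1$ — that no atom has cross number below $1$. I would phrase the verification via Lemma \ref{aux_lem_higherorder}, replacing each generator $g$ by $\mathsf{n}_{G_0}(g)g$ to reduce to the case where $G_0$ generates and all the $\mathsf{n}$'s are $1$, which makes the cross-number bookkeeping for atoms tractable; and the $(ii)\Rightarrow(i)$ half, by contrast, is essentially a re-reading of the already-completed proof of Lemma \ref{lem_2innonLCN} with the sign of the numerator tracked, and should be short.
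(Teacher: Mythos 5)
Your direction (ii)$\Rightarrow$(i) is essentially the paper's argument: the paper also reruns the proof of Lemma \ref{lem_2innonLCN} and, at the step just before \eqref{eq_2innonLCN}, uses the large-cross-number hypothesis to force $k_m=\lcm(m,d')$; your contrapositive sign-tracking ($\lcm(m,d')\mid k_m$ and $k_m\ge 1$ give $k_m\ge\lcm(m,d')$, so a non-integral cross number is automatically $<1$) is the same observation, and the detour through a two-element subset is harmless. One small repair is needed, though: a torsion group of rank $1$ need not be cyclic (Pr\"ufer groups), so your parenthetical reduction is wrong as stated; the paper instead notes that both properties are of finite character, so one may pass to a finite subset of $G_0$, whose generated subgroup is then finite cyclic, and only then is Lemma \ref{lem_2innonLCN} applicable.

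The genuine gap is the direction (i)$\Rightarrow$(ii): you never actually produce, in a group of rank at least $2$, a large-cross-number set that is not weakly half-factorial, and the route you sketch cannot work. Rank at least $2$ only guarantees that some $p$-rank is at least $2$, i.e.\ a subgroup isomorphic to $C_p\oplus C_p$; the group may be elementary abelian, so there need not exist any element of composite order, and your proposed ``resolution'' via an element $g$ with $\ord g=kl$, $k,l>1$, fails in exactly the minimal case one must handle. Moreover no two-element set (such as your candidate $\{g,-g+h\}$) can ever separate the two notions: by Lemma \ref{aux_lem_higherorder} a two-element set transfers, with cross numbers of atoms preserved, into a finite cyclic group, where by the rank-one case large-cross-number and weakly half-factorial coincide; so any example must have at least three elements. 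The paper's construction is $G_1=\{e_1,e_2,-e_1+e_2\}\subset C_p\oplus C_p$ with $e_1,e_2$ independent of order $p$: its minimal zero-sum sequences are $e_1^p$, $e_2^p$, $(-e_1+e_2)^p$ and $e_1^j(-e_1+e_2)^je_2^{p-j}$ for $j\in[1,p-1]$, with cross numbers $1$ and $(p+j)/p\in(1,2)$, hence all at least $1$ but not all integral. Without this (or an equivalent) explicit construction, which you defer to ``a short explicit search,'' the proposal does not prove the proposition.
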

\begin{proof}
First, suppose the rank of $G$ is $1$, and we have to establish the equivalence of the notions weakly half-factorial and large-cross-number.
Let $G_0\subset G$. It is immediate that a  weakly half-factorial set is a large-cross-number set. We assume that $G_0$ is a larger-cross-number set and  show that it is weakly half-factorial.
Since both properties are of finite character, i.e., they hold for $G_0$ if and only if they hold for every finite subset of $G_0$, we may assume that $G_0$ is finite, or indeed that $G$ is a finite cyclic group.

Now, we proceed as in the proof of Lemma \ref{lem_2innonLCN}.
That is, we pick $e$ a generating element of $G$ such that
$G_0 \cap \{de \colon d \mid n \}$ has maximal cardinality and
denote this set by $G_0'$. We have to show that
$G_0 \setminus G_0' \neq \emptyset$ yields a contradiction.
This is achieved by the exact same argument, except that just
before \eqref{eq_2innonLCN}, we have to infer instead from the fact
that the set is a large-cross-number set that the cross number is at least $1$.

To establish the converse implication, suppose $G$ does not have rank $1$.
It follows that there exists a prime $p$ such that the $p$-rank of $G$ is at least two.
Therefore, it effectively suffices to show that $C_p \oplus C_p$ contains a large-cross-cumber set that is not weakly half-factorial, or in other words we may assume $G= C_p \oplus C_p$ for some prime $p$.
We consider the set $G_1=\{e_1,e_2, -e_1+e_2\}\subset G$ where $e_1,e_2$ are independent.
We observe that
\[\ac(G_1)= \{e_1^j(-e_1+e_2)^je_2^{p-j} \colon j \in [1,p-1]\} \cup \{e_1^p,e_2^p,(-e_1+e_2)^p\}\]
and so
\[\ko(\ac(G_1))= \left \{\frac{j}{p} \colon j \in [p,2p-1] \right\},\]
implying that $G_1$ is a large-cross-number set, yet not weakly half-factorial.
\end{proof}

In a similar vein, we note for the sake of completeness that Lemma \ref{lem_2innonLCN} can be generalized along the same lines.

\begin{proposition}
Let $G$ be an abelian torsion group.
The following two statements are equivalent:
\begin{enumerate}
  \item For each subset $G_0\subset G$ that is not weakly half-factorial, there exists a subset $G_2 \subset G_0$ with $|G_2|=2$ that is not weakly half-factorial,
  \item The rank of $G$ is $1$.
\end{enumerate}
\end{proposition}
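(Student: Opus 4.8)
The plan is to follow the same two-step pattern as the proof of Proposition \ref{prop_whflcn}, reducing everything to the finite cyclic case (Lemma \ref{lem_2innonLCN}) for one implication and to the $C_p\oplus C_p$ example for the other. For the implication (ii) $\Rightarrow$ (i), assume $G$ has rank $1$ and let $G_0\subset G$ be a subset that is not weakly half-factorial. First I would pick a witness, that is, some $A\in \ac(G_0)$ with $\ko(A)\notin \N$, and pass to $G_0'=\supp(A)$: this is a finite subset of $G_0$, and since $A$ is still a minimal zero-sum sequence over $G_0'$, the set $G_0'$ is not weakly half-factorial either. Because the elements of $G_0'$ are torsion and finite in number, $\langle G_0'\rangle$ is a finite abelian group, and being a subgroup of a rank-$1$ group it has rank at most $1$, hence is cyclic. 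Now Lemma \ref{lem_2innonLCN}, applied to the finite cyclic group $\langle G_0'\rangle$ and its subset $G_0'$, produces a subset $G_2\subset G_0'\subset G_0$ with $|G_2|=2$ that is not weakly half-factorial, which is exactly the required conclusion.

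For (i) $\Rightarrow$ (ii) I would argue by contraposition, reusing the example already analyzed in the proof of Proposition \ref{prop_whflcn}. If $G$ does not have rank $1$, then, as observed there, some prime $p$ has $p$-rank at least $2$, so I may fix independent elements $e_1,e_2\in G$ of order $p$ and set $G_1=\{e_1,e_2,-e_1+e_2\}\subset G$. The description of $\ac(G_1)$ and the identity $\ko(\ac(G_1))=\{j/p\colon j\in[p,2p-1]\}$ from that proof show that $G_1$ is not weakly half-factorial (for instance $e_1(-e_1+e_2)e_2^{p-1}$ has cross number $1+1/p$). On the other hand, each of the three two-element subsets of $G_1$ consists of two independent elements of order $p$ — none of the three elements lies in the cyclic subgroup generated by another — and hence has only the two $p$-th powers as minimal zero-sum sequences, each of cross number $1$; so every two-element subset of $G_1$ is weakly half-factorial. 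Thus $G_1$ witnesses the failure of (i).

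I expect the one point requiring care to be the reduction in the first implication: one must record that failure of weak half-factoriality is of finite character and is witnessed by a single minimal zero-sum sequence, and that the conclusion — the existence of a bad two-element subset — is intrinsic to that subset, depending only on the two orders and the cyclic group the pair generates, so nothing is lost by descending to the finite cyclic subgroup $\langle\supp(A)\rangle$ and invoking Lemma \ref{lem_2innonLCN} there. Beyond this bookkeeping there is no real obstacle, and the second implication is essentially a repackaging of the example treated for Proposition \ref{prop_whflcn}.
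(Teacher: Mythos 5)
Your proposal is correct and follows essentially the same route as the paper: the reduction via the support of a witnessing minimal zero-sum sequence to a finite cyclic subgroup and an appeal to Lemma \ref{lem_2innonLCN} for one direction, and the set $\{e_1,e_2,-e_1+e_2\}$ in a subgroup of type $C_p\oplus C_p$ (with all two-element subsets independent, hence weakly half-factorial) for the other. The extra bookkeeping you supply merely spells out steps the paper leaves implicit, so nothing further is needed.
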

\begin{proof}
First, suppose the rank of $G$ is $1$, and $G_0\subset G$ is not weakly half-factorial.
We have to show that $G_0$ contains a subset of cardinality $2$ that is not weakly half-factorial.
It is clear that $G_0$ contains a finite subset $G_0'$ that is not weakly half-factorial, e.g., just consider the support of a minimal zero-sum sequence whose cross number is not integral.
The group generated by $G_0'$ is finite, and thus cyclic. Now, the existence of a not weakly half-factorial set of cardinality $2$ follows by Lemma \ref{lem_2innonLCN}.

Second, suppose $G$ does not have rank $1$.
As the rank of $G$ is by the definition the supremum of the $p$-ranks of $G$ over all primes $p$, it follows that there exists a prime $p$ such that the $p$-rank of $G$ is at least two.
Therefore, it effectively suffices to show that $C_p \oplus C_p$ contains a not weakly half-factorial set not containing such a set of cardinality $2$, or in other words we may assume $G= C_p \oplus C_p$ for some prime $p$.
We consider the set $G_1=\{e_1,e_2, -e_1+e_2\}\subset G$ where $e_1,e_2$ are independent.
We observe that
\[\ac(G_1)= \{e_1^j(-e_1+e_2)^je_2^{p-j} \colon j \in [1,p-1]\} \cup \{e_1^p,e_2^p,(-e_1+e_2)^p\}\]
and so
\[\ko(\ac(G_1))= \left \{\frac{j}{p} \colon j \in [p,2p-1] \right \},\]
implying that $G_1$ is not weakly half-factorial.
Yet, every subset of cardinality $2$ is independent, thus half-factorial and hence also weakly half-factorial.
\end{proof}

To prove Theorem \ref{prop_2subset}, we also use \cite[Proposition 3.6]{WASchar}, which we recall in a simplified form below; on the one hand, we plug-in the upper bound for $\Ko(G)$ recalled in Section \ref{sec_prel} and on the other hand we use the trivial estimate $\ro^{\ast}(G) < 1 + \log |G|$ valid for finite cyclic groups.

\begin{theorem}
\label{thm_boundlcn}
Let $G$ be a finite cyclic group with $|G| \ge 3$, and let $G_0$ be a
non-half-factorial large-cross-number subset of $G$. Then,
\[
\min \Delta(G_0) < \log |G|.
\]
\end{theorem}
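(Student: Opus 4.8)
The plan is to read this statement as the cyclic specialisation of the cited result \cite[Proposition 3.6]{WASchar}, which is exactly how it is framed in the sentence preceding it. That general result provides, for a non-half-factorial large-cross-number set $G_0$ in a finite abelian group, an upper bound for $\mD{G_0}$ that is linear in the two group invariants $\Ko(G)$ and $\ro^{\ast}(G)$ and carries a subtracted constant; for a finite cyclic group the relevant reading is a bound of the shape
\[
\mD{G_0} \le \max\{\Ko(G),\ro^{\ast}(G)\} - 1 .
\]
So the entire task reduces to substituting good estimates for $\Ko(G)$ and $\ro^{\ast}(G)$ into this inequality and checking that the outcome stays strictly below $\log|G|$.

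For the first invariant I would simply invoke \eqref{star33}, which gives $\Ko(G) \le \tfrac1q + \log|G|$ with $q$ the smallest prime divisor of $|G|$; since $q \ge 2$ this yields $\Ko(G) - 1 \le \log|G| - \tfrac12 < \log|G|$. For the second invariant I would use that a finite cyclic group has $\ro^{\ast}(G) = \omega(|G|)$, the number of distinct prime divisors of $|G|$, together with the crude estimate $\ro^{\ast}(G) < 1 + \log|G|$. The latter is elementary: at most one prime divisor of $|G|$ equals $2$ and all others are at least $3$, so $\log|G| \ge \sum_{p \mid |G|}\log p \ge \log 2 + (\omega(|G|)-1)\log 3 > \omega(|G|) - 1$ because $\log 3 > 1$; hence $\ro^{\ast}(G) - 1 < \log|G|$.

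Combining the two substitutions, both $\Ko(G) - 1$ and $\ro^{\ast}(G) - 1$ are strictly less than $\log|G|$, so their maximum is as well, and since $\mD{G_0}$ is an integer bounded by this maximum we obtain $\mD{G_0} < \log|G|$. The only genuine content, then, is the bound imported from \cite{WASchar}; the present statement is precisely the packaging of that bound once the two cyclic-group estimates are inserted, and the strictness comes for free from the $-1$ together with $\tfrac1q < 1$ and $\log 3 > 1$.

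The main obstacle, and the part I would want to reconstruct if the cited bound were unavailable, is the inequality $\mD{G_0}\le\max\{\Ko(G),\ro^{\ast}(G)\}-1$ itself. A self-contained route would go through Lemma~\ref{aux_lem_geroldinger}: if $G_0$ contained a generator $e$ of $G$, then $\mD{G_0} = \gcd\{\s_e(A)/|G| - 1 \colon A \in \ac(G_0)\}$, so it suffices to produce one atom whose ``excess'' $\s_e(A)/|G| - 1$ is positive (guaranteed since $G_0$ is not half-factorial, via Lemma~\ref{lem_charhf}) but smaller than $\log|G|$, using the large-cross-number hypothesis (Definition~\ref{defLCNWHFHF}) rewritten through $1/\ord g = \gcd(\s_e(g),|G|)/|G|$ to control the relevant atom. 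The subtlety here is that $G_0$ need not contain a generator of $G$, so one must first pass to a generating set by Lemma~\ref{aux_generating} (or iterate Lemma~\ref{aux_lem_higherorder}), verifying that this reduction preserves $\mD{G_0}$, non-half-factoriality, and the large-cross-number property --- which it does, since the transfer homomorphisms involved leave both lengths and the cross numbers of atoms unchanged.
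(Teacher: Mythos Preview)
Your proposal is correct and matches the paper's approach exactly: the paper does not give a separate proof but presents the theorem as the specialisation of \cite[Proposition~3.6]{WASchar} obtained by inserting the bound $\Ko(G)\le\tfrac1q+\log|G|$ from \eqref{star33} and the estimate $\ro^{\ast}(G)<1+\log|G|$, precisely as you describe. Your verification of the $\ro^{\ast}$ estimate and your sketch of a self-contained argument via Lemma~\ref{aux_lem_geroldinger} go beyond what the paper provides, but the core derivation is the same.
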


We are now ready to prove Theorem \ref{prop_2subset}.

\begin{proof}[Proof of Theorem \ref{prop_2subset}]
We have to show that a subset $G_0$ of a finite cyclic group of order $n>1$ that fulfills $\mD{G_0}\ge \log n$, has a non-half-factorial subset of cardinality $2$.
First, suppose that $G_0$ is weakly half-factorial. It is thus in particular a large-cross-number set. Now, Theorem \ref{thm_boundlcn} yields a contradiction to the assumption on the minimal distance.
Thus, we have that $G_0$ is not weakly half-factorial. In this case, the claim follows by Lemma \ref{lem_2innonLCN}.
\end{proof}

In view of this proof, we see that the condition $\mD{G_0}\geq \log |G|$ stems
directly from Theorem \ref{thm_boundlcn}; in particular, one could
replace the condition by more technical ones, such as the one in \cite[Proposition 3.6]{WASchar} or even an abstract one involving the quantity $\mathsf{m}(G)$, as used in \cite{WASchar}, whose definition we do not wish to recall here.

We point out that for certain types of finite cyclic groups the condition on
$\min \Delta(G_0)\ge \log |G|$ can be dropped altogether, but introducing it is
crucial to obtain a result valid for general finite cyclic
groups. Indeed, Geroldinger \cite[Proposition 6]{geroldinger90b} showed that each
non-half-factorial subset of a cyclic group of prime-power order
contains a non-half-factorial subset of cardinality $2$, and pointed
out that for general finite cyclic groups this assertion is false; the
simplest example is the set $\{e,6e,10e,15e\}$ where $e$ is a
generating element of a cyclic group of order $30$. Moreover, as we
detail below the assertion that every non-half-factorial subset
contains a half-factorial subset of cardinality $2$ is also true for
finite cyclic groups of total rank $2$. This can be summarized in the following corollary, to the proof of Theorem \ref{prop_2subset}.

\begin{corollary}
Let $G$ be a finite cyclic group.
The following statements are equivalent:
\begin{enumerate}
\item Each weakly half-factorial subset of $G$ is a half-factorial set,
\item Each non-half-factorial subset of $G$ contains a non-half-factorial
subset of cardinality two.
\end{enumerate}
\end{corollary}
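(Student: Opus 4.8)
The plan is to prove the equivalence by using the machinery already developed in this section, in particular Lemma \ref{lem_2innonLCN} and Theorem \ref{thm_boundlcn}, which together say that a \emph{large-cross-number} set without a non-half-factorial two-element subset must actually be half-factorial, and for cyclic groups weakly half-factorial and large-cross-number coincide (Proposition \ref{prop_whflcn}). The statement of the corollary restricts attention to \emph{weakly} half-factorial sets, which is the natural level at which the argument closes.

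\medskip

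For the direction (i) $\Rightarrow$ (ii), I would take a non-half-factorial subset $G_0 \subset G$ and distinguish two cases. If $G_0$ is weakly half-factorial, then by assumption (i) it is in fact half-factorial, contradicting the choice of $G_0$; so this case cannot occur. Hence $G_0$ is not weakly half-factorial, and Lemma \ref{lem_2innonLCN} directly produces a subset $G_2 \subset G_0$ of cardinality $2$ that is not weakly half-factorial. A two-element set that is not weakly half-factorial is a fortiori not half-factorial (every half-factorial set is weakly half-factorial, as noted after Definition \ref{defLCNWHFHF}), which gives (ii). This direction is essentially the proof of Theorem \ref{prop_2subset} with the size condition on $\min\Delta$ removed, and it requires no real work beyond invoking the two cited results.

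\medskip

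For the converse (ii) $\Rightarrow$ (i), I would argue by contraposition: suppose $G$ has a weakly half-factorial subset $G_0$ that is \emph{not} half-factorial, and show (ii) fails. By Lemma \ref{lem_charhf}, non-half-factoriality of $G_0$ means there is a minimal zero-sum sequence $A \in \ac(G_0)$ with $\ko(A) \ne 1$; since $G_0$ is weakly half-factorial, $\ko(A) \in \N$, so $\ko(A) \ge 2$. Now consider $\supp(A) \subset G_0$: it is still weakly half-factorial, and $A$ witnesses that it is not half-factorial. By Lemma \ref{lem_struct_cycl45}, there is a generating element $e$ of $G$ with $\supp(A) \subset \{de : d \mid |G|\}$. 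Any two-element subset $\{d_1 e, d_2 e\}$ of $\supp(A)$ is itself contained in $\{de : d \mid |G|\}$, hence weakly half-factorial by Lemma \ref{lem_struct_cycl45}, hence half-factorial: indeed for a two-element set weakly half-factorial and half-factorial coincide, since by Remark \ref{rem_2el} a non-half-factorial two-element set in a cyclic group has $\{\mathsf{n}_{G_0}(g)g : g \in G_0\}$ of cardinality $2$, and one checks via Lemma \ref{aux_lem_higherorder} that this forces a non-integral cross number on some minimal zero-sum sequence — alternatively, one invokes directly that a weakly half-factorial set of cardinality two is half-factorial because the image under the map of Lemma \ref{aux_lem_higherorder}(i) is a single point. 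Therefore $\supp(A)$ is a non-half-factorial set containing no non-half-factorial subset of cardinality two, so (ii) fails.

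\medskip

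The main obstacle I anticipate is the small technical point in the converse: verifying cleanly that for a \emph{two-element} subset of a cyclic group "weakly half-factorial" is the same as "half-factorial." This is morally clear — Lemma \ref{lem_struct_cycl45} together with Lemma \ref{aux_lem_higherorder}(i) says a weakly half-factorial cyclic set maps by a transfer homomorphism onto a set all of whose elements have the same order up to the $\mathsf{n}_{G_0}$-rescaling, and for two elements of $\{de : d \mid n\}$ this rescaled set is a singleton, hence half-factorial — but one must state it carefully so as not to circularly use the corollary itself. Everything else is a direct assembly of Lemmas \ref{lem_charhf}, \ref{lem_2innonLCN}, \ref{lem_struct_cycl45}, Remark \ref{rem_2el}, and Theorem \ref{thm_boundlcn}, so I expect the write-up to be short.
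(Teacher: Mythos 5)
Your proposal is correct and follows essentially the same route as the paper: the direction (i)~$\Rightarrow$~(ii) is the argument of the proof of Theorem \ref{prop_2subset} with the hypothesis (i) replacing the minimal-distance condition, and the converse rests, exactly as in the paper, on the observation that a weakly half-factorial two-element subset is half-factorial, verified via Lemma \ref{lem_struct_cycl45} and Lemma \ref{aux_lem_higherorder} (the rescaled set being a singleton). The only superfluous detours are the unused references to Theorem \ref{thm_boundlcn} and the vaguer first variant of the two-element argument, but your second, clean variant is precisely the paper's.
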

\begin{proof}
That the former statement implies the latter is clear by the proof of Theorem \ref{prop_2subset}.
To see the converse, it suffices to note that a weakly half-factorial set of cardinality two is half-factorial;
this can be seen using Lemma \ref{lem_struct_cycl45}, and then Lemma \ref{aux_lem_higherorder}; the resulting set has cardinality one.
\end{proof}

To see how this corollary contains the above mentioned results, we recall that if $|G|$ is a prime-power or the product of two
prime-powers, then the condition of the corollary is always, yet non-trivially, fulfilled; however, in case $|G|$ has at least three distinct prime divisors the situation becomes subtle and the pair of assertions of the corollary might or might not  hold (see, e.g., \cite{WAS12} for relevant results).

Regarding the second part of the above proof, we mention the following result from \cite{chapman95}. 
\begin{proposition}
\label{3elementshf}
Let $G$ be a finite cyclic group. Every weakly half-factorial subset of $G$ of cardinality three is half-factorial.
\end{proposition}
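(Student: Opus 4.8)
The goal is to show that a weakly half-factorial subset $G_0$ of a finite cyclic group $G$ with $|G_0|=3$ is in fact half-factorial. By Lemma \ref{lem_struct_cycl45}, after fixing a suitable generating element $e$ of $G$, we may assume $G_0 = \{d_1 e, d_2 e, d_3 e\}$ with $d_1, d_2, d_3$ distinct divisors of $n = |G|$; moreover, by applying Lemma \ref{aux_lem_higherorder} (part (i)) we may replace each $d_i e$ by $\mathsf{n}_{G_0}(d_i e) d_i e$, which is again of the form (divisor of $n$) times $e$ and does not change half-factoriality or weak half-factoriality, and thereby assume that no element $d_i e$ lies in the subgroup generated by the other two. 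In particular we may assume $\gcd(d_1, d_2, d_3) = 1$ (otherwise all $d_i e$ lie in a common proper subgroup and we pass to that subgroup) and that the $d_i$ are pairwise such that removing one strictly enlarges the generated subgroup.

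\textbf{Key steps.} First I would set up the arithmetic: write $D = \{d_1, d_2, d_3\}$, $D_{ij} = \gcd(d_i, d_j)$, and use Lemma \ref{aux_lem_geroldinger} with distinguished element $e$ (noting that after the reductions above $e$ itself may not be in $G_0$, so I would instead argue directly with cross numbers via Lemma \ref{lem_charhf}). The cleanest route is: since $G_0$ is weakly half-factorial, $\ko(A) \in \N$ for every $A \in \ac(G_0)$; I want to show $\ko(A) = 1$ for every such $A$, which by Lemma \ref{lem_charhf} gives half-factoriality. The point is that for a set of size three one can bound cross numbers of minimal zero-sum sequences from above. Concretely, I would analyze a minimal zero-sum sequence $A = (d_1 e)^{a_1}(d_2 e)^{a_2}(d_3 e)^{a_3}$: minimality forces each $a_i$ to be bounded (if some $a_i$ is a multiple of $\mathsf{n}$ determined by the other two elements' subgroup, one could split off a zero-sum subsequence), and one shows the only way $\ko(A) \ge 2$ can occur is if two of the three elements already generate a proper subgroup containing the third — which is excluded. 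The structure of minimal zero-sum sequences over a three-element subset of a cyclic group, each supported on at most the three generators, is rigid enough that $\ko(A) < 2$ always holds after the reductions, forcing $\ko(A) = 1$.

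\textbf{Alternative and the main obstacle.} An alternative is to quote the classification of half-factorial subsets of cyclic groups of small cardinality directly; indeed this proposition is attributed to Chapman \cite{chapman95}, and one could simply import the needed structural lemma on three-element half-factorial sets. If a self-contained argument is preferred, the main obstacle is controlling minimal zero-sum sequences when the three divisors $d_1, d_2, d_3$ have a complicated lattice of gcd's — e.g. pairwise coprime versus sharing common factors in different pairs. The worst case is when each pair $\{d_i e, d_j e\}$ is itself half-factorial (so pairwise the cross numbers behave) but a genuinely three-term minimal relation exists with cross number $2$; ruling this out requires showing that such a three-term relation would produce, after the $\mathsf{n}_{G_0}$-reduction, a two-term relation of cross number $\ge 2$, contradicting Remark \ref{rem_2el} which forces two-element weakly half-factorial sets to collapse to a single element. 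So the heart of the matter is a careful bookkeeping argument showing that any minimal zero-sum sequence over $\{d_1 e, d_2 e, d_3 e\}$ of cross number exceeding $1$ is "reducible" in the sense of Lemma \ref{aux_lem_higherorder} to a shorter-support configuration where weak half-factoriality is already known to imply half-factoriality; I expect this reduction step, and the case distinction on the gcd lattice it entails, to be the most delicate part.
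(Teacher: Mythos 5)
First, note that the paper does not prove this proposition at all: it is imported directly from Chapman \cite{chapman95} (``we mention the following result from \cite{chapman95}''), so the ``alternative'' you mention in passing is in fact the paper's actual route. Judged as a self-contained argument, your plan has a genuine gap at its core. After Lemma \ref{lem_struct_cycl45} you are working with $G_0\subset\{de\colon d\mid n\}$, and for such sets every zero-sum sequence $S$ automatically satisfies $\ko(S)=\s_e(S)/n\in\N$, because $de$ has order $n/d$; so integrality carries no information, and the entire content of the proposition is exactly your unproved assertion that every minimal zero-sum sequence $A$ over the (reduced) three-element set has $\ko(A)<2$. You have restated the statement, not proved it. Moreover, any proof of that bound must use $|G_0|=3$ in an essential, quantitative way: over $\{e,6e,10e,15e\}\subset C_{30}$ the sequence $e\,(6e)^4(10e)^2(15e)$ is a minimal zero-sum sequence of cross number $2$, so no soft ``rigidity'' or bookkeeping argument that does not exploit having only three support elements can succeed, and your sketch never identifies where cardinality three enters.

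Second, the normalization you build the plan on runs in the wrong direction. The replacement of Lemma \ref{aux_lem_higherorder}(i) sends each $g$ to $\mathsf{n}_{G_0}(g)g$, which by definition \emph{lies in} $\langle G_0\setminus\{g\}\rangle$; iterating until the set stabilizes (orders strictly drop otherwise) leaves you either with at most two elements or with a set in which $\mathsf{n}(g)=1$ for every $g$, i.e.\ every element lies in the subgroup generated by the other two. So you cannot ``assume that no element $d_ie$ lies in the subgroup generated by the other two''; the reduction drives you into precisely the configuration you propose to exclude, and that configuration is not where the difficulty sits anyway (e.g.\ $\{2e,4e,6e\}\subset C_{12}$, where $2e\in\langle 4e,6e\rangle$, is half-factorial). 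Likewise, the parenthetical claim that $a_i\ge\mathsf{n}_{G_0}(d_ie)$ lets you ``split off a zero-sum subsequence'' is false: it only produces a subsequence whose sum lies in $\langle G_0\setminus\{d_ie\}\rangle$, not $0$, so the promised reduction of a three-term relation of cross number $2$ to a two-term relation contradicting Remark \ref{rem_2el} is not substantiated. As it stands, the proposal either needs a genuine combinatorial analysis of minimal zero-sum sequences over three divisor-type elements (the missing heart of the argument, which you yourself flag as ``the most delicate part''), or it should simply cite \cite{chapman95}, as the paper does.
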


\section{Sets containing two elements of maximal order}
\label{sec_spec}

The aim of this section is to investigate $\min \Delta(G_0)$ and
related questions for a subset $G_0$ of a finite cyclic group $G$ with the additional condition that $G_0$ contains at least two generating elements of $G$.
They are main tools in the proofs of our main results that are given in Section \ref{sec_proofs}.

The main result of this section gives a precise description of
the `large' values that $\min \Delta(G_0)$ can attain for these types
of sets; throughout this section the term `large' means  at least $(2 |G|^2)^{1/3}$, that is the bound appearing in Theorem \ref{2maximal}.

In order to make the formulation of our results, at least in part,
somewhat compact, we first introduce some notation. The relevance of the quantities below can be inferred from Theorem \ref{aux_thm_2elements}.

\begin{definition}
\label{def_sets}
Let $n$ be a positive integer.
\begin{enumerate}
\item We let $M (n)$ be the set of triples of positive integers
$(c_1,c_2,d)$ such that $c_1$ and $c_2$ belong to $[1,n]$,
$d\mid n$ and the fractions
\[
\frac{n-c_1-c_2}{c_1c_2} \quad  \text{ and } \quad
\frac{(n-c_1 -c_2)d}{c_2 n }
\]
are positive integers,
\item We let $\In {n}$ denote the set of all positive integers of
the form
\[
\gcd \left\{ \frac{n-c_1-c_2}{c_1c_2}, \,
\frac{(n-c_1 -c_2)d}{c_2 n } \right\} \]
where $( c_1, c_2,d ) \in M(n)$,
\item We let $\Hud{n}$ denote the set of all positive integers which
divide a positive integer of the form
\[ \frac{n-c_1-c_2}{c_1c_2}\]
for some $c_1, c_2$ in $[1,n]$.
\end{enumerate}
\end{definition}

Clearly,  $\In{n} \subset \Hud{n}$. We point out that given a pair of positive integers $(c_1,c_2)$ such that $(n-c_1-c_2)/(c_1c_2)$ is an integer, $(c_1,c_2,d)\in M(n)$ implies that 
\[
\frac{n}{d} \mid (c_1+c_2);
\] 
yet, the converse implication is not true.
Moreover, we recall that $\frac{n-c_1-c_2}{c_1c_2}$ being integral implies that $\frac{n-c_1}{c_2}$ 
is integral and invertible modulo $n$; and its inverse is given by  $\frac{n-c_2}{c_1}$.

Furthermore we point out that for $n \le 13$ we have $(2 n^2)^{1/3} > n-2$; in view of $\max \Dast{G} \le |G| - 2$, as recalled in \eqref{GeroHami} this makes some of the results below trivial for $n \le 13$.

\begin{theorem}
\label{2maximal}
Let $G$ be a finite cyclic group and let $G_0$ be
a subset of $G$ containing at least two generating elements.
Suppose that $\min \Delta(G_0)\ge (2 |G|^2)^{1/3}$.
Then
\[
\min \Delta(G_0) \in \In { |G|}.
\]
Moreover, $G_0$ contains exactly two generating elements, say $e$ and $g$, and they satisfy the conditions
\[
\s_e(g)=\frac{|G|-c_1}{c_2}\quad \text{ and }\quad de=dg
\]
for some $(c_1,c_2, d) \in M(|G|)$ such that
\[
\min \Delta(G_0) =  \gcd \left\{ \frac{|G|-c_1-c_2}{c_1c_2}, \frac{(|G|-c_1 -c_2) d }{c_2 |G|} \right\}.
\]
\end{theorem}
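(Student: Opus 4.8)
The plan is to bootstrap from the two-element case (Theorem \ref{aux_thm_2elements}) and the gcd-formula of Geroldinger (Lemma \ref{aux_lem_geroldinger}). Fix a generating element $e \in G_0$ and write $n = |G|$. By hypothesis $G_0$ contains a second generating element $g$. The first step is to argue that $\{e,g\}$ already `captures' the minimal distance, in the sense that $\min \Delta(G_0) \mid \min \Delta(\{e,g\})$ (from \eqref{eq_min=gcd}, since $\bc(\{e,g\})$ is a divisor-closed submonoid) and, crucially, that no further drop occurs. Since $\min \Delta(G_0) \ge (2n^2)^{1/3} > \sqrt n$ for $n$ large (and for small $n$ the statement is vacuous by the remark that $(2n^2)^{1/3} > n-2$ when $n \le 13$, together with $\max \Dast{G} \le n-2$), Theorem \ref{aux_thm_2elements} applies to $\{e,g\}$: writing $a = \s_e(g)$, there are positive integers $c_1, c_2$ with $a = (n-c_1)/c_2$ and $\min \Delta(\{e,g\}) = d_a = (n - c_1 - c_2)/(c_1 c_2)$, an integer exceeding $\sqrt n$.

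Next I would handle the role of $d$ and the relation $de = dg$. The point is that any zero-sum sequence over $G_0$ using both $e$ and $g$ contributes, via Lemma \ref{aux_lem_geroldinger}, terms of the form $(\s_e(A) - n)/n$ to the gcd; a minimal zero-sum sequence of the shape $e^{x} g^{y}$ with $x + ay \equiv 0 \pmod n$ forces $de = dg$ for the appropriate $d$, where $d \mid n$ is determined by the condition that $(n - c_1 - c_2)/(c_1 c_2)$ together with the `mixed' contribution $(n - c_1 - c_2)d/(c_2 n)$ are both integral — i.e. $(c_1,c_2,d) \in M(n)$. Concretely: the minimal zero-sum sequences over $\{e,g\}$ are understood (their structure is governed by the continued fraction $[c_2, d_a, c_1]$ of $n/a$, as recalled after Theorem \ref{aux_thm_2elements}), so one computes $\s_e$ of each generator modulo $n$, divides by $n$, and takes the gcd; this yields exactly $\gcd\{(n-c_1-c_2)/(c_1c_2),\, (n-c_1-c_2)d/(c_2 n)\}$, which by definition lies in $\In n$. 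The order $d$ emerges as the smallest divisor of $n$ with $ (n/d) \mid (c_1 + c_2)$ that is compatible with the actual structure of $G_0$.

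The main obstacle — and where the size hypothesis $(2n^2)^{1/3}$ is really used — is ruling out that $G_0$ contains a third element (generating or not) that lowers the minimal distance below what $\{e,g\}$ and the above $d$ predict, and simultaneously showing $G_0$ has \emph{exactly} two generating elements. For the latter: if $e, g, g'$ were three distinct generating elements, pairwise they would each have large minimal distance, and the compatibility constraints on the associated continued-fraction data $[c_2,d_a,c_1]$ become over-determined — a counting/size argument shows the three pairwise conditions cannot simultaneously hold once the common value exceeds $(2n^2)^{1/3}$ (this is presumably where the cube-root threshold, rather than $\sqrt n$, enters: two independent constraints of `square-root strength' combine). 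For non-generating extra elements, one invokes Lemma \ref{aux_lem_higherorder} to replace such an element $h$ by $\mathsf{n}_{G_0}(h) h$, reducing to a set whose elements have order dividing $n$ properly, and then controls the new minimal zero-sum sequences produced; the size bound guarantees these contribute only multiples of the predicted gcd, so the value is unchanged. I would carry out this last part by bounding the cross numbers / $\s_e$-values of the newly created minimal zero-sum sequences, showing any genuinely new contribution to the gcd in Lemma \ref{aux_lem_geroldinger} would have to be of size $O(n^{2/3})$ or smaller, contradicting the hypothesis. Assembling these pieces gives $\min\Delta(G_0) \in \In n$ with the stated witnesses $(c_1,c_2,d)$ and the relations $\s_e(g) = (n-c_1)/c_2$, $de = dg$.
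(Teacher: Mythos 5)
Your opening move (apply Theorem \ref{aux_thm_2elements} to a pair of generators, and rule out a third generator by a size/over-determination argument in the spirit of Proposition \ref{3elements_improved}) matches the paper, but the rest of the plan has two genuine gaps. First, you attribute the whole gcd formula, including the term $(|G|-c_1-c_2)d/(c_2|G|)$, to the minimal zero-sum sequences over $\{e,g\}$: by Theorem \ref{aux_thm_2elements} (equivalently Lemma \ref{aux_lem_geroldinger} restricted to $\{e,g\}$) the gcd over that pair alone is exactly $(|G|-c_1-c_2)/(c_1c_2)$, and nothing more. The second term is contributed precisely by minimal zero-sum sequences involving the \emph{non-generating} elements of $G_0$ (e.g.\ $h\,g^{|G|-\s_e(h)}$ for $h=de=dg$), and because of it $\min\Delta(G_0)$ can be a proper divisor of $\min\Delta(\{e,g\})$ -- e.g.\ $\{f,-f,(|G|/2)f\}$ has minimal distance $(|G|-2)/2$ while $\{f,-f\}$ has $|G|-2$, both in the range of the theorem. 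So your guiding claims that ``no further drop occurs'' / ``the value is unchanged'' are false, and the plan never actually derives the structural constraint on the extra elements (each $h$ must satisfy $\s_e(h)\mid |G|$ and $\s_e(h)e=\s_e(h)g$, or be removable), which is where $d$ really comes from ($d=\gcd(\s_e(D))$, with $(c_1,c_2,d)\in M(|G|)$ because $d\mid \frac{(|G|-c_1-c_2)}{c_2|G|}\cdot |G|$ integrality is equivalent to $de=dg$).

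Second, your proposed tool for the non-generating elements is vacuous: since $e\in G_0$ already generates $G$, one has $\mathsf{n}_{G_0}(h)=1$ for every other $h\in G_0$, so the replacement $h\mapsto \mathsf{n}_{G_0}(h)h$ from Lemma \ref{aux_lem_higherorder} does nothing, and the subsequent hand-waved bound (``new contributions are $O(|G|^{2/3})$, contradiction'') cannot be right as stated, since legitimate new contributions of size about $|G|/2$ do occur and do lower the gcd. The paper's route, which you would need substitutes for, is: Proposition \ref{3elements_improved} forces $\s_e(h)$ or $\s_g(h)$ to be a divisor $d$ of $|G|$ for any third element $h$ (this also gives ``exactly two generators'', taking $\ord h=|G|$, $d=1$); Lemma \ref{smallelements} allows one to discard such $h$ when $d<(2|G|^2)^{1/3}$; for $d\ge(2|G|^2)^{1/3}$ one observes $\{h,dg\}$ lies in the subgroup $\langle dg\rangle$ of order $|G|/d<(2|G|)^{1/3}$, hence is half-factorial, forcing $h=dg=de$; and finally Lemma \ref{specialdivisors} (via the $f_e$/$f_g$ replacement analysis) computes $\min\Delta(\{e,g\}\cup D)$ exactly, in particular proving the hard direction that the gcd does not drop below the claimed value. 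None of these steps is supplied or replaced by your plan, so as it stands the argument does not establish either the membership in $\In{|G|}$ or the exact formula.
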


This result is sharp, except for the lower bound on $\min\Delta(G_0)$.
Namely, we obtain the following result, which is essentially a converse to the just mentioned result. 

\begin{proposition}
\label{prop_2maximal}
Let $G$ be a finite cyclic group.
For each $m \in \In{|G|}$ there exists a subset $G_0\subset G$ containing exactly two generating elements such that $\min \Delta(G_0)= m$.
More precisely, for $(c_1, c_2, d) \in M(|G|)$,
and $e$ a generating element of $G$, the element 
\[
g= \left(\frac{|G|-c_1}{c_2}\right)e
\] 
is another generating element of $G$, and
\[
\min \Delta \left( \left \{e, \frac{|G|-c_1}{c_2} e, de \right\} \right) = \gcd \left\{ \frac{|G|-c_1-c_2}{c_1c_2}, \,
\frac{(|G|-c_1 -c_2)d}{c_2 |G|}  \right\}.
\]
\end{proposition}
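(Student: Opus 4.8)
The plan is to verify Proposition~\ref{prop_2maximal} by an explicit computation based on Lemma~\ref{aux_lem_geroldinger}, using the distinguished generating element $e$. First I would record the basic arithmetic facts already noted after Definition~\ref{def_sets}: for $(c_1,c_2,d)\in M(|G|)$ the fraction $(n-c_1)/c_2$ is a positive integer (write $n=|G|$ throughout) that is invertible modulo $n$, so $g=\bigl((n-c_1)/c_2\bigr)e$ is indeed a generating element, and $d\mid n$ with $\frac{n}{d}\mid (c_1+c_2)$. Set $G_0=\{e,\,g,\,de\}$. The quantity we want is, by Lemma~\ref{aux_lem_geroldinger},
\[
\min\Delta(G_0)=\gcd\left\{\frac{\s_e(A)-n}{n}\colon A\in\ac(G_0)\right\}.
\]
So the whole proof reduces to understanding the set of values $\s_e(A)$ as $A$ ranges over minimal zero-sum sequences over $\{e,g,de\}$, where $\s_e(e)=1$, $\s_e(g)=(n-c_1)/c_2=:a$, and $\s_e(de)=d$.

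The core step is to identify the relevant minimal zero-sum sequences. The key one is the analogue of the sequence used in Theorem~\ref{aux_thm_2elements}: the minimal zero-sum sequence $e^{c_1}g^{c_2}$ has $e$-sum $c_1+c_2 a=c_1+(n-c_1)=n$... wait, that gives $e$-sum exactly $n$, contributing $0$ to the gcd; the point is rather that there is a minimal zero-sum sequence whose $e$-sum realizes $\frac{n-c_1-c_2}{c_1c_2}$ as $(\s_e(A)-n)/n$, i.e.\ with $\s_e(A)=n+n\cdot\frac{n-c_1-c_2}{c_1c_2}$. This is exactly the sequence identified in the two-element analysis of $\{e,ae\}$, and by Theorem~\ref{aux_thm_2elements} (or the Remark following it) the minimal distance of $\{e,g\}=\{e,ae\}$ is already $\frac{n-c_1-c_2}{c_1c_2}=:d_a$, so $d_a$ divides every value $(\s_e(A)-n)/n$ for $A\in\ac(\{e,g\})$, and at least one such $A$ achieves it exactly. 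Adding the element $de$ can only introduce new minimal zero-sum sequences involving $de$; I would show that the single additional generator of the gcd coming from these is $\frac{(n-c_1-c_2)d}{c_2 n}$. Concretely, I expect to exhibit a minimal zero-sum sequence of the form $(de)^{k}g^{\ell}$ or $(de)^{k}e^{j}g^{\ell}$ with $e$-sum congruent to $0$ mod $n$ and $(\s_e(A)-n)/n=\frac{(n-c_1-c_2)d}{c_2 n}$ (note $\frac{n}{d}\mid c_1+c_2$ is exactly the integrality condition making $(de)$-only-plus-$g$ combinations land in $\langle e\rangle$ appropriately), and conversely show every minimal zero-sum sequence over $\{e,g,de\}$ contributes a multiple of $\gcd\{d_a,\frac{(n-c_1-c_2)d}{c_2 n}\}$.

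For the "converse show every $A$ contributes a multiple" direction I would argue as follows: given $A\in\ac(G_0)$, consider the multiplicity $k=\vo_{de}(A)$. If $k=0$ then $A\in\ac(\{e,g\})$ and $d_a\mid(\s_e(A)-n)/n$ by Theorem~\ref{aux_thm_2elements}. If $k>0$, replace $(de)^{\lambda}$ for $\lambda$ a suitable multiple of $n/d$ (so that $(de)^{n/d}$ is already zero-sum) — more cleanly, use the transfer-type observation that modifying $A$ by swapping $(de)^{n/d}$ for $e^{n}$ (both zero-sum, same $e$-sum) relates $A$ to a sequence over $\{e,g\}$ plus leftover copies of $de$ whose count is $<n/d$; then a short case analysis using minimality pins down the contribution modulo $\gcd\{d_a,\frac{(n-c_1-c_2)d}{c_2n}\}$. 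I would also need to check $\frac{(n-c_1-c_2)d}{c_2n}$ is genuinely a positive integer, which is part of the definition of $M(n)$, and that it is the $e$-sum-deficit of an actual minimal (not merely zero-sum) sequence — the minimality is where one must be slightly careful and is, I expect, the main obstacle: one has to rule out that the candidate sequence $(de)^{k}g^{c_2/\gcd(\dots)}\cdots$ decomposes, and to organize the case analysis so that no minimal zero-sum sequence over $\{e,g,de\}$ with a smaller, coprime-to-both contribution sneaks in. The cleanest route is probably to use Lemma~\ref{aux_lem_higherorder}/Lemma~\ref{smallelements} to remove $de$ when $d$ is "small" relative to the target distance and handle the remaining genuinely new sequences by hand; but since here the claimed distance need not be large, I would instead do the direct computation with $\s_e$ and the divisibility $\frac{n}{d}\mid c_1+c_2$ doing the work. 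Once both inclusions of the gcd are established, the displayed formula for $\min\Delta(G_0)$ follows immediately, and the membership $m\in\In{n}$ part of the statement is then just Definition~\ref{def_sets}(ii).
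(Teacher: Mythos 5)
Your framework (Lemma \ref{aux_lem_geroldinger} with respect to $e$, the observation that $g$ is generating, and that $\{e,g\}$ already realizes $d_a:=\frac{n-c_1-c_2}{c_1c_2}$ by the Remark after Theorem \ref{aux_thm_2elements}, where $n=|G|$) is the right one, but the crucial half of the argument is missing: you never actually prove that every $A\in\ac(\{e,g,de\})$ containing $de$ has $(\s_e(A)-n)/n$ divisible by $\gcd\bigl\{d_a,\ \tfrac{d(n-c_1-c_2)}{c_2n}\bigr\}$, and the one concrete mechanism you propose would not do anything: swapping $(de)^{n/d}$ for $e^{n}$ is vacuous because $(de)^{n/d}$ is itself a zero-sum sequence, so minimality of $A$ already forces $\vo_{de}(A)<n/d$ and no copies get removed. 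The mechanism that actually works (it is the proof of Lemma \ref{specialdivisors}) is different: since $e^{c_1}g^{c_2}$ is a zero-sum sequence, a minimal $A$ satisfies $\vo_e(A)<c_1$ or $\vo_g(A)<c_2$; if $\vo_g(A)<c_2$ one replaces the entire non-$g$ part $F$ of $A$ by $e^{\s_e(F)}$, which preserves $\s_e$ and preserves minimality because $\s_e(F)\le n$ (here one uses that $\s_e(de)=d$ divides $n$, so the relevant values form a divisor chain), landing in $\ac(\{e,g\})$ where $d_a$ divides the contribution; if $\vo_e(A)<c_1$ one performs the analogous replacement on the $g$-side, and then $\s_e$ shifts by $\s_e(F')\frac{n-c_1-c_2}{c_2}$ with $d\mid\s_e(F')$, i.e.\ the contribution changes by a multiple of $\frac{d(n-c_1-c_2)}{c_2n}$ --- this is exactly how the second generator of the gcd enters the divisibility direction, and nothing in your sketch produces it. Relatedly, the structural fact you need but never isolate is $de=dg$, which is what the integrality of $\frac{d(n-c_1-c_2)}{c_2n}$ encodes; the weaker consequence $\frac{n}{d}\mid(c_1+c_2)$ that you quote is not the fact that drives the computation.

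For comparison, the paper's own proof is a two-line reduction: since $\s_e(g)=\frac{n-c_1}{c_2}$ holds by construction (the size hypothesis in Lemma \ref{specialdivisors} is only used to derive this form of $\s_e(g)$ from Theorem \ref{aux_thm_2elements}, so it is not needed here) and $de=dg$, one applies Lemma \ref{specialdivisors} with $D=\{de\}$; the singleton $\s_e(D)=\{d\}$ is trivially totally ordered by divisibility, so case (i) of that lemma gives exactly the stated gcd. The attained-value half of your plan is fine in principle --- for instance $(de)\,g^{\,n-d}$ is a minimal zero-sum sequence with contribution $c_1d_a-\frac{d(n-c_1-c_2)}{c_2n}$, which together with the $\ac(\{e,g\})$-contributions generates the right gcd --- but without the divisibility direction above the proof is incomplete at precisely the point you yourself flagged as the main obstacle.
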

Since the proof of this result is constructive, we do not need to impose a condition on the size of the minimal distance or other parameters.   

We start with some preparatory results.
First, we show that if a set contains two generating elements,
and a third element that is not of a certain special form,
then the minimal distance of this set, and thus of any set
containing such a set, cannot be `large.'

\begin{proposition}
\label{3elements_improved}
Let $G$ be a finite cyclic group. Let $e,g,h$ be three distinct elements in $G$, such that $e$ and $g$ are generating elements and $h$ is arbitrary (possibly generating).
We set $G_0=\{e,g,h\}$ and $d= |G|/ (\ord h)$.
If $\s_e(h) \neq d$ and $\s_g(h) \neq d$, then
\[
\min \Delta (G_0) < \big( 2|G|^2 \big)^{1/3}.
\]
\end{proposition}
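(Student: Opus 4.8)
The plan is to argue by contradiction, so put $n=|G|$ and suppose $D:=\mD{G_0}\ge(2n^2)^{1/3}$ (if $G_0$ is half-factorial the statement is trivial by the convention $\min\emptyset=0$); note that $(2n^2)^{1/3}>\sqrt n$. Since $e$ and $g$ are distinct elements of order $n$, the set $\{e,g\}$ is not half-factorial (Remark~\ref{rem_2el}), and as $\bc(\{e,g\})$ is divisor-closed in $\bc(G_0)$, \eqref{eq_min=gcd} gives $D\mid\mD{\{e,g\}}$, so $\mD{\{e,g\}}\ge D>\sqrt n$. Hence Theorem~\ref{aux_thm_2elements} applies to $\{e,g\}$ (with $\s_e(g)$, which is coprime to $n$, in the role of $a$) and produces positive integers $c_1,c_2$ with $\s_e(g)=(n-c_1)/c_2=c_1d_0+1$, $\s_g(e)=(n-c_2)/c_1=c_2d_0+1$, $n=c_1c_2d_0+c_1+c_2$, and $\s_e(g)\s_g(e)=d_0n+1$, where $d_0:=\mD{\{e,g\}}=(n-c_1-c_2)/(c_1c_2)$; since $d_0>\sqrt n$ we get $c_1c_2<\sqrt n$, and $D\mid d_0$.

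Next I would extract divisibility constraints on $D$ from explicit atoms of $\bc(G_0)$ using Lemma~\ref{aux_lem_geroldinger} with either of the generators $e,g$. Put $d=n/\ord h$ and write $\s_e(h)=da'$ and $\s_g(h)=da''$ with $\gcd(a',n/d)=\gcd(a'',n/d)=1$. The atom $h^{n/d}$ yields $D\mid a'-1$ and $D\mid a''-1$; the two hypotheses $\s_e(h)\ne d$ and $\s_g(h)\ne d$ say exactly that $a'\ge2$ and $a''\ge2$, so these constraints are non-vacuous and force $a'-1\ge D$, $a''-1\ge D$, whence $\ord h>D$ and $d<n/D$. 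The sequences $W_1:=hg^{n-\s_g(h)}$ and $W_2:=he^{n-\s_e(h)}$ have supports $\{g,h\}$ and $\{e,h\}$, hence are atoms of $\bc(G_0)$; writing $\s_e(g)\s_g(h)=\s_e(h)+kn$ and $\s_g(e)\s_e(h)=\s_g(h)+k'n$ (valid as $h\ne0$) and simplifying with $\s_e(g)\s_g(e)=d_0n+1$, a direct computation shows that the value $(\s_e(W_1)-n)/n$ of $W_1$ relative to $e$ equals $c_1d_0-k$ and the value of $W_2$ relative to $g$ equals $c_2d_0-k'$, so $D\mid k$ and $D\mid k'$. A short estimate excludes $k=0$ and $k'=0$: e.g.\ $k'=0$ gives $\s_g(e)\s_e(h)=\s_g(h)<n$, so $\s_e(h)<n/\s_g(e)<\sqrt n$, contradicting $\s_e(h)\ge a'>D>\sqrt n$. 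Thus $k,k'\ge D$.

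Multiplying the two relations of the preceding paragraph by $\s_g(e)$ and $\s_e(g)$ and using $\s_e(g)\s_g(e)=d_0n+1$ gives $\s_e(h)d_0=k+k'\s_e(g)$ and $\s_g(h)d_0=k'+k\s_g(e)$; since $\s_e(g)=c_1d_0+1$ this forces $d_0\mid k+k'$, and writing $q:=(k+k')/d_0$ the relations become $\s_e(h)=q+k'c_1$ and $\s_g(h)=q+kc_2$. From $\s_e(h),\s_g(h)<n$ we get $k'<n/c_1$ and $k<n/c_2$, so $D\mid k,k'$ yields $c_1,c_2<n/D\le(n/2)^{1/3}$ and then $q<D$. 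On the other hand $D\mid a'-1$ and $D\mid k'$ force $q\equiv d\pmod D$, and since also $d<n/D<D$ we conclude $q=d$. Consequently $a'-1=k'c_1/d$ and $a''-1=kc_2/d$, so $Dd\mid k'c_1$ and $Dd\mid kc_2$; multiplying these and using $kk'\le((k+k')/2)^2=(d_0d/2)^2$ gives $4D^2\le d_0^2c_1c_2$.

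This last inequality reads $4D^2\le d_0(n-c_1-c_2)<d_0n$, i.e.\ $D<\tfrac{1}{2}\sqrt{d_0n}$, which already contradicts $D\ge(2n^2)^{1/3}$ as soon as $d_0\le2^{8/3}n^{1/3}$, a range that is dealt with. I expect the main obstacle to be the complementary range where $d_0$ is large (equivalently $c_1c_2$ is small, the extreme case being $c_1=c_2=1$, i.e.\ $g=-e$ and $d_0=n-2$). There the plan is to also exploit the non-half-factorial pairs $\{e,h\}$ and $\{g,h\}$: reduce each to the cyclic group $\langle h\rangle\cong C_{n/d}$ via Lemma~\ref{aux_lem_higherorder}, apply Theorem~\ref{aux_thm_2elements} in $C_{n/d}$ to describe $a'$ and $a''$ through the continued fractions of $(n/d)/a'$ and $(n/d)/a''$, and then combine this with the continued-fraction data of $n/\s_e(g)$, the congruence $a'\equiv a''\s_e(g)\pmod{n/d}$ linking them, the bound $d_0\le n-2$, and the size estimates already obtained, so as to derive a contradiction. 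Making this final combinatorial case analysis precise — together with a direct check of the finitely many small values of $n$ — is the technical heart of the argument.
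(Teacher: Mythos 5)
There is a genuine gap. The computation you actually carry out ends with $4D^2\le d_0^2c_1c_2\le d_0(n-c_1-c_2)<d_0n$, and you observe this contradicts $D\ge(2n^2)^{1/3}$ only when $d_0\le 2^{8/3}n^{1/3}$, calling that "a range that is dealt with." But since $D\mid d_0$ and $d_0\neq 0$, you always have $d_0\ge D\ge(2n^2)^{1/3}=2^{1/3}n^{2/3}$, which exceeds $2^{8/3}n^{1/3}$ as soon as $n>128$; so the range your completed argument covers is empty for all but finitely many $n$, and those small $n$ you also defer to an unperformed "direct check." In other words, all the divisibility relations you derive ($D\mid k,k'$, $q=d$, $Dd\mid k'c_1$, $Dd\mid kc_2$, etc.) are correct but settle no case of the proposition; the entire burden falls on your final paragraph, which is only a plan (reduce $\{e,h\}$ and $\{g,h\}$ to $C_{n/d}$, apply Theorem~\ref{aux_thm_2elements} there, and combine continued-fraction data), explicitly left as "the technical heart" without execution. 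A proof proposal whose quantitative core is admitted to be missing cannot be accepted as a proof.

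For comparison, the step you postpone is essentially the paper's whole argument, and it is shorter than your preparatory computations: the hypotheses $\s_e(h)\neq d$ and $\s_g(h)\neq d$ say exactly that $b'=\s_{de}(h)\neq 1$ and $b''=\s_{dg}(h)\neq 1$, so by Remark~\ref{rem_2el} the reduced pairs $\{de,h\}$ and $\{dg,h\}$ are not half-factorial; Lemma~\ref{aux_lem_higherorder} and Theorem~\ref{aux_thm_2elements} applied in the subgroup of order $n/d$ then give $\s_e(h)=(n-dc_1^{(b')})/c_2^{(b')}$ and $\s_g(h)=(n-dc_1^{(b'')})/c_2^{(b'')}$ with all four parameters below $(n/2)^{1/3}/d$ (and $c_1^{(g)},c_2^{(g)}<(n/2)^{1/3}$ from the pair $\{e,g\}$, as you also derived). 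Feeding these two expressions into the congruence $\s_g(h)\equiv\s_g(e)\s_e(h)\pmod{n}$ forces the positive integer $dc_1^{(b'')}c_1^{(g)}c_2^{(b')}+dc_2^{(g)}c_1^{(b')}c_2^{(b'')}$ to be at least $n$, which the size bounds immediately contradict; no separate continued-fraction analysis or large-$d_0$ case split is needed. If you carry out your sketched plan along these lines it will close the gap, but as written the proposal does not prove the statement.
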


\begin{proof}
We set $n=|G|$ and we assume that $n \ge 13$ as otherwise the result is immediate by the general bound $n-2$ (see \eqref{GeroHami}). We assume that $\s_e(h) \neq d$ and $\s_g(h) \neq d$, and that $\min \Delta (G_0) \ge (2n^2)^{1/3}$.

We start by considering just $\min \Delta (\{e,g\})$.
Since $\{e,g\}$ is a subset of $G_0$ and, by Remark \ref{rem_2el}, is not half-factorial, it follows that $\min \Delta (\{e,g\}) \ge \min \Delta(G_0) \ge (2n^2)^{1/3}$.

Thus, by Theorem \ref{aux_thm_2elements}, we get that
\[\s_e(g)=\frac{n-c_1^{(g)}}{c_2^{(g)}}\]
for some positive integers $c_1^{(g)}, c_2^{(g)}$ such that 
\[
\frac{n- c_1^{(g)} -c_2^{(g)}}{c_1^{(g)}c_2^{(g)}}
\] 
is integral. And, we have
\[
(2n^2)^{1/3} \le \min \Delta (\{e,g\}) =
\frac{n- c_1^{(g)} -c_2^{(g)}}{c_1^{(g)}c_2^{(g)}} <
\frac{n}{c_1^{(g)} c_2^{(g)}}.
\]
It thus follows that both $c_1^{(g)}$ and $c_2^{(g)}$ -- indeed even their product -- are smaller than $(n/2)^{1/3}$.

Next, we consider $\min \Delta (\{e,h\})$.
Let $b' = \s_{de}(h)$; note that by the definition of $d$ this is well-defined,  $b' \in [1, n/d-1]$ is co-prime to $n/d$, and $b' \neq 1$ by the assumption $\s_{e}(h) \neq d$.
By Lemma \ref{aux_lem_higherorder}, we know that
\[
\min \Delta (\{e,h\})= \min \Delta (\{de,h\}) =
\min \Delta (\{e' ,b'e'\})
\]
with $e'=de$.
And, by  Remark \ref{rem_2el}, we also know that $\{e' ,b'e'\}$ is not half-factorial.
Thus,  $\min \Delta (\{e' , b'e'\}) \ge  \min \Delta(G_0) \ge (2n^2)^{1/3}$.
Again, by Theorem \ref{aux_thm_2elements}, applied to the group of order $n/d$ generated by $e'$, and an argument similar to the one before, we get
\[
b'=\frac{n/d-c_1^{(b')}}{c_2^{(b')}}
\]
with some positive integers $c_1^{(b')}$ and $ c_2^{(b')}$ that are smaller than  $(n/2)^{1/3}/d$.
We observe that
\[
\s_e(h)= d \s_{de}(h) =\frac{n-dc_1^{(b')}}{c_2^{(b')}}.
\]

Finally, we consider $\min \Delta (\{g,h\})$.
We set
\[
b''= \s_{dg}(h),
\]
and get as above
\[
\min \Delta (\{g,h\})= \min \Delta (\{dg,h\})=
\min \Delta (\{g', b''g' \})
\]
where $g'=dg$. And, again,
\[
b''=\frac{n/d-c_1^{(b'')}}{c_2^{(b'')}}
\]
with some positive integers $c_1^{(b'')}$ and $c_2^{(b'')}$ smaller than
$(n/2)^{1/3}/d$.

We compute $\s_g(h)$ in two ways. We have, as for $\s_e(h)$,
\[
\s_g(h)=\frac{n-dc_1^{(b'')}}{c_2^{(b'')}}.
\]
Yet, since by \eqref{eq_psigma} we have $\s_g(h) \equiv \s_g(e) \s_e(h) \pmod{n}$ and by the just obtained results, we also have
\[
\s_g(h) \equiv
\left( \frac{n-c_2^{(g)}}{c_1^{(g)}} \right)
\left(\frac{n-dc_1^{(b')}}{c_2^{(b')}}\right) \pmod{n}.
\]

Using the two conditions for $\s_g(h)$ it follows that
\[
-dc_1^{(b'')}c_1^{(g)} c_2^{(b')} \equiv
 c_2^{(g)} d c_1^{(b')}c_2^{(b'')} \pmod{n}
\]
and so
$dc_1^{(b'')}c_1^{(g)} c_2^{(b')} + d c_2^{(g)}c_1^{(b')}c_2^{(b'')}$,
which is a positive integer, has to be at least $n$.
Yet, this contradicts the conditions on the sizes of the involved
parameters obtained above.
\end{proof}

The next result complements the just obtained one.
We consider sets containing two generating elements, and elements of a special form.
Note that the result is trivial for $n \le 4$. 

\begin{lemma}
\label{specialdivisors}
Let $G$ be a finite cyclic group.
Let $e,g\in G$ be two distinct generating elements of $G$ such that
$\min \Delta(\{e,g\})> |G|^{1/2}$.
Let $D \subset G$ such that for each $h\in D$
we have that $\s_e(h)=\s_g(h)$ and this common value is a divisor of $|G|$.
Then, at least one of the following two assertions holds:
\begin{enumerate}
\item For some positive integers $c_1,c_2$ such that the triple
$(c_1,c_2, \gcd( \sigma_e(D)))$ belongs to $M (|G|)$,
\[
\min \Delta(\{e,g\}\cup D) = \gcd\left\{ \frac{|G|-c_1-c_2}{c_1c_2}, \,
\left( \frac{|G|-c_1 -c_2}{c_2 |G| }\right) \gcd(\sigma_e(D)) \right\}.
\]
\item $\min \Delta(\{e,g\}\cup D)\le |G|^{1/2}  + \log |G| - 1$.
\end{enumerate}

Moreover, if $\s_e(D)$ is totally ordered with respect to divisibility,
then we are in case (i).
\end{lemma}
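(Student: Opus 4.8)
The plan is to set $n = |G|$ and leverage the structure theory already at hand: since $\min \Delta(\{e,g\}) > n^{1/2}$ and $\{e,g\}$ is not half-factorial (Remark \ref{rem_2el}), Theorem \ref{aux_thm_2elements} applies and yields positive integers $c_1, c_2$ with $\sigma_e(g) = (n-c_1)/c_2$, with $(n-c_1-c_2)/(c_1c_2)$ integral, and $\min \Delta(\{e,g\}) = (n-c_1-c_2)/(c_1c_2)$. Also recall, as noted after Definition \ref{def_sets}, that then $\sigma_e(g)$ is invertible modulo $n$ with inverse $(n-c_2)/c_1$, i.e.\ $\sigma_g(e) \equiv (n-c_2)/c_1 \pmod n$. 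The key computational engine is Lemma \ref{aux_lem_geroldinger}: with $e$ as distinguished generating element,
\[
\min \Delta(\{e,g\} \cup D) = \gcd \left\{ \frac{\sigma_e(A) - n}{n} \colon A \in \ac(\{e,g\}\cup D) \right\}.
\]
So the task reduces to understanding, modulo the needed gcd, the possible values of $\sigma_e(A)/n - 1$ over minimal zero-sum sequences $A$.

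First I would identify the ``obvious'' contributions to this gcd. The minimal zero-sum sequence $e^{c_2}g^{c_1}$ (which is zero-sum since $\sigma_e(e^{c_2}g^{c_1}) = c_2 + c_1\sigma_e(g) = c_2 + c_1(n-c_1)/c_2 \cdot$ ... more precisely $= c_2 + c_1 \sigma_e(g)$, and one checks this is $n$ times the integer $(n + c_1 c_2 - c_1 c_2)/\cdots$; the standard computation from Theorem \ref{aux_thm_2elements} gives $\sigma_e(e^{c_2}g^{c_1})/n - 1 = (n-c_1-c_2)/(c_1c_2)$) contributes $\min\Delta(\{e,g\})$ to the gcd. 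For the divisor elements: write $m = \gcd(\sigma_e(D))$ and $D_0 = \sigma_e(D)$. For each $h \in D$ with $\sigma_e(h) = \sigma_g(h) = d_h \mid n$, the element $h$ appears in zero-sum sequences with multiplicity a multiple of $\mathsf{n}_{\{e\}}(h) = (n/d_h)/\gcd(\cdots)$; combining $h$ with copies of $e$ gives the sequence $(h) e^{n - d_h} \cdot$ (suitably) — more carefully, the minimal zero-sum sequences mixing $e$ with a single $h$ and with $e,g,h$ together need to be enumerated. The point I would push is that because $\sigma_e(h) = \sigma_g(h)$, replacing $e \leftrightarrow g$ in a sequence does not change $\sigma_e$ modulo $n$ in the relevant way, which collapses the analysis: effectively the ``mixed'' sequences involving $D$ behave as in the two-element analysis applied to $\{e\} \cup D$, and one extracts that the divisor elements contribute exactly $\left(\frac{n-c_1-c_2}{c_2 n}\right) m$ to the gcd (this is where one must verify $(c_1, c_2, m) \in M(n)$, i.e.\ that $\frac{(n-c_1-c_2)m}{c_2 n}$ is a positive integer — which should follow from $n/m \mid$ something, perhaps after adjusting $c_1,c_2$ within their congruence class). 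Assembling, the gcd of all these ``structured'' sequences is exactly the quantity in case (i).

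The main obstacle — and the reason for the dichotomy — is controlling the minimal zero-sum sequences $A$ that genuinely involve \emph{both} generating elements $e$ and $g$ together with elements of $D$, or that involve large multiplicities of several elements of $D$ simultaneously. These are the sequences not captured by the two-element reductions. My approach here would be: suppose such an $A$ exists whose contribution $\sigma_e(A)/n - 1$ does \emph{not} lie in the lattice generated by the structured contributions above. Then, following the template of the proof of Proposition \ref{3elements_improved}, I would derive size bounds on the multiplicities $\mathsf{v}_e(A), \mathsf{v}_g(A)$ and the valuations of the $D$-elements — using that all the relevant $c_i$-type parameters are $O(n^{1/2})$ because $\min\Delta$ is large — and a congruence relating $\sigma_e(A) \equiv \sigma_g(A) \cdot \sigma_e(g) \pmod n$ via \eqref{eq_psigma}. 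This would force a positive integer bounded by roughly $n^{1/2}\log n$-type quantities to be $\geq$ (something forcing $\min\Delta(\{e,g\}\cup D) \le n^{1/2} + \log n - 1$), giving case (ii). The slightly awkward $\log n$ term presumably enters through a cross-number / Theorem \ref{thm_boundlcn}-type estimate bounding how many distinct divisor-elements can effectively appear, or through bounding $\mathsf{k}(G) \le \log n$ from \eqref{star33}.

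For the final ``moreover'' clause: if $\sigma_e(D)$ is totally ordered by divisibility, then $\langle D \rangle$ together with $e$ has a particularly rigid structure — all elements of $D$ lie in a single chain of cyclic subgroups — and I would argue directly that no ``exotic'' minimal zero-sum sequence of the problematic type can arise, because any minimal zero-sum sequence involving two elements $h_1, h_2 \in D$ with $\sigma_e(h_1) \mid \sigma_e(h_2)$ can be shown to reduce (via Lemma \ref{aux_lem_higherorder} applied repeatedly, peeling off the element of largest order) to one involving only the $\sigma_e$-smallest such element together with $e, g$; hence the gcd in Lemma \ref{aux_lem_geroldinger} is realized entirely by the structured sequences, landing us in case (i). I expect the chain condition to make the combinatorics of ``which subsequences sum to zero'' transparent enough that the case-(ii) escape is impossible. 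The hardest single step, I anticipate, is the bookkeeping in the general case showing that no minimal zero-sum sequence mixing $e$, $g$, and several $D$-elements contributes anything outside the asserted gcd unless the minimal distance is forced to be small.
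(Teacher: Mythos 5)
Your plan sets up the right framework (Lemma \ref{aux_lem_geroldinger} with $e$ as distinguished generator, Theorem \ref{aux_thm_2elements} for $c_1,c_2$), but the core of the argument is missing, and the substitute you sketch does not obviously work. First, the ``structured'' sequences you point to are the wrong ones: $e^{c_2}g^{c_1}$ is not a zero-sum sequence in general (the zero-sum one is $e^{c_1}g^{c_2}$, and its $\sigma_e$-value is exactly $n$, so it contributes $0$), and $h\,e^{\,n-\sigma_e(h)}$ likewise has $\sigma_e$-value $n$ and contributes nothing. The sequences that actually produce the second generator of the gcd are $h\,g^{\,n-\sigma_e(h)}$, whose value is $\frac{n-c_1-c_2}{c_2}-\sigma_e(h)\frac{n-c_1-c_2}{nc_2}$; without them you cannot establish the divisibility of $\min\Delta(\{e,g\}\cup D)$ into the asserted gcd, i.e.\ one half of case (i). Second, and more seriously, the mechanism that confines \emph{every} mixed minimal zero-sum sequence to the lattice generated by these values -- or else forces case (ii) -- is absent. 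The paper's proof observes that $e^{c_1}g^{c_2}\nmid A$ forces $\mathsf{v}_g(A)<c_2$ or $\mathsf{v}_e(A)<c_1$, and then replaces the $D$-part of $A$ by a power of $e$ (map $f_e$) or of $g$ (map $f_g$); the crucial identity $\sigma_e(f_g(A))-\sigma_e(A)=\sigma_e(F')\frac{n-c_1-c_2}{c_2}$ is what makes the term $\gcd(\sigma_e(D))\frac{n-c_1-c_2}{c_2 n}$ appear. The bound in case (ii) has a very specific source: if the replacement destroys minimality, one considers $B=Ae^{n\ell}$, which has the two lengths $1+\ell$ and $\ell'+\ell$ with $2\le\ell'\le \mathsf{v}_g(A)+\ko(F)\le c_2+\ko(G)\le n^{1/2}+\log n$, whence $\min\Delta\le n^{1/2}+\log n-1$. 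Your proposal to instead imitate the congruence-plus-size argument of Proposition \ref{3elements_improved} is not worked out: that argument compares $\sigma_g(h)$ computed two ways for a \emph{single} third element, and it is unclear what congruence you would write for a minimal zero-sum sequence containing $e$, $g$, and several elements of $D$ simultaneously; nothing in your sketch produces the specific threshold $n^{1/2}+\log n-1$.

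The ``moreover'' clause also needs a concrete criterion you do not supply. The paper proves that $f_e(A)$ is minimal if and only if $\sigma_e(F)\le n$ (the ``only if'' direction uses $g^{v_i}e^{\,n-v_i\sigma_e(g)}\mid A_i$ and $v<c_2$), and then notes that when $\sigma_e(D)$ is a divisor chain, $\sigma_e(F)>n$ would force a zero-sum subsequence of $F$, contradicting minimality of $A$; hence the case-(ii) escape never occurs. Your alternative -- repeatedly applying Lemma \ref{aux_lem_higherorder} to ``peel off the element of largest order'' -- is not a valid substitute as stated: that lemma modifies the whole set $G_0$ (replacing $g$ by $\mathsf{n}_{G_0}(g)g$) rather than transforming an individual minimal zero-sum sequence, and it does not by itself control sequences containing two or more elements of $D$ together with both $e$ and $g$. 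So the plan identifies the right tools and the right shape of the answer, but the two decisive steps -- the replacement construction with its minimality dichotomy, and the chain criterion $\sigma_e(F)\le n$ -- are genuine gaps.
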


\begin{proof} We set $n=|G|$ and assume $n \ge 5$ as otherwise the result is trivial. By Theorem \ref{aux_thm_2elements},
we know that $\s_e(g)=(n-c_1)/c_2$ for positive integers $c_1$ and
$c_2$ such that
\[
\min \Delta(\{e,g\}) = \frac{n-c_1-c_2}{c_1c_2}.
\]
Our goal is to apply Lemma \ref{aux_lem_geroldinger} to determine $\min \Delta(\{ e, g \} \cup D )$.

We consider a minimal zero-sum sequence over $\{e,g\}\cup D$ that actually contains some element of $D$; so let
\[
A\in \mathcal{A}(\{e,g\}\cup D)\setminus \mathcal{A}(\{e,g\}).
\]
We note that $e^{c_1}g^{c_2} \nmid A$ (as sequences) since $e^{c_1}g^{c_2}$ is a non-empty zero-sum sequence and thus, $A$ being a minimal zero-sum sequence, would have to equal $A$, which in turn would contradict our assumption.

So, we know that $\mathsf{v}_g(A) < c_2$ or $\mathsf{v}_e(A) < c_1$.
First, we assume that $v = \mathsf{v}_g(A) < c_2$.
Let
\[
A = g^vF  
\]
and define $f_e(A)= g^ve^{\sigma_e(F)}$.
Note that $\sigma_e(A) = \sigma_e(f_e(A))$ and thus $f_e(A)$ is a zero-sum sequence (over $\{e,g\}$).

We, first, argue that if $f_e(A)$ is not a minimal zero-sum sequence, then we are in case (ii).
We write $F= \prod_{i=1}^{\ell}(d_ie)$ and consider the zero-sum sequence $B= Ae^{n\ell}$. It is evident that $1+\ell$ is a length of $B$.
Noting that
\[
B= f_e(A)\prod_{i=1}^{\ell} \bigl(e^{n-d_i} (d_ie) \bigr),
\]
we see that $\ell' +\ell$ is a length of $B$ for each $\ell' \in \Lo(f_e(A))$.

We establish an upper bound for $\max \mathsf{L}(f_e(A))$. Namely
\[
\max \mathsf{L}(f_e(A)) \le v + \frac{\s_e(F)}{n} = v + \ko(F)
\le c_2 + \mathsf{k}(G)\le n^{1/2}  + \log n,
\]
for the last inequality we use inequality \eqref{star33}. 
Now, since $1+\ell$ and $\max \mathsf{L}(f_e(A)) + \ell$ are lengths of $B$, it follows that if $\max \mathsf{L}(f_e(A))>1$, then $\min \Delta(\Lo(B))\le n^{1/2}  + \log n - 1 $.
Thus, in case $f_e(A)$ is not a minimal zero-sum sequence we get the bound claimed in (ii).

So, we may assume that for each minimal zero-sum sequence $A\in \mathcal{A}(\{e,g\}\cup D)\setminus \mathcal{A}(\{e,g\})$ with $\mathsf{v}_g(A) < c_2$ we have that $f_e(A)$ is a minimal zero-sum sequence.

Yet, for each minimal zero-sum sequence $A\in \mathcal{A}(\{e,g\}\cup D)\setminus \mathcal{A}(\{e,g\})$ with $\mathsf{v}_e(A) < c_1$, the exact same
argument works, interchanging the roles of $e$ and $g$ as well as $c_1$
and $c_2$.
However, there is one crucial point to observe.
Namely, denoting, for $A=e^wg^vF'$ with $F'\in \mathcal{F}(D)$,
the sequence  $e^wg^{v+\sigma_e(F')}$ by $f_g(A)$ -- the sequence obtained by the analogue of the replacement defining $f_e(A)$  -- we clearly have $\s_g(A)= \s_g(f_g(A))$. However, for our further analysis we actually need to understand the relation between $\s_e(A)$ and $\s_e(f_g(A))$ as we wish to apply Lemma \ref{aux_lem_geroldinger}
and we must not mix in our considerations $\s_e$ and $\s_g$, but consistently use one of the two.
Thus we observe that
\[
\sigma_e(f_g(A))-\sigma_e(A)= \sigma_e(F')
\left(\frac{n-c_1}{ c_2} - 1 \right).
\]
Note that $\gcd \sigma_e(D) \mid \sigma_e(F')$.

Thus, it follows that if all minimal zero-sum sequences remain minimal zero-sum sequences under the replacements, then (the equality by Lemma \ref{aux_lem_geroldinger})
\[
\min \Delta(\{e,g\}\cup D) = \gcd \left\{ \frac{\sigma_e(A)}{n} - 1 \colon A \in \mathcal{A}(\{e,g\}
\cup D) \right\}
\]
is a multiple of
\[
\gcd \left\{ \gcd \left\{ \frac{\sigma_e(A)}{n} - 1 \colon A \in \mathcal{A}(\{e,g\}) \right\}, \,
\gcd(\sigma_e(D)) \left( \frac{n-c_1 -c_2}{c_2 n} \right) \right\}.
\]

We observe that for each element $h \in D$, we have the minimal zero-sum sequence  $hg^{n-\sigma_e(h)}$, and that
\[\s_e(hg^{n-\sigma_e(h)}) = n \frac{n-c_1}{c_2}-\sigma_e(h)\frac{n-c_1-c_2}{c_2}\]
that is
\[\frac{\s_e(hg^{n-\sigma_e(h)})}{n}-1 =  \frac{n-c_1-c_2}{c_2}- \sigma_e(h)\frac{n-c_1-c_2}{nc_2}.\]
Moreover, we recall that by Lemma \ref{aux_lem_geroldinger} and Theorem \ref{aux_thm_2elements}
\[
\gcd \left\{ \frac{\sigma_e(A)}{n} - 1 \colon A \in \mathcal{A}(\{e,g\}) \right\} = \frac{n-c_1-c_2}{c_1c_2}.
\]
In combination this yields that $\min \Delta(\{e,g\}\cup D)$ indeed also divides
\[
\gcd\left\{ \frac{n-c_1-c_2}{c_1c_2}, \,
\left( \frac{n-c_1 -c_2}{c_2 n }\right) \gcd(\sigma_e(D)) \right\},
\]
establishing the result.

It remains to show the additional statement.
To this end we need to analyze under which conditions a minimal zero-sum sequence remains minimal
under $f_e$ or $f_g$.

It suffices to do so in one case, the other one being analogous.
Let $A$ be a minimal zero-sum sequence, containing an element of $D$, and assume that $\mathsf{v}_g(A) < c_2$.
(We use the notation $v$, $F$, and $f_e(A)$ with the same meaning as above.)
\medskip

\noindent
\textbf{Claim:}
The sequence $f_e(A)$ is a minimal zero-sum sequence if and only if
$\sigma_e(F)\le n$.
\smallskip

To establish this claim we first observe that if $\sigma_e(F)> n$, it is immediate that $f_e(A)$ is not a minimal zero-sum sequence.
Now, suppose $f_e(A)$ is not a minimal zero-sum sequence, say $f_e(A)= A_1A_2$ with non-empty zero-sum sequences $A_1$ and $A_2$.
Let $v_1$ and $v_2$ denote the multiplicities of $g$ in $A_1$ and $A_2$, respectively. We have  $v_1 + v_2 = v$.
Moreover, it follows that $g^{v_i}e^{n - v_i \s_e(g)}\mid A_i$ for $i\in \{1,2\}$; for the sake of formal correctness we observe that $n - v_i \s_e(g)$ is non-negative as $v_i \le v < c_2$ and $\s_e(g)=(n-c_1)/c_2$.
Thus, the multiplicity of $e$ in $f_e(A)$ is at least
$2n - (v_1 + v_2)\s_e(g)> n$; again, since $v<c_2$  and $\s_e(g)=(n-c_1)/c_2$.
Since $\vo_{e}(g(A))= \s_e(F)$, this establishes the claim.

Now, it suffices to note that if $\s_e(D)$ is totally ordered with respect to divisibility, we have $\sigma_e(F)\le n$ as otherwise $F$ would have a non-empty zero-sum subsequence.
\end{proof}

Now, we are ready to give the proof of the main result of this section.

\begin{proof}[Proof of Theorem \ref{2maximal}] Let $n=|G|$. To avoid some notational inconveniences, we assume -- this is no restriction  -- that $0\notin G_0$.
By Theorem \ref{aux_thm_2elements} we know that
$\s_e(g)=(n-c_1)/c_2$ with $c_1,c_2 \in \N$ such that
\[
\min \Delta(\{e,g\})= \frac{n-c_1-c_2}{c_1c_2}.
\]

Let $h\in G_0 \setminus \{e,g\}$. By Proposition \ref{3elements_improved} and in view of the condition on  $\min \Delta(G_0)$, we get that $\s_e(h)$ or $\s_g(h)$ is a divisor of $n$.
We assume $\s_e(h)=d$ is a divisor of $n$. This assumption introduces
an asymmetry in $e$ and $g$; yet, it actually dissolves in the course
of the argument.

By Lemma \ref{smallelements} and again by the condition on $\min \Delta(G_0)$, we know that if $d <(2n^2)^{1/3}$, then necessarily $\min \Delta(G_0\setminus \{h\})= \min \Delta(G_0)$.
Repeatedly applying this argument, we may assume without restriction that $G_0$ does not contain an element of this form.

Now, suppose that $d\ge (2n^2)^{1/3}$.
We consider the set $\{h,g\} \subset G_0$.
Its minimal distance is a multiple of the minimal distance of $G_0$.
We will assert that this is only possible if the former is $0$, that is $\{h,g\}$
is a half-factorial set.

By Lemma \ref{aux_lem_higherorder}, we get that $\{h,g\}$
has the same minimal distance as $\{h,dg\}$. Yet this is a subset
of $\langle dg \rangle$, i.e., a finite cyclic group of order
$n/d$, and thus 
\[
\min \Delta (\{h,dg\})\le n/d - 2<  (2n)^{1/3},
\] 
where we used the first statement of \eqref{GeroHami} for the first inequality.

Thus, $\min \Delta (\{h,dg\})=0$, which is only possible if $h=dg$ (cf.~Remark \ref{rem_2el}).
In particular, this means that $de=dg$, which resolves the above mentioned asymmetry.

Thus we have (without restriction) $G_0 = \{e,g\}\cup D$ such that for each $h \in D$ we have $\s_e(h) \mid n $ and $\s_e(h)e = \s_e(h)g$; and so also $\s_e(h)=\s_g(h)$.
The minimal distance of sets of this form has been determined in Lemma \ref{specialdivisors}, and it is precisely of the claimed form.

To get the `moreover'-statement it suffices to inspect the proof and to note that if
$\s_e(h)e=\s_e(h)g$ for each $h \in D$, then $\gcd (\s_e(D))e = \gcd (\s_e(D)) g$.
\end{proof}

We now come to the proof of the converse statement.

\begin{proof}[Proof of Proposition \ref{prop_2maximal}]
The proof is essentially a direct consequence of Lemma \ref{specialdivisors};
it merely remains to check two details.

First, as pointed out after Definition \ref{def_sets} the integrality of the fraction
$\frac{n-c_1-c_2}{c_1c_2}$ guarantees that for $e$ a generating element of $G$ the element
\[
g = \left( \frac{n-c_1}{c_2} \right) e
\]
is also generating.

Second,  the integrality of $d (n-c_1 -c_2)/(c_2 n)$ is equivalent
to $d(g-e)=0$, and thus $de=ge$. Hence, we can apply
Lemma \ref{specialdivisors} to the set $\{e,g,de\}$,
and get the required minimal distance.
\end{proof}

We underline the fact that the proof of Theorem \ref{2maximal}
is non-constructive, in the sense that it does not allow to extract
the structure of the set $G_0$; this is due to the non-constructiveness
of Lemma \ref{smallelements}. The following lemma partly resolves
this issue. Again, the result is trivial for groups of order at most $4$.

\begin{lemma}
\label{smalldetailed}
Let $G$ be a finite cyclic group.
Let $e,g\in G$ be two distinct generating elements of $G$ such that
$\min \Delta(\{e,g\})> |G|^{1/2}$, that is by Theorem \ref{2maximal}
\[
\s_e(g)=\frac{|G|-c_1}{c_2}
\]
for some $(c_1,c_2,|G|)\in M(|G|)$.
And, let $h\in G$.
Suppose  $\mD{\{e,g,h\}} \ge (2|G|^2)^{1/3}$. 
Then, $\s_{e}(h)=d$ or $\s_{g}(h)=d$ for some divisor $d$ of $|G|$, and this divisors verifies $de = dg$ or $d\mid \gcd(c_1, c_2)$.
\end{lemma}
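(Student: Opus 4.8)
The plan is to run essentially the same argument as in the proof of Theorem \ref{2maximal}, but to track the information more carefully in the step where Lemma \ref{smallelements} is invoked non-constructively, so as to pin down the divisor $d$. Set $n=|G|$ and write $G_0=\{e,g,h\}$. First I would dispose of the case that $h$ is of the special form appearing in Proposition \ref{3elements_improved}: since $\min \Delta(G_0)\ge (2n^2)^{1/3}$, Proposition \ref{3elements_improved} forces $\s_e(h)=d'$ or $\s_g(h)=d'$ for some divisor $d'$ of $n$. After possibly swapping the names of $e$ and $g$ (both are generating, and $\min \Delta(\{e,g\})> n^{1/2}$ is symmetric) we may assume $\s_e(h)=d$ with $d\mid n$. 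If $de=dg$ there is nothing more to prove, so assume $de\ne dg$; the task is then to show $d\mid \gcd(c_1,c_2)$.

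The key step is to analyse $\min \Delta(\{e,g,h\})$ directly via Lemma \ref{aux_lem_geroldinger}, using the machinery of the proof of Lemma \ref{specialdivisors}. Since $de\ne dg$ the element $h=de$ does \emph{not} satisfy $\s_e(h)e=\s_e(h)g$, so I cannot apply Lemma \ref{specialdivisors} as a black box; instead I would redo its local analysis for the single element $h$. Consider a minimal zero-sum sequence $A\in\ac(\{e,g,h\})\setminus\ac(\{e,g\})$. As in Lemma \ref{specialdivisors}, $e^{c_1}g^{c_2}\nmid A$, so $\vo_g(A)<c_2$ or $\vo_e(A)<c_1$; in either case the replacement $f_e$ (resp.\ $f_g$) either destroys minimality — in which case, by the argument of Lemma \ref{specialdivisors} bounding $\max\Lo(f_e(A))\le c_i+\ko(G)\le n^{1/2}+\log n$, we would get $\min\Delta(G_0)\le n^{1/2}+\log n-1<(2n^2)^{1/3}$, a contradiction — or the sequence remains a minimal zero-sum sequence over $\{e,g\}\cup\{d e\}$ with a controlled shift in $\s_e$. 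Concretely, applying $f_e$ to the minimal zero-sum sequence $h g^{\,n-d}= (de) g^{\,n-d}$ gives a minimal zero-sum sequence over $\{e,g\}$, and computing $\s_e\bigl(hg^{\,n-d}\bigr)/n-1$ exactly as in Lemma \ref{specialdivisors} shows that $\min\Delta(G_0)$ divides $d\cdot\frac{n-c_1-c_2}{c_2 n}$; symmetrically, using $f_g$ and the shift identity $\s_e(f_g(A))-\s_e(A)=\s_e(F')\bigl(\frac{n-c_1}{c_2}-1\bigr)$, one gets that $\min\Delta(G_0)$ also divides $d\cdot\frac{n-c_1-c_2}{c_1 n}$ up to the factor $\frac{n-c_1}{c_2}$, which is a unit mod $n$.

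Putting these divisibility constraints together with $\min\Delta(G_0)\ge (2n^2)^{1/3}$ and the bounds $c_1c_2<(n/2)^{1/3}$, $c_1,c_2<(n/2)^{1/3}$ coming from Theorem \ref{aux_thm_2elements} applied to $\{e,g\}$ (exactly as at the start of the proof of Proposition \ref{3elements_improved}), I expect to force that $d$ must be small enough, and in particular that $d$ divides both $c_1$ and $c_2$: the point is that $\min\Delta(G_0)$ is a common multiple of $\frac{n-c_1-c_2}{c_1c_2}$-type quantities scaled by $d/(c_i n)$, and for this to be an integer as large as $(2n^2)^{1/3}$ one needs $d$ to absorb the denominators, i.e.\ $d\mid c_1$ and $d\mid c_2$. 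The main obstacle will be bookkeeping in this last step — making the congruence/size estimates tight enough to conclude $d\mid\gcd(c_1,c_2)$ rather than merely some weaker divisibility — and handling cleanly the case distinction $\vo_g(A)<c_2$ versus $\vo_e(A)<c_1$, since the two replacements $f_e$ and $f_g$ interact differently with $\s_e$; here one must, as in Lemma \ref{specialdivisors}, be careful to express everything consistently in terms of $\s_e$ and never mix $\s_e$ with $\s_g$.
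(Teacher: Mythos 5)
There is a genuine gap, and it starts at the concrete computation you propose. Once you assume $h=de$ with $de\neq dg$, the sequence $hg^{\,n-d}=(de)g^{\,n-d}$ is \emph{not} a zero-sum sequence: its sum is $(\s_g(h)+n-d)g$, and $\s_g(h)\equiv d\tfrac{n-c_2}{c_1}\not\equiv d \pmod{n}$ precisely because $de\neq dg$ (recall from the proof of Proposition \ref{prop_2maximal} that integrality of $d(n-c_1-c_2)/(c_2n)$ is \emph{equivalent} to $de=dg$). So the divisibility ``$\mD{\{e,g,h\}}$ divides $d\cdot\frac{n-c_1-c_2}{c_2n}$'' that you want to extract is built on a sequence that does not exist in the case you are actually treating, and the quantity itself need not even be an integer there. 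The machinery of Lemma \ref{specialdivisors} genuinely requires $\s_e(h)=\s_g(h)$ and cannot be patched locally for a single $h$ with $de\neq dg$. Moreover, even granting some divisibility constraints of this flavour, your final inference — that forcing integrality of $d(n-c_1-c_2)/(c_in)$ at size $\ge(2n^2)^{1/3}$ makes $d$ ``absorb the denominators'', i.e.\ $d\mid c_1$ and $d\mid c_2$ — is not valid: conditions of this shape translate into statements like $\tfrac{n}{d}\mid(c_1+c_2)$, not $d\mid c_i$, and you yourself flag this step as unfinished bookkeeping; it is in fact the heart of the lemma.

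The paper's route is different and is what makes the conclusion reachable: since $de\neq dg$, it passes to the two-element set $\{de,dg\}$ inside the subgroup $\langle de\rangle$ of order $n/d$ (via Lemma \ref{aux_lem_higherorder} and Remark \ref{rem_2el}), applies Theorem \ref{aux_thm_2elements} there to get $\s_{de}(dg)=(n/d-b_1)/b_2$, and compares this with $\s_e(g)=(n-c_1)/c_2$ through the relation $\tfrac{n-c_1}{c_2}-k\tfrac{n}{d}=\tfrac{n/d-b_1}{b_2}$. Size estimates turn the resulting congruence into the exact identity $c_1b_2=c_2b_1$, whence $d-kc_2=c_2/b_2=c_1/b_1=d_0$ is a divisor of $\gcd(c_1,c_2)$. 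Finally, the decisive trick is to form the integer combination $D=d_0D_1-(d/d_0)D_2$ of the two known minimal distances $D_1=\mD{\{e,g\}}$ and $D_2=\mD{\{de,dg\}}$: it equals $\tfrac{(d-d_0)(c_1+c_2)}{c_1c_2}\le 2d<(2n^2)^{1/3}$, yet it is a multiple of $\mD{\{e,g,de\}}$, so $D=0$ and $d=d_0\mid\gcd(c_1,c_2)$. Your proposal contains no analogue of this subgroup comparison or of the vanishing-combination argument, and without them the claimed divisibility $d\mid\gcd(c_1,c_2)$ does not follow.
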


\begin{proof}
Let $n=|G| \ge 5$. If neither $\s_{e}(h)$ nor $\s_g(h)$ is a divisor of $n$ we are done by Proposition \ref{3elements_improved}. Since the condition $de = dg$ or $d\mid \gcd(c_1, c_2)$ is symmetric in $e$ and $g$, recall that $e= \frac{n-c_2}{c_1}g$, we may thus assume that $h=de$ for some divisor $d\mid n$.
Suppose $dg \neq de$.
By Lemma \ref{aux_lem_higherorder} the minimal distances of $\{de,g\}$ and $\{de,dg\}$ are equal,
and -- by Remark \ref{rem_2el} it is non-zero -- at least $(2n^2)^{1/3}$, the minimal distance of
$\{e,g,de\}$.

Note that $\{de,dg\}$ is contained in a cyclic group of order $n/d$.
Let $b=\s_{de}(dg)$. It is clear that $\gcd(b,n/d)=1$ as $g$ is a generating element of $G$.
By Theorem \ref{aux_thm_2elements}, it follows that
$b= (n/d - b_1)/b_2$ for some  $(b_1,b_2,n/d)\in M(n/d)$.

We observe that $b = (d \s_e(g) - kn)/d$ for a suitable integer $k$.
Thus, one condition on $c_1,c_2$ and $b_1,b_2$ we obtain is that
\begin{equation}
\label{eq_smalldetailed}
\frac{n-c_1}{c_2} - k \frac{n}{d} = \frac{n/d - b_1}{b_2}.
\end{equation}
Yet this is is not all. In addition, $b_1,b_2$ and $c_1,c_2$
need to be sufficiently small to guarantee that the respective
minimal distances of $\{e,g\}$ and $\{de,g\}$,
which we know in terms of the $c_i$'s and the $b_i$'s, resp.,
by Theorem \ref{aux_thm_2elements}, are at least $(2n^2)^{1/3}$.
Multiplying the just obtained equation by $c_2b_2$, we get that
\[c_1b_2 \equiv c_2b_1 \pmod{n/d}.\]

We now prove that $c_1b_2$ and  $c_2b_1$ are in fact equal.
Assume to the contrary that they are not equal. It follows that at least one of
$b_1,b_2,c_1,c_2$ is  at least $(n/d)^{1/2}$.
If it is $b_1$ or $b_2$, we get
\[
\mD{\{de,g\}}=\mD{\{de,dg\}} =  \frac{n/d -b_1-b_2}{b_1b_2}< (n/d)^{1/2} \le n^{1/2}< (2n^2)^{1/3},
\]
a contradiction. So, assume it is $c_1$ or $c_2$. This yields
\[
\mD{\{e,g\}} = \frac{n-c_1-c_2}{c_1c_2} <  \frac{n}{(n/d)^{1/2}}= (nd)^{1/2}.
\]
This does not right away give a contradiction,
as so far we have no information on the size of $d$ relative to $n$.
However, since $\mD{\{de,g\}}=\mD{\{de,dg\}}$ and $\{de,dg\}$ is contained in a cyclic group of order $n/d$ we have, by the general bound \eqref{GeroHami}, that $\mD{ \{de,g\}} \le n/d -2$. Thus, it follows
\[
\mD{\{e,g,de\}} < \min\{(nd)^{1/2}, n/d \} \le n^{2/3},
\]
contradicting our assumption.

We in fact thus have
\[c_1b_2 = c_2b_1.\]
Plugging this into \eqref{eq_smalldetailed},
we get after some computation
\[d - kc_2 = \frac{c_2}{b_2}.\]
Since as just established $c_2/b_2= c_1/b_1$, we also have \((d-kc_2) = c_1/b_1\),
implying that
\[
(d - kc_2) \mid \gcd(c_1,c_2).
\]
So, $d= d_0 + kc_2$ for some divisor $d_0$ of $\gcd(c_1,c_2)$.
We point out that $d_0 \mid d$, and that $b_i = c_i/d_0$ for $i\in \{1,2\}$.

Thus we can rewrite
\[
b = \frac{n/d-b_1}{b_2} = \frac{n/d - c_1/d_0}{c_2/d_0}
\]
and the minimal distance of $\{de,dg\}$ and thus  $\{de,g\}$ as
\[
D_2 = \frac{n/d - c_1/d_0 - c_2/d_0}{ c_1c_2/d_0^2}
=  \frac{d_0^2}{d}  \left( \frac{n - (c_1 + c_2) d/d_0}{ c_1c_2} \right).
\]
We put $D_1 = \frac{n - c_1 -c_2}{c_1c_2}$, the minimal distance
of $\{e,g\}$, and compute
\[
D = d_0 D_1 - \frac{d}{d_0} D_2.
\]
Recall that the minimal distance of $\{e,de,g\}$ divides $\gcd\{D_1,D_2\}$, and thus $D$ is a multiple of $\mD{\{e,de,g\}}$.

We note that
\[D= \frac{(d-d_0)(c_1+c_2)}{c_1c_2},\] and this is at most $2d$.
Since as mentioned above $n/d$ is at least $(2n^2)^{1/3}$, we get
that $2d$ is less than $(2n^2)^{1/3}$.
So, the only way that $D$ can be a multiple of $\mD{\{e,de,g\}}$ is
that  $D=0$. This means that  $d= d_0$. In view of
$d_0 \mid \gcd(c_1,c_2)$, as indicated above, the claim follows.
\end{proof}

\section{Main results}
\label{sec_proofs}

In this section, we formulate and prove our main results.
We start with a lemma, whose proof is mainly a summary of
the preceding results; for clarity and to simplify the subsequent discussion, we include a few redundant points.

\begin{lemma}
\label{main_lem_ul}
Let $G$ be a finite cyclic group.
\begin{enumerate}
\item For each divisor $m$ of $|G|$, we have
\[
\In{m}\subset \Dast{G},
\]
\item We have the inclusion
\[
\Dast{G} \cap \N_{> |G|^{1/2}} \subset
\bigcup_{m\mid  |G|} \Hud{m},
\]
\item We have the inclusion
\[
\Dast{G} \cap \N_{\ge (2|G|^2)^{1/3}} \subset
\In{|G|} \cup   \bigcup_{m \mid  |G|,\ m \neq |G|} \Hud{m} .
\]
\end{enumerate}
\end{lemma}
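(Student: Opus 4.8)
The plan is to derive the three inclusions by assembling the results of Sections~\ref{sec_aux} and~\ref{non-LCN}, with essentially no new computation. Throughout I would use $\mD{\cdot}=\gcd\Delta(\cdot)$ from~\eqref{eq_min=gcd}, and first dispose of $|G|\le 2$: there every subset is half-factorial, so $\Dast{G}=\emptyset$ and $\In{m}=\emptyset$ for each $m\mid|G|$, and all three assertions are vacuous. So assume $|G|\ge 3$; then $|G|^{1/2}>\log|G|$ and $(2|G|^2)^{1/3}>|G|^{1/2}>\log|G|$. For (i), if $\In{m}\ne\emptyset$ then $m\ge 3$, and I would exploit that $G$ has a cyclic subgroup $H$ of order $m$. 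Given $x\in\In{m}$, choose a triple $(c_1,c_2,d)\in M(m)$ realising $x$ in the sense of Definition~\ref{def_sets} and a generator $e$ of $H$; Proposition~\ref{prop_2maximal}, applied inside $H$, produces the explicit set $G_0=\{e,\,((m-c_1)/c_2)e,\,de\}\subset H\subset G$ with $\mD{G_0}=x$. Since $\bc(G_0)$ is a divisor-closed submonoid of $\bc(G)$ and $x\ge 1$, this gives $x\in\Dast{G}$.

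For (ii) and (iii) the common engine is a reduction to the two-element coprime case. Suppose $d=\mD{G_0}$ with $\Delta(G_0)\ne\emptyset$, where $d>|G|^{1/2}$ (case (ii)) or $d\ge(2|G|^2)^{1/3}$ (case (iii)); in either range $d>\log|G|$, so Theorem~\ref{prop_2subset} supplies a non-half-factorial subset $G_2=\{a,b\}\subset G_0$ of cardinality $2$. As $\bc(G_2)$ is divisor closed in $\bc(G_0)$ we have $\Delta(G_2)\subset\Delta(G_0)$, hence $d=\gcd\Delta(G_0)$ divides $\mD{G_2}$; in particular $\mD{G_2}\ge d$. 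Then I would apply Lemma~\ref{aux_lem_higherorder}(ii) twice: first with $g=a$ to replace $G_2$ by $\{\mathsf{n}_{G_2}(a)a,\,b\}\subset\langle b\rangle$, and then, working inside the cyclic group $\langle b\rangle$, with $g=b$ to reach a set $\{f,\,c'f\}$ with $f$ a generator of a cyclic group of order $m$ and $c'\in[1,m]$ coprime to $m$. Each replacement is a transfer homomorphism, so $\mD{\{f,c'f\}}=\mD{G_2}$, and since $G_2$ is not half-factorial the final set still has two elements (so $c'\ne 1$). Tracing the two replacements shows $m\mid|G|$, and that $m=|G|$ can happen only when $a$ and $b$ are both generators of $G$. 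Finally, $\mD{\{f,c'f\}}=\mD{G_2}\ge d>|G|^{1/2}\ge m^{1/2}$, so Theorem~\ref{aux_thm_2elements}, applied in the order-$m$ group, gives $\mD{G_2}=(m-c_1-c_2)/(c_1c_2)$ for positive integers $c_1,c_2\in[1,m]$; hence $\mD{G_2}\in\Hud{m}$, and since $d\mid\mD{G_2}$ and $\Hud{m}$ is closed under taking divisors, $d\in\Hud{m}$.

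This already proves (ii), where $m$ need only divide $|G|$. For (iii) I would split into two cases. If $G_0$ contains two generators of $G$, then Theorem~\ref{2maximal} applies directly to $G_0$ — its hypothesis is precisely $d\ge(2|G|^2)^{1/3}$ — and gives $d=\mD{G_0}\in\In{|G|}$. Otherwise $G_0$ has at most one generator, hence so does $G_2\subset G_0$, and by the reduction above $m$ is then a \emph{proper} divisor of $|G|$, so $d\in\Hud{m}$ with $m\mid|G|$, $m\ne|G|$. In either case $d\in\In{|G|}\cup\bigcup_{m\mid|G|,\,m\ne|G|}\Hud{m}$.

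The step I expect to be the real (though modest) obstacle is the double application of Lemma~\ref{aux_lem_higherorder}: one must check that the resulting $c'$ is genuinely coprime to $m$, that the reduced set does not collapse to a single element, and, most importantly, pin down exactly when $m=|G|$ — namely precisely when $a$ and $b$ are both generators — since this is the dichotomy that separates the term $\In{|G|}$ from the terms $\Hud{m}$ with $m$ a proper divisor in part (iii). Everything else is a direct appeal to~\eqref{eq_min=gcd}, Theorems~\ref{prop_2subset},~\ref{aux_thm_2elements},~\ref{2maximal}, Lemma~\ref{aux_lem_higherorder}, and Proposition~\ref{prop_2maximal}.
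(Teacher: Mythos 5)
Your proposal is correct and follows essentially the same route as the paper's proof: part (i) by applying Proposition \ref{prop_2maximal} inside each subgroup, and parts (ii)--(iii) by combining Theorem \ref{prop_2subset} with the reduction via Lemma \ref{aux_lem_higherorder} to a two-element set of equal order $m$, then invoking Theorem \ref{aux_thm_2elements} (and Theorem \ref{2maximal} for the case of two generating elements). The only differences are cosmetic: you apply part (ii) of Lemma \ref{aux_lem_higherorder} twice instead of part (i) once, phrase the dichotomy in (iii) in terms of $G_0$ rather than $G_2$, and spell out the size checks that the paper leaves implicit.
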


\begin{proof}
(i) It suffices to apply Proposition \ref{prop_2maximal} for each subgroup
of $G$.

(ii) Let $G_0 \subset G$ with $\mD{G_0} > |G|^{1/2}$.
By our condition on $\mD{G_0}$ we can apply Theorem \ref{prop_2subset} and thus get that there exists a subset $G_2\subset G_0$
of cardinality $2$ that is not half-factorial.
We are interested in the minimal distance of $G_2$.
By Lemma \ref{aux_lem_higherorder} we may assume that the two elements of $G_2$ have the same order; denote it by $m$. 
By Theorem \ref{aux_thm_2elements} it follows that $\mD{G_2}$ equals $(m-c_1-c_2)/(c_1 c_2)$
for some $(c_1,c_2,m)\in M(m)$.
Since $\mD{G_0}\mid \mD{G_2}$, we get that $\mD{G_0}\in \Hud{m}$, and the claim follows.

(iii) We start as in (ii). If $G_2$ does not consist of two elements
of order $|G|$, we know that $m$ as in (ii) is not $n$ and $\mD{G_0}\in \Hud{m}$ for a proper divisor $m$. If, however,
$G_2$ consists of two elements of order $|G|$, we can apply
Theorem \ref{2maximal} to get $\mD{G_0}\in \In{n}$.
\end{proof}

First, we give a description of the large elements of $\Dast{G}$ for $G$ a cyclic group of prime-power order; for earlier results see \cite[Corollary 1]{geroldinger90b} (essentially it asserts the inclusion in \ref{main_lem_ul}.2).

\begin{theorem}
\label{thm_pgroups}
Let $G$ be a cyclic group of prime-power order.
Then,
\[
\Dast{G}\cap  \N_{\ge (2|G|^2)^{1/3}}  =
\bigcup_{m\mid  |G|} \In{m} \cap \N_{\ge (2|G|^2)^{1/3}} .\]
\end{theorem}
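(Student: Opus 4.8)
The plan is to prove both inclusions, with the genuinely new content being the inclusion ``$\subseteq$''. For the reverse inclusion ``$\supseteq$'', note that for each divisor $m$ of $|G|$ the group $G$ has a (unique) subgroup of order $m$, which is itself a finite cyclic group, and part (i) of Lemma \ref{main_lem_ul} gives $\In{m}\subset \Dast{G}$; intersecting with $\N_{\ge (2|G|^2)^{1/3}}$ yields one inclusion immediately. (Strictly, $\In{m}\subset \Dast{G'}$ for $G'$ the order-$m$ subgroup, and $\Dast{G'}\subset\Dast{G}$ since a divisor-closed submonoid of $\bc(G')$ is a divisor-closed submonoid of $\bc(G)$.)

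For the inclusion ``$\subseteq$'', the starting point is part (iii) of Lemma \ref{main_lem_ul}, which gives
\[
\Dast{G}\cap\N_{\ge(2|G|^2)^{1/3}}\subset \In{|G|}\cup\bigcup_{m\mid|G|,\ m\neq|G|}\Hud{m}.
\]
The term $\In{|G|}$ is already of the desired form, so the work is to show that if $d\in\Hud{m}$ for some proper divisor $m$ of $|G|$ and $d\ge(2|G|^2)^{1/3}$, then in fact $d\in\In{m'}$ for some divisor $m'$ of $|G|$. Here I would use the prime-power hypothesis crucially: write $|G|=q^k$, so that the proper divisors $m$ of $|G|$ are totally ordered by divisibility, and a divisor-closed submonoid $\bc(G_0)$ with $\mD{G_0}=d$ large contains, by Theorem \ref{prop_2subset}, a non-half-factorial $2$-element subset $G_2$; passing to elements of equal order via Lemma \ref{aux_lem_higherorder} we may assume $G_2$ sits inside a subgroup $G'$ of order $m=q^j$, and $d=\mD{G_0}$ divides $\mD{G_2}$. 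The key extra step for prime-power order is that, unlike the general case, one can re-run the analysis of Section \ref{sec_spec} \emph{inside the cyclic group $G'$ of order $m$}: since $d\ge(2|G|^2)^{1/3}\ge(2m^2)^{1/3}$ and $G_2$ consists of two generating elements of $G'$, Theorem \ref{2maximal} applied to $G'$ (together with $G_0\cap G'$, which has minimal distance a multiple of $d$ hence still $\ge(2m^2)^{1/3}$) forces $\mD{G_0\cap G'}\in\In{m}$; and because $m\mid|G|$, one then deduces $d\in\In{m}$ using that $d\mid\mD{G_0\cap G'}$ and $\Hud{}$/$\In{}$ behave well under the divisibility present in the prime-power setting.

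The main obstacle, and the point where the prime-power assumption really earns its keep, is controlling the interaction between the minimal distance of the small submonoid $\bc(G_2)$ (which a priori only tells us $d\mid\mD{G_2}\in\Hud{m}$) and membership in the more restrictive set $\In{m}$. In the general cyclic case one cannot bridge $\Hud{m}$ and $\In{m}$ — this is exactly why Theorem \ref{thm_directweak} only goes down to $|G|/5$ — but for $|G|=q^k$ the divisors form a chain, so once we know $d$ is large we can enlarge $G_2$ back up to a full analysis within the subgroup $G'$ and apply Theorem \ref{2maximal} there rather than merely Theorem \ref{aux_thm_2elements}; the divisor-closedness of the submonoid and Lemma \ref{smallelements}/Lemma \ref{smalldetailed} are what let us discard the ``small'' elements and reduce to that clean situation. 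I expect the write-up to consist of: (1) the trivial ``$\supseteq$'' direction; (2) invoking Lemma \ref{main_lem_ul}(iii) to reduce to the $\Hud{m}$ terms; (3) for each such term, using Theorem \ref{prop_2subset} and Lemma \ref{aux_lem_higherorder} to land in a subgroup $G'$ of prime-power order $m$; (4) applying Theorem \ref{2maximal} within $G'$ to the divisor-closed submonoid $\bc(G_0\cap G')$ to conclude its minimal distance lies in $\In{m}$; and (5) a short divisibility argument, valid because the divisor lattice of $q^k$ is a chain, to push $d$ itself into $\bigcup_{m\mid|G|}\In{m}$.
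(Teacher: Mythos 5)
Your reduction ultimately rests on a step that does not exist. From $d\mid\mD{G_0\cap G'}$ and $\mD{G_0\cap G'}\in\In{m}$ you propose to conclude $d\in\In{m'}$ for some $m'\mid|G|$ by ``a short divisibility argument, valid because the divisor lattice of $q^k$ is a chain.'' But divisors of elements of $\In{m}$ land only in $\Hud{m}$, and the whole point of the theorem (and of the surrounding discussion: compare Lemma \ref{lem_n-68} with Lemma \ref{lem_n-410}, and the remark that only \emph{certain} divisors of known elements of $\Dast{G}$ occur) is that $\Hud{m}$ and $\bigcup_{m'}\In{m'}$ genuinely differ; no divisibility principle, prime-power hypothesis or not, bridges them, and the chain structure of the divisors of $q^k$ plays no role in such a bridge. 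There is also a problem one step earlier: the hypotheses of Theorem \ref{2maximal} for $G_0\cap G'$ are not verified. After the Lemma \ref{aux_lem_higherorder} replacement, the two equal-order elements produced from $G_2$ need not lie in $G_0$, so $G_0\cap G'$ may fail to contain two generating elements of $G'$; and since discarding the elements of $G_0$ outside $G'$ can change the minimal distance, $\mD{G_0\cap G'}$ could a priori be a proper multiple of $d$ (or the intersection could even be half-factorial), so equality with $d$ is exactly what is not available.

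The paper's proof never passes to a subset of $G_0$ and never argues by divisibility at this point. It chooses $G_2=\{f_1,f_2\}\subset G_0$ with $(\ord f_1,\ord f_2)$ lexicographically maximal, uses the prime-power hypothesis to show first that $G_0\subset\langle f_1\rangle$ and then that every element of $G_0$ of order exceeding $\ord f_2$ is of the form $df_1$ with $d\mid n$; repeated applications of Lemma \ref{aux_lem_higherorder} then collapse all these elements to a single element $d''f_1$ of order $\ord f_2$, yielding a set $G_0'$ with $\mD{G_0'}=\mD{G_0}$ --- an equality, not a divisibility --- which contains two generating elements of $\langle G_0'\rangle$. Theorem \ref{2maximal}, applied inside $\langle G_0'\rangle$, then places $\mD{G_0}$ itself in $\In{\ord f_2}$. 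This transfer of the \emph{whole} set into the smaller subgroup, with the minimal distance preserved exactly, is the idea your proposal is missing; intersecting with a subgroup and trying to recover $d$ afterwards cannot work.
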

\begin{proof}
Let $n$ denote the order of $G$.
The claim is trivial for $n \le 13$. 
One inclusion is merely Lemma \ref{main_lem_ul}.1; we establish the other one.
Let $G_0\subset G$ with $\min \Delta(G_0)\ge (2|G|^2)^{1/3}$, and
we have to show that it is an element of $\In{m}$ for some $m \mid |G|$.

Again, by Theorem \ref{prop_2subset} $G_0$ has a subset  $G_2=\{f_1,f_2\}$
of cardinality two that is not half-factorial.
Let $G_2$ be chosen such that $(\ord f_1, \ord f_2)$ is maximal
in the lexicographic order among all such two-element sets.
Note that this implies $\ord f_2 \le \ord f_1$.

First, we assert that $G_0 \subset \langle f_1 \rangle$.
Assume not. By the assumption on $G$, this means there is some $g\in G_0$ with $\ord g > \ord f_1$.

By our assumption on $(\ord f_1, \ord f_2)$ it follows that $\{g,f_1\}$ is half-factorial, since $(\ord g, \ord f_1)$ is greater than $(\ord f_1, \ord f_2)$.
By Lemma \ref{aux_lem_higherorder} and Remark \ref{rem_2el} it follows that, for
$d\mid n$ such that $\ord(dg)=\ord f_1 $, we have $dg = f_1$.
We assert that $\{g,f_2\}$ is not half-factorial, which violates again the maximality assumption on $(\ord f_1, \ord f_2)$.
To see this observe, for example, that by Lemma \ref{aux_lem_higherorder}
$\mD{\{g,f_2\}}=\mD{\{f_1,f_2\}}$.

So, we have established that $G_0 \subset \langle f_1 \rangle$.
Suppose $h\in G_0$ with $\ord h> \ord f_2$.
It follows that $\{f_1, h\}$ is half-factorial.
Thus, again, $d'f_1 =h$ for some $d'\mid n$.
It follows that the set of all elements of $G_0$ of order greater
than $f_2$, let us denote it by $G_3$, is contained in
$\{df_1\colon d\mid n\}$.
In particular, as $n$ is a prime-power, it follows,
by repeated applications of Lemma~\ref{aux_lem_higherorder},
that the minimal distance of $G_0$ is equal to the one of
$G_0' = (G_0 \setminus G_3) \cup \{d''f_1\}$ where $d''\mid n$
such that the order of $d'' f_1$ is equal to the one of $f_2$;
of course $d''$ might be equal to $1$ (and $G_3$ empty).

Yet, $G_0'$ fulfills the condition of Theorem \ref{2maximal} with respect to the group $\langle G_0' \rangle$.
\end{proof}

For finite cyclic groups of general order, the problem is more complex; the just given argument does not carry over directly.
We are only able to obtain a result for a smaller range of values. 

\begin{theorem}
\label{main_thm_direct}
Let $G$ be a finite cyclic group of order at least $2000$.
Then
\[
\Dast{G} \cap \N_{\ge \frac{|G|}{10}} =
  \bigcup_{m\mid |G|} \In{m} \cap \N_{\ge \frac{|G|}{10}} .
\]
\end{theorem}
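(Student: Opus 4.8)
The inclusion $\supseteq$ is immediate: by Lemma~\ref{main_lem_ul}.1 every $\In{m}$ with $m\mid |G|$ lies in $\Dast{G}$, so intersecting with $\N_{\ge |G|/10}$ gives the containment. The content is the reverse inclusion, and the plan is to feed Lemma~\ref{main_lem_ul}.3 into a short, explicit combinatorial check. Let $d\in\Dast{G}$ with $d\ge |G|/10$. Since $|G|\ge 2000$ one has $|G|/10\ge (2|G|^2)^{1/3}$ (indeed $2000$ is exactly the threshold for this inequality), so Lemma~\ref{main_lem_ul}.3 applies and gives either $d\in\In{|G|}$, in which case we are done, or $d\in\Hud{m}$ for some proper divisor $m$ of $|G|$. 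Assume the latter.

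Then $d$ divides a positive integer $(m-c_1-c_2)/(c_1c_2)$ with $c_1,c_2\in[1,m]$; writing $m=|G|/t$ with $t\ge 2$, and $k=(m-c_1-c_2)/(c_1c_2 d)\in\N$, we have $d=(m-c_1-c_2)/(c_1c_2 k)$. Since $d<m/(c_1c_2k)$ and $d\ge |G|/10$, this forces $c_1c_2k<10/t$; in particular $t\le 9$, and for each $t$ the triple $(c_1,c_2,k)$ lies in a short explicit list. Moreover $k\ge 2$ is possible only for $t\in\{2,3,4\}$ and then $c_1c_2\le 2$.

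It remains to verify in each case that $d$ lies in $\In{m'}$ for a suitable divisor $m'$ of $|G|$. When $k=1$ this is immediate: $d=(m-c_1-c_2)/(c_1c_2)\in\In{m}$, witnessed by $(c_1,c_2,m)\in M(m)$, for which the second fraction of Definition~\ref{def_sets} becomes $(m-c_1-c_2)/c_2$, a multiple of the first, so the defining $\gcd$ equals $d$. The remaining cases $k\ge2$ reduce $d$ to one of the seven shapes $(|G|-4)/4,\ (|G|-4)/6,\ (|G|-4)/8,\ (|G|-6)/8,\ (|G|-6)/6,\ (|G|-6)/9,\ (|G|-8)/8$ (according to the residue of $|G|$ modulo a small number, which is automatically compatible because $d$ is an integer of that form). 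For each one I would exhibit an explicit triple $(a_1,a_2,e)\in M(m')$ with $m'\in\{|G|,|G|/2,|G|/3,|G|/4\}$, choosing $e$ to be a \emph{proper} divisor of $m'$ so that the $\gcd$ defining $\In{m'}$ collapses to $d$ rather than to a multiple of it: e.g.\ $(|G|-4)/6=\gcd\{(|G|-4)/3,(|G|-4)/6\}$ via $(1,3,|G|/2)\in M(|G|)$, and $(|G|-8)/8=\gcd\{|G|/4-2,(|G|/4-2)/2\}$ via $(1,1,|G|/8)\in M(|G|/4)$; the other five are handled the same way. The divisibility side conditions needed ($4\mid |G|$, $6\mid |G|-4$, $8\mid |G|$, $9\mid |G|-6$, etc.) are in each case already implied by $d$ being an integer of the stated form.

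The main obstacle is precisely this last step: for the proper-divisor cases $k\ge2$ one must choose the right parameters, and especially the non-obvious choice of $e$ as a proper divisor of $m'$ that makes the defining $\gcd$ realize $d$ itself; this is the reflection, in the range down to $|G|/10$, of the phenomenon flagged in the introduction that only \emph{certain} divisors of already-known elements of $\Dast{G}$ reappear. Everything else — reducing to the short list of shapes and checking that $|G|\ge 2000$ suffices for the threshold inequality and for the trivial positivity estimates $m-c_1-c_2>0$ — is routine. A more structural alternative would reduce $G_0$, via the transfer homomorphisms of Lemma~\ref{aux_lem_higherorder} together with the three-element bounds of Proposition~\ref{3elements_improved} and Lemma~\ref{smalldetailed}, to a subset of a single cyclic subgroup containing two of its own generators, and then invoke Theorem~\ref{2maximal} in that subgroup; but there the obstruction is eliminating elements of incomparable order — exactly what the prime-power hypothesis renders costless in Theorem~\ref{thm_pgroups}, and what here seems to require the same case analysis anyway.
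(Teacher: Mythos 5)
Your proposal is correct and follows essentially the same route as the paper: the containment from Lemma \ref{main_lem_ul}.1, the threshold check $|G|/10\ge (2|G|^2)^{1/3}$ for $|G|\ge 2000$ feeding Lemma \ref{main_lem_ul}.3, and then an explicit finite enumeration of the possible elements of $\Hud{m}\setminus\In{m}$ of size at least $|G|/10$ (your seven shapes match the paper's list, after it absorbs $(|G|-4)/4$, $(|G|-6)/6$, $(|G|-8)/8$ into $\In{|G|/2}$, $\In{|G|/3}$, $\In{|G|/4}$), each resolved by exhibiting a triple in some $M(m')$ exactly as in the paper -- indeed your two worked witnesses $(1,3,|G|/2)\in M(|G|)$ and $(1,1,|G|/8)\in M(|G|/4)$ are the paper's own. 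Only the bookkeeping differs (your parametrization by $(t,c_1c_2,k)$ versus the paper's case split over $m\in\{|G|/2,|G|/3,|G|/4\}$), and the remaining five witnesses do go through as you assert.
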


It would not be too complicated to determine $\Dast{G} \cap \N_{\ge c |G|}$ 
with $c$ somewhat smaller than $1/10$.
It is not clear to us though until what point this type of result does hold. 
It seems possible that at some point  elements from $\Hud{m} \setminus \In{m}$ for $m \neq |G|$ will appear; 
this believe is based on the example given in Lemma \ref{lem_n-68}, 
yet as shown in Lemma \ref{lem_n-410} certainly not all those elements appear. 

For the largest elements of $\Dast{G}$, the condition
on $|G|$ could be considerably weakened (see Remark \ref{rem_sizen0}), 
and the problem of determining large elements of $\Dast{G}$ for a fixed, not too large, cyclic group $G$
is fairly manageable. It seems thus feasible, though possibly very tedious,
to remove this condition.

\begin{proof}
Let $n$ denote the order of the group. We observe that by our assumption on $n$, we have that $n/10 \ge (2n^2)^{1/3}$.
By Lemma \ref{main_lem_ul} we already have considerable knowledge on $\Dast{G}$, in particular regarding very large elements.
Indeed, by this result the only values that might be contained in $\Dast{G}$, yet of which we do not yet know so are elements of $J(m)\setminus I(m)$ for a proper divisor $m$ of $n$.  

We assert that the only  integers of size at least $n/10$ fulfilling this are 
\[
\frac{n-4}6,\quad \frac{n-4}8,\quad \frac{n-6}8\quad \text{ and }\quad \frac{n-6}9
\] 
(of course under the implicit assumption that these values actually are integers).

Since $J(m)\setminus I(m)$ contains only proper divisors of elements from  $I(m)$, it follows that every element in this set is of size at most $\max I(m)/2 < m/2$.
Thus, it suffices to consider $m \in \{n/2, n/3, n/4 \}$.

First, consider $m=n/2$. We consider the set $J(n/2) \cap \N_{\ge \frac{n}{10}}$. 
We have the elements $(n-4)/2$, $(n-4)/4$, $(n-4)/6$, $(n-4)/8$ from the choice $c_1= c_2 = 1$ and divisors. 
We have the elements $(n-6)/4$, $(n-6)/8$ corresponding to $c_1= 1$ and $c_2= 2$, and a divisor of it. 
We have the elements $(n-8)/6$, $(n-8)/8$, $(n-10)/8$,   corresponding to $c_1= 1$ and $c_2= 3$, $c_1= c_2 = 2$, $c_1= 1$ and $c_2= 4$, respectively.
All but $(n-4)/4$, $(n-4)/6$, $(n-4)/8$, $(n-6)/8$ are clearly in $I(n/2)$. Now, $(n-4)/4$ is also in $I(n/2)$ stemming from $(1,1,n/4) \in M(n/2)$. 

Now, consider $m=n/3$. We consider the set $J(n/3) \cap \N_{\ge \frac{n}{10}}$. 
We have the elements $(n-6)/3$, $(n-6)/6$, $(n-6)/9$, from the choice $c_1= c_2 = 1$ and divisors. 
We have the elements $(n-9)/6$ corresponding to $c_1= 1$ and $c_2= 2$. 
We have the elements $(n-12)/9$  corresponding to $c_1= 1$ and $c_2= 3$.
All but $(n-6)/6$, $(n-6)/9$ are clearly in $I(n/3)$. Now, $(n-6)/6$ is also in $I(n/3)$ stemming from $(1,1,n/6) \in M(n/3)$. 

Finally, consider $m=n/4$. We consider the set $J(n/4) \cap \N_{\ge \frac{n}{10}}$. 
We have the elements $(n-8)/4$ and $(n-8)/8$, from the choice $c_1= c_2 = 1$ and divisors.  We have the element $(n-12)/4$ corresponding to $c_1= 1$ and $c_2= 2$. 
Except for $(n-8)/8$, they are clearly in $I(n/3)$, and $(n-8)/8$ is also in $I(n/4)$ stemming from $(1,1,n/8) \in M(n/4)$.

This shows our assertion. We proceed to investigate the remaining elements. We see they are all in $I(n)$.   
We have that $(n-6)/8$ and $(n-6)/9$ are in $I(n)$ stemming from  $(2,4,n)$ and $(3,3,n)$, respectively, in $M(n)$.
We note that $(n-4)/8$ is an element of $I(n)$, too, as in this case $4 \mid n$ and $(2, 2, n/4) \in M(n)$.
Similarly $(n-4)/6$ is an element of $I(n)$, as in this case $2 \mid n$ and $(1, 3, n/2) \in M(n)$.
\end{proof}

The proof directly yields the more technical result (the condition $c_0\le 0.5$ is there to exclude very small $n$).
\begin{remark}
\label{rem_sizen0}
Let $G$ be a finite cyclic group of order $n$ and let $0.1 \le c_0 \le 0.5$.  
If $n  \ge 2 c_0^{-3}$, then  
\[
\Dast{G} \cap \N_{\ge c_0n} = \bigcup_{m\mid n} \In{m} \cap \N_{\ge c_0n}. 
\]
\end{remark}

With this remark at hand the proof of Theorem \ref{thm_directweak} is direct. 
\begin{proof}[Proof of Theorem \ref{thm_directweak}]
The condition $n \ge 250$ is the one implied by Remark \ref{rem_sizen0} for $c_0=1/5$. The set is just $\bigcup_{m\mid n} \In{m} \cap \N_{\ge n/5}$.
\end{proof}

In the following lemma we give an example of a set $G_0$ with $\mD{G_0} = (n-6)/8$ that  differs from the one mentioned in the proof of Theorem \ref{thm_directweak}.  

\begin{lemma}
\label{lem_n-68}
Let $G$ be a finite cyclic group of order $n$ and assume $(n-6)/8$ is a natural number. 
Let $f$ denote a generating element of $G$. The minimal distance of   
\[
\{f,2f, (n/2) f , -4f\}
\] 
is $(n-6)/8$. 
\end{lemma}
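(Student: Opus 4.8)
The plan is to compute $\min\Delta(G_0)$ directly via Lemma~\ref{aux_lem_geroldinger}, taking $e:=f$ as the distinguished generating element. Write $n=|G|$; the hypothesis just says $n\equiv 6\pmod 8$, so $n$ is even, $n/2$ is odd, the elements $2f$ and $-4f$ both have order $n/2$, the element $(n/2)f$ has order $2$, while $\s_f(f)=1$, $\s_f(2f)=2$, $\s_f((n/2)f)=n/2$ and $\s_f(-4f)=n-4$. For a minimal zero-sum sequence $A=f^a(2f)^b((n/2)f)^c(-4f)^d$ one has $\s_f(A)=a+2b+(n/2)c+(n-4)d$, which is a positive multiple of $n$, and Lemma~\ref{aux_lem_geroldinger} gives
\[
\min\Delta(G_0)=\gcd\Bigl\{\tfrac{\s_f(A)}{n}-1 : A\in\ac(G_0)\Bigr\}.
\]

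For the divisibility $\min\Delta(G_0)\mid (n-6)/8$ I would exhibit the single sequence $A_1=f\cdot((n/2)f)\cdot(-4f)^{(n+2)/8}$, where $(n+2)/8\in\N$ because $n\equiv 6\pmod 8$. A short inspection of its subsequence sums --- repeatedly using that an odd integer cannot be congruent to an even integer modulo the even number $n$ --- shows $A_1\in\ac(G_0)$, while $\s_f(A_1)=n(n+2)/8$, so $\tfrac{\s_f(A_1)}{n}-1=(n-6)/8$.

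For the reverse, i.e.\ that every $\tfrac{\s_f(A)}{n}-1$ is a multiple of $(n-6)/8$, I would classify the minimal zero-sum sequences $A$ over $G_0$. Since $f^n$, $(2f)^{n/2}$, $((n/2)f)^2$, $(-4f)^{n/2}$, $f^4(-4f)$, $f^2(2f)(-4f)$ and $(2f)^2(-4f)$ are all zero-sum, either $A$ is one of these explicit sequences --- all of value $0$ except $(-4f)^{n/2}$, of value $(n-6)/2=4\cdot(n-6)/8$ --- or its exponents are severely constrained: $a\le 3$, $b\le 1$ with $b=1\Rightarrow a\le 1$, $c\le 1$, and $d<n/2$. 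Reducing $\s_f(A)$ modulo $2$ (with $n/2$ odd) forces $a\equiv c\pmod 2$. If $d=0$ one has $0<\s_f(A)<5n/2$, hence $\s_f(A)\in\{n,2n\}$, and $\s_f(A)=2n$ is impossible by minimality (the sequence would contain the proper zero-sum subsequence $f^{a-n/2}(2f)^b$), so the value is $0$. If $d\ge 1$, only the triples $(a,b,c)\in\{(2,0,0),(0,1,0),(1,0,1),(3,0,1),(1,1,1)\}$ survive, and the zero-sum condition then pins down $d$ uniquely in the admissible range: the first two force $d=(n+2)/4$ and value $(n-6)/4=2\cdot(n-6)/8$; the third forces $d=(n+2)/8$, that is $A=A_1$, value $(n-6)/8$; the last two force $d=3(n+2)/8$, but then the sequence contains $A_1$ (indeed already $f^2(-4f)^{(n+2)/4}$ resp.\ $(2f)(-4f)^{(n+2)/4}$) as a proper zero-sum subsequence, hence is not minimal. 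So all values lie in $\{0,(n-6)/8,(n-6)/4,(n-6)/2\}$, whose gcd is $(n-6)/8$; combined with the previous paragraph this gives $\min\Delta(G_0)=(n-6)/8$.

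The bulk of the work --- and the only real obstacle --- is the bookkeeping in this last classification: one must keep precise track of which short zero-sum sequences obstruct which exponents, combine that with the congruence $n\equiv 6\pmod 8$ (used both for parity and, more delicately, to locate the unique admissible $d$ in each surviving case), and verify that the two near-miss families $f^3((n/2)f)(-4f)^d$ and $f(2f)((n/2)f)(-4f)^d$ are genuinely non-minimal. Each individual step is routine, but there are enough of them that the challenge is organizing the case distinction cleanly rather than any single computation.
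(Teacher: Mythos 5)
Your proof is correct, and it reaches the same gcd computation via Lemma~\ref{aux_lem_geroldinger} (with distinguished element $f$) and the same crucial sequence $A_1=f\,((n/2)f)\,(-4f)^{(n+2)/8}$ as the paper, but it handles the remaining minimal zero-sum sequences quite differently. The paper first invokes Theorem~\ref{aux_thm_2elements} in $\langle 2f\rangle$ and Lemma~\ref{aux_lem_higherorder} to see that $\mD{\{f,2f,-4f\}}=\mD{\{2f,(n/2)f,-4f\}}=(n-6)/4$ and $\mD{\{f,(n/2)f\}}=0$, so that only minimal zero-sum sequences containing $f$, $(n/2)f$ and one of $2f,-4f$ need a direct analysis (a short congruence argument forcing $u+2v=1$); you instead classify \emph{all} minimal zero-sum sequences over the four-element set from scratch, obtaining the explicit value set $\{0,(n-6)/8,(n-6)/4,(n-6)/2\}$. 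Your route is longer but completely self-contained (no appeal to the two-element theorem or the higher-order block monoid lemma), it makes explicit the non-minimality of the two families $f^3((n/2)f)(-4f)^{3(n+2)/8}$ and $f(2f)((n/2)f)(-4f)^{3(n+2)/8}$ which the paper disposes of with a terse congruence remark, and it verifies the minimality of $A_1$, which the paper leaves implicit although it is needed for the gcd to be exactly $(n-6)/8$ rather than a proper multiple. One small imprecision: as literally stated, your constraints ``$a\le 3$, $b\le 1$ with $b=1\Rightarrow a\le 1$'' do not hold for all minimal $A$ outside your explicit list (e.g.\ $f^{\,n-2}(2f)$ is minimal with $a=n-2$); they follow from $f^4(-4f)$, $f^2(2f)(-4f)$, $(2f)^2(-4f)$ only when $d\ge 1$. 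This is harmless, since your $d=0$ case uses only the weaker bounds $a\le n-1$, $b\le n/2-1$, $c\le 1$ (whence $\s_f(A)<5n/2$), but the sentence should be phrased conditionally on $d\ge 1$.
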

\begin{proof}
By Theorem \ref{aux_thm_2elements}, applied to the group generated by $2f$ we know that 
\[
\mD{ \{  2f,  -4f \}}= \frac{n/2 - 3}{2} = \frac{n - 6}{4}.
\]
Then, by Lemma \ref{aux_lem_higherorder} we know that 
$\mD{ \{ f, 2f , -4f \}} = \mD{ \{ 2f, -4f \}}$ and also $\mD{ \{  2f, (n/2) f , -4f \}} = \mD{ \{  2f,  -4f, 0 \}} = \mD{ \{ 2f, -4f \}}$; and, also the simple fact  $ \mD{ \{ f, (n/2) f  \}}=0 $.
We wish to apply Lemma \ref{aux_lem_geroldinger}. 
By the just made reasoning, it suffices to consider minimal zero-sum sequences containing both $f$ and $(n/2)f$ and at least one of the elements $2f, -4f$. 
Of course, $(n/2)f$ thus has to appear exactly once.

Let 
\[
A = f^u\ (2f)^v\ (-4f)^w\ ((n/2)f)
\]
be such a minimal zero-sum sequence.
If $w=0$ it is easy to see that $\s_f(A)=n$.
Suppose $w>0$. It follows that $u + 2v < 4$, since otherwise we would get a proper zero-sum subsequence. 
Moreover, by the congruence condition on $n$, it follows that in fact $u+2v$ is congruent to $1$ or $5$ modulo $8$, that is $u+2v = 1$. 
Since we now know that $u=1$ and $v=0$, we also get that $w= (n+2)/8$.
Therefore $\s_f(A)= n(n+2)/8$.
Consequently, by Lemma \ref{aux_lem_geroldinger},
\[
\begin{aligned}
& \mD{ \{ f, 2f, (n/2) f , -4f \}}   \\
 = &\gcd \left \{ \frac{\s_f(A)}{n} -1 \colon A \in \ac \bigl(\{ f, 2f, (n/2) f , -4f \} \bigr) \right\} \\ 
 = &\frac{n-6}{8}.
\end{aligned}
\]
\end{proof}

We  now turn to the inverse problem, that is the problem of characterization of sets with a large minimal distance.
We start with a result for groups of prime order where the problem is simpler than in the general case. 
We note that the result is void for groups of order less than $13$.

\begin{theorem}
\label{main_thm_prime}
Let $G$ be a finite cyclic group of order $n$, and assume that $n$ is prime.
Let $G_0 \subset G$ and  $d \in \Dast{G} \cap \N_{\ge (2n^2)^{1/3}}$.
Then $\mD{G_0}=d$ if and only if
\[
\{ f , ((n - c_1)/c_2) f \} \subset G_0 \subset \{ 0, f, ((n - c_1)/c_2) f \}.
\]
with $c_1,c_2 \in \N$ such that $d= (n- c_1 -c_2)/(c_1c_2)$ and some $f\in G \setminus \{ 0 \}$.
\end{theorem}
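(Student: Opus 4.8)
The plan is to reduce the statement to the machinery developed for sets containing two generating elements, exploiting the fact that in a group of prime order \emph{every} non-zero element is a generating element. First I would dispose of the trivial ingredient: the element $0$ is half-factorial in the strongest sense, so adjoining or removing $0$ never changes the minimal distance (this is the content of the inclusion $G_0 \subset \{0,f,((n-c_1)/c_2)f\}$ being harmless); more precisely, a zero-sum sequence over $G_0 \cup \{0\}$ factors as a zero-sum sequence over $G_0$ times a power of the atom $0$, so $\Delta(G_0 \cup \{0\}) = \Delta(G_0)$ whenever $G_0 \neq \emptyset$. Hence throughout one may work with $G_0 \setminus \{0\}$.

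For the ``only if'' direction, assume $\mD{G_0} = d \ge (2n^2)^{1/3}$. Since $n$ is prime, $(2n^2)^{1/3} > n^{1/2} \geq \log n$ (for $n$ large enough, and the statement is void for $n<13$ anyway), so Theorem \ref{prop_2subset} applies and yields a non-half-factorial subset $G_2 \subset G_0$ of cardinality $2$. Because $n$ is prime, both elements of $G_2$ are generating elements of $G$; write $G_2 = \{f_1, f_2\}$. Thus $G_0 \setminus \{0\}$ contains (at least) two generating elements, so Theorem \ref{2maximal} applies to $G_0 \setminus \{0\}$: it gives that $G_0 \setminus \{0\}$ contains \emph{exactly} two generating elements, and since $n$ is prime there are no other non-zero elements, whence $G_0 \setminus \{0\} = \{e, g\}$ with $e,g$ generating, $\s_e(g) = (|G|-c_1)/c_2$ for some $(c_1,c_2,d_0) \in M(n)$, and
\[
\mD{G_0} = \gcd\left\{ \frac{n-c_1-c_2}{c_1 c_2}, \frac{(n-c_1-c_2)d_0}{c_2 n}\right\}.
\]
Now $n$ being prime forces $d_0 \in \{1, n\}$. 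The choice $d_0 = 1$ makes the second entry of the gcd equal to $(n-c_1-c_2)/(c_2 n)$, which is a positive \emph{integer} only if $n \mid (n-c_1-c_2)$, i.e.\ $c_1 + c_2 \equiv 0 \pmod n$; since $\min\Delta(\{e,g\}) = (n-c_1-c_2)/(c_1c_2)$ is a large positive integer we have $1 \le c_1, c_2$ small, so $0 < c_1 + c_2 < n$, a contradiction. Hence $d_0 = n$, the second gcd entry becomes $(n-c_1-c_2)/c_2$, which is a multiple of the first, and therefore $\mD{G_0} = (n-c_1-c_2)/(c_1c_2) = d$. Setting $f = e$ gives exactly the asserted form.

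For the ``if'' direction one simply runs Theorem \ref{aux_thm_2elements} (equivalently Proposition \ref{prop_2maximal} with $d_0 = |G| = n$, which is automatically in $M(n)$ once $(n-c_1-c_2)/(c_1c_2)$ is an integer and $n$ is prime): with $e := f$ and $g := ((n-c_1)/c_2)f$, which is a generating element because $\gcd((n-c_1)/c_2, n) = 1$, one gets $\min\Delta(\{f, ((n-c_1)/c_2)f\}) = d$, and adjoining $0$ does not change this. The main obstacle is not really an obstacle here — it is making sure the size hypothesis $d \ge (2n^2)^{1/3}$ is genuinely strong enough to invoke both Theorem \ref{prop_2subset} (needs $d \ge \log n$) and Theorem \ref{2maximal} (needs $d \ge (2n^2)^{1/3}$) — and then carefully extracting from the prime hypothesis that $d_0 \in \{1,n\}$ and ruling out $d_0 = 1$ via the integrality of the second gcd entry. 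Everything else is bookkeeping.
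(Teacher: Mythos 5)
Your proposal is correct and follows essentially the same route as the paper: Theorem \ref{prop_2subset} produces a non-half-factorial two-element subset (necessarily of non-zero, hence generating, elements since $n$ is prime), Theorem \ref{2maximal} then forces exactly two non-zero elements with the stated relation, and Proposition \ref{prop_2maximal} (equivalently Theorem \ref{aux_thm_2elements}) gives sufficiency. Your extra bookkeeping — ruling out the divisor $d_0=1$ in the triple from $M(n)$ so that the gcd collapses to $(n-c_1-c_2)/(c_1c_2)$ — is a detail the paper leaves implicit, and it is handled correctly.
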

\begin{proof}
By Theorem \ref{prop_2subset} we know that $G_0$ contains at least two non-zero elements (that evidently have order $n$) and by Theorem \ref{2maximal} it cannot contain more than two non-zero elements and the relation among these two non-zero elements is as claimed. By Proposition \ref{prop_2maximal} this condition is also sufficient. The claim is established.
\end{proof}

The following result for finite cyclic groups in general gives a complete characterization of sets with minimal distance at least $|G|/5$. To avoid confusion, we point out that we chose this formulation of the result, rather than an `if-and-only-if' formulation, since in this way we can deal with congruence conditions on the order of the group implicitly. The values that appear in this result are the elements in $\Delta^{\ast}(G)$ that exceed the threshold $|G|/5$, as determined in Theorem \ref{thm_directweak}.

As can be seen by inspecting the proof, the difficulty of answering the inverse problem is not at all uniform for all the values. Indeed, for certain considerably smaller values we can also solve the inverse problem (see Lemmas \ref{lem_n-q-1q}, \ref{lem_n-2qq}, and \ref{lem_n-2qq2}). In this sense, the value $|G|/5$ is not the limit of our method.

\begin{theorem}
\label{main_thm_inverse}
Let $G$ be a finite cyclic group of order $n$ and let $G_0 \subset G$.
Suppose $n \ge 250$. For a suitable generating element $f$ of $G$,
the following assertions hold true:
\begin{enumerate}
\item If $\mD{G_0} =  n-2$, then
$\{f,-f\} \subset G_0 \subset \{f, -f, 0\}$,
\item If $\mD{G_0} = (n-2)/2$, then
$\{f, -f, (n/2) f\}\subset G_0 \subset \{f, -f, (n/2) f, 0\}$,
\item If $\mD{G_0} = (n-3)/2$, then
$\{f, -2f\}\subset G_0 \subset \{f, -2f, 0\}$,
\item If $\mD{G_0} = (n-4)/2$, then
\[
\{f, -2f\}\subset G_0\subset \{f, 2f, -2f, 0\}
\]
or
\[
\{2f, -2f\} \subset G_0\subset \{2f, -2f,  0\},
\]
or, in case $4 \nmid n$, 
\[
\{2f, -2f\} \subset G_0\subset \{2f, -2f, (n/2)f, 0\},
\]
\item If $\mD{G_0} = (n-4)/3$, then
$\{f, -3f\}\subset   G_0 \subset \{f, -3f, 0\}$,
\item If $\mD{G_0} = (n-6)/3$, then
\[
\{f, -3f\} \subset G_0 \subset \{f, 3f , -3f, 0 \}
\]
or
\[
\{3f, -3f\} \subset G_0 \subset \{3f, -3f, 0\},
\]
or, in case $9 \nmid n$, 
\[
\{3f, -3f\} \subset G_0 \subset \{3f, -3f, (n/3) f, 0\},
\]
\item If $\mD{G_0} = (n-4)/4$, then 
\[
\{f,  ((n-2)/2)f\}\subset G_0\subset
\{f,  ((n-2)/2)f,  2f, -2 f, (n/2) f, 0\}
\]
or
\[
\{f,-2f,  (n/2)f\}\subset G_0\subset
\{f, 2f, -2 f, (n/2) f, 0\}
\]
or, in case $8\nmid n $,
\[
\{2f,-2f, (n/2) f\} \subset G_0 \subset \{2f, -2f, (n/4) f,(n/2) f,0 \},
\]
\item If $\mD{G_0} = (n-5)/4$, then
$\{f, -4f\}\subset   G_0 \subset \{f, -4f, 0\}$,
\item If $\mD{G_0} = (n-6)/4$, then
\[
\{2f,-4f\} \subset G_0 \subset \{2f, -4f,(n/2)f, 0\}
\]
or
\[
\{f,-4f\} \subset G_0 \subset \{f,2f, -4f, 0\}
\]
or
\[
\{f,((n-2)/2) f\} \subset G_0 \subset \{f , 2f, ((n-2)/2) f , (n/2) f, 0 \},
\]
\item If $\mD{G_0} = (n-8)/4$, then
\[
\{jf, -4f\} \subset G_0 \subset \{f, 2f, 4f, -4f, 0\},
\]
for $j\in \{1,2,4\}$ or, in case $8 \nmid n$,
\[
\{4f, -4f\} \subset G_0 \subset \{4f, -4f, (n/4) f, (n/2) f, 0\}.
\]
\end{enumerate}
Moreover, in all cases, the sets actually have these minimal distances,
and there are no other sets with this minimal distance (except for the choice of the generating element).
\end{theorem}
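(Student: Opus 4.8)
The plan is to run, for each of the ten values of $d$ listed in Theorem~\ref{thm_directweak}, the reduction already used in the proofs of Theorems~\ref{thm_pgroups} and~\ref{main_thm_direct}, but now keeping track of the precise shape of $G_0$ rather than only of the number $\mD{G_0}$. So fix such a value $d$ and a subset $G_0\subset G$ with $\mD{G_0}=d$. Since $n\ge 250$ we have $d\ge n/5\ge (2n^2)^{1/3}>\log n$, hence by Theorem~\ref{prop_2subset} the set $G_0$ contains a non-half-factorial subset of cardinality $2$; as in the proof of Theorem~\ref{thm_pgroups} I would fix such a subset $G_2=\{f_1,f_2\}$ with $(\ord f_1,\ord f_2)$ lexicographically maximal, and put $m=\lcm(\ord f_1,\ord f_2)$. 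By Lemma~\ref{aux_lem_higherorder} and Theorem~\ref{aux_thm_2elements}, $\mD{G_2}$, which is a multiple of $d$, equals $(m-c_1-c_2)/(c_1c_2)$ for some $(c_1,c_2,m)\in M(m)$; in particular $d<m$, so $m>n/5$ and, arguing on sizes exactly as in Theorem~\ref{main_thm_direct}, $m\in\{n,n/2,n/3,n/4\}$.

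\emph{The case $m=n$.} Here $G_2$ consists of two generating elements, so Theorem~\ref{2maximal} gives, after discarding $0$ if present, that $G_0=\{e,g\}\cup D$ with $e,g$ generating, $\sigma_e(g)=(n-c_1)/c_2$, and every $h\in D$ satisfying $\sigma_e(h)=\sigma_g(h)\mid n$; moreover, by Lemma~\ref{specialdivisors} together with Lemma~\ref{smalldetailed} for the structure of $D$, one has $\mD{G_0}=\gcd\{(n-c_1-c_2)/(c_1c_2),\,(n-c_1-c_2)d_0/(c_2 n)\}$ with $d_0=\gcd(\sigma_e(D))$. The bound $\mD{\{e,g\}}\ge d\ge n/5$ forces $c_1c_2<5$, and membership of $(c_1,c_2,d_0)$ in $M(n)$ forces $n/d_0\le c_1+c_2\le 5$, so for each target $d$ only finitely many triples $(c_1,c_2,d_0)$ remain; solving this bounded problem determines all of them. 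Re-coordinatizing via the inverse relation $\sigma_g(e)=(n-c_2)/c_1$ so that $g$ or $e$ plays the role of $f$ turns $\{e,g\}$ into one of the forms $\{f,-f\}$, $\{f,-2f\}$, $\{f,-3f\}$, $\{f,-4f\}$, $\{f,((n-2)/2)f\}$ and $D$ into the prescribed chain of divisor-multiples, producing the full-order alternatives in items~(i)--(x); the converse direction, that each listed set actually has minimal distance $d$, is Proposition~\ref{prop_2maximal}.

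\emph{The case $m\in\{n/2,n/3,n/4\}$.} As in the proof of Theorem~\ref{thm_pgroups}, the lexicographic maximality of $(\ord f_1,\ord f_2)$ together with Lemma~\ref{aux_lem_higherorder} and Remark~\ref{rem_2el} forces every element of $G_0$ of order exceeding $m$ to be a divisor-multiple of $f_1$; repeatedly pushing these down via Lemma~\ref{aux_lem_higherorder} reduces the computation of $\mD{G_0}$ to that of a subset of the cyclic group $\langle f_1\rangle$ of order $m$, where the previous case applies. The genuinely new ingredient is a possible leftover element of order $2$, $3$ or $4$: such an element is consistent with $\mD{G_0}=d$ exactly when the corresponding small divisor of $G$ is \emph{not} already a divisor-multiple of $f$ inside $\langle f_1\rangle$, i.e.\ exactly when $4\nmid n$, $9\nmid n$, or $8\nmid n$ respectively --- the phenomenon illustrated in Lemma~\ref{lem_n-68} --- and it accounts for the conditional third alternatives in items~(iv), (vi), (vii), (x). That no further elements can occur follows from Proposition~\ref{3elements_improved} and Lemma~\ref{smalldetailed}.

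I expect the main obstacle to be this last step: for each value one must pin down exactly which proper divisors $m$ of $n$ can host the non-half-factorial pair --- in particular ruling out that $d$ is a proper divisor of an element of $\Hud{m}\setminus\In{m}$, which for these ten values the size analysis in the proof of Theorem~\ref{main_thm_direct} excludes --- and then reconcile the description obtained inside $\langle f_1\rangle$ with the presence or absence of the small-order extra element, getting every divisibility condition on $n$ exactly right (this reconciliation being also where a slight simplification is made in one case of item~(iv), as discussed in Section~\ref{sec_appCHF}). The remaining ``moreover'' clause is routine: for each listed set one exhibits explicit minimal zero-sum sequences in the style of Lemma~\ref{lem_n-68} to confirm it attains the stated minimal distance, appealing to Proposition~\ref{prop_2maximal} when the set has exactly two non-zero elements.
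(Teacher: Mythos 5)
Your skeleton matches the paper's (Theorem~\ref{prop_2subset} to extract a non-half-factorial pair $G_2=\{f_1,f_2\}$ chosen lexicographically maximal, localization of the possible $m\mid n$ by the size analysis, then Theorem~\ref{2maximal}, Lemma~\ref{specialdivisors} and Lemma~\ref{smalldetailed}, with Proposition~\ref{prop_2maximal} for the converse), but there are concrete gaps. First, a small but consequential slip: after the reduction of Lemma~\ref{aux_lem_higherorder} the two elements acquire the \emph{common} order $\gcd(\ord f_1,\ord f_2)$, so the $m$ with $\mD{G_2}=(m-c_1-c_2)/(c_1c_2)$ is the gcd, not the lcm; with $m=\lcm$ the displayed identity is simply false (e.g.\ $\{f,-2f\}$ with $n$ even has $\mD{\{f,-2f\}}=n/2-2$, which is not of the form $(n-c_1-c_2)/(c_1c_2)$), and your dichotomy ``$m=n$ iff both elements generate'' collapses. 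Second, and more seriously, in the $m=n$ case your claim that enumerating triples $(c_1,c_2,d_0)$ through the gcd-formula of Lemma~\ref{specialdivisors} ``determines all of them'' is unjustified: Lemma~\ref{smalldetailed} permits an extra element $h$ with $\s_e(h)=d\mid\gcd(c_1,c_2)$ but $de\neq dg$, and this case really occurs --- it is exactly how $2f$ and $-2f$ can accompany $\{f,((n-2)/2)f\}$ in item (vii). For such sets Lemma~\ref{specialdivisors} does not compute the minimal distance at all; the paper needs a separate argument (the replacement $A\mapsto f_i^2(2f_i)^{-1}A$ reducing minimal zero-sum sequences over the large set to ones over $\{f_1,f_2,(n/2)f_1,0\}$) to verify that the enlarged set still attains $(n-4)/4$. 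Your enumeration would either omit these sets or leave precisely that verification open.

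Third, in the proper-subgroup cases you invoke the mechanism of Theorem~\ref{thm_pgroups}, but the paper explicitly notes that this argument does not carry over to general $n$, and your replacement for the missing analysis --- ``the leftover element of order $2$, $3$ or $4$ is consistent with $\mD{G_0}=d$ exactly when $4\nmid n$, $9\nmid n$, or $8\nmid n$'' --- is not correct as a criterion. In item (ix) the class $(n/2)f$ is allowed in $\{2f,-4f,(n/2)f,0\}$ and in $\{f,2f,((n-2)/2)f,(n/2)f,0\}$ but must be excluded from $\{f,2f,-4f,0\}$, and in item (x) it must be excluded from $\{f,2f,4f,-4f,0\}$ even though $4\mid n$ there; these exclusions are not divisibility statements about $n$ but are established in the paper by exhibiting explicit minimal zero-sum sequences (such as $f_1^x\,((n/2)f_1)\,(-4f_1)^{(n+2x)/8}$ and $f_1\,((n/4)f_1)\,((n-2)f_1)^{(n+4)/8}$) whose $\s_{f_1}$-value violates the required congruence. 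Likewise the candidates $(n/4)f_1$, $(n/8)f_1$, $(n/(2q))f_1$ arising from the congruence $n\mid b(d'+d'')$ must be eliminated one by one in the same way. Without these computations (which occupy most of the paper's proof, either directly or packaged in Lemmas~\ref{lem_n-q-1q} and \ref{lem_n-2qq} that dispatch items (i), (iii), (iv), (v), (vi), (viii)), the forward direction of the characterization is not established.
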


The reason why in this result we only need to assume that the order is at least $250$ as opposed to $2000$ in Theorem \ref{main_thm_direct} is explained in Remark \ref{rem_sizen0}; indeed, using this remark we could specify bounds for each case. 

We split off parts of the proof in technical lemmas.
We point out that the two lemmas below are void for groups of order at most $7$, since we need at least $1 \le |G|^{1/3}/2$.

\begin{lemma}
\label{lem_n-q-1q}
Let $G$ be a finite cyclic group of order $n$ and $G_0 \subset G$.
Let $q$ be a divisor of $n-1$ with  $q \le n^{1/3}/2$. 
The following statements are equivalent:
\begin{enumerate}
\item $\mD{G_0}=(n-q-1)/q$,
\item $\{f,-qf\} \subset G_0 \subset \{0,  f, -qf\}$ for some generating element $f$.
\end{enumerate}
\end{lemma}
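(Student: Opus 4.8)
The plan is to prove the two implications separately, the forward one resting on a single arithmetic fact used twice. For \textbf{(ii)$\Rightarrow$(i)}: since $q\mid n-1$ we have $\gcd(n-q,n)=\gcd(q,n)=1$, so $-qf=(n-q)f$ with $n-q$ prime to $n$; applying Theorem~\ref{aux_thm_2elements} (equivalently Proposition~\ref{prop_2maximal}) with the triple $(c_1,c_2,d)=(q,1,n)\in M(n)$ — valid since $q\mid n-1$ forces $q\mid n-q-1$, and $q\le n^{1/3}/2$ gives $(n-q-1)/q>\sqrt n$ — yields $\mD{\{f,-qf\}}=(n-q-1)/q$. Adjoining $0$ is harmless, since every zero-sum sequence over $G_0\cup\{0\}$ has the form $S\cdot 0^{j}$ with $\Lo(S\cdot 0^{j})=j+\Lo(S)$, so $\Delta(\bc(G_0\cup\{0\}))=\Delta(\bc(G_0))$.

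For \textbf{(i)$\Rightarrow$(ii)} write $d=(n-q-1)/q$; from $q\le n^{1/3}/2$ one gets $n-q-1\ge n/2$, hence $d\ge(2|G|^2)^{1/3}$, and in particular $d>\sqrt n\ge\log n$, so Theorem~\ref{prop_2subset} supplies a non-half-factorial $G_2\subset G_0$ of cardinality $2$. The key fact, to be proved by elementary size estimates, is: \emph{if $d\mid(m-c_1-c_2)/(c_1c_2)$ for some $m\mid n$ and $c_1,c_2\in[1,m]$, then $m=n$, the quotient equals $d$, and $\{c_1,c_2\}=\{1,q\}$.} Writing the quotient as $dt$ and $m=n/k$ and substituting $n=q(d+1)+1$ produces the identity $d(q-ktc_1c_2)=k(c_1+c_2)-q-1$; the bounds $tc_1c_2<m/d\le 2q$ and $k\le n/d\le 2q$ force the right-hand side to have absolute value $<d$ (for $q\ge 2$, where then $n\ge 64$; when $q=1$ one has $d=n-2$ and $tc_1c_2=1$, and the conclusion is immediate), hence $q=ktc_1c_2$; plugging back, $k(c_1+c_2-tc_1c_2)=1$, so $k=t=1$, $(c_1-1)(c_2-1)=0$ and $c_1c_2=q$, i.e. $\{c_1,c_2\}=\{1,q\}$.

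I would then apply this fact twice. By Lemma~\ref{aux_lem_higherorder} and Remark~\ref{rem_2el}, $G_2$ reduces, with the same nonzero minimal distance, to a set $\{u,vu\}$ with $u$ of order $m\mid n$ and $v\ne 1$ prime to $m$, whose minimal distance is $(m-c_1-c_2)/(c_1c_2)$ by Theorem~\ref{aux_thm_2elements}; since $d$ divides it, the fact gives $m=n$, and — the $\mathsf n$-reduction of an order-$n$ element being trivial — $G_2$ itself equals $\{f,-qf\}$ for some generating element $f$. Thus $G_0$ has at least two generating elements, so Theorem~\ref{2maximal} applies: labelling $e=f$, $g=-qf$, it has exactly these two generating elements, $\s_e(g)=(n-c_1)/c_2$, $d_0e=d_0g$ for $(c_1,c_2,d_0)\in M(n)$, and $\mD{G_0}=\gcd\{(n-c_1-c_2)/(c_1c_2),(n-c_1-c_2)d_0/(c_2n)\}$ with $\mD{\{e,g\}}=(n-c_1-c_2)/(c_1c_2)$; applying the fact to $\mD{\{e,g\}}$ (a multiple of $d$) forces $c_1=q$, $c_2=1$, so $\mD{G_0}=\gcd\{d,(n-q-1)d_0/n\}=d$, whence $n\mid qd_0$. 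Finally, if $D:=G_0\setminus\{e,g\}$ contained a nonzero $h$, then by Theorem~\ref{2maximal} each $\s_e(h')$ ($h'\in D$) is a divisor of $n$ with $\s_e(h')(g-e)=0$, and $1<\s_e(h)<n$; since $g-e=-(q+1)f$ has order $n/\rho$ with $\rho=\gcd(q+1,n)$, $(n/\rho)$ divides every $\s_e(h')$ and also $\s_e(0)=n$, hence $(n/\rho)\mid d_0$. Writing $d_0=(n/\rho)j$ with $j\mid\rho$, the relation $n\mid qd_0$ becomes $\rho\mid qj$, and $\gcd(\rho,q)\mid\gcd(q+1,q)=1$ forces $j=\rho$, i.e. $d_0=n$, contradicting $d_0\le\s_e(h)<n$. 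So $D\subseteq\{0\}$, giving $\{f,-qf\}\subseteq G_0\subseteq\{0,f,-qf\}$.

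The hard part is the arithmetic fact — pushing the size estimates far enough to force $k=t=1$ and to identify $\{c_1,c_2\}=\{1,q\}$, together with the separate, easy treatment of the degenerate case $q=1$ (where $d=n-2$ is the maximal distance and the estimates degenerate). Everything else is routine bookkeeping: two applications of that fact feeding into the structural Theorem~\ref{2maximal}, followed by the short divisibility argument eliminating the spurious elements.
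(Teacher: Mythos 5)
Your converse direction and the arithmetic core of the forward direction are correct and essentially the paper's own argument: the paper forces the identity $\ell c_1c_2(q+1)=q(c_1+c_2)$ by reducing modulo $n$ and checking both sides lie in $[1,n]$, while you substitute $n=q(d+1)+1$ and use divisibility by $d$; these are the same size estimates (driven by $q\le n^{1/3}/2$) packaged differently, and your deduction $k=t=1$, $\{c_1,c_2\}=\{1,q\}$, $m=n$ is sound, as is the identification $G_2=\{f,-qf\}$ via Lemma \ref{aux_lem_higherorder}.

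The genuine gap is in the step you dismiss as bookkeeping, namely excluding further nonzero elements of $G_0$. You attribute to Theorem \ref{2maximal} two things its statement does not contain: that every $h'\in G_0\setminus\{e,g\}$ satisfies $\s_e(h')\mid n$ and $\s_e(h')(g-e)=0$, and the inequality $d_0\le \s_e(h)<n$ linking the parameter $d_0$ of the theorem to such elements. Theorem \ref{2maximal} only asserts the existence of some $(c_1,c_2,d_0)\in M(n)$ with $\s_e(g)=(n-c_1)/c_2$, $d_0e=d_0g$ and the gcd formula; the paper explicitly stresses (just before Lemma \ref{smalldetailed}) that its proof is non-constructive and yields no structure for $G_0$ beyond the two generating elements. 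In particular $d_0=n$ is a perfectly admissible value -- your own proof of (ii)$\Rightarrow$(i) uses the triple $(q,1,n)\in M(n)$ -- so arriving at $d_0=n$ produces no contradiction, and your argument does not rule out a nonzero $h\in G_0\setminus\{f,-qf\}$. (There is also a labelling ambiguity: the theorem may present the triple with $c_2=q$ rather than $c_2=1$, in which case one gets $d_0=n$ immediately and the issue is the same.) What is needed, and what the paper invokes at exactly this point, is Lemma \ref{smalldetailed} applied to each such $h$ (giving $\s_e(h)=\s_g(h)=d'$ a divisor of $n$ with $d'e=d'g$, the alternative $d'\mid\gcd(c_1,c_2)=1$ being impossible), followed by Lemma \ref{specialdivisors}, whose case (i) -- case (ii) being excluded by the size of $d$ -- expresses $\mD{G_0}$ as $\gcd\bigl\{d,\ \bigl((n-q-1)/(c_2n)\bigr)\gcd(\s_e(D))\bigr\}$. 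Your $\rho=\gcd(q+1,n)$ computation then applies verbatim with $\gcd(\s_e(D))$ in place of $d_0$ and forces $\gcd(\s_e(D))=n$, i.e.\ $D\subset\{0\}$. So the proof is repairable along your lines, but as written the final step does not stand.
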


\begin{proof}
Suppose $n \ge 7$ as the result is void otherwise.
To see that (i) implies (ii), suppose that $G_0$ has
the required minimal distance.
By our assumption on the size of $q$, and thus $\mD{G_0}$, we can apply Theorem \ref{prop_2subset}. That is,  there exists a subset
$G_2=\{f_1,f_2\} \subset G_0$ such that $\Delta(G_2)\neq \emptyset$.
Clearly, 
\[
\mD{G_2} \ge \mD{G_0} \ge \frac{n-q-1}{q}.
\] 

It follows by Lemma \ref{aux_lem_higherorder} and
Theorem \ref{aux_thm_2elements} that  
\[\frac{n-q-1}{q} \Big\vert \frac{m - c_1 - c_2}{c_1 c_2}\]
for some divisor $m\mid n$ and $c_1, c_2 \in [1,m]$. 
Setting $m=n/d$, this yields 
\[\ell c_1c_2 \frac{(n-q-1)}{q} =  (n/d -c_1-c_2)\]
for some integer $\ell$, and further 
\begin{equation}
\label{equival33} 
\frac{n-d(c_1+c_2)}{dc_1 c_2} = \frac{l (n-q-1)}{q}.
\end{equation}

We assert that
\begin{equation}
\label{approuver} 
\ell  c_1c_2 (q+1) = q  (c_1+c_2).
\end{equation}
Indeed \eqref{equival33} implies
\[d c_1c_2 l (q+1) \equiv q d (c_1+c_2) \pmod{n}.\]
Let us show that both sides of this inequality are in the range $[1,n]$, and thus equal (as integers), which after division by $d$ will 
imply \eqref{approuver}. On the one hand, we have
\[\frac{n - dc_1 - dc_2}{dc_1 c_2} \ge  \mD{G_0} \ge \frac{n-q-1}{q} \ge 2 n^{2/3} -1 - \frac{1}{q} \ge 2 n^{2/3} -2 \ge n^{2/3},\]
and thus
\begin{equation}
\label{dc1c2}
dc_1c_2  \le \frac{n - dc_1 - dc_2}{n^{2/3}} < n^{1/3}.
\end{equation}
Moreover, \eqref{equival33} again implies 
\[
\ell=  \frac{(n/d -c_1-c_2)q}{c_1 c_2 (n-q-1)}  \le \frac{q(n-2)}{n-q-1} \le 2q-1 \le n^{1/3}-1
\]
(here we use $2q+1 \le n$ and $n \ge 7$) and 
\[
q+1 \le \frac{n^{1/3}}2+1 \; , 
\]
thus
\[
d c_1c_2 l (q+1) \leq n^{1/3} (n^{1/3}-1) \left( \frac{n^{1/3}}2+1 \right) <n
\]
for $n \ge 7$. On the other hand, by \eqref{dc1c2}, $d(c_1+c_2) \le 2dc_1 c_2 < 2 n^{1/3}$; 
and $q \le n^{1/3}/2$, thus finally
\[
qd(c_1+c_2) <n^{2/3} <n.
\]
Equality \eqref{approuver} is proved.

From \eqref{approuver} it follows that $c_1c_2 < (c_1 + c_2)$ which is only possible if  $c_1=1$ or $c_2=1$, say $c_1 = 1$. 
It remains to determine $c_2$. We know $(q+1) \ell  c_2  = q (1+c_2)$, hence $\ell c_2 < (1+c_2)$, therefore $\ell = 1$, implying $c_2= q$.

Thus we have $m = n$ and $\{c_1, c_2\} = \{1,q\}$.
By Lemma \ref{smalldetailed},  we have, for any element $h \in G_0 \setminus G_2$, that  $\s_{f_1}(h)= \s_{f_2}(h) = d'$ with $d'f_1 = d'f_2$; 
note that $\gcd(c_1,c_2)= 1$ shows that this case of the lemma occurs. Now, Lemma \ref{specialdivisors} is applicable and implies $d'=n$, and the claim is established.

The converse claim, that (ii) implies (i), is clear
by Theorem \ref{aux_thm_2elements}.
\end{proof}

\begin{lemma}
\label{lem_n-2qq}
Let $G$ be a finite cyclic group of order $n$.
Let $q=1$ or a prime divisor of $n$, less than $n^{1/3}/2$.
Let $G_0 \subset G$.
The following statements are equivalent:
\begin{enumerate}
\item $\mD{G_0}=(n-2q)/q$.
\item $\{qf,-qf\} \subset G_0 \subset \{0,  qf, -qf, d f\}$
where
\[
d = 
\begin{cases}
n/q & \text{if }q^{2}\nmid n \\
n   & \text{otherwise}
\end{cases}.
\]
or $\{f,-qf\} \subset G_0 \subset \{0, f, qf, -qf\}$ for some generating element $f$.
\end{enumerate}
\end{lemma}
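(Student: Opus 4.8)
The plan is to follow the template of the proof of Lemma~\ref{lem_n-q-1q}, the genuinely new features being the case distinction according to whether $q^2 \mid n$ and the need to control the elements of $G_0$ lying outside a certain subgroup. Set $n=|G|$ and $d_0=(n-2q)/q=n/q-2$; since $q\le n^{1/3}/2$ we have $d_0\ge 2n^{2/3}-2$, so for all $n$ compatible with the hypothesis except possibly the smallest ones (where, as for Lemma~\ref{lem_n-q-1q}, the statement is void or immediate) $d_0$ exceeds $\log n$, $\sqrt n$, and $(2n^2)^{1/3}$.

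For the implication (ii)$\Rightarrow$(i) I would simply verify each set directly. The set $\{qf,-qf\}$ is $\{g,-g\}$ with $g=qf$ of order $n/q$, so Theorem~\ref{aux_thm_2elements} (or the remark following it) gives $\mD{\{qf,-qf\}}=n/q-2=d_0$; adjoining $0$ does not affect distances, and adjoining $(n/q)f$ --- an element of order $q$, present only when $q^2\nmid n$ --- does not either, by Lemma~\ref{aux_lem_higherorder}, since $q\cdot (n/q)f=0$. For $\{f,-qf\}$, Lemma~\ref{aux_lem_higherorder} applied with $g=f$ (one checks $\mathsf{n}_{\{f,-qf\}}(f)=q$ and $qf\in\langle -qf\rangle$) produces a transfer homomorphism onto $\bc(\{qf,-qf\})$, hence $\mD{\{f,-qf\}}=d_0$; adjoining $0$, or $qf$ (which is absorbed by the very same reduction), leaves this unchanged.

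For (i)$\Rightarrow$(ii): by Theorem~\ref{prop_2subset} there is a two-element subset $G_2=\{f_1,f_2\}\subset G_0$ that is not half-factorial; by Lemma~\ref{aux_lem_higherorder} and Remark~\ref{rem_2el} its minimal distance equals $\mD{\{g_1,g_2\}}$ for suitable $g_1\neq g_2$ of a common order $m\mid n$, and it is a multiple of $d_0$. Theorem~\ref{aux_thm_2elements} gives $\mD{\{g_1,g_2\}}=(m-c_1-c_2)/(c_1c_2)$ and $\s_{g_1}(g_2)=(m-c_1)/c_2$ with $(c_1,c_2,m)\in M(m)$. Writing $m=n/t$ and transcribing $d_0\mid (m-c_1-c_2)/(c_1c_2)$ exactly as in Lemma~\ref{lem_n-q-1q} --- producing a congruence modulo $n$, bounding $tc_1c_2<n^{1/3}$ from $\mD{\{g_1,g_2\}}\ge d_0\ge n^{2/3}$ and the cofactor by $2q$, and using $q<n^{1/3}/2$ to squeeze both sides of the congruence into $[1,n)$ --- one gets $c_1+c_2=2\ell c_1c_2$, whence $c_1=c_2=\ell=1$ and then $m=n/q$. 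Thus $g_2=-g_1$ and $\ord g_1=n/q$. Since $q$ is prime, the subgroup $\langle g_1\rangle$ (of order $n/q$) and $G$ are the only subgroups of $G$ containing it, and $\langle g_1\rangle\subset\langle G_2\rangle\subset\langle G_0\rangle$, so either $\langle G_0\rangle=\langle g_1\rangle$ or $\langle G_0\rangle=G$. In the first case $G_0$ is a subset of the cyclic group $\langle g_1\rangle$ of order $n/q$ with $\mD{G_0}=n/q-2=\max\Dast{\langle g_1\rangle}$, so the known description of the extremal sets (the case $|G|-2$, see \cite{WAS16}) gives $G_0\in\{\{g,-g\},\{0,g,-g\}\}$ for a generator $g$ of $\langle g_1\rangle$; writing $g=qf$ with $f$ a generator of $G$ (possible for any element of order $n/q$) puts $G_0$ into the first family of (ii). In the second case one shows first that $G_0$ contains either two mutually negative elements of order $n/q$ --- forcing $qf,-qf\in G_0$ --- or a generator $f$ of $G$ together with $-qf$ (the point being that for $q>1$ at most one of $f_1,f_2$ can be a generator, by the same divisibility-and-size argument), and then, for every remaining $h\in G_0$, examines the set $\{qf,h\}$, $\{-qf,h\}$ or $\{f,h\}$: via Lemma~\ref{aux_lem_higherorder} it reduces to a pair in a proper cyclic subgroup and via Lemma~\ref{specialdivisors} (or a direct use of Lemma~\ref{aux_lem_geroldinger}) its minimal distance turns out to be positive but strictly smaller than $n/q-2$ unless $h$ already lies in the claimed superset --- the sole exception being an element of order $q$ outside $\langle qf\rangle$, which is possible exactly when $q^2\nmid n$, must be unique (two distinct elements of order $q$ would again yield a too-small positive distance), and is identified with $(n/q)f$ after a suitable choice of generator.

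I expect this last part --- the case $\langle G_0\rangle=G$ --- to be the main obstacle: when $q>1$ the set $G_0$ need not contain two generating elements of $G$, so Lemma~\ref{smalldetailed} does not apply directly and one is forced into a somewhat delicate element-by-element analysis, in particular ruling out any $h\in G_0$ of intermediate order $n/q<\ord h<n$ (which can exist when $n$ has a prime factor below $q$) by reducing orders and bounding minimal distances in small cyclic groups.
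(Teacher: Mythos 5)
Your direction (ii)$\Rightarrow$(i) and the opening of (i)$\Rightarrow$(ii) (Theorem \ref{prop_2subset}, the reduction via Lemma \ref{aux_lem_higherorder}, and the congruence-and-size argument forcing $c_1=c_2=\ell=1$ and $m=n/q$) match the paper's proof, and your first case $\langle G_0\rangle=\langle g_1\rangle$ is acceptable. The genuine gap lies in the case where $G_0$ contains a generator $f$ together with $-qf$. Your plan is to exclude every extraneous $h$ by showing that one of the pairs $\{f,h\}$, $\{qf,h\}$, $\{-qf,h\}$ has a positive minimal distance strictly smaller than $n/q-2$. This fails for precisely the elements that matter: for $h=(n/q)f$ (order $q$, which exists outside $\langle qf\rangle$ exactly when $q^2\nmid n$) and for $h=(n/(2q))f$ (order $2q$, with $q\nmid n/(2q)$), all three pairs are half-factorial --- for instance $\{-qf,(n/q)f\}$ reduces by Lemma \ref{aux_lem_higherorder} to a set with a single nonzero element when $q^2\nmid n$, and $\{-qf,(n/(2q))f\}$ reduces to the single element $(n/2)f$ --- so no pairwise argument can rule them out. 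The paper excludes them by exhibiting explicit minimal zero-sum sequences over the three-element configurations, namely $h^q(-qf)^{n/(2q)}$ and $f^x\,h\,(-qf)^{(x+n/q)/q}$ with $x\equiv -n/q \pmod q$, and evaluating $\s_f(\cdot)/n-1$ via Lemma \ref{aux_lem_geroldinger}, obtaining the values $(n-2q)/(2q)$ and $(n+xq-q^2)/q^2$, which are not multiples of $(n-2q)/q$; nothing of this kind appears in your sketch.

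Relatedly, your ``sole exception'' clause conflates the two families of (ii): the order-$q$ element outside $\langle qf\rangle$ is admissible only in the first family (where $G_0$ contains no generator of $G$), whereas the second family's superset $\{0,f,qf,-qf\}$ must exclude it. As written, your argument would accept a set such as $\{f,-qf,(n/q)f\}$, whose minimal distance is in fact a proper divisor of $(n-2q)/q$ --- this is exactly what the sequence $f^x\,h\,(-qf)^{(x+n/q)/q}$ demonstrates --- so the characterization you would end up proving is false. To repair the argument you need the paper's three-element zero-sum computations relative to the generator $f$ (or an equivalent application of Lemma \ref{aux_lem_geroldinger}); the pairwise reductions you describe only dispose of the elements whose pairs are genuinely non-half-factorial.
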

In case $d= n$ we have of course $df= 0$, and in the first description the set on the right-hand side is just of cardinality $3$. 

\begin{proof}
Suppose $n \ge 7$ as the result is void otherwise.
To see that (i) implies (ii), suppose that $G_0$ has
the required minimal distance. By our assumption on the size of $q$, we can apply Theorem \ref{prop_2subset} to get a subset $G_2=\{f_1,f_2\} \subset G_0$ that is not half-factorial.
We thus have $\mD{G_2}\ge \mD{G_0} = (n-2q)/q$.

We assume without restriction that $(\ord f_1, \ord f_2)$ is maximal (in the lexicographic order) among all non-half-factorial subset of $G_0$ with two elements; in particular $\ord f_1 \ge \ord f_2$.
We assert that $(\ord f_1, \ord f_2)$ is equal to $(n, n/q)$
or $(n/q,n/q)$.

By Lemma \ref{aux_lem_higherorder}
and
Theorem \ref{aux_thm_2elements}
it follows that
\[
\frac{n-2q}{q}  \Big \vert \;
\frac{m -c_1-c_2}{c_1c_2}
\]
for a divisor $m$ of $n$, and more precisely $m=\gcd\{\ord f_1, \ord f_2\}$, and $(c_1,c_2,m)\in M(m)$.

We assert that this is only possible if $m = n/q$; in particular, also  $\ord f_2 = n$ is impossible (except for $q=1$). 

Let $\ell \in \mathbb{N}$ such that
\begin{equation}
\label{eq_n/q}
\ell\left( \frac{n-2q}{q} \right) =
\frac{m -c_1-c_2}{c_1c_2}
\end{equation}
and let $d$ be the co-divisor of $m$ in $n$,  that is $m=n/d$.
It follows from \eqref{eq_n/q} that
\[
\ell d c_1c_2n - 2 \ell d c_1c_2 q = qn - qd(c_1+c_2),
\]
and thus $2 \ell d c_1c_2 q$ equals $qd(c_1+c_2)$ modulo $n$.
Yet, since $q < n^{1/3}/2$, we have that, say,
\[\frac{n -dc_1-dc_2}{dc_1c_2} \ge \frac{n-2q}{q} > n^{2/3},\]
so that $dc_1c_2 < n^{1/3}$; also note that $\ell < n^{1/3}$.
Thus, indeed, $2 \ell d c_1c_2 q = qd(c_1+c_2)$, and hence
$\ell d c_1c_2n = qn$.
In case, $q=1$ it follows immediately that $d=1$ and we are done. Assume $q>1$.
We have that exactly one of $c_1$, $c_2$, $\ell$, and $d$ equals $q$ while the other quantities are equal to $1$.
We observe that \eqref{eq_n/q} can only hold if $q=d$, and are done again.

First, assume 
\[
(\ord f_1, \ord f_2) = (n/q,n/q).
\]
It follows by Lemmas \ref{specialdivisors} and \ref{smalldetailed} that $f_1 = -f_2$ and that
$\langle f_1 \rangle \cap G_0 \subset \{f_1, f_2, 0\}$.
Suppose $h \in G_0 \setminus \langle f_1 \rangle$.
It follows by Lemma~\ref{aux_lem_higherorder} that
\[
\mD{h,f_1,f_2, 0}=\mD{qh,f_1,f_2, 0};
\]
note $qh \in \langle f_1 \rangle $.
By Lemmas \ref{specialdivisors} and \ref{smalldetailed}, we get that $qh \in \{f_1, f_2, 0\}$.
We observe that $G_0 $ cannot contain two distinct elements
of order $q$: this obvious for $q \le 2$ and for $q> 2$ we would otherwise, by \eqref{GeroHami}, have
$\mD{G_0}\le q-2$.
Moreover, note that $G \setminus \langle f_1 \rangle$
contains an element of order $q$ if and only if  $q^2 \nmid n$.
Finally, we observe that $h$ cannot have order $n$,
since otherwise $\{f_1,h\}$ or $\{f_2,h\}$ is not half-factorial,
which violates our assumption on the extremal choice of $f_1, f_2$.
This establishes our claim.
\smallskip

Second, assume 
\[
(\ord f_1, \ord f_2) = (n, n/q).
\]
It follows by Lemma \ref{aux_lem_higherorder} that
\[
\mD{f_1,f_2}=\mD{qf_1,f_2}
\]
and furthermore by Theorem \ref{2maximal} that
$f_2 = -qf_1$. In addition,  we get that
$\langle f_2 \rangle \cap G_0 \subset \{qf_1, f_2, 0\}$; to see this, one applies again Lemma \ref{aux_lem_higherorder} 
and then argues as above. 

Suppose that $h \in G_0 \setminus \langle f_2 \rangle$.
It follows that $\{f_1,h\}$ is half-factorial, since $\ord h \neq n/q$ (cf.~above).
And, since thus $\ord h \neq n$, we also get that $\{f_2,h\}$
is half-factorial.
In particular, $\s_{f_1}(h) \mid n$ and $\s_{f_2}(qh)\mid n/q$  (see Remark \ref{rem_2el}).
Thus, $h=df_1$ and
\[
qh= d'f_2= (-d'q)f_1
\]
for $d\mid n$ and $d'\mid n/q$.
Moreover, since $h \notin \langle f_2 \rangle$ it follows that
$q \nmid d$ and thus $d \mid n/q$.
Considering the two different representations for $qh$, we get that
$n \mid qd + qd'$ and thus
\[
n/q \mid (d+d').
\]
So, $d=d' = n/(2q)$ or $d=d'=n/q$.

Suppose first $d=n/(2q)$. We then consider
\[
A=h^q\ f_2^{n/(2q)},
\]
which is a minimal zero-sum sequence.
We note that
\[
\frac{\s_{f_1}(A)}{n} -1=\frac{n-2q}{2q},
\]
yielding a contradiction as it is not a multiple of the minimal distance of $G_0$.

Suppose now that $d=n/q$. We consider
\[
A'=f_1^x\ h\ f_2^{(x + n/q)/q}
\]
where $x\in [0,q-1]$ and $x \equiv -n/q \pmod{q} $.
We note that
\[
\frac{\s_{f_1}(A')}{n} -1=\frac{n + xq - q^2}{q^2},
\]
yielding a contradiction as it is not a multiple of the minimal distance of $G_0$.
In any case, the structure of the set is as claimed.

It remains to show the converse, that is (ii) implies (i).
Essentially, all arguments were already given in this proof; we add some brief explanation.
The sets $\{e, -qe\}$ and $\{qe,-qe\}$ both have the required minimal distance (cf.~above). By Lemma \ref{aux_lem_higherorder}, the minimal distance of $\{0,  qe, -qe, d e\}$, with $d$ as in the result, is equal to the one of $\{0,  qe, -qe, qd e\}=  \{0,  qe, -qe\}$, which in turn is equal to the one of $\{ qe, -qe\}$. Similarly, the minimal distance of $\{e, qe, -qe,0\}$ is equal to the minimal distance of $\{qe, -qe\}$.
Of course, this also determines the minimal distance of all `intermediate' sets, and the claim is established.
\end{proof}

It might be interesting to note that for other values of $x$ in
the above proof we do not get another `large' minimal distance,
but a `small' one since the greatest common divisor of
$(n+ xq -q^2)/q^2$  and $(n-2q)/q$ would be `small'.

We continue this type of investigations with another lemma.
We note that the lemma is void if the order of the group is less than $6^6$.
The condition that $q$ is odd below is necessary as can be seen by comparing the result to the one for $(n-4)/4$ in Theorem \ref{main_thm_inverse}.

\begin{lemma}
\label{lem_n-2qq2}
Let $G$ be a finite cyclic group of order $n$.
Let $q\mid n$ be a prime greater than $2$ and at most $n^{1/6}/2$,
and let $G_0 \subset G$.
The following statements are equivalent:
\begin{enumerate}
\item $\mD{G_0}=(n-2q)/q^2$,
\item we have:
\begin{itemize}
\item $ G_0 \subset \{0, e, qe, -qe, (n/q) e, ((n-q)/q) e\}$,
\item $G_0$ contains $\{e,-qe, (n/q) e\}$ or $\{e, ((n-q)/q) e\}$, and
\item $n/q \equiv 2 \pmod{q}$.
\end{itemize}
\end{enumerate}
\end{lemma}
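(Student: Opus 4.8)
\emph{Sketch of proof.} We follow the architecture of the proof of Lemma~\ref{lem_n-2qq}, with the value $(n-2q)/q^{2}$ replacing $(n-2q)/q$ throughout. The statement is void for $n<6^{6}$, so we assume $n\ge 6^{6}$; then $q^{2}\le n^{1/3}/4$, so $(n-2q)/q^{2}>n^{2/3}$ comfortably exceeds the thresholds in Theorems~\ref{prop_2subset}, \ref{2maximal}, \ref{aux_thm_2elements} and in Lemmas~\ref{specialdivisors}, \ref{smalldetailed}. A cheap observation that pays off: if (i) holds then $(n-2q)/q^{2}$ is an integer, so $q^{2}\mid n-2q$, i.e.\ $n/q\equiv 2\pmod q$ (the third item of (ii)); and since $q>2$ this forces $q\nmid n/q$, hence $q^{2}\nmid n$. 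We write $N=n/q$, which is therefore coprime to~$q$.

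For (i)$\Rightarrow$(ii): by Theorem~\ref{prop_2subset} choose a non-half-factorial two-element set $G_{2}=\{f_{1},f_{2}\}\subset G_{0}$ with $(\ord f_{1},\ord f_{2})$ lexicographically maximal and $\ord f_{1}\ge\ord f_{2}$, and put $m=\gcd(\ord f_{1},\ord f_{2})=n/d$. By Lemma~\ref{aux_lem_higherorder} and Theorem~\ref{aux_thm_2elements} there is $(c_{1},c_{2},m)\in M(m)$ with $\min\Delta(G_{2})=(m-c_{1}-c_{2})/(c_{1}c_{2})=\ell\,(n-2q)/q^{2}$ for some $\ell\in\N$. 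The core is a size squeeze: from $\min\Delta(G_{2})\ge\min\Delta(G_{0})>n^{2/3}$ one gets $dc_{1}c_{2}<n^{1/3}$, and then $\ell dc_{1}c_{2}=q^{2}\,\frac{n-d(c_{1}+c_{2})}{n-2q}$ lies strictly between $q^{2}-1$ and $q^{2}+1$, hence equals $q^{2}$; substituting back gives $d(c_{1}+c_{2})=2q$. Thus $d$ divides $\gcd(2q,q^{2})=q$ (the latter since $q$ is an odd prime), and $c_{1}+c_{2}\ge 2$ excludes $d=2q$ anyway, so $d\in\{1,q\}$; moreover $c_{1}=c_{2}=q$ when $d=1$ (from $c_{1}+c_{2}=2q$, $c_{1}c_{2}\mid q^{2}$, $q$ prime) and $c_{1}=c_{2}=1$ when $d=q$.

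If $d=1$, both $f_{1},f_{2}$ generate $G$ and $\min\Delta(G_{0})=(n-2q)/q^{2}$, so Theorem~\ref{2maximal} yields exactly two generating elements $e,g\in G_{0}$ with $\s_{e}(g)=(n-q)/q$ (i.e.\ $g=((n-q)/q)e$) and $d_{1}e=d_{1}g$ for a triple $(q,q,d_{1})\in M(n)$ realizing the distance; demanding $\gcd\{(n-2q)/q^{2},(n-2q)d_{1}/(qn)\}=(n-2q)/q^{2}$ forces $N\mid d_{1}$, and since $q^{2}\nmid n$ the only divisors of $n$ that are multiples of $N$ are $N$ and $n$, so the non-generating part of $G_{0}$ lies in $\{(n/q)e,0\}$; hence $\{e,((n-q)/q)e\}\subset G_{0}\subset\{0,e,(n/q)e,((n-q)/q)e\}$. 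If $d=q$, then $m=N$ and, $q$ being prime, $(\ord f_{1},\ord f_{2})\in\{(n,N),(N,N)\}$. The case $(N,N)$ is impossible: there $\min\Delta(\{f_{1},f_{2}\})=N-2$ and $f_{2}=-f_{1}$, and by Lemmas~\ref{specialdivisors} and \ref{smalldetailed} (using $q^{2}\nmid n$ and $q>2$ odd) no admissible enlargement of $G_{0}$ brings the distance to $(N-2)/q$: an element of $\langle f_{1}\rangle$ of order $\le 2$ gives $(N-2)/2$, an order-$q$ element outside $\langle f_{1}\rangle$ collapses under Lemma~\ref{aux_lem_higherorder} and leaves distance $N-2$, and two order-$q$ elements give distance $\le q-2$ by \eqref{GeroHami}. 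In the case $(n,N)$, set $e:=f_{1}$; then $\min\Delta(\{e,f_{2}\})=\min\Delta(\{qe,f_{2}\})$ with $qe,f_{2}$ of order $N$, forcing $f_{2}=-qe$. Computing $\min\Delta$ via Lemma~\ref{aux_lem_geroldinger} relative to $e$: every atom has defect divisible by $(n-2q)/q^{2}$, the length-$3$ minimal zero-sum sequence $e^{q-2}(-qe)^{1+(n-2q)/q^{2}}((n/q)e)$ exists exactly when $(n/q)e\in G_{0}$ and has defect $(n-2q)/q^{2}$, while without $(n/q)e$ the distance is $N-2$; combined with the lexicographic-maximality bound on the orders of further elements this gives $(n/q)e\in G_{0}$ and $G_{0}\subset\{0,e,qe,-qe,(n/q)e\}$, so $\{e,-qe,(n/q)e\}\subset G_{0}$. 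In every case the three items of (ii) hold.

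For (ii)$\Rightarrow$(i), by Lemma~\ref{aux_lem_geroldinger} it suffices to show that any set trapped between $\{e,((n-q)/q)e\}$ (resp.\ $\{e,-qe,(n/q)e\}$) and the six-element set of (ii) has minimal distance exactly $(n-2q)/q^{2}$. The two cores do: for the first, apply the Remark after Theorem~\ref{aux_thm_2elements} with $b=c=q$, valid precisely because $n/q\equiv 2\pmod q$; for the second, use the witnessing atom above together with $\gcd(N-2,(n-2q)/q^{2})=(n-2q)/q^{2}$. Adjoining $0$, $(n/q)e$, $qe$ or $-qe$ contributes only atoms of defect divisible by $(n-2q)/q^{2}$ (the relations $\s_{e}(qe)\mid n$, $\s_{e}((n/q)e)=n/q$ and Lemma~\ref{aux_lem_higherorder} make these easy to list), so the gcd is unchanged. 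I expect the main obstacle to be the case $d=q$ with orders $(n,N)$: there $\mathsf{n}$-reduction together with Theorem~\ref{2maximal} only returns a two-element set of distance $N-2$, a proper multiple of the target, so one has to produce the length-$3$ witnessing sequence, verify its minimality by a short case check on its subsequences, and rule out atoms of smaller defect --- this, and the matching enumeration of optional elements in the converse, is where essentially all the technical work sits.
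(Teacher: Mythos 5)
Your overall architecture does track the paper's: the size squeeze giving $\ell d c_1c_2=q^2$ (your extra step $d(c_1+c_2)=2q$ is a clean way to isolate the two possibilities $(c_1,c_2,\ell,d)=(q,q,1,1)$ and $(1,1,q,q)$), the case split on the orders of the extremal non-half-factorial pair, and your witnessing atom $e^{q-2}(-qe)^{1+(n-2q)/q^{2}}((n/q)e)$, which is correct and of the same family the paper uses. However, there is a genuine error in your case $d=1$, i.e.\ $(\ord f_1,\ord f_2)=(n,n)$. Theorem \ref{2maximal} only asserts the \emph{existence} of some triple $(c_1,c_2,d_1)\in M(n)$ realizing $\mD{G_0}$ as the stated gcd; it gives no control over the remaining non-generating elements of $G_0$ (the paper explicitly points out that the proof of Theorem \ref{2maximal} is non-constructive because of Lemma \ref{smallelements}, which is why Lemma \ref{smalldetailed} exists). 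So your deduction that ``the non-generating part of $G_0$ lies in $\{(n/q)e,0\}$'' is unjustified, and in fact false: Lemma \ref{smalldetailed} permits third elements $h$ with $\s_e(h)$ or $\s_g(h)$ dividing $\gcd(c_1,c_2)=q$, i.e.\ $qe$ and $-qe$, and indeed a set such as $\{e,((n-q)/q)e,qe\}$ satisfies condition (ii) and therefore has minimal distance $(n-2q)/q^{2}$, contradicting your claimed containment $G_0\subset\{0,e,(n/q)e,((n-q)/q)e\}$; your two directions are mutually inconsistent. The paper's treatment of this case is quite different: it uses half-factoriality of $\{f_1,h\}$ and $\{f_2,h\}$ to force $h=d'f_1$ with $n\mid 2qd'$, hence $d'\in\{n,n/2,n/q,n/(2q)\}$, and then eliminates $d'=n/2$ and $d'=n/(2q)$ by the explicit atoms $A_1$ and $A_2$; none of this appears in your sketch.

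The second gap is the converse direction, which is where the bulk of the paper's work sits. Your claim that adjoining $qe$ or $-qe$ to $\{e,((n-q)/q)e\}$ ``contributes only atoms of defect divisible by $(n-2q)/q^{2}$'' and that these are ``easy to list'' is precisely the hard part: for the full six-element set the paper must analyze atoms containing both $e$ and $((n-q)/q)e$, reduce via \eqref{eq_psigma} and symmetry, bound $\s_e(A)<3n$, and then exclude the value $\s_e(A)=2n$ by a nontrivial splitting argument on the multiplicities. You flag this as the remaining technical work, but without it (and without repairing the $(n,n)$ case above) neither implication of the lemma is established.
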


\begin{proof}
Suppose $n \ge 6^6$ as the result is void otherwise.
To prove that (i) implies (ii), assume that $G_0$ has the claimed minimal distance.
We assert that $G_0$ has a subset $G_2$ of cardinality $2$ with minimal distance $(n-2q)/q^2$ or  $(n-2q)/q$, and that these two values are the only two values that can occur as minimal distance of a non-half-factorial subset of $G_0$ of cardinality $2$.

By Theorem \ref{prop_2subset} we know that $G_0$ has a non-half-factorial subset $G_2= \{f_1,f_2\}$ of cardinality two. And, its minimal distance is a multiple of $(n-2q)/q^2$.

By Lemma \ref{aux_lem_higherorder} and Theorem \ref{aux_thm_2elements} it follows that
\begin{equation*}
\mD{G_2} =  \frac{m - c_1 - c_2}{c_1 c_2}
\end{equation*}
for a divisor $m$ of $n$, and more precisely  $m = \gcd\{\ord f_1, \ord f_2\}$, and $(c_1,c_2,m)\in M(m)$.

We write $m= n/d$ and let $\ell$ be the integer such that
\begin{equation}
\label{eq_n-2qq2_1}
\ell \frac{n-2q}{q^2} = \frac{n/d - c_1 - c_2}{c_1 c_2}.
\end{equation}
It follows that
\begin{equation}
\label{eq_n-2qq2_2}
\ell d c_1c_2 n -  2  \ell d  c_1 c_2 q = q^2 n  - q^2 d (c_1 + c_2)
\end{equation}
and so
\[ 2  \ell d  c_1 c_2 q \equiv   q^2 d (c_1 + c_2) \pmod{n}.\]
By our assumption on the size of $q$ and the conditions on the sizes of $c_1,c_2, d$  and $\ell$ that follow from this assumption, we have in fact that 
\[ 2  \ell d  c_1 c_2 q = q^2 d (c_1 + c_2).\]
And then from this and \eqref{eq_n-2qq2_1}, we obtain that
\[\ell d c_1c_2 n = q^2 n \]
and so
\[\ell d c_1c_2  = q^2 . \]

Therefore, either exactly one of $\ell, d, c_1$, and $c_2$ equals  $q^2$ and the other three $1$, or exactly two of the four equal $q$ and the other two $1$.

Checking all possibilities we get that $c_1=c_2= q$ and $\ell = d = 1$, yielding $\mD{G_2}= (n-2q)/q^2$, and $c_1=c_2 = 1$ and   $\ell = d = q$, yielding $\mD{G_2}= (n-2q)/q$, are the only choices for which \eqref{eq_n-2qq2_1} holds, implying the claim.

\bigskip

We distinguish cases according to $(\ord f_1, \ord f_2)$, and we assume that $G_2$ is chosen in such a way that this pair is maximal in the lexicographic order among all subsets of $G_0$ of cardinality $2$ that are not half-factorial.

By the above argument we get that this pair is equal to $(n/q,n/q)$, $(n,n)$, or $(n,n/q)$.

First, assume 
\[
(\ord f_1, \ord f_2) = (n/q,n/q),
\]
that is $d=q$ and $c_1=c_2=1$, yielding a minimal distance of $(n-2q)/q$.
The argument is similar to the one in Lemma \ref{lem_n-2qq}.
It follows that $f_1 = -f_2$ and by Lemmas \ref{specialdivisors} and \ref{smalldetailed} that
$\langle f_1 \rangle \cap G_0 \subset \{f_1, f_2, 0\}$.
Suppose $h \in G_0 \setminus \langle f_1 \rangle$.
It follows by Lemma~\ref{aux_lem_higherorder} that
\[
\mD{h,f_1,f_2, 0}=\mD{qh,f_1,f_2, 0};
\]
note $qh \in \langle f_1 \rangle $.
Again, by Lemmas  \ref{specialdivisors} and \ref{smalldetailed}, we get that $qh \in \{f_1, f_2, 0\}$.
We observe that $G_0 $ cannot contain two distinct elements
of order $q$, since otherwise, by \eqref{GeroHami}, 
 $\mD{G_0}\le q-2$, a contradiction.
Moreover, note that $G \setminus \langle f_1 \rangle$
contains an element of order $q$ since $q^2 \nmid n$.
Finally, we observe that $h$ cannot have order $n$,
since otherwise $\{f_1,h\}$ or $\{f_2,h\}$ is not half-factorial,
which violates our assumption on the extremal choice of $f_1, f_2$.
Thus, we proved that in this case $\{-qf, qf\} \subset G_0 \subset \{-qf, qf, 0 , (n/q) f\}$ for some generating element $f$. 
Note that this set is not of the form given in the result, yet, also note that we did not actually assert that such a set $G_0$ has the required minimal distance; indeed, we show later it does not have it.

Second, assume 
\[
(\ord f_1, \ord f_2) = (n,n/q).
\]
Thus, $d=q$ and so $f_2= -qf_1$. As in the proof of Lemma \ref{lem_n-2qq} it follows that
$G_0\subset \{f_1,qf_1 , - qf_1, (n/q)f_1, 0\}$; the only modification necessary is to observe that $(n-2q)/(2q)$ is not a multiple of the minimal distance as $q \ne 2$.

Third, assume 
\[
(\ord f_1, \ord f_2) = (n,n).
\]
We have $c_1=c_2 =q$, that is $f_2= ((n-q)/q) f_1$.
We know by Proposition \ref{3elements_improved} that $G_0$ cannot contain a third element of order $n$.
Let $h \in G_0$ an element of order less than $n$. First, assume its order is not $n/q$.
Let us denote its order by $n/d'$.  Since $\{f_1,h\}$ and $\{f_2,h\}$ are both half-factorial,
it follows that $d'f_1 = h = d' f_2$. Thus, $d'$ and $d'(n-q)/q$ are congruent modulo $n$.
After some calculation, this implies that $n\mid 2qd'$, that is $d'$ is a multiple of $n/(2q)$, and thus equal to $n$, $n/2$, $n/q$, or $n/(2q)$.

We show that $d'$ cannot be equal to $n/2$ or $n/(2q)$. 
To this end it suffices to consider the following two minimal zero-sum sequences: 
\[
A_1=(\frac{n}{2}f_1)\ (\frac{n-q}{q}f_1)^{n/2}
\]
and
\[
A_2=(\frac{n}{2q}f_1)^{2q-1}\ (\frac{n-q}{q}f_1)^{n/(2q)}.
\]
Since $\s_{f_1}(A_1)/n -1= (n-2q)/(2q)$ and $\s_{f_1}(A_2)/n -1= (n-2q)/(2q^2)$ are not multiples of $(n-2q)/q^2$, we see that these values of $d$ are impossible.
Thus, $h= (n/q)f_1$ or $h=nf_1=0$.

It remains to consider elements of order $n/q$. It follows, considering the set consisting of $qf_2= -qf_1$ and this element, from the argument at the beginning of the lemma that an element order $n/q$ other than $qf_2$ in $G_0$ needs to be $qf_1$.

Thus, we get that 
\[
G_0 \subset \{ f_1, qf_1, (n/q)f_1, -qf_1, ((n-q)/q)f_1, 0\}.
\]

So, we see that in each case we get a subset of $\{ f_1, qf_1, -qf_1, (n/q)f_1, ((n-q)/q)f_1, 0\}$.
To complete the proof it thus suffices to investigate which subsets of this set have the required minimal distance.
Recall that since we need that $(n-2q)/q^2$ is an integer, we have that $n/q$ is congruent $2$ modulo $q$.
As always, the presence of $0$ in a set does not have an effect on the minimal distance so we actually can reduce to studying subsets of $\{ f_1, qf_1, -qf_1, (n/q)f_1, ((n-q)/q)f_1\}$. 

Let 
\[
G_0' \subset \{ f_1, qf_1, -qf_1, (n/q)f_1, ((n-q)/q)f_1\}
\] 
be a subset with $\mD{G_0'}= (n-2q)/q^2$. 

Since 
\[
\mD{\{qf_1, -qf_1, (n/q)f_1 \}} = \mD{\{qf_1, -qf_1 \}} =  \frac{n}{q} -2
\] 
we see that $G_0'$ contains an element of order $n$. We thus may assume it contains $f_1$; note that the set $\{ f_1, qf_1, -qf_1, (n/q)f_1, ((n-q)/q)f_1\}$  is symmetric under change of the `basis element' $f_1$ to  $((n-q)/q)f_1$.

The set $\{ f_1, qf_1, (n/q)f_1\}$ is half-factorial (see Proposition \ref{3elementshf}). Thus, we get that $G_0'$ contains $-qf_1$ or $((n-q)/q)f_1$.

Assume first it does not contain $((n-q)/q)f_1$, and thus contains $-qf_1$.
By Lemma \ref{lem_n-2qq}, we get that $\{ f_1, qf_1, -qf_1\}$ has minimal distance $(n-2q)/q$, thus $G_0'$ contains $(n/d)f_1$. We continue by asserting that $\{ f_1,  -qf_1, (n/q)f_1\}$ and also $\{ f_1, qf_1, -qf_1, (n/q)f_1\}$ have the required minimal distance $(n-2q)/q^2$, i.e. $G_0'$ could be either of these two sets. 

We proceed as in the proof of Lemma \ref{lem_n-2qq}. Let $A$ be a minimal zero-sum sequences over the set in question.
We investigate $\s_{f_1}(A)/n - 1$ and in view of the arguments already given we can reduce to the case that $A$ contains $f_1,  -qf_1, (n/q)f_1$ (and thus does not contain $qf_1$). 

Let us write 
\[
A= f_1^u((n/q)f_1)^v(-qf_1)^w. 
\] 
Since $A$ is a minimal zero-sum sequence it follows that $u + v n/q < n$; moreover it is a multiple of $q$. Also note that $u,v \le q-1$. We have $w = (u + v n/q)/q$ and, 
since $n/q \equiv 2 \pmod{q}$, that $u +2v \equiv 0 \pmod{q}$. It follows that in fact $u = q-2v$, that is $v < q/2$, since otherwise 
\[
f_1\ ((n/q)f_1)^{(q-1)/2}\ (-qf_1)^{(1 + n(q-1)/(2q))/q}
\] 
would be a zero-sum subsequence of $A$. From this we get that 
\[
\frac{ \s_{f_1}(A)}{n} - 1  =  \frac{q-2v + v(n/q) + (n-q)(q-2v + v n/q)/q}{n}-1 
				 =  \frac{(n-2q)v}{q^ 2}.
\]    
This shows the claim. 

Assume now the subset does contain $((n-q)/q)f_1$. The set $\{f_1,((n-q)/q)f_1\}$ has the required minimal distance
it thus suffices to show that 
\[
\{ f_1, qf_1, -qf_1, (n/q)f_1, ((n-q)/q)f_1\}
\] 
does not have a smaller minimal distance.

To this end we consider $\s_{f_1}(A)/n -1$ for all minimal zero-sum sequences, and show this is always a multiple of  $(n-2q)/q^2$.

We first mention for completeness two cases that we more or less considered already.

If $A$ only contains $f_1$, $qf_1$, $-qf_1$, and $(n/q)f_1$, then the claim follows by the argument given just above.

If $A$ only contains $((n-q)/q)f_1$, $qf_1$, $-qf_1$, and $(n/q)f_1$, then the claim follows first considering $\s_{((n-q)/q)f_1}$ 
in combination with the above claim, and expressing $\s_{((n-q)/q)f_1}$ in terms of $\s_{f_1}$ using \eqref{eq_psigma} (note that  $f_1 = ((n-q)/q)  (((n-q)/q)f_1)$ ).

So, we can reduce to considering $A$ that actually contain both $f_1$ and $((n-q)/q)f_1$. It follows that $A$ cannot contain both $qf_1$ and $-qf_1$. Without restriction we can assume it does not contain $-qf_1$; otherwise, we could exchange the role of $f_1$ and $(n-q)/q)f_1$, and use again \eqref{eq_psigma} to complete the argument. 

If $A$ contains only $f_1$, $((n-q)/q)f_1$,  and $(n/q)f_1$ the claim follows from Lemma \ref{specialdivisors}.
So, we can assume $A$ contains $qf_1$, too.

This implies that the multiplicity of $((n-q)/q)f_1$ is less than $q$; otherwise $(qf_1)(((n-q)/q)f_1)^q$ would be a zero-sum subsequence of $A$.

If $A$ does not contain $(n/q)f_1$, this implies $\s_{f_1}(A)= n$, since there is no zero-sum-free sequence over $f_1, qf_1$ with $\s_{f_1}$-value greater than $n$ while the contribution of the elements $((n-q)/q)f_1$ to $\s_{f_1}$ is at most $((n-q)/q)(q-1) < n$.

So, we can assume that $A$ contains all of $f_1$, $qf_1$, $((n-q)/q)f_1$,  and $(n/q)f_1$.
We denote the multiplicities of the elements by $v$, $v_q$, $w$ and $u$ respectively.
As mentioned above we have $w < q$. Of course, $u < \ord((n/q)f_1) = q $, too. Moreover, we can assume that $v < q$; otherwise we could pass from $A$ to the minimal zero-sum sequences $(qf_1)f_1^{-q}A$, which does not change $\s_{f_1}$. Finally, $v_q < \ord(q f_1) = n/q$.

This directly implies that 
\[
\s_{f_1} (A) \le (q-1) + (n/q -1)q + (q-1)n/q + (q-1)(n-q)/q < 3n.
\]
Since  $\s_{f_1} (A)$ is a multiple of $n$, and if it is equal to $n$ we are done, it remains to exclude the case that it is  $2n$. So, assume $\s_{f_1} (A)= 2n$. We show that $A$ cannot be a minimal zero-sum sequence.

To this end it suffices to show that there exist 
\[
v_1 + v_2 = v,\quad  u_1 + u_2 = u,\quad w_1 + w_2 = w,
\] 
all non-negative integers, such that
\[
v_i + u_i \frac{n}{q}  + w_i \frac{n-q}{q} \le n\quad \text{ and }\quad v_i + u_i  \frac{n}{q}+ w_i \frac{n-q}{q} \equiv 0 \pmod{q}\ \text{ for } i=1,2.
\]
Recall that $n/q$ is equal to $2$ modulo $q$. So, we need $v_i + 2u_i   + w_i  \equiv 0 \pmod{q}$.
Furthermore, recall $v + 2u   + w  \equiv 0 \pmod{q}$. Since $u,v,w \le q-1$, we get that  $v + 2u   + w < 4q$.
If this sum equals $q$, it follows that $v + u n/q  + w (n-q)/q \le n$ and we are done, setting $v_2=u_2 = w_2 =0$.
So it remains to consider the cases that the sum is $2q$ or $3q$.

If the sum is $3q$, we note that $v+ 2u \ge 2q$. Let $u_1= u$ and $v_1 = 2q -2u$ and $w_1 = 0$.
Then $v_i + 2u_i   + w_i  \equiv 0 \pmod{q}$ and $v_i + u_i n/q  + w_i (n-q)/q \le n$.

If the sum is $2q$, then just let $v_1\le v$, $u_1\le u$, $w_1\le w$ such that $v_1 + 2u_1   + w_1  =q$; they clearly exist as $v,w$ are not (both) $0$. Then also $v_2 + 2u_2   + w_2  =q$. 
And since 
\[
v_i + u_i n/q  + w_i (n-q)/q \le n/q (v_i + 2u_i   + w_i),
\] 
the claim follows.
\end{proof}

We are now ready to prove Theorem \ref{main_thm_inverse}; some of the points are direct by the already obtained lemmas, some need additional work though.

\begin{proof}[Proof of Theorem \ref{main_thm_inverse}]
For each of the ten assertions, suppose that $n$ is such that the value for the minimal distance is integral -- otherwise the claim is vacuously true -- and suppose $G_0\subset G$ is a subset with the given minimal distance. We know by Theorem \ref{thm_directweak} that such a subset exists provided that the given value is integral.
We observe that by our assumption on $n$, we have that $n/5 \ge (2n^2)^{1/3}$.

We start with some general remarks.
By Theorem \ref{prop_2subset} there exists a subset
$G_2=\{f_1,f_2\} \subset G_0$ such that $\mD{G_2}>0$.
Suppose that $(\ord f_1,\ord f_2)$ is maximal
in the lexicographic order among all such subsets.
By Lemma \ref{aux_lem_higherorder} and \eqref{GeroHami},
we get that $\mD{G_2} \le (\ord f_2) -2$.

Further, by Lemma \ref{main_lem_ul} and Theorem \ref{2maximal} it follows that if $\mD{G_0}\notin \Hud{m}$ for some particular $m \mid n$, then every subset $\{f_1', f_2'\}$ of $G_0$ with $\gcd \{\ord f_1' , \ord f_2'\}= m$ is half-factorial.

Now, we give a definition useful for what follows. We say below that the minimal distance is contained \emph{only} in $\Hud{m_1}, \dots , \Hud{m_k}$, 
for some specified $m_i\mid n$, if it is not contained in $\Hud{m'}$ for each $m' \mid n$ not equal to one of the  $m_i$ while contained in $\Hud{m_i}$ for each $i$.

We start by observing that our lemmas allow to treat several cases directly. 
Specifically, Lemma \ref{lem_n-q-1q} with $q$ equal to $1$, $2$, $3$, $4$ yields (i), (iii), (v), (viii), respectively.
And, Lemma \ref{lem_n-2qq} with $q$ equal to  $2$ and $3$ yields (iv) and (vi), respectively. 

It thus remains to consider (ii), (vii), (ix), (x). 

\medskip

\noindent \emph{Proof of  (ii):} We have $\mD{G_0}=(n-2)/2$. This is only contained in $\Hud{n}$.
So, we have  $\ord f_1 = \ord f_2=n$. By Theorem \ref{2maximal} we get that $f_1 = -f_2$ and $G_0$ contains no other element of order $n$. The claim follows by Lemma \ref{smalldetailed}.

\medskip

\noindent \emph{Proof of  (vii):} We have $\mD{G_0}=(n-4)/4$. This is only contained in $\Hud{n}$ and $\Hud{n/2}$. Thus, we get that $(\ord f_1,\ord f_2) \in \{ (n,n),(n,n/2), (n/2,n/2) \}$.
We now distinguish these three cases.
\smallskip

a.) Suppose $(\ord f_1,\ord f_2)=(n,n)$.
By Theorem \ref{2maximal}, we get that $f_1 = ((n-2)/2)f_2$.
Moreover, by Lemma \ref{smalldetailed}, we get that
\[
G_0\subset \{f_1,f_2, 0, (n/2) f_1, 2f_1, 2 f_2\} = G_1.
\]
We assert that this set indeed has the right minimal distance.
Let $A$ be a minimal zero-sum sequence over $G_1$.
We need to show that $\sigma_{f_1}(A)/n - 1$ is a multiple of $(n-4)/4$.

We start by asserting that it suffices to consider $A$ containing
$f_1$ and $f_2$. Assume not, say it does not contain $f_2$. Since $(n-4)/4$ is integral we have that $(n/2)f_1 \in \langle 2f_1\rangle$. By Lemma \ref{aux_lem_higherorder},
the minimal distance of $\{f_1, 0, (n/2) f_1, 2f_1, 2 f_2\}$ is equal to  the minimal distance of $\{ 0, (n/2) f_1, 2f_1, 2 f_2\}$, which by Proposition \ref{prop_2maximal}, for $\langle 2 f_1 \rangle $ a group of order $n/2$, is $(n/2 -2)/2= (n-4)/4$, completing this argument.

We observe that if $A$ contains both $f_1$ and $f_2$ at least with multiplicity $2$, then $A= f_1^2f_2^2$ and  $\sigma_{f_1}(A)/n -1= 0$.
Without restriction we thus suppose that $A$ contains $f_2$ with multiplicity $1$. Consequently, it contains $f_1$ with an odd multiplicity, too (all other elements are contained in the subgroup $\langle 2 f_1 \rangle$).

Now, suppose $2f_i$ appears in $A$ for $i\in \{1,2\}$. Let $A'= f_i^2 (2f_i)^{-1}A$.
We claim that $A'$ is still a minimal zero-sum sequence.
This is the case as the multiplicity of $f_i$ in $A$ is odd, and thus each proper zero-sum subsequence of $A'$ would yield a proper zero-sum subsequence of $A$.

Observe that such a replacement does not affect $\sigma_{f_1}$, that is
$\sigma_{f_1}(A) = \sigma_{f_1}(A')$.
Now repeatedly applying this replacement, we obtain a minimal zero-sum sequence $A''$ with the same value under $\sigma_{f_1}$ that neither contains $2f_1$ nor $2f_2$ and is thus in fact a minimal zero-sum sequence over $\{f_1,f_2, 0, (n/2) f_1\}$. By Proposition \ref{prop_2maximal} we know that the minimal distance of this set $(n-4)/4$ and thus that $ \sigma_{f_1}(A'')/n -1$ is a multiple of $(n-4)/4$.

\smallskip

b.) Suppose $(\ord f_1,\ord f_2)=(n,n/2)$.
By Lemma \ref{aux_lem_higherorder} we get that the minimal distance of $\{f_1,f_2\}$ is equal to the minimal distance of $\{2f_1,f_2\}$. By Theorem \ref{2maximal} it follows that $2f_1 = - f_2$.
Moreover, it follows applying Lemma \ref{aux_lem_higherorder} and then Lemma \ref{smalldetailed} to $\{f_1\} \cup (G_0 \cap \langle 2f_1 \rangle)$, that
\[
G_0 \cap \langle 2f_1 \rangle \subset \{0,2f_1, -2f_1,  (n/2) f_1 \}.
\]
Let $h \in G_0 \setminus (\langle 2f_1 \rangle \cup \{f_1\})$.
We note that $\{f_1, h\}$ is half-factorial, i.e., $h= d' f_1$
for some $d' \mid n$; note that $\ord h \neq n$ and thus the minimal
distance cannot be a multiple of $(n-4)/4$ in any other way.

Moreover, it follows that
\[
2 d'f_1= 2h =  -f_2 = 2f_1
\]
or
\[
2d'f_1 =2h = d'' f_2= - 2d''f_1
\]
with some $d'' \mid n/2$. 

This means that $2d' -2$ or $2d'+2d''$ for some $d'' \mid n/2$ is a multiple of $n$. 

Since $2d' - 2$ is certainly not a multiple of $n$, we get that
$2(d' +d'')$ is a multiple of $n$. 
The only way in which this is possible is that that $d' = d'' = n/4$; to see this note that $d' \neq n/2$ as
$h \notin \langle 2f_1\rangle$ and $4\mid n$.
Moreover, we observe that this can only happen if $8 \nmid n$, since otherwise $h \in \langle 2f_1  \rangle $.

Yet, considering the minimal zero-sum sequence
\[
f_1\ ((n/4) f_1)\ ((n-2)f_1)^{(n+4)/8},
\]
and $\sigma_{f_1}$ of it, it follows that $(n/4) f_1 \notin G_0$.
Thus
\[
G_0 \subset \{f_1,2f_1, -2f_1, (n/2)f_1, 0 \}\quad  \text{ and }\quad
\{f_1, -2f_1, (n/2)f_1\} \subset G_0;
\]
note that we must have $(n/2)f_1$ in $G_0$ since otherwise the minimal distance is $n/2 - 2$.
\smallskip

c.) Suppose $(\ord f_1,\ord f_2)=(n/2,n/2)$. We get by Theorem \ref{2maximal} applied to a group of order $n/2$ that $f_1 = - f_2$.
Moreover, it follows that
\[
G_0 \cap \langle f_1 \rangle \subset \{0, (n/4) f_1, f_1, -f_1\}.
\]
Let $h \in G_0 \setminus \langle f_1 \rangle$.
It follows that $2h\in \{0, (n/4) f_1, f_1, -f_1\}$.
Since $h$ cannot have order $n$, since this would contradict our assumption on the maximality of $(\ord f_1,\ord f_2)$,  or $2$, since the element of order $2$ is contained in the subgroup generated by $\langle f_1 \rangle$, it follows that $2h = (n/4)f_1$.
That is $h$ is an element of order $4$; note that this is only possible
if $8\nmid n$, since otherwise all element of order $4$ would be contained in $\langle f_1 \rangle$. Of course, $G_0$ contains at most one element of order $4$, as the minimal distance of a set of two elements of order $4$ is $2$.
Thus, it follows that
\[
G_0 \subset \{2f, -2f, (n/4) f,(n/2) f,0 \}
\]
for some generating element $f$.
\medskip

\noindent \emph{Proof of  (ix):} We have $\mD{G_0}=(n-6)/4$. This is only contained in $\Hud{n/2}$.
So, we get that $(\ord f_1,\ord f_2) \in \{ (n,n/2), (n/2,n/2) \}$.
\smallskip

a.)
First, suppose $(\ord f_1,\ord f_2)=(n/2,n/2)$. It follows by Theorem \ref{2maximal} and symmetry that we may assume
$f_2 = -2f_1$.
And, moreover, by Lemma \ref{smalldetailed},  $G_0 \cap \langle f_1 \rangle \subset \{f_1,f_2, 0\}$.
Let $h\in G_0 \setminus \langle f_1 \rangle$.
It follows by Lemma \ref{aux_lem_higherorder} that $2h \in \{f_1,f_2, 0\}$. Yet, $\ord h \neq n$, since this would contradict the maximality of $(\ord f_1,\ord f_2)$  and thus
$2h = 0$. Thus,
\[
G_0 \subset \{2f, -4f, (n/2) f, 0\}
\]
for some generating element $f$. By Lemma \ref{aux_lem_higherorder} the minimal distance of this set is equal to the one of $\{2f, -4f, 0\}$ and that set actually has the
required minimal distance by Proposition \ref{prop_2maximal}.
\smallskip

b.) Now, suppose  $(\ord f_1,\ord f_2)=(n,n/2)$. It follows by Lemma \ref{aux_lem_higherorder} and Theorem \ref{2maximal} for a group of order $n/2$ that
$f_2 = -4 f_1$ or $f_2 =  ((n-2)/2)  f_1$.
Moreover, by Lemma \ref{smalldetailed}, $G_0 \cap \langle f_2 \rangle \subset \{2f_1, f_2 , 0\}$.
Let $h\in G_0 \setminus (\langle f_2 \rangle \cup \{f_1\})$.
It follows that $2h \in  \{2f_1, f_2 , 0\} $. Since $\ord h \neq n$,
it follows that $h = (n/2) f_1$. Thus
\[
G_0 \subset \{f_1 , 2f_1, -4f_1 , (n/2) f_1, 0 \} \text{ or }
G_0 \subset \{f_1 , 2f_1, ((n-2)/2) f_1 , (n/2) f_1, 0 \}.
\]

It remains to check whether or not the sets have the required
minimal distance. We first consider
$\{f_1 , 2f_1, -4f_1 , (n/2) f_1, 0 \}$.
We consider the minimal zero-sum sequence
\[
f_1^x\ ((n/2)f_1)\ (-4f_1)^{(n+2x)/8}
\]
where $x\in \{1,3\}$ depending on
the congruence class of $n$ modulo $8$.
In any case, its value under  $\sigma_{f_1}(\cdot)/n -1$ is not
a multiple of $(n-6)/4$, showing that in this case we cannot have
$(n/2) f\in G_0$.
\smallskip

Now, we consider $\{f_1 , 2f_1, ((n-2)/2) f_1 , (n/2) f_1, 0 \}$.
Since $\{f_1 , 2f_1, (n/2) f_1, 0 \}$ is half-factorial, and both  $\{ 2f_1, ((n-2)/2) f_1 , (n/2) f_1, 0 \}$ and $\{f_1 , 2f_1, ((n-2)/2) f_1 , 0 \}$ have the required minimal distance, it suffices to consider minimal zero-sum sequences containing $f_1$, $((n-2)/2) f_1$,
and $(n/2) f_1$. The only one  is however
\[
f_1\ (((n-2)/2) f_1)\ ((n/2) f_1),
\]
and its value under $\sigma_{f_1}(\cdot)/n -1$ is $0$.
It follows that the set has the required minimal distance.
\medskip

\noindent \emph{Proof of  (x):} We have $\mD{G_0}=(n-8)/4$. This is only contained in $\Hud{n/4}$.
So, we get that $(\ord f_1,\ord f_2) \in \{ (n,n/4), (n/2,n/4), (n/4,n/4) \}$.
\smallskip

a.) Suppose that $(\ord f_1,\ord f_2)=(n/4,n/4)$.
It follows by Theorem \ref{2maximal} that $f_1 = -f_2$. We note that $G_0 $ cannot contain
an element of order $n$ or $n/2$; if it would contain such an element,
together with $f_1$ or $f_2$, we get a set violating our assumption.
Moreover, by Lemma \ref{smalldetailed},  $G_0 \cap \langle f_1 \rangle \subset \{f_1, -f_1, 0\}$.
Let $h \in G_0 \setminus \langle f_1 \rangle$.

Let $a$ be minimal such that $ah \in \langle f_1 \rangle$;
note that $a\in \{2,4\}$. It follows that $ah= 0$. Clearly $G_0$
cannot contain two elements of order $4$; compare the argument in (vii).
It follows that
\[
G_0 \subset \{4f, -4f, 0,(n/2) f, (n/4)f\}.
\]
And, note that we can only have $(n/2)f$ or $(n/4)f$ in
$G_0$ if $8 \nmid n$.

By Lemma \ref{aux_lem_higherorder}, applied twice, we get that the minimal distance of the set $\{4f, -4f, 0,(n/2) f, (n/4)f\}$ is $(n-8)/4$, and the claim follows.

\smallskip

b.) Suppose that $(\ord f_1,\ord f_2)=(n/2,n/4)$.
It follows by Lemma \ref{aux_lem_higherorder} and Theorem \ref{2maximal} that $f_2 = -2f_1$.
Moreover, by Lemma \ref{smalldetailed}, $G_0 \cap \langle f_2 \rangle \subset \{f_2, -f_2, 0\}$.
Let $h \in G_0 \setminus  (\langle f_2 \rangle \cup \{f_1\})$.
First, suppose $h \in \langle f_1 \rangle$, that is
$2h \in \langle f_2 \rangle$. It follows that $2h = f_2$ or
$2h = -d'f_2 = - 2d'f_1$ for some $d' \mid n/4$.
Yet, considering $\{f_1, h\}$ it follows that $h= d''f_1$
for some $d'' \mid n/2$ so that the set is half-factorial; 
note that  $2h = -2f_1$ is impossible.
Thus, $2(d' + d'')$ is a multiple of $n/2$. This implies that $h= (n/4) f_1$
or $h= (n/8)f_1$. Let $d$ equal $n/8$ or $n/4$, respectively.
We consider the minimal zero-sum sequence
\[
A= f_1\ (df_1)\ (-2f_1)^{(d+1)/2}
\] 
(note that $d$ is odd as
$h \notin \langle f_2 \rangle$).
We compute $\sigma_{f_1}(A)/(n/2) -1 = (d-1)/2$ (note the $n/2$),
and see that this is not a multiple of $(n-8)/4$.

Second, suppose $h \notin \langle f_1 \rangle$.
It follows that $2h = d'' f_1$ for some $d''\mid n/2$; note that
$2h \neq - 2f_1$. Moreover, $4h = d'f_2 = -2d' f_1 $ for some $d' \mid n/4$.
This implies that $2h = (n/4) f_1$ or $2h = (n/8) f_1$,
yielding a contradiction as above.

Thus, $G_0 \subset \{f_1, 2f_1, -2f_1, 0\}$; by Lemma \ref{aux_lem_higherorder} and Proposition \ref{prop_2maximal} the latter set has the required minimal distance. Note that $G_0$ is of the claimed form with $j=2$.

\smallskip

c.) Suppose that $(\ord f_1,\ord f_2)=(n,n/4)$.
Again, it follows that $f_2 = -4f_1$ and $G_0 \cap \langle f_2 \rangle \subset \{f_2, -f_2, 0\}$.
Let $h \in G_0 \setminus  (\langle f_2 \rangle \cup \{f_1\})$.
It follows that $h= d'f_1$ with $d'\mid n$.

Let $b\in \{2,4\}$ minimal such that $bh \in \langle f_2 \rangle$.
It follows that $bh = -f_2$ or $bh = d''f_2$ for some $d'' \mid n/4$.
In the former case it follows that $h= 2f_1$.

In the latter case it follows that $b d' + 4 d''$ is a multiple of $n$.
For $b=4$, this implies that $d' \in \{n/4,n/8\}$, and  for
$b=2 $, this implies $d' \in \{n/2, n/4\}$.

Let $x\in \N$ be minimal such that $4 \mid x +d'$.
We consider the minimal zero-sum sequence
\[
A= f_1^x\ (d'f_1)\ (-4f_1)^{(d+x)/4}.
\]
Since $\sigma_{f_1}(A)/n -1 = (d' +x-1)/4$, is not a multiple of $(n-8)/4$,
such an $h$ cannot exist.

Thus, $G_0 \subset \{f_1, 2f_1, 4f_1, -4f_1, 0\}$ and the latter set has the required minimal distance by Lemma \ref{aux_lem_higherorder}, applied twice, and Theorem \ref{2maximal}.
\end{proof}

The following result is mainly intended to further investigate the possibility of extending Theorem \ref{main_thm_direct}. 

\begin{lemma}
\label{lem_n-410}
Let $G$ be a finite cyclic group and $G_0 \subset G$.
For $|G| \ge  2000$ we have $\mD{G_0} \neq (|G|-4)/10$.
\end{lemma}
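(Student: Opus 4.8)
The plan is to argue by contradiction. Assume $n:=|G|\ge 2000$ and that some $G_0\subset G$ satisfies $\mD{G_0}=(n-4)/10$; in particular $n\equiv 4\pmod{10}$ (otherwise the claim is vacuous), so $n$ is even and $5\mid n-4$. Note also that $(n-4)/10$ exceeds $n^{1/2}$ and $\log n$, and even $(2(n/2)^2)^{1/3}$, comfortably.

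\emph{Arithmetic step.} First I would pin down where such a value can come from. Since $(n-4)/10>n^{1/2}$, Lemma~\ref{main_lem_ul}(ii) gives $(n-4)/10\in\Hud{m}$ for some $m\mid n$; more precisely, exactly as in the proof of that lemma, every non-half-factorial subset $G_2\subset G_0$ of cardinality two, after its two elements are reduced to a common order $m$ via Lemma~\ref{aux_lem_higherorder} and Theorem~\ref{aux_thm_2elements} is applied (legitimate because $\mD{G_2}\ge\mD{G_0}>m^{1/2}$), satisfies
\[
\ell\,\frac{n-4}{10}=\frac{m-c_1-c_2}{c_1c_2},\qquad m=\frac nd,
\]
for suitable positive integers $c_1,c_2,d,\ell$. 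From $(n-4)/10\le m/(c_1c_2)\le n/(c_1c_2)$ and $m-2\ge(n-4)/10$ one gets $c_1c_2\le 10$, $\ell\le 10$, $d\le 10$; clearing denominators turns the display into
\[
(\ell c_1c_2d-10)\,n = 4\ell c_1c_2d-10\,d(c_1+c_2),
\]
and for $n\ge 2000$ a short check over the bounded quantities shows the only solution is $c_1=c_2=1$, $d=2$, $\ell=5$. Hence $m=n/2$, $\mD{G_2}=n/2-2=(n-4)/2$, the reduced pair is $\{f,-f\}$ with $\ord f=n/2$, and since $C_{n/2}$ is the unique subgroup of that order, all these pairs lie in one fixed subgroup $\langle f\rangle$. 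This reduction of all the non-half-factoriality in $G_0$ to the single subgroup $\langle f\rangle$ is the crux; the rest is book-keeping.

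\emph{Structural step.} Pick $G_2=\{f_1,f_2\}\subset G_0$ non-half-factorial with $(\ord f_1,\ord f_2)$ lexicographically maximal, $\ord f_1\ge\ord f_2$. As the reduced pair has order $n/2$, each $\ord f_i\in\{n/2,n\}$; the case $(n,n)$ is impossible, since then Theorem~\ref{aux_thm_2elements} applied directly in $C_n$ would force $d=1$ above, which has no solution. So $(\ord f_1,\ord f_2)$ is $(n/2,n/2)$ or $(n,n/2)$. If $(\ord f_1,\ord f_2)=(n/2,n/2)$, then $f_2=-f_1$ and $\langle f_1\rangle=C_{n/2}$; maximality forces $G_0$ to contain no element of order $n$ (an order-$n$ $h$ would have to pair half-factorially with both $f_1$ and $f_2$, hence $2h=f_1=-f_1$ by Remark~\ref{rem_2el}, impossible). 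Then each element of $G_0$ outside $\langle f_1\rangle$ can be replaced by its multiple inside $\langle f_1\rangle$ without changing the minimal distance (Lemma~\ref{aux_lem_higherorder}), and Lemma~\ref{smalldetailed} applied inside $C_{n/2}=\langle f_1\rangle$ — valid since $(n-4)/10\ge(2(n/2)^2)^{1/3}$ — confines $G_0$ to a subset of $\{f_1,-f_1,(n/4)f_1,0\}$ (the element of order $2$ in $C_n$ outside $\langle f_1\rangle$ when $4\nmid n$, and at most one element of order $4$ when $8\nmid n$, are also allowed, but they push into that same set). If $(\ord f_1,\ord f_2)=(n,n/2)$, then $f_2=-2f_1$ by Lemma~\ref{aux_lem_higherorder} and Theorem~\ref{2maximal}, and repeating the argument of the $(n,n/q)$ case in the proof of Lemma~\ref{lem_n-2qq} with $q=2$ — now discarding only those minimal zero-sum sequences whose $\s_{f_1}$-value fails to be a multiple of $(n-4)/10$ — gives $G_0\subset\{f_1,2f_1,-2f_1,(n/2)f_1,0\}$.

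\emph{Conclusion.} For each of the finitely many candidate sets above I would compute $\mD{G_0}$ directly from Lemma~\ref{aux_lem_geroldinger} by listing the relevant minimal zero-sum sequences, as in the proofs of Lemmas~\ref{lem_n-2qq} and~\ref{lem_n-2qq2}; in every case the outcome is $0$, $(n-4)/2$, or $(n-4)/4$ (the last only when $4\mid n$). Since $5\mid n-4$, none of these equals $(n-4)/10$, which is the desired contradiction. The main obstacle is the structural step: one must redo, with the weaker divisibility hypothesis $(n-4)/10\mid\bullet$ in place of the sharper ones used in the cited proofs, the analysis of how elements outside $\langle f_1\rangle$ interact, while tracking the parity conditions on $n$; this is routine but is the bulk of the work. (No appeal to $(2n^2)^{1/3}$ over $C_n$ itself is needed — Lemma~\ref{smalldetailed}, Theorem~\ref{2maximal} and Proposition~\ref{3elements_improved} are only ever used over $C_{n/2}$, where the threshold is met with room to spare, and the exclusion of a second order-$n$ element uses only lexicographic maximality and Remark~\ref{rem_2el}.)
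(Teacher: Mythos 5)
Your proposal is correct and takes essentially the same route as the paper: both reduce to the fact that a minimal distance $(n-4)/10$ can only come from two-element subsets whose reduction (via Lemma \ref{aux_lem_higherorder} and Theorem \ref{aux_thm_2elements}) is $\{g,-g\}$ of order $n/2$ with minimal distance $(n-4)/2$ (your explicit computation forcing $c_1=c_2=1$, $d=2$, $\ell=5$ is the quantitative form of the paper's assertion that $(n-4)/10$ lies in $\Hud{n/2}$ and in no other $\Hud{m}$), and then constrain $G_0$ with Lemma \ref{smalldetailed} and explicit minimal zero-sum sequences to conclude that its minimal distance would be $(n-4)/2$ or $(n-4)/4$, a contradiction. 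The only deviations are organizational: you split according to the lexicographically maximal order pair $(n/2,n/2)$ versus $(n,n/2)$ where the paper splits according to whether $G_0$ contains a pair $\{f,-2f\}$ with $f$ generating, and you retain the order-two element as a possible member of $G_0$ (leading to the extra admissible value $(n-4)/4$), which is if anything slightly more careful than the paper's terse exclusion of that element.
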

\begin{proof}
Evidently we only need to consider the case that $(|G|-4)/10$
is an integer. Note that $(|G|-4)/10$ is contained in $\Hud{|G|/2}$ but no other set $\Hud{m}$ for $m \mid |G|$. By Theorem \ref{prop_2subset}, we get that there exists a non-half-factorial subset $G_2$ of cardinality two, whose minimal distance is  $(|G|-4)/2$. By Lemma \ref{main_lem_ul} and Theorem \ref{2maximal} we know that $G_2 = \{f, -2f\}$ or
$G_2 = \{2f , -2f\}$ for some generating element $f$ of $G$.

First, suppose the latter is the case and $G_0$ does not contain a subset of the former form. By Lemma \ref{smalldetailed} applied to the group $\langle 2f \rangle $ it follows that $G_0 \cap \langle 2f \rangle \subset \{ 2f, -2f, 0 \}$. Moreover, it follows that for $h \in G_0 \setminus \langle 2f \rangle$
 we have $2h \in \{ 2f, -2f, 0 \}$. By our assumption that $G_0$ does not contain a subset of the form $\{f, -2f\}$, we get that $2h=0$. Thus $G_0 \setminus \langle 2f \rangle$ contains at most one element, and the minimal distance of $G_0$ is $(|G|-4)/2$, a contradiction.

Second, suppose $G_0$ contains $ \{f, -2f\}$. We note that by Lemma \ref{aux_lem_higherorder} and Lemma \ref{smalldetailed} $\langle 2f\rangle \cap G_0 \subset \{0, 2f , -2f\}$.

Suppose there exists an element $h \in G_0 \setminus (\{f\} \cup \langle 2f \rangle )$.
It follows that $\{f, h\}$ is half-factorial and thus $h= d f$  for some odd $d \mid n $. Moreover, it follows that $\{2df, -2f\}$ is half-factorial, too. This implies that $2df=-2df$, that is $n \mid 4d$. So, $d \in \{n/4,n/2\}$.
We consider $A= f (df) (-2f)^{(d+1)/2}$, which is a minimal zero-sum sequence. Since $\s_{f_1}(A) = (d+1)/2  n $, it follows by Lemma \ref{aux_lem_geroldinger} that $\mD{G_0}\mid (d-1)/2$, a contradiction to $\mD{G_0} = (n-4)/10$.

Consequently, we get that $ G_0\subset \{f, 2f, -2f, 0\}$, and so its minimal distance is $(n-4)/2$ by Proposition \ref{prop_2maximal}, again a contradiction.
\end{proof}

\section{Applications to congruence half-factorial structures}
\label{sec_appCHF}

In this section we detail how our results can be applied to questions related to congruence half-factorial structures. To some extent this was already discussed at the beginning of the paper. In view of known results, the results below are fairly direct consequences of the results in the preceding sections; however, there are some subtleties that we believe are worthwhile to be stressed.

As mentioned in Section \ref{sec_prel} for a Krull monoid $H$ it is well known that
\[\lc(H) = \lc(\bc(G_0))\]
where $G_0$ is the subset of classes containing prime divisors.
In view of the fact recalled in Section \ref{ssec_kn} that $d$-congruence half-factoriality is equivalent to $d$ dividing $\min \Delta(H)$ and so $d$ dividing $\min\Delta(G_0)$ a close link between the present and preceding problems is evident.
There is however one additional point to consider. Namely, for a subset $G_0 $ of $G$ we need to know that there actually exists a Krull monoid having that class group and that subset of classes containing prime divisors.

This has one, but only one, implication for the set $G_0$.
We recall the relevant result (see \cite[Theorem 2.5.4]{geroldingerhalterkochBOOK}).

\begin{theorem}
\label{thm_KM_ex}
Let $G$ be an abelian group and $G_0 \subset G$.
There exists a Krull monoid with class group isomorphic to $G$ such that the set of classes containing prime ideals corresponds to $G_0$ if and only if $G_0$  generates $G$ as a semi-group.
\end{theorem}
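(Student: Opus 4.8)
The plan is to establish the two implications separately. The forward direction --- if such a Krull monoid exists, then $G_0$ generates $G$ as a semigroup --- is essentially formal, whereas the converse requires an explicit construction, which is the real content.

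For the forward direction, let $H$ be a Krull monoid with divisor theory $\varphi\colon H\to\fc(P)$, so that $\cc(H)=\qo(\fc(P))/\qo(\varphi(H))$ and the set of classes containing prime divisors is $\{[p]\colon p\in P\}$; by hypothesis this set is $G_0$, under an identification $\cc(H)\cong G$. First I would note that $G_0$ generates $\cc(H)$ \emph{as a group}, since every element of the free abelian group $\qo(\fc(P))$ is a $\Z$-combination of the generators in $P$, hence every class is a $\Z$-combination of the $[p]$. To upgrade this to generation as a semigroup, I would invoke the defining property of a divisor theory: for each $p\in P$ there is an $a\in H$ with $p\mid_{\fc(P)}\varphi(a)$, say $\varphi(a)=p\,v$ with $v\in\fc(P)$; since $\varphi(a)$ lies in $\varphi(H)$, its class is $0$, so $-[p]$ is the class of $v$, a (possibly empty) sum of elements of $G_0$. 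Similarly, for any non-invertible $b\in H$ the sequence $\varphi(b)$ exhibits $0$ as a nonempty sum of elements of $G_0$. Hence the subsemigroup of $\cc(H)$ generated by $G_0$ contains $0$ and is stable under negation, so it is a subgroup; containing the group-generating set $G_0$, it equals $\cc(H)$. (If $H=H^{\times}$ then $P=\emptyset$, $G_0=\emptyset$, and $\cc(H)$ is trivial, which is the boundary case, covered by convention.)

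For the converse, suppose $G_0$ generates $G$ as a semigroup. I would build the monoid explicitly: take $P=G_0\times\N$, write $p_{g,i}$ for $(g,i)$, let $\rho\colon\fc(P)\to G$ be the homomorphism determined by $\rho(p_{g,i})=g$, and set $H=\rho^{-1}(\{0\})$, the monoid of ``coloured'' zero-sum sequences over $P$. Then: (a) the inclusion $\varphi\colon H\hookrightarrow\fc(P)$ is a divisor homomorphism, because $b=ac$ in $\fc(P)$ with $a,b\in H$ forces $\rho(c)=0$, so $c\in H$; hence $H$ is a Krull monoid. (b) It is moreover a divisor theory: for $g\in G_0\setminus\{0\}$, the hypothesis provides $w\in\fc(P)$ with $\rho(w)=-g$, and --- using that each class contributes infinitely many primes --- a second such $w'$ with $\supp w\cap\supp w'=\emptyset$ and $p_{g,i}\notin\supp w\cup\supp w'$; then $p_{g,i}w,\,p_{g,i}w'\in H$ and $\gcd(p_{g,i}w,p_{g,i}w')=p_{g,i}$, while for $g=0$ the element $p_{0,i}$ already lies in $H$ and is prime there. (c) To compute $\cc(H)=\qo(\fc(P))/\qo(H)$, let $\widetilde\rho\colon\qo(\fc(P))\to G$ be the induced group homomorphism; I claim $\qo(H)=\ker\widetilde\rho$: the inclusion ``$\subseteq$'' is immediate, and if $\widetilde\rho(a-b)=0$ with $a,b\in\fc(P)$, choose (again by semigroup generation) $e\in\fc(P)$ with $\rho(e)=-\rho(a)$, so that $ae,be\in H$ and $a-b=(ae)-(be)\in\qo(H)$; since $\widetilde\rho$ is onto $G$ (its image is the group generated by $G_0$), we get $\cc(H)\cong G$ via $\widetilde\rho$. (d) Under this isomorphism $[p_{g,i}]\mapsto g$, so the classes containing prime divisors form exactly $G_0$.

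The main obstacle lies in step (b). One prime per class does not suffice --- the naive candidate $\bc(G_0)$ need not admit a divisor theory at all (for instance $\bc(\{g\})$ with $\ord g>1$ is factorial) --- and the point of allowing infinitely many primes in each class is exactly to split a relation $\rho(w)=-g$ into two disjoint ``witnesses'' whose greatest common divisor collapses to the single prime $p_{g,i}$. The hypothesis is used a second time, decisively, in the identification $\qo(H)=\ker\widetilde\rho$ of step (c): there one must express $-\rho(a)$ as a genuine sum of elements of $G_0$, which is precisely why the condition reads ``$G_0$ generates $G$ as a semi-group'' and not merely ``as a group''.
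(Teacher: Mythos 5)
Your proposal is correct, but note that the paper does not prove this statement at all: it is quoted from \cite[Theorem 2.5.4]{geroldingerhalterkochBOOK}, so there is no internal proof to compare against. Your argument --- the formal ``only if'' direction via the divisor theory, and for the converse the monoid of zero-sum sequences over $G_0\times\N$ with infinitely many primes per class, verified to be a divisor theory with $\qo(H)=\ker\widetilde\rho$ --- is essentially the standard construction behind the cited result, and the checks in steps (a)--(d) are sound; your handling of the degenerate case $H=H^{\times}$, $G_0=\emptyset$ by convention is consistent with how the paper itself treats such corner cases (cf.\ its remark following the theorem on semigroup versus group generation).
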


In case $G$ is a torsion group (with at least two elements), the condition that $G_0$  generates $G$ as a semi-group is of course  equivalent to the condition that $G_0$ generates $G$ as a group.

Such a result not only holds for Krull monoids but even for Dedekind domains;  
we refer to \cite{gilmeretal96} for a refined version mainly concerned with the number of prime divisors in the classes in the domain case. By contrast for Krull monoids there is less restriction on the number of prime divisors in each class.  That there is a difference between the domain and monoid case is actually a somewhat rare phenomenon in this context. In possibly more classical terms, one can understand this difference by recalling that Dedekind domains have the approximation property while a Krull monoid might not have it.
An arithmetic result where this difference is visible and crucial is the work of Coykendall and Smith \cite{coykendallsmith11} on `other half-factorial structures' (a notion we do not recall here).

A half-factorial structure is $d$-congruence half-factorial for each $d$. Since it is well-known that half-factorial Krull monoids with finite cyclic class group of any order exist (see for example Section \ref{non-LCN}), the mere question for which $d$ it is true that $d$-congruence half-factorial Krull monoids exist, does not make much sense.
Yet, restricting to non-half-factorial structures it becomes interesting. An additional natural question is to ask for the $d$ such that there exists a $d$-congruence half-factorial Krull monoid which is not $d'$-congruence half-factorial for any multiple of $d'$; in this case, we say that the monoid is \emph{truly $d$-congruence half-factorial}.

The following result is not surprising and it might even seem obvious, yet there is one subtle point making it in the end not as obvious as it might seem. Indeed, we do not know how to prove the result for general finite groups and it is not clear whether it holds; note that we use Lemma \ref{aux_generating}.

\begin{theorem}
Let $d$ and $n$ be positive integers.
\begin{enumerate}
\item There exists a non-half-factorial truly $d$-congruence half-factorial Krull mo\-noid with finite cyclic class group of order $n$ if and only if $d\in \Dast{C_n}$,
\item There exists a non-half-factorial $d$-congruence half-factorial Krull monoid with finite cyclic class group of order $n$ if and only if $d$ divides an element of $\Dast{C_n}$.
\end{enumerate}
\end{theorem}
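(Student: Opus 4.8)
The plan is to prove both statements by relating the existence question for Krull monoids to the purely combinatorial set $\Dast{C_n}$, using the block-monoid transfer machinery recalled in Section \ref{ssec_km}. The key facts I would invoke are: first, $\Delta(H) = \Delta(\bc(\dc(H)))$ for a Krull monoid $H$, so that $\mD{H} = \mD{\dc(H)}$ and hence $H$ is (truly) $d$-congruence half-factorial exactly according to divisibility properties of $\mD{\dc(H)}$; second, Theorem \ref{thm_KM_ex}, which says that a subset $G_0 \subset C_n$ arises as the set of classes containing prime divisors of some Krull monoid with class group $C_n$ if and only if $G_0$ generates $C_n$; and third, Lemma \ref{aux_generating}, which provides the bridge from an arbitrary subset to a generating one while preserving sets of lengths.

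For part (i), I would argue as follows. If such a monoid $H$ exists, then $\mD{\dc(H)} = d$ (``truly $d$-congruence half-factorial'' means $d\mid \mD{H}$ but no proper multiple of $d$ does, equivalently $d = \mD{H}$, using $\mD{H} = \gcd\Delta(H)$ from \eqref{eq_min=gcd}), and since $\dc(H)$ is a subset of $C_n$ with $\Delta(\dc(H))\neq\emptyset$, by definition $d\in\Dast{C_n}$. Conversely, suppose $d\in\Dast{C_n}$, so there is $G_0\subset C_n$ with $\mD{G_0}=d$ and $\Delta(G_0)\neq\emptyset$. By Lemma \ref{aux_generating} there is a generating subset $G_0'\subset C_n$ admitting a transfer homomorphism $\theta:\bc(G_0')\to\bc(G_0)$; a transfer homomorphism preserves sets of lengths, hence $\Delta(\bc(G_0'))=\Delta(\bc(G_0))$ and so $\mD{G_0'}=d$ as well (and in particular $G_0'$ is not half-factorial). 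Since $G_0'$ generates $C_n$, Theorem \ref{thm_KM_ex} gives a Krull monoid $H$ with class group $C_n$ and $\dc(H)=G_0'$; then $\mD{H}=\mD{G_0'}=d$, so $H$ is non-half-factorial and truly $d$-congruence half-factorial.

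For part (ii), the statement ``$H$ is $d$-congruence half-factorial'' means $d\mid\mD{H}$, so I would note that a non-half-factorial $H$ with class group $C_n$ is $d$-congruence half-factorial precisely when $d$ divides $\mD{\dc(H)}$, and $\mD{\dc(H)}$ is some element of $\Dast{C_n}$ (using again that $\dc(H)$ generates and, when $H$ is non-half-factorial, $\Delta(\dc(H))\neq\emptyset$). Conversely if $d$ divides some $e\in\Dast{C_n}$, apply part (i) (or its proof) to produce a non-half-factorial Krull monoid $H$ with $\mD{H}=e$, which is then $d$-congruence half-factorial. This direction is immediate once (i) is in place.

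The main obstacle—and the ``subtle point'' the authors allude to—is the converse direction of part (i): starting from $d\in\Dast{C_n}$ realized by a possibly \emph{non-generating} subset $G_0$, one must manufacture a \emph{generating} subset with the same minimal distance, because Theorem \ref{thm_KM_ex} only realizes generating subsets as $\dc(H)$. This is exactly what Lemma \ref{aux_generating} is built to supply, and the reason the theorem is not clear for general finite abelian groups is that no analogue of Lemma \ref{aux_generating} is known there (as explicitly remarked before that lemma). I would make sure the write-up flags that the use of Lemma \ref{aux_generating} is the crux, and double-check that the transfer homomorphism it yields genuinely preserves the \emph{minimal distance} (not merely the property of being non-half-factorial), which follows since it preserves all sets of lengths and hence $\Delta$.
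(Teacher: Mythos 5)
Your proposal is correct and follows essentially the same route as the paper: the forward directions via the block-monoid transfer (so that truly $d$-congruence half-factorial means $d=\mD{\dc(H)}\in\Dast{C_n}$), the converse via Lemma \ref{aux_generating} to pass to a generating set with the same minimal distance and then Theorem \ref{thm_KM_ex} to realize it, and part (ii) deduced from part (i). You also correctly identify the generating-set issue resolved by Lemma \ref{aux_generating} as the subtle point the authors highlight.
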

These results as well as the subsequent ones are also true for Dedekind domains instead of Krull monoids, as can be seen from the discussion above.

\begin{proof}
We start by proving the first part. Suppose $H$ is a truly $d$-congruence half-factorial structure with the required properties. And, let $G_0$ be the subset of ideal classes containing prime divisors. It is known (cf.~Section \ref{ssec_kn}) that $d\mid  \min \Delta(G_0)$ and that the structure is $\min \Delta(G_0)$-congruence half-factorial. This establishes one part of the proof.

Now, suppose that $d\in \Dast{C_n}$. Merely from the definition, it follows that there exist a subset $G_0\subset C_n$ such that $\min \Delta(G_0)=d$ and thus $\bc(G_0)$ is truly $d$-congruence half-factorial.
However, this is not sufficient to obtain our claim by Theorem \ref{thm_KM_ex}, since we have no guarantee that $G_0$ is a generating set. Yet, by Lemma \ref{aux_generating} there exists a generating set $G_0'\subset C_n$ such that $\mD{G_0'}=\mD{G_0}$ and the claim follows.

We now turn to the second part. Suppose $H$ is $d$-congruence half-factorial, it follows that there exists some $d'$ such that $H$ is truly $d'$-congruence-half-factorial. By the first part, we know that $d'\in \Dast{C_n}$ and the claim follows. Conversely, if $d$ divides an element of $\Dast{C_n}$, denote this element by $d'$, we get by the first part that there exist a truly $d'$-congruence-half-factorial Krull monoid. Now, this monoid is $d$-congruence half-factorial, establishing the claim.
\end{proof}

Evidently, this result can now be combined with the results on $\Dast{G}$ obtained in Section \ref{sec_proofs}, or any other result for $\Dast{G}$ for finite cyclic groups, to obtain more `explicit' versions.
We only phrase one such result, since due to a fortunate coincidence it is much stronger than what one might expect in view of the results of Section \ref{sec_proofs}.

\begin{theorem}
Let $d$ and $n$ be integers with $d > n^{1/2}$.
There exists a non-half-factorial $d$-congruence half-factorial Krull monoid with finite cyclic class group of order $n$ if and only if
\[
d \in \bigcup_{m \mid n} \Hud{m}.
\]
\end{theorem}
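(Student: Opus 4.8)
The plan is to reduce, via the preceding theorem, to the purely combinatorial assertion that for $d>n^{1/2}$ one has $d\in\bigcup_{m\mid n}\Hud{m}$ if and only if $d$ divides some element of $\Dast{C_n}$; the preceding theorem then immediately converts this into the claimed existence statement. The whole argument rests on Lemma \ref{main_lem_ul} together with one elementary manipulation of the quantities in Definition \ref{def_sets}.

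First I would record that, directly from its definition, $\Hud{m}$ is closed under passing to divisors: if $d'\mid d''$ and $d''$ divides some $(m-c_1-c_2)/(c_1c_2)$, then so does $d'$. For the implication ``$d$ divides an element of $\Dast{C_n}$ $\;\Longrightarrow\;$ $d\in\bigcup_{m\mid n}\Hud{m}$'' this is essentially all that is needed: if $d\mid d'$ with $d'\in\Dast{C_n}$, then $d'\ge d>n^{1/2}$, so Lemma \ref{main_lem_ul}.2 gives $d'\in\Hud{m}$ for some $m\mid n$, whence $d\in\Hud{m}$ by divisor-closedness. This is the only place the hypothesis $d>n^{1/2}$ is used, and it is exactly the ``fortunate coincidence'' referred to in the text: Lemma \ref{main_lem_ul}.2 controls every element of $\Dast{C_n}$ that could possibly be a multiple of such a $d$, even though $\Dast{C_n}$ itself is not understood throughout the range above $n^{1/2}$.

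For the converse I would prove the stronger, size-free fact that every element of $\bigcup_{m\mid n}\Hud{m}$ divides an element of $\Dast{C_n}$. Fix $m\mid n$ and $d\in\Hud{m}$, so $d$ divides a \emph{positive} integer $N=(m-c_1-c_2)/(c_1c_2)$ with $c_1,c_2\in[1,m]$. The key observation is the identity
\[
\frac{m-c_1-c_2}{c_2}=c_1\cdot\frac{m-c_1-c_2}{c_1c_2}=c_1N,
\]
which shows that $(m-c_1-c_2)/c_2$ is again a positive integer, hence that the triple $(c_1,c_2,m)$ lies in $M(m)$ (using $m\mid m$). For this triple the gcd appearing in Definition \ref{def_sets} equals $\gcd\{N,(m-c_1-c_2)m/(c_2m)\}=\gcd\{N,c_1N\}=N$, so $N\in\In{m}$. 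By Lemma \ref{main_lem_ul}.1, $N\in\Dast{C_n}$, and $d\mid N$, as required. Combining the two implications with the preceding theorem finishes the proof.

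I do not expect a genuine obstacle: once the identity $(m-c_1-c_2)/c_2=c_1N$ is spotted---making $M(m)$-membership automatic with third coordinate $m$ and collapsing the defining gcd of $\In{m}$ down to $N$---everything is a short deduction from Lemma \ref{main_lem_ul} and the preceding theorem. The only points that require care are the bookkeeping of quantifiers when invoking Lemma \ref{main_lem_ul}.2 (it must be applied to the element $d'$ of $\Dast{C_n}$, which satisfies $d'\ge d>n^{1/2}$, rather than to $d$ directly), and the observation, worth stating explicitly, that the ``if'' direction in fact holds with no lower bound on $d$ whatsoever.
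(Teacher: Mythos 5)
Your proposal is correct, but it is organized differently from the paper's own proof. The paper argues from the primitive results directly: for the ``if'' direction it writes down the explicit generating set $G_0=\{e,(n/m)e,\frac{m-c_1}{c_2}((n/m)e)\}$, computes its minimal distance via Lemma \ref{aux_lem_higherorder} and Theorem \ref{aux_thm_2elements}, and realizes it by a Krull monoid via Theorem \ref{thm_KM_ex}; for the ``only if'' direction it applies Theorem \ref{prop_2subset}, Lemma \ref{aux_lem_higherorder} and Theorem \ref{aux_thm_2elements} to the set of classes containing prime divisors. You instead reduce everything to the immediately preceding theorem (existence is equivalent to $d$ dividing an element of $\Dast{C_n}$) and to Lemma \ref{main_lem_ul}: part (2) plus divisor-closedness of $\Hud{m}$, correctly applied to the multiple $d'>n^{1/2}$, gives one implication, and your identity $\frac{m-c_1-c_2}{c_2}=c_1N$ shows $(c_1,c_2,m)\in M(m)$ with the defining gcd collapsing to $N$, so $N\in\In{m}\subset\Dast{C_n}$ by Lemma \ref{main_lem_ul}(1), giving the other. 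The underlying ingredients are the same (Lemma \ref{main_lem_ul} is itself proved from Theorem \ref{prop_2subset}, Theorem \ref{aux_thm_2elements} and Proposition \ref{prop_2maximal}), so the mathematical content coincides; what your packaging buys is modularity -- the realization issue (a set attaining the distance need not generate) is delegated to the preceding theorem, hence to Lemma \ref{aux_generating}, and you make explicit the fact, used but not proved elsewhere in the paper, that every element of $\Hud{m}$ divides an element of $\In{m}$, which also justifies your correct side remark that the ``if'' direction needs no lower bound on $d$. What the paper's version buys is an explicit description of a set of classes realizing the property, without passing through $\In{m}$ or the $M(m)$ bookkeeping.
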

\begin{proof}
Let $d$ be of the form given in the result. This means that $m\mid n$ and that $c_1,c_2$ are integers such that
\[
\frac{m - c_1 - c_2}{c_1 c_2}
\]
is integral and a multiple of $d$. Let $G$ be a finite cyclic group of order $n$, and $e$ a generating element of $G$.
The set $G_0=\{e, (n/m)e , \frac{m - c_1}{c_2} ((n/m)e) \}$ has, by Lemma \ref{aux_lem_higherorder}, the same minimal distance as 
\[
\{(n/m)e , \frac{m - c_1}{c_2} ((n/m)e) \}.
\]
By Theorem \ref{aux_thm_2elements}, applied to $\langle (n/m)e \rangle$, a cyclic group of order $m$, it is $(m-c_1 -c_2)/(c_1c_2)$.
By Theorem \ref{thm_KM_ex} there exists a Krull monoid with class group isomorphic to $G$ such that the subset of classes containing prime divisors is $G_0$. By standard results recalled in Section \ref{ssec_kn} the minimal distance of this Krull monoid is the minimal distance of $G_0$. And, thus it is $d$-congruence half-factorial.

To see the converse claim suppose that a certain Krull monoid with class group $G$, a finite cyclic group of order $n$, is $d$-congruence half-factorial. We know that its minimal distance $d'$ is a multiple of $d$.
Since $d'$ is non-zero we also have that $d' \ge \log n$.
Again by Section \ref{ssec_kn} we know that $d'$ is equal to the minimal distance of $G_0\subset G$ the subset of classes containing prime divisors. Now, by Theorem \ref{prop_2subset} we know that there exists a non-half-factorial subset $G_2 \subset G_0$ of cardinality two.
By Lemma \ref{aux_lem_higherorder} we may assume without restriction that the two elements in $G_2$ have the same order $m \mid n$. Now, it follows by Theorem \ref{aux_thm_2elements} that $d'$ is of the form $(m-c_1 -c_2)/(c_1c_2)$ establishing the claim.
\end{proof}

We now address the other question, that is we give a precise characterization of Krull monoids with finite cyclic class group of order $n$ that are  $d$-congruence half-factorial or truly $d$-congruence half-factorial Krull monoids, for $d\ge n/5$ in the general case and for $d\ge (2n^2)^{1/3}$ for $n$ prime. The result follows quite directly from Theorems \ref{main_thm_prime} and \ref{main_thm_inverse}.

\begin{theorem}
\label{char_thm}
Let $H$ be a non-half-factorial Krull monoid with finite cyclic class group of order $n$.
Suppose $d$ is one of the elements appearing in Theorem \ref{main_thm_inverse} (and $n$ is such that $d$ is integral). Then,
\begin{enumerate}
\item $H$ is truly $d$-congruence half-factorial if and only if the subset of classes containing prime divisors is equal to a generating set fulfilling the condition for this $d$ in Theorem \ref{main_thm_inverse},
\item $H$ is $d$-congruence half-factorial if and only if the subset of classes containing prime divisors is equal to a generating set fulfilling the condition for a multiple of this $d$ in Theorem \ref{main_thm_inverse}.
\end{enumerate}
\end{theorem}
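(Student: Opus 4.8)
The plan is to reduce everything to the combinatorial results already established, using the block-monoid correspondence and the existence criterion of Theorem~\ref{thm_KM_ex}. The key observations are these: (a) by the discussion in Section~\ref{ssec_kn}, a Krull monoid $H$ with finite class group has $\min\Delta(H)=\min\Delta(G_0)$, where $G_0=\dc(H)$ is the subset of classes containing prime divisors, and $H$ is $d$-congruence half-factorial iff $d\mid\min\Delta(G_0)$; (b) consequently $H$ is \emph{truly} $d$-congruence half-factorial iff $\min\Delta(G_0)=d$ exactly; (c) by Theorem~\ref{thm_KM_ex}, the subsets $G_0$ arising this way are precisely the generating subsets of $G$, so the question becomes: which \emph{generating} subsets $G_0$ of $C_n$ satisfy $\min\Delta(G_0)=d$ (resp.\ $d\mid\min\Delta(G_0)$)?

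First I would record (a)--(c) explicitly, citing Section~\ref{ssec_kn} and Theorem~\ref{thm_KM_ex}. For part (i): if $H$ is truly $d$-congruence half-factorial, then $\min\Delta(G_0)=d$ with $G_0$ generating, so $d\in\Dast{C_n}$ and Theorem~\ref{main_thm_inverse} describes $G_0$ up to the choice of generating element — and since $G_0$ is already generating, it must literally be one of the listed sets (the listed "sandwich" conditions $\{f,\dots\}\subset G_0\subset\{f,\dots,0\}$ determine $G_0$ up to adjoining $0$, which does not affect whether the set generates). Conversely, Theorem~\ref{main_thm_inverse} asserts each such set actually has minimal distance $d$, hence the associated Krull monoid (which exists by Theorem~\ref{thm_KM_ex}, the set being generating) is truly $d$-congruence half-factorial. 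For part (ii): $H$ is $d$-congruence half-factorial iff $\min\Delta(G_0)$ is a multiple of $d$; since $d\ge n/5$ (or $d\ge(2n^2)^{1/3}$ in the prime case), any multiple $d'$ of $d$ with $d'\in\Dast{C_n}$ also lies in the range covered by Theorem~\ref{main_thm_inverse} (indeed the only multiple of such a large $d$ that can be in $\Dast{C_n}$, besides $d$ itself, would have to be even larger, so only finitely many cases arise and all are on the list), so $G_0$ is a generating set fulfilling the condition for that multiple $d'$; the converse is immediate.

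One technical point to handle carefully: the phrase "the subset of classes containing prime divisors is equal to a generating set fulfilling the condition" must be squared with the fact that the sets in Theorem~\ref{main_thm_inverse} are only determined up to the choice of generating element $f$ and up to adjoining $0$. I would note that adjoining $0$ to a set leaves both the minimal distance (by Lemma~\ref{aux_lem_higherorder}, since $\mathsf{n}_{G_0}(0)\cdot 0=0$ already lies in the set) and the generating property unchanged, and that replacing $f$ by another generator is an automorphism of $C_n$, so "fulfilling the condition" should be read as "equal, for some generating element $f$ and possibly after adjoining $0$, to one of the listed sets" — this matches the wording in Theorem~\ref{main_thm_inverse} itself ("except for the choice of the generating element"). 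I would also flag, as the paper does in the remark after Theorem~\ref{main_thm_direct}, the single mismatch in one branch of case~(iv)/(ix) between the $\Dast{G}$-description and the congruence-half-factoriality description, which is why the theorem is phrased via reference to Theorem~\ref{main_thm_inverse} rather than restated.

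The main obstacle is not any hard estimate — everything heavy was done in Sections~\ref{non-LCN}--\ref{sec_proofs} — but rather the bookkeeping in part (ii): verifying that for $d$ in the stated ranges, the set of multiples of $d$ lying in $\Dast{C_n}$ is exactly $\{d\}$ together with (at most) a few explicitly larger values all of which appear in Theorem~\ref{main_thm_inverse}. For $n$ prime this is immediate from Theorem~\ref{main_thm_prime} since $\Dast{C_n}\cap\N_{\ge(2n^2)^{1/3}}$ is simply described; for general $n$ with $d\ge n/5$ one checks against the explicit list in Theorem~\ref{thm_directweak} that no two listed values are in a nontrivial divisibility relation with both $\ge n/5$, except possibly pairs like $(n-2)/2$ and $n-2$, or $(n-4)/4,(n-4)/2$, etc., which are precisely the pairs already covered — so the characterization for $d$-congruence half-factoriality reduces to a finite union of the characterizations from Theorem~\ref{main_thm_inverse}. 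I would present this as a short case inspection rather than a formula.
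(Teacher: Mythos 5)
Your proposal is correct and follows essentially the same route as the paper's own (very short) proof: reduce via the transfer/block-monoid facts of Section~\ref{ssec_kn} to the statements $d\mid\mD{G_0}$ resp.\ $d=\mD{G_0}$, invoke Theorem~\ref{thm_KM_ex} to know $G_0$ is generating, and conclude by Theorem~\ref{main_thm_inverse}. Your extra bookkeeping for part (ii) (that any multiple of $d$ lying in $\Dast{C_n}$ is again in the range covered by Theorem~\ref{main_thm_inverse}) and the remarks on adjoining $0$ and the choice of generator are correct elaborations of what the paper leaves implicit.
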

\begin{proof}
Let $G$ denote the class group and $G_0$ the subset of classes containing prime divisors.
By Section \ref{ssec_kn} we know that $H$ is  $d$-congruence half-factorial if and only if  $d$ is a divisor of $\mD{G_0}$. 
Likewise, $H$ is truly $d$-congruence half-factorial if and only if $d$ equals $\mD{G_0}$. 
By Theorem \ref{thm_KM_ex} we know that $G_0$ is a generating set of $G$.
Now, the claim follows by Theorem \ref{main_thm_inverse}.
\end{proof}

One could extend the second part of this result  to the case of those $d$ for which one knows by results on $\Dast{G}$, such as Theorem \ref{main_thm_direct}, that all multiples of $d$ that are in $\Dast{G}$ are at least of size $n/5$, and thus covered by Theorem \ref{main_thm_inverse}.

For completeness, we end this section by giving the proofs of the two results mentioned at the beginning of the paper, which are now extremly short.

\begin{proof}[Proof of Theorem \ref{thm_caseprime}]
The proof is analogous to the one of Theorem \ref{char_thm} except for applying Theorem \ref{main_thm_prime} instead of Theorem \ref{main_thm_inverse}.
\end{proof}

\begin{proof}[Proof of Theorem \ref{thm_genprel}]
This is just a special case of Theorem \ref{char_thm}.
\end{proof}

\section{Number theoretic applications}
\label{sec_nt}

In this final section, we discuss some applications to questions of quantitative problems on  factorizations, specifically in rings of algebraic integers, yet they apply verbatim in other or more general context as well (holomorphy rings of function fields and formations, resp., see the introduction to Chapter 9 of \cite{geroldingerhalterkochBOOK}).

First we recall the abstract description for $\ao$ and $\bo$ mentioned in Section \ref{ssec_kn}; we refer to \cite[Theorem 9.4.10]{geroldingerhalterkochBOOK} and surrounding results for details.
It is known that $\ao$ is equal to the maximal cardinality
of certain subsets of $G$.
We recall a notation: for a set $G_0 \subset G$, a (possibly empty) sequence  $S\in \fc(G \setminus G_0)$, and $\ell \in \N_0$, let
\[
\Omega(G_0,S,\ell)
\]
denote the set of all zero-sum sequences $SF$ such that $F \in \fc(G_0)$ with $\vo_g(F)\ge \ell $ for each $g \in G_0$.

Then  $\ao$ is the maximum over all $G_0 \subset G$ such that for some $S\in \fc(G \setminus G_0)$ and some $\ell \in \N_0$ one has
 \begin{equation}
\label{eq_PHDM}
\emptyset \neq \beta^{-1}(\Omega(G_0,S, \ell)) \subset \pc(H, \dc, M).
\end{equation}
For clarity, note that the condition that the set is nonempty merely means that $\s(S) \in \langle G_0 \rangle$.
We denote $\ao$ by $\ao_{\dc}(G)$.
For completeness, we recall that $\bo$ is the maximal length of a sequence $S$ for which \eqref{eq_PHDM} holds for some $G_0$ with maximal cardinality, i.e., cardinality $\ao_{\dc}(G)$.

While this cannot be the  place to recall how these results are obtained in any detail, we still give some very rough indications, in the hope that they clarify a bit why the constants are of this form.

One can show $\pc(H, \dc, M)$ is a \emph{finite} union of sets  of the form $\beta^{-1}(\Omega(G_0,S, \ell))$.
The order of the counting function of these special sets can be determined via expressing the associated Dirichlet series as a suitable combination of $L$-series, and then applying an appropriate Tauberian theorem.
The order of the counting function of a set $\beta^{-1}(\Omega(G_0,S, \ell))$ is 
\[
\frac{x}{(\log x)^{1- |G_0|/|G|}} (\log \log x)^{|S|-\varepsilon}
\] 
with $\varepsilon$ equal to $0$ or $1$ depending on whether $G_0$ is non-empty or empty. 
Finally, note that for the dominant terms the sets $G_0$ are non-empty.

In the following result we determine $\ao_{\{0,d\}}(G)$ for finite cyclic $G$ and large $d$; the case $d= n-2$ appeared already in \cite{WAS16}.

\begin{theorem}
Let $G$ be a finite cyclic group of order $n\ge 250$.
\begin{enumerate}
\item If $d \in \N \cap \{n-2, (n-3)/2, (n-4)/3, (n-5)/4\}$,
then $\ao_{\{0,d\}}(G)=3$,
\item If $d \in \N \cap \{(n-2)/2, (n-4)/2, (n-6)/3\}$,
then $\ao_{\{0,d\}}(G)=4$,
\item If $d \in \N\cap \{(n-6)/4,(n-8)/4\}$, then $\ao_{\{0,d\}}(G)=5$,
\item If $d=(n-4)/4$ is integral, then $\ao_{\{0,d\}}(G)=6$.
\end{enumerate}
\end{theorem}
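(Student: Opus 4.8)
The plan is to deduce the statement from the inverse‑problem results of Section~\ref{sec_proofs}, in particular Theorem~\ref{main_thm_inverse}, via the combinatorial description of $\ao$ recalled above.

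The first thing I would do is establish the reduction
\[
\ao_{\{0,d\}}(G)=\max\{\,|G_0|\colon G_0\subseteq G,\ \mD{G_0}=d\,\}.
\]
For the inequality ``$\ge$'' one takes, for a given $G_0$ with $\mD{G_0}=d$, the empty sequence for $S$ and $\ell,M$ large; then $\Omega(G_0,S,\ell)$ is nonempty (it contains, e.g., $\prod_{g\in G_0}(g^{\ord g})^{\ell}$), and by the Structure Theorem for Sets of Lengths (Theorem~\ref{thm_STSL}) together with the fact that an element all of whose atoms occur with sufficiently large multiplicity over $G_0$ has set of lengths an AAMP of period exactly $\{0,\mD{G_0}\}$ whose spread $\max\Lo(B)-\min\Lo(B)$ tends to infinity with $\ell$, one gets $\beta^{-1}(\Omega(G_0,S,\ell))\subseteq\pc(H,\{0,d\},M)$. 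For the inequality ``$\le$'', if a triple $(G_0,S,\ell)$ realizes $\ao_{\{0,d\}}(G)$ then, taking $\ell$ large and $F$ with $SF\in\Omega(G_0,S,\ell)$, the set $\Lo(SF)$ is an AAMP of period $\{0,d\}$; since removing from $SF$ the (boundedly many) atoms supporting $S$ leaves a high‑multiplicity element over $G_0$, the difference of this AAMP must be $\mD{G_0}$, forcing $\mD{G_0}=d$, whence $|G_0|$ is bounded by the right‑hand side above.

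The second thing I would do is purely read off maxima from Theorem~\ref{main_thm_inverse}, which gives, up to the choice of generating element, the complete list of subsets of $G$ with each of the ten minimal distances in question (and asserts there are no others, and that the sets displayed — including those containing $0$ — do have the stated minimal distance). The largest set occurring has three elements when $d\in\{n-2,(n-3)/2,(n-4)/3,(n-5)/4\}$ (for instance $\{f,-f,0\}$, $\{f,-2f,0\}$, $\{f,-3f,0\}$, $\{f,-4f,0\}$), four elements when $d\in\{(n-2)/2,(n-4)/2,(n-6)/3\}$ (for instance $\{f,-f,(n/2)f,0\}$, $\{f,2f,-2f,0\}$, $\{f,3f,-3f,0\}$), five elements when $d\in\{(n-6)/4,(n-8)/4\}$ (namely $\{f,2f,((n-2)/2)f,(n/2)f,0\}$ and $\{f,2f,4f,-4f,0\}$), and six elements when $d=(n-4)/4$ (namely $\{f,((n-2)/2)f,2f,-2f,(n/2)f,0\}$). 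Combined with the reduction above this gives exactly the four cases of the theorem; note that the hypothesis $n\ge 250$ is precisely the one needed to invoke Theorem~\ref{main_thm_inverse}.

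The main obstacle is the reduction step, and within it the assertion that a set $G_0$ with $\mD{G_0}=d$ has the property that its high‑multiplicity elements have sets of lengths of period \emph{exactly} $\{0,d\}$ — neither coarser, nor containing gaps of size $d$ in the central part. For $d=n-2$ this is essentially contained in \cite{WAS16}. For the remaining values the sets $G_0$ occurring are explicit and of cardinality at most six, so the claim can either be checked by a direct computation of the relevant length sets — for instance an element $f^x(-f)^x0^z$ of $\bc(\{f,-f,0\})$ has set of lengths $\{\,x+z+j(2-n)\colon 0\le j\le x/n\,\}$, a full arithmetic progression with difference $n-2$ — or read off from the refined structure theory for sets of lengths over a fixed family of supports developed in \cite[Chapter~4]{geroldingerhalterkochBOOK}.
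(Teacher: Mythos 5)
Your second step (reading the extremal cardinalities off Theorem \ref{main_thm_inverse}) matches the paper, but your reduction step claims more than your argument delivers, and the excess is exactly where the paper is careful. You assert the exact identity $\ao_{\{0,d\}}(G)=\max\{|G_0|\colon \mD{G_0}=d\}$, and for the upper bound you argue that since $SF$ contains a high-multiplicity element over $G_0$, ``the difference of this AAMP must be $\mD{G_0}$, forcing $\mD{G_0}=d$.'' This inference is not valid. From $\beta^{-1}(\Omega(G_0,S,\ell))\subset \pc(H,\{0,d\},M)$ one gets that all lengths of each $SF$ lie in a single residue class modulo $d$; splicing two factorizations of an arbitrary $B\in\bc(G_0)$ into a suitably padded $SF$ then yields $d\mid \mD{G_0}$ -- but nothing forces equality. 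The distances of size $d$ in $\Lo(SF)$ may be produced by atoms whose support meets $\supp(S)\subset G\setminus G_0$, so they need not belong to $\Delta(\bc(G_0))$ at all: if $\mD{G_0}=2d$, a set of the shape $\{m,m+d\}+2d\cdot[0,K]$ is a perfectly good AAMP with period $\{0,d\}$, so membership in $\pc(H,\{0,d\},M)$ does not rule this out. This is why the paper only derives the sandwich
\[
\max \{|G_0|  \colon \mD{G_0} = d \} \;\leq\; \ao_{\{0,d\}}(G) \;\leq\; \max \{|G_0| \colon d \mid \mD{G_0} \},
\]
rather than your exact equality.

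The consequence is a concrete missing step: with only the sandwich available, one must still check that for each of the ten values of $d$ no set $G_0$ with $\mD{G_0}$ equal to a \emph{proper} multiple of $d$ has cardinality exceeding the maximum attained at $d$ itself. Your proposal never performs this check, because the (unjustified) exact reduction made it look unnecessary. The check is not hard -- for $n\ge 250$ any multiple of these $d$ lying in $\Dast{G}$ is again one of the values covered by Theorems \ref{thm_directweak} and \ref{main_thm_inverse} (e.g.\ for $d=(n-4)/4$ the only relevant proper multiple is $(n-4)/2$, whose extremal sets have cardinality $4<6$) -- but it has to be carried out, and it is precisely how the paper closes the gap between the two bounds. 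Your lower-bound discussion (empty $S$, large $\ell$, verification that high-multiplicity elements over the explicit extremal sets have period exactly $\{0,d\}$ and large spread) is in the right spirit and matches the paper's appeal to well-known arguments, so the repair needed is confined to replacing the forced equality $\mD{G_0}=d$ by $d\mid\mD{G_0}$ and adding the comparison over multiples.
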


\begin{proof}
From the description of $\ao_{\dc}(G)$ recalled above 
one can derive, using well-known arguments, that for $\dc = \{0,d\}$ with $d \in \Dast{G}$ one has 
\[
\begin{split}
\max \{|G_0|  \colon \min \Delta(G_0) = d , \, G_0 \subset & G\}  \leq \\
 & \ao_{\{0,d\}}(G) \leq \max \{|G_0| \colon d \mid \min \Delta(G_0) , \, G_0 \subset G \}.
\end{split}
\]

Now, from Theorem \ref{main_thm_inverse} we see that in all cases
the maximum cardinality on the right hand side is actually attained
for $d$ (as opposed to a proper multiple) and we thus get an equality,
and the exact value.
\end{proof}

\begin{theorem}
Let $G$ be a finite cyclic group of order $n$ and let $q\mid n$ be an odd prime. 
\begin{enumerate}
\item If $d = (n-q-1)/q$ is integral and  $q \le n^{1/3}/2$, then $\ao_{\{0,d\}}(G)=3$,
\item If $d = (n-2q)/q$ is integral and  $q \le n^{1/3}/2$, then $\ao_{\{0,d\}}(G)=4$,
\item If $d = (n-2q)/q^2$ is integral and  $q \le n^{1/6}/2$ , then $\ao_{\{0,d\}}(G)=6$. 
\end{enumerate}
\end{theorem}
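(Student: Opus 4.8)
The plan is to mimic the proof of the preceding theorem. From the description of $\ao_{\dc}(G)$ recalled above one obtains, for $\dc=\{0,d\}$ with $d\in\Dast{G}$, the sandwich bound
\[
\max\{\,|G_0|\colon \mD{G_0}=d\,\}\ \le\ \ao_{\{0,d\}}(G)\ \le\ \max\{\,|G_0|\colon d\mid\mD{G_0}\,\},
\]
the maxima being over subsets $G_0\subset G$. For the lower bound I would exhibit an extremal set from the relevant structural lemma: Lemma \ref{lem_n-q-1q} (valid since $q\le n^{1/3}/2$) shows that $\mD{G_0}=(n-q-1)/q$ forces $\{f,-qf\}\subset G_0\subset\{0,f,-qf\}$, so $\{0,f,-qf\}$ is extremal of cardinality $3$; Lemma \ref{lem_n-2qq} gives in the same way an extremal set of cardinality $4$ for $(n-2q)/q$; and Lemma \ref{lem_n-2qq2} (whose hypotheses hold since $q\le n^{1/6}/2$ and integrality of $(n-2q)/q^2$ forces $n/q\equiv 2\pmod{q}$) shows that $\{0,e,qe,-qe,(n/q)e,((n-q)/q)e\}$ has minimal distance $(n-2q)/q^2$, extremal of cardinality $6$. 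This yields $\ao_{\{0,d\}}(G)\ge 3,4,6$ in the three cases, and in particular confirms $d\in\Dast{G}$ so the sandwich applies.

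The real work is the upper bound, namely pinning down \emph{all} multiples $kd$ that can equal some $\mD{G_0}$. In each case integrality of $d$ gives an identity $q^{\varepsilon}d=n-r$ with $(\varepsilon,r)=(1,q+1)$, $(1,2q)$, $(2,2q)$ in cases (i), (ii), (iii), and one checks $d\ge 2n^{2/3}-2>(2n^2)^{1/3}>n^{1/2}$. Suppose $\mD{G_0}=kd$ with $k\ge 2$. By Theorem \ref{prop_2subset} and Lemma \ref{aux_lem_higherorder}, $G_0$ contains a two-element set $\{f_1,f_2\}$ with $\ord f_1=\ord f_2=m'\mid n$ and $\mD{\{f_1,f_2\}}$ a multiple $j'kd$ of $kd$; since $j'kd>n^{1/2}\ge (m')^{1/2}$, Theorem \ref{aux_thm_2elements} gives $j'kd=(m'-c_1-c_2)/(c_1c_2)$ for positive integers $c_1,c_2$, whence $m'=c_1+c_2+j'kc_1c_2\,d$. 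Writing $n=tm'$ and substituting the identity above yields $d\,(q^{\varepsilon}-tj'kc_1c_2)=t(c_1+c_2)-r$. Because $kd\ge 2d$ is large, $m'\ge 2d$ forces $t\le n/(2d)$, the inequality $(m'-c_1-c_2)/(c_1c_2)\ge 2d$ forces $c_1c_2\le m'/(2d)$, and hence also $j'\le n/(2d)$; a short estimate then gives $|t(c_1+c_2)-r|<d$, so the factor $q^{\varepsilon}-tj'kc_1c_2$ must vanish. Thus $q^{\varepsilon}=tj'kc_1c_2$, and since $q$ is prime this forces $k\mid q^{\varepsilon}$.

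It remains to dispose of the possibilities $k=q$ and, for $\varepsilon=2$, $k=q^2$. If $tj'c_1c_2=1$ --- which is automatic when $k=q^{\varepsilon}$, and is one of the sub-cases when $\varepsilon=2$, $k=q$ --- then $t=1$, so $m'=n$, and $m'=2+q^{\varepsilon}d$ together with $q^{\varepsilon}d=n-r$ gives $r=2$, contradicting $q\ge 3$ (so $r\ge 4$). The only surviving branch is case (iii) with $k=q$ and $t=q$, $j'=c_1=c_2=1$, where $\mD{G_0}=qd=(n-2q)/q$; here Lemma \ref{lem_n-2qq} applies directly to $G$ and gives $|G_0|\le 4<6$. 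Consequently, in cases (i) and (ii) the only multiple of $d$ lying in $\Dast{G}$ is $d$ itself, so the sandwich collapses to $3$ in case (i) and to $4$ in case (ii); in case (iii) the multiples in $\Dast{G}$ are $d$ (cardinality $6$) and $qd$ (cardinality $\le 4$), so the maximum is $6$. The main obstacle is the estimate $|t(c_1+c_2)-r|<d$: one must verify that the $O(n^{1/3})$ bounds on $t,c_1,c_2,j'$ really make the error term strictly below $d=\Theta(n^{2/3})$, and in case (iii) the hypothesis is sharpened to $q\le n^{1/6}/2$ precisely because $q$ appears squared in the denominator of $d$, so only then is $d$ of order $n^{2/3}$ and only then does Lemma \ref{lem_n-2qq2} apply.
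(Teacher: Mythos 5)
Your proposal is correct and takes essentially the same approach as the paper: the paper's own proof is exactly the sandwich bound for $\ao_{\{0,d\}}(G)$ combined with Lemmas \ref{lem_n-q-1q}, \ref{lem_n-2qq} and \ref{lem_n-2qq2}, declaring the argument identical to that of the preceding theorem. Your explicit treatment of the proper multiples $kd$ (deriving $q^{\varepsilon}=tj'kc_1c_2$, hence $k\in\{q,q^2\}$, and eliminating every branch except $\mD{G_0}=(n-2q)/q$ in case (iii), which Lemma \ref{lem_n-2qq} bounds by $4<6$) simply spells out what the paper leaves implicit, and it checks out, since $t\,c_1c_2\le (n/m')\cdot m'/(2d)=n/(2d)$ indeed makes $|t(c_1+c_2)-r|<d$ and the relation $t(c_1+c_2)=r$ rules out the remaining sub-cases.
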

\begin{proof}
The argument is identical to the one of the preceding result, except that we use Lemmas \ref{lem_n-q-1q}, \ref{lem_n-2qq}, and \ref{lem_n-2qq2} instead of Theorem \ref{main_thm_inverse}.
\end{proof}

Moreover, it is not hard to see that the values of $\ao_{\{0,d\}}(G)$, for the $d$ we considered, are upper bounds for $\ao_{ \dc }(G)$ for every $\{ 0, d \} \subset \dc \subset [0,d]$ such that the period $ \dc $ is aperiodic, i.e., there exists no $x \in \Z \setminus d\Z$ such that the image of $\dc$ and $x + \dc$ in $ \Z / d \Z$ are equal.

It could be interesting to pursue these ideas further, including an analysis of $\bo_{\dc}$, yet we do not do so here.

\bibliographystyle{abbrv}

%\bibliography{schmid_publ,schmid_iu}

\end{document}